
%
%
%

\documentclass[a4paper,twoside]{article}
\newif\ifarxiv
\arxivtrue
\ifarxiv\else
\usepackage[width=16truecm,height=25truecm,center]{crop}

\usepackage{mypdfcolor}
\fi


\makeatletter
\let\mySfragile\S
\def\myS{\protect\mySfragile}
\usepackage[mathscr]{euscript}
\let\matheu\mathscr
\usepackage{mathrsfs}
\usepackage{amsmath,amsfonts,amssymb}

\DeclareFontFamily{OT1}{bfit}{}
\DeclareFontShape{OT1}{bfit}{m}{n}{<->cmbxti10}{}
\DeclareMathAlphabet {\mathbfit} {OT1} {bfit} {m} {n}

\DeclareFontFamily{OT1}{bfsl}{}
\DeclareFontShape{OT1}{bfsl}{m}{n}{<->cmbxsl10}{}
\DeclareMathAlphabet {\mathbfsl} {OT1} {bfsl} {m} {n}

\def\mou{plus0.4pt minus0.2pt }

\def\@seccntformat#1{\csname the#1\endcsname.\ }
\newskip\intertitleskip
\intertitleskip0.75ex plus0pt minus0pt

\def\testtitle#1{\if@nobreak
\vskip#1 plus0pt minus0pt\nobreak
\vskip\intertitleskip\nobreak
\else\fi}

\renewcommand\section{\@startsection {section}{1}{\z@}%
                                   {2em \mou}%
                                   {1.ex}%
                                   {\normalfont\large\bfseries\boldmath}}

\renewcommand\subsection{\testtitle{-5pt}\@startsection{subsection}{2}{\z@}%
                                     {-1.25em}%
                                     {0.1ex }%
                                     {\normalfont\normalsize\bfseries\boldmath}}

\newcommand\subsectionni{\testtitle{-5pt}\@startsection{subsection}{2}{\z@}%
                                     {-1.25em}%
                                     {-1.5ex }%
                                     {\normalfont\normalsize\bfseries\boldmath}}

\renewcommand\subsubsection{\testtitle{-3pt}\@startsection{subsubsection}{3}{\z@}%
                                     {-2ex}%
                                     {-1.5ex}%
                                     {\normalfont\normalsize\bfseries\boldmath}}
                                     
\renewcommand\paragraph{\@startsection{paragraph}{4}{\z@}%
                                    {2ex }%
                                    {-1.5ex}%
                                    {\normalfont\normalsize\bfseries\boldmath}}
\renewcommand\subparagraph{\@startsection{subparagraph}{5}{\parindent}%
                                       {2ex}%
                                       {-1.5ex}%
                                      {\normalfont\normalsize\bfseries\boldmath}}

\def\mysubsubsection#1{\refstepcounter{subsubsection}\addcontentsline{toc}{subsubsection}{\protect\numberline {\thesubsubsection}#1}{\bf\thesubsubsection. #1\ }\ignorespaces}

\input myfleches.sty

\def\defrobuste#1{{\let\protect0\expandafter\let\csname#1fragile\endcsname0\relax
\expandafter\xdef\csname#1\endcsname{{\protect\csname#1fragile\endcsname}}}}
\def\multiexpand#1#2{\spoolmacro{#1}#2\end}
\def\fin{\end}
\def\spoolmacro#1#2{\def\donne{#2}\ifx\donne\fin\let\mnext\relax
\else\csname#1\endcsname{#2}\def\mnext{\spoolmacro{#1}}\fi\mnext}
\def\EXTRA#1{\expandafter\def\csname#1#1#1fragile\endcsname{\matheu #1}\defrobuste{#1#1#1}\relax}
\multiexpand{EXTRA}{ABCDEFGHIJKLMNOPQRSTUVWXYZ}
\def\gras#1{\expandafter\def\csname#1gfragile\endcsname{\mathord{\mathbfsl #1}}\defrobuste{#1g}\relax}
\multiexpand{gras}{ABCDEFGHIJKLMNOPQRSTUVWXYZabcdefhijklmnopqrstuvwxyz}
\def\gothique#1{\expandafter\def\csname#1gothfragile\endcsname{\mathord{\frak #1\mkern1mu}}\defrobuste{#1goth}\relax}
\multiexpand{gothique}{ABCDEFGHIJKLMNOPQRSTUVWXYZabcdefghijklmnopqrstuvwxyz}
\def\defbbb#1{\expandafter\def
\csname#1#1fragile\endcsname{\mathord{\mathbb#1}}\defrobuste{#1#1}\relax}
\multiexpand{defbbb}{ABCDEFGHIJKLMNOPQRSTUVWXYZ}
\def\defcal#1{\expandafter\def
\csname#1fragile\endcsname{\mathord{\mathscr#1}}\defrobuste{#1}\relax}
\multiexpand{defcal}{ABCDEFGHIJKLMNOPQRSTUVWXYZ}


\let\bfit\itbf
\newif\ifendpoint\endpointtrue
\def\endpoint{\ifendpoint.\else\global\endpointtrue\fi}
\def\noendpoint{\endpointfalse}
\newif\ifmycount
\mycounttrue
\def\noNumber{\mycountfalse}
\newif\ifinsidetheorem

\def\mynewtheorem#1#2#3{\newenvironment{#1}[1][]
{\ifinsidetheorem\vskip\intertitleskip \else\if@nobreak \vskip-2pt\vskip\intertitleskip\else\goodbreak\vskip1.2em\mou\fi\fi
\ifmycount\refstepcounter{subsubsection}\fi
{\parskip0pt\noindent}{\bfseries\ifmycount\thesubsubsection. 
\else\global\mycounttrue\fi #2##1\endpoint\hskip1ex}#3\begingroup
\insidetheoremtrue}{\vskip-\lastskip\endgroup
}
\newenvironment{#1*}[1][]
{\ifinsidetheorem\vskip\intertitleskip\else\if@nobreak\vskip-4pt\vskip\intertitleskip\else\goodbreak\vskip1.2em\fi\fi
{\parskip0pt\noindent}{\bfseries#2##1\endpoint\hskip1ex}#3\begingroup
\insidetheoremtrue}{\vskip-\lastskip\endgroup}}

\mynewtheorem{exa}{Example}{}
\mynewtheorem{exas}{Examples}{}
\mynewtheorem{exer}{Exercise}{}
\mynewtheorem{exercise}{Exercise}{}
\mynewtheorem{theo}{Theorem}{\slshape}
\mynewtheorem{prop}{Proposition}{\slshape}
\mynewtheorem{coro}{Corollary}{\slshape}
\mynewtheorem{lemm}{Lemma}{\slshape}
\mynewtheorem{defi}{Definition}{}
\mynewtheorem{nota}{Notation}{}
\mynewtheorem{rema}{Remark}{}
\mynewtheorem{ackn}{Acknowledgement}{}
\mynewtheorem{ackns}{Acknowledgements}{}
\mynewtheorem{caut}{Caution}{}
\mynewtheorem{remas}{Remarks}{}
\mynewtheorem{var}{\varname}{\gdef\varname{def-varname}}
\def\varname{def-varname}

\def\demoname{Proof}
\def\endbox{\hbox{\small$\Box$}}

\newenvironment{proof}{\ifhmode\par\fi
\vskip1\medskipamount
\begingroup
\insidedemotrue
\nobreak\noindent{\sl \demoname\endpoint\ }\ignorespaces}{\ifnoendbox\global\noendboxfalse 
\else\nobreak\hfill\nobreak\endbox\fi\par\vskip-\lastskip\endgroup
}

\newif\ifinsidedemo
\newif\ifnoendbox

\newenvironment{hint}{\def\endbox{\hbox{\small$\boxdot$}}\def\demoname{Hint}\begin{proof}}{\end{proof}}

\newenvironment{varproof}[1]{\def\endbox{\hbox{\small$\boxdot$}}\def\demoname{#1}\begin{proof}}{\end{proof}}
\newcommand\qedsymbol{\hbox{\small$\Box$}}
\def\QED{\null\nobreak\hfill\nobreak\qedsymbol
\ifinsidedemo\global\noendboxtrue\fi}

\let\qed\QED

\newcommand\lemmaqedsymbol{\hbox{\small$\boxminus$}}
\def\middleQED{\null\nobreak\hfill\nobreak\lemmaqedsymbol
}
\def\Jot{0.5\jot}

\let\mathalign\diagalign
\def\pmathalign#1{\left\{\mathalign{\noalign{\kern-2pt}#1\\\noalign{\kern-2pt}}\right.}
\let\dans\subseteq
\let\cont\supseteq
\let\moins\backslash
\let\minus\moins

\def\operator#1#2{\expandafter\def\csname#2fragile\endcsname{\mathop{#1{#2}}\nolimits}\relax
\defrobuste{#2}\relax}
\def\ordinaire#1#2{\expandafter\def\csname#2fragile\endcsname{\mathord{#1{#2}}}\relax
\defrobuste{#2}}
\operator\rm{GL}
\operator\rm{Stab}
\operator\rm{Hom}
\operator\rm{Hot}
\operator\rm{Bas}
\operator\rm{Card}
\operator\rm{dim}
\operator\rm{ker}
\operator\rm{coker}
\operator\rm{Gr}
\operator\rm{id}
\operator\rm{Lie}
\ordinaire\rm{pr}
\operator\rm{im}
\operator\bf{Ab}
\operator\rm{Mor}
\operator\goth{Rad}
\operator\rm{Diff}
\operator\rm{Fonct}
\operator\rm{Appl}
\operator\rm{Top}
\operator\rm{Vec}
\operator\rm{Ob}
\operator\rm{Ouv}
\operator\rm{Mor}
\operator\rm{Aut}
\operator\rm{End}
\operator\rm{Iso}
\operator\rm{Red}
\operator\rm{Spec}
\operator\rm{Spm}
\operator\rm{Annul}
\operator\rm{Parties}
\operator\rm{Mod}
\operator\rm{Ext}
\operator\rm{Adg}
\operator\rm{Alg}
\operator\goth{Rec}
\def\cl#1{\,\nobreak\overline{\!#1}}
\def\resetdisplay{\displayindent=\leftmargin
\advance\displaywidth-\leftmargin}

\newbox\b@xit
\def\goodboxit#1#2#3{\def\eh{width#2}\def\ev{height#2}%
\setbox\b@xit=\hbox{\kern#1pt{#3}\kern#1pt}%
\dimen1=\ht\b@xit \advance\dimen1 by #1pt \dimen2=\dp\b@xit \advance\dimen2 by #1pt
\setbox\b@xit=\hbox{\vrule\eh height\dimen1 depth\dimen2\box\b@xit\vrule\eh}%
\setbox\b@xit=\hbox{\vtop{\vbox{\hrule\ev\box\b@xit}\hrule\ev}}%
\box\b@xit}
\def\boxit#1#2{\goodboxit{#1}{0,4pt}{#2}}
\def\displayboxit#1{\boxit4{$\displaystyle#1$}}
\def\ramollit#1#2{#1=#2#1 \mou\relax}

\def\mydisplayskips{\abovedisplayskip=1.5ex\mou
\belowdisplayskip=1.5ex\mou
\abovedisplayshortskip=-0.2ex\mou
\belowdisplayshortskip=1.5ex\mou
\parskip2.pt
\normalbaselineskip=1\normalbaselineskip 
\ramollit\smallskipamount1
\ramollit\medskipamount1
\ramollit\bigskipamount1
\normallineskiplimit1.pt\normallineskip1.pt
\normalbaselines}

\def\mysmash{\relax 
  \ifmmode\def\next{\mathpalette\mathsm@mysh}\else\let\next\makesm@mysh
  \fi\next}
\def\makesm@mysh#1{\setbox\z@\hbox{#1}\finsm@mysh}
\def\mathsm@mysh#1#2{\setbox\z@\hbox{$\m@th#1{#2}$}\finsm@mysh}
\def\finsm@mysh{\ht\z@\z@ \dp\z@\z@ \box\z@ }
\def\finsm@myshbot{\dp\z@\z@ \box\z@}
\def\finsm@myshtop{\ht\z@\z@ \box\z@}
\def\finhalfsm@myshbot{\dp\z@0.5\dp\z@ \box\z@}
\def\finhalfsm@myshtop{\ht\z@0.5\ht\z@ \box\z@}
\def\finhalfsm@mysh{\ht\z@0.5\ht\z@ \dp\z@0.5\dp\z@ \box\z@}
\def\fingoodsm@mysh#1#2{\ht\z@#1\ht\z@ \dp\z@#2\dp\z@ \box\z@}
\def\smashtop#1{\begingroup\let\finsm@mysh\finsm@myshtop\mysmash{#1}\endgroup}
\def\smashbot#1{\begingroup\let\finsm@mysh\finsm@myshbot\mysmash{#1}\endgroup}
\def\halfsmashtop#1{\begingroup\let\finsm@mysh\finhalfsm@myshtop\mysmash{#1}\endgroup}
\def\halfsmashbot#1{\begingroup\let\finsm@mysh\finhalfsm@myshbot\mysmash{#1}\endgroup}
\def\halfsmash#1{\begingroup\let\finsm@mysh\finhalfsm@mysh\mysmash{#1}\endgroup}
\def\goodsmash#1#2#3{\begingroup\def\finsm@mysh{\fingoodsm@mysh{#1}{#2}}\mysmash{#3}\endgroup}

\def\fixepreskip{\abovedisplayshortskip\abovedisplayskip}
\def\fixepostskip{\belowdisplayshortskip\belowdisplayskip}
\def\fixedskips{\belowdisplayshortskip\belowdisplayskip
\abovedisplayskip\belowdisplayskip\abovedisplayshortskip\abovedisplayskip}
\def\preskip{\afterassignment\fixepreskip\abovedisplayskip}
\def\postskip{\afterassignment\fixepostskip\belowdisplayskip}
\def\dskips{\afterassignment\fixedskips\belowdisplayskip}

\newcount\dimcount
\newif\ifhalfds
\def\scaledimen#1#2/#3 {\dimcount=#1\relax
\divide\dimcount by#3
\multiply\dimcount by #2
#1=\dimcount sp }

\def\scaledisplayskips#1/#2 {\scaledimen\abovedisplayskip#1/#2 
\scaledimen\abovedisplayshortskip#1/#2 
\scaledimen\belowdisplayskip#1/#2 
\scaledimen\belowdisplayshortskip#1/#2 
}

\def\halfdisplayskips{\ifhalfds\else
\scaledimen\abovedisplayskip50/100
\scaledimen\abovedisplayshortskip50/100
\scaledimen\belowdisplayskip50/100
\scaledimen\belowdisplayshortskip50/100
\halfdstrue\fi}

\def\displayskips#1/#2 {\ifhalfds\else
\scaledimen\abovedisplayskip#1/#2 
\scaledimen\abovedisplayshortskip#1/#2 
\scaledimen\belowdisplayskip#1/#2 
\scaledimen\belowdisplayshortskip#1/#2 
\halfdstrue\fi}


\def\mathrigid#1 {\thinmuskip=#1
\medmuskip=#1
\thickmuskip=#1 }

\thinmuskip=1mu

\def\relmuskip#1 {\thickmuskip=#1 }
\def\binmuskip#1 {\medmuskip=#1 }
\def\set#1/{\{#1\}}

\let\@addpunctorg\@addpunct
\def\nopoint{\def\@addpunct##1{\global\let\@addpunct\@addpunctorg}}

\def\bold{\bfseries\boldmath}
\def\virg{\raise2pt\hbox{,}\,}

\let\cont\supseteq
\def\limind{\mathop{\rm limind}\limits}

\def\limite{\setbox0=\vtop{\offinterlineskip\setbox0=\hbox{lim}\dimen0=\wd0
\setbox1=\hbox{$\longrightarrow$}\ifdim\dimen0<\wd1 \dimen0=\wd1 \fi
\hbox to\dimen0{\hss\box0\hss}\kern2pt\hbox
to\dimen0{\hss\box1\hss}}\dp0=5pt\box0}
\def\limitegauche{\setbox0=\vtop{\offinterlineskip\setbox0=\hbox{lim}\dimen0=\wd0
\setbox1=\hbox{$\longleftarrow$}\ifdim\dimen0<\wd1 \dimen0=\wd1 \fi
\hbox to\dimen0{\hss\box0\hss}\kern2pt\hbox
to\dimen0{\hss\box1\hss}}\dp0=5pt\box0}

\def\limind{\mathop{\limite}\nolimits}

\def\limproj{\mathop{\limitegauche}\nolimits}

\def\_{\mathchoice
{\raise-3.5pt\hbox{$-$}}
{\raise-3.5pt\hbox{$-$}}
{\raise-2pt\hbox{$\scriptstyle-$}}
{\raise-1pt\hbox{$\scriptscriptstyle-$}}}


\def\Cdot{{\mkern1mu\cdot\mkern1mu}}

\def\0{{\bf0}}
\def\1{{\bf1}}
\let\fonct\rightsquigarrow
\let\funct\fonct

\def\itemize{\begingroup%
  \ifnum \@itemdepth >\thr@@\@toodeep\else
    \advance\@itemdepth\@ne
    \edef\@itemitem{labelitem\romannumeral\the\@itemdepth}%
    \expandafter
    \list
      \csname\@itemitem\endcsname
      {
\leftmargin2em
\itemsep0pt\parskip0pt\topsep0pt  
\addvarlistseps}\fi}

\def\enumerate{\begingroup%
  \ifnum \@enumdepth >\thr@@\@toodeep\else
    \advance\@enumdepth\@ne
    \edef\@enumctr{enum\romannumeral\the\@enumdepth}%
      \expandafter
      \list
        \csname label\@enumctr\endcsname
        {\usecounter\@enumctr\def\makelabel##1{\hss\llap{##1}}}%
  \fi}

\renewcommand\labelenumi{{\rm\theenumi)}}
\renewcommand\theenumi{\@alph\c@enumi}

\renewcommand\labelenumii{{\rm\theenumii)}}
\renewcommand\theenumii{\romannumeral\c@enumii}

\setlength\leftmargini  {1.6em}
\setlength\leftmarginii  {1em}
\setlength\leftmarginiii {2.5em}
\setlength\leftmarginiv  {3em}

\def\@listi{\leftmargin\leftmargini
             \parskip1pt\mou\relax
            \parsep \parskip\relax
            \partopsep 1\p@ \mou\relax
            \topsep 1\p@ \mou\relax
            \itemsep2\p@ \mou
            \listparindent\parindent
            \addvarlistseps
            }
\def\@listii{\leftmargin\leftmarginii
            \listparindent\parindent
            \parsep \parskip\relax
            \topsep 2\p@ \mou\relax
            \itemsep2\p@ \mou}
\let\@listI\@listi

\def\varlistseps#1{\def\addvarlistseps{#1}}
\let\addvarlistseps\relax

\def\relaxpenalties#1{
\clubpenalty=#1
\widowpenalty=#1
\displaywidowpenalty=#1
\postdisplaypenalty=#1
\everydisplay{}
\interlinepenalty=#1
\interfootnotelinepenalty=#1
\abovedisplayskip=1\abovedisplayskip\mou
\belowdisplayskip=1\belowdisplayskip\mou
\baselineskip=1\baselineskip\mou}

\pretolerance=100		
\tolerance=1000			
\brokenpenalty=100 			
\predisplaypenalty=10000 		
\postdisplaypenalty=7000 	
\interlinepenalty=0 		
\floatingpenalty=0 		
\hbadness=1000 				
\vbadness=1000 				
\linepenalty=2 			
\hyphenpenalty=50 			
\exhyphenpenalty=50 		
\binoppenalty=700 			
\relpenalty=500 			
\clubpenalty=10000			
\widowpenalty=10000 			
\displaywidowpenalty=10000	

\doublehyphendemerits=10000 		
\def\footnoterule{\kern-3\p@
\vrule height7pt width0pt  \hrule \@width 2in \kern 2.6\p@} 

\long\def\@footnotetext#1{\insert\footins{%
    \reset@font\footnotesize
    \interlinepenalty\interfootnotelinepenalty
    \splittopskip\footnotesep
    \splitmaxdepth \dp\strutbox \floatingpenalty \@MM
    \hsize\columnwidth \@parboxrestore
    \protected@edef\@currentlabel{%
       \csname p@footnote\endcsname\@thefnmark
    }%
    \color@begingroup
      \@makefntext{%
        \rule\z@\footnotesep\ignorespaces#1\@finalstrut\strutbox}%
    \color@endgroup}}%

\def\comment{\begingroup\color{green}}
\def\endcomment{\endgroup}
\long\def\commentt#1\endcommentt{\relax}
\long\def\comment#1\endcomment{\relax}

\def\mynobreak{\par\nobreak\vskip\topsep\nobreak\@nobreaktrue\@beginparpenalty=10000 }

\makeatletter
\def\mysettings{\skip\footins6pt\mou
\gdef \@makecol {\setbox \@outputbox \box \@cclv 
\xdef \@freelist {\@freelist \@midlist }\global \let \@midlist \@empty \@combinefloats \ifvoid \footins \else \setbox \@outputbox \vbox \bgroup \boxmaxdepth \@maxdepth \unvbox \@outputbox  \relax \vskip \skip \footins \color@begingroup \normalcolor \footnoterule \unvbox \footins \color@endgroup \egroup \fi \ifvbox \@kludgeins \@makespecialcolbox \else \setbox \@outputbox \vbox to\@colht {\@texttop \dimen@ \dp \@outputbox \unvbox \@outputbox \vskip -\dimen@ \@textbottom }\fi \global \maxdepth \@maxdepth}\mydisplayskips\parindent1em \frenchspacing}

\def\mrlap#1{\rlap{$\mathsurround0pt#1$}}

\mathchardef\tiret="002D
\def\gadgt{$\ggoth$-adg}
\def\gadgm{\ggoth{\rm\tiret adg}}
\def\gadg{\ifmmode\gadgm\else\gadgt\fi}
\def\basic{{\rm bas}}

\def\sbullet{\raise0.25ex\hbox{$\scriptstyle\bullet$}}

\def\underlinee#1{\vtop{\setbox0=\hbox{$#1$}\dp0=0pt\copy0\kern1.pt\hrule\kern1pt\hrule}}

\def\fs#1{\underlinee{#1}}

\def\Pf{\mathop{\rm Pf}}
\def\ev{\mathop{\rm ev}\nolimits}

\def\HX{H_{X}}
\def\HcX{H_{X,\rm c}}

\def\stackdown#1{\ifmmode\mathrel{\vtop{\scriptsize\offinterlineskip
\stackstyle\dimen0=0pt
\def\hboxto{\setbox0=\hbox}\spoolstackdown#1\\\end\\
\def\hboxto##1{\hbox to\dimen0{\stacklhss##1\stackrhss}}\spoolstackdown
#1\\\end\\}}\else$\stackdown{#1}$\fi}

\def\myend{\end}

\def\stackstyle{}
\let\stacklhss\hss
\let\stackrhss\hss

\def\spoolstackdown#1\\{\def\donne{#1}\ifx\donne\myend
\let\nextspoolstackdown\relax\else
\hboxto{$#1$}\ifdim\dimen0<\wd0 \dimen0=\wd0\fi
\let\nextspoolstackdown\spoolstackdown\fi
\nextspoolstackdown}
\usepackage{makeidx}         
\usepackage{graphicx}        
\graphicspath{ {images/} }   

\usepackage{color}        
\usepackage{multicol}        
\usepackage[bottom]{footmisc}
\usepackage[OT1]{fontenc}
\usepackage[latin1]{inputenc}  

\usepackage[all]{xy}

\SelectTips {cm}{}
\def\xymatrixc#1#2{\vcenter{\hbox{$\displaystyle\xymatrix#1{#2}$}}}
\def\xylbl[#1]#2{\ar @{} [#1] |{\hbox{#2}}}


\makeindex             
\input habille.sty 


\nonstopmode
\errorstopmode

\def\deg{\mathop{\rm deg}\nolimits}
\def\Tot{\mathop{\bf Tot}\nolimits}

\def\IP{I\mkern-6muP}
\def\IR{I\mkern-6muR}
\def\ID{I\mkern-6muL}
\def\ID{I\mkern-6muD}
\def\IDG{\ID\sb{\Gg} }
\def\IDT{\ID\sb{\Tg} }
\def\DG{\D_{\Gg}}
\def\DT{\D_{\Tg}}
\def\trans{{\,}\sp{t}}

\def\coh{\let\alphaorg\alpha\let\betaorg\beta
\def\alpha{[\alphaorg]}
\def\beta{[\betaorg]}
\let\wedge\cup
}
\def\aalpha{[\alpha]}
\def\bbeta{[\beta]}

\def\doboldsymbol#1#2{\mathord{\hbox{\boldmath$#1\mathsurround0pt\bf\csname#2\endcsname$}}}

\def\makeboldsymbol#1 {\expandafter
\def\csname #1g\endcsname
{\mathord{\mathchoice
{\doboldsymbol\textstyle{#1}}
{\doboldsymbol\textstyle{#1}}
{\doboldsymbol\scriptstyle{#1}}
{\doboldsymbol\scriptscriptstyle{#1}}
}}}

\def\vide{}
\def\spoolmakeboldsymbol#1 {\def\donne{#1}\ifx\donne\vide\let\spoolmakeboldsymbol\relax
\else\makeboldsymbol #1 \fi\spoolmakeboldsymbol}

\spoolmakeboldsymbol 
alpha beta otimes Delta varphi epsilon theta 
{}

\def\bref#1{{\ref{#1}}}
\def\llie[#1]{[\mkern-2mu[#1]\mkern-2mu]}

\def\sss{\ifhmode\par\fi\vskip-\lastskip\vskip1ex \mou
\penalty-100\refstepcounter{subsubsection}\noindent{\bf\thesubsubsection\ }}
\def\ParagrapheLabel#1{\subsubsection{#1}\label}

\def\repeatChar#1#2{{\count110=#1\loop{#2}\advance
\count110 by-1\ifnum\count110>1\repeat}}

\def\diamonds#1{\repeatChar{#1}\diamond}
\def\asts#1{\repeatChar{#1}*}
\def\stars#1{\repeatChar{#1}\star}

\def\exclams#1{\repeatChar{#1}!}

\def\qtext#1{\hbox{\quad#1\quad}}

\def\dual{\sp{\vee}}
\def\ddual{\sp{\vee\vee}}
\def\Vec{\mathop{\rm Vec}\nolimits}

\def\fin{\end}
\def\ctext#1{{\let\hboxto\hbox
\setbox0=\vbox{\spoolctext#1\\\end\\}\dimen0=\wd0
\def\hboxto{\hbox to\dimen0}\vcenter{\spoolctext#1\\\end\\}}}

\def\spoolctext#1\\{\def\donne{#1}\ifx\donne\fin
\let\spoolctext\relax\else
\hboxto{\hss#1\hss}\fi\spoolctext}

\def\rest#1{\hbox{\kern1pt\vrule width 0.4pt height 5pt depth3pt\kern1pt}\sb{#1}}

\def\goodrest#1#2#3#4{\hbox{\kern1pt\vrule width 0.4pt height #1 depth#2}\hbox{\vrule width 0pt height #1 depth#3\kern1pt}\sb{#4}}

\def\Man{\mathop{\hbox{\it\bfseries Man}}\nolimits}
\def\Manp{\mathop{\hbox{\it\bfseries Man}\sb{\pi}}\nolimits}
\def\Mano{\mathop{\hbox{\it\bfseries Man}\sp{\rm or}}\nolimits}
\def\Manop{\mathop{\hbox{\it\bfseries Man}\sp{\rm or}\sb{\pi}}\nolimits}
\def\Gr{\mathop{\rm Gr}}
\def\EuG(#1){\mathop{\rm Eu}\nolimits_{\Gg}\big(#1\big)}
\def\EuT(#1){\mathop{\rm Eu}\nolimits_{\Tg}\big(#1\big)}
\def\Eu(#1){\mathop{\rm Eu}\big(#1\big)}

\def\gysgn{*}
\def\gy{\sb{\gysgn}}

\def\gypsgn{!}
\def\gyp{\sb{\gypsgn}}
\def\tpt{\set{\raise1pt\hbox{$\scriptstyle \bullet$}}/}
\def\spt{\set{\raise0.5pt\hbox{$\scriptscriptstyle \bullet$}}/}
\def\sspt{\set{\raise0.pt\hbox{$\scriptscriptstyle \bullet$}}/}
\def\pt{\mathchoice{\tpt}{\tpt}{\spt}{\sspt}}

\let\vect\vec

\def\slant#1{{\em #1\/}}
\let\expression\slant

\let\hook\hookrightarrow
\def\cmp{\sb{{\rm c}}}

\def\tiret{\hbox{-}}
\def\cf{cf. \ignorespaces}

\def\hoook{\longrightarrowhook}
\def\putTextAt#1#2#3{\vtop to 0pt{\kern-#2\hbox to0pt{\kern#1#3\hss}\vss}}
\def\putMathAt#1#2#3{\vtop to 0pt{\kern-#2\hbox to0pt{\kern#1$#3$\hss}\vss}}



\def\idest{i.e. }
\def\mycases#1{{\def\mycr{\\}
\begin{cases}#1\end{cases}}}
\def\mycr{\cr}
\def\H{\mathcal{H}}

\let\smashtop\relax
\let\Spar\myS
\let\Cal\mathcal
\def\spacetext#1{\hbox{\qquad#1}}


\def\IL{I\mkern-6muL}
\def\IE{I\mkern-6muE}
\def\pIE{'\!I\mkern-6muE}
\def\ppIE{''\!I\mkern-6muE}
\def\IF{I\mkern-6muF}
\def\IB{I\mkern-6.5muB}

\def\EE{{\rm I\mkern-3muE}}
\def\BB{{\rm I\mkern-3muB}}

\let\Sgras\Sg
\let\Sgorg\Sg
\def\St{\Sgras(\tgoth\dual{})}
\def\Sg{\Sgras(\ggoth\dual{})}
\def\varSg#1{\Sgras^{#1}(\ggoth\dual{})}
\def\Sgd#1{S^{#1}(\ggoth\dual)}

\def\St{\Sgras(\tgoth)}
\def\Sg{\Sgras(\ggoth)}
\def\varSg#1{\Sgras^{#1}(\ggoth)}
\def\Sgd#1{S^{#1}(\ggoth)}

\def\Omegac{\Omega\sb{\rm c}}

\def\OmegaHc{\Omega\sb{\Hg,\rm c} }
\def\OmegaG{\Omega\sb{\Gg} }
\def\OmegaT{\Omega\sb{\Tg} }
\def\OmegaGc{\Omega\sb{\Gg,\rm c}}
\def\OmegaTc{\Omega\sb{\Tg,\rm c}}
\def\dgG{\dg\sb{\Gg}}
\def\dgg{\dg\sb{\ggoth}}
\def\Cgg{\Cg\sb{\ggoth}}

\def\QG{Q_{\Gg}}
\def\QGc{Q_{\Gg,\rm c}}
\def\QT{Q_{\Tg}}

\def\HTp{H_{\Tg'}}
\def\HT{H_{\Tg}}
\def\Hr{H}
\def\Hc{H_{\rm c}}

\def\HTc{H_{\Tg,\rm c}}
\def\HTpc{H_{\Tg',\rm c}}
\def\HG{H\sb{\Gg}}
\def\HH{H\sb{\Hg}}
\def\HHc{H\sb{\Hg,\rm c}}
\def\Hgg{H\sb{\ggoth}}

\def\HGc{H\sb{\Gg,\rm c} }
\def\Der{\mathop{\rm Der}\nolimits}
\def\Res{\mathop{\rm Res}\nolimits}
\def\End{\mathop{\rm End}\nolimits}
\def\Mor{\mathop{\rm Mor}\nolimits}

\def\Homgr{\mathop{\rm Homgr}\nolimits}
\def\Endgr{\mathop{\rm Endgr}\nolimits}

\def\Homgrg{\mathop{\bf Homgr}\nolimits}
\def\Endgrg{\mathop{\bf Endgr}\nolimits}

\def\bull{\sp{\bullet}}

\def\Endg{\mathop{\bf End}\nolimits}
\def\Homg{\mathop{\bf Hom}\nolimits}
\def\Homgb{\mathop{\bf Hom}\nolimits\bull}
\let\Homgrg\Homgb
\def\Endgrg{\Endg^{\bullet}}
\def\Extg{\mathop{\bf Ext}\nolimits}
\def\Torg{\mathop{\bf Tor}\nolimits}
\def\Morg{\mathop{\bf Mor}\nolimits}

\let\ootimesg\otimesg
\def\otimesg{\mathbin{\ootimesg}}

\def\cone#1{\hat c(#1)}

\def\Im{\mathop{\rm Im}\nolimits}

\def\C{{\mathcal C}}
\def\DGM{\mathop{\rm DGM}\nolimits}
\def\GM{\mathop{\rm GM}\nolimits}
\def\GV{\mathop{\rm GV}\nolimits}

\def\Kunneth{K\"unneth}
\def\1{{\rm 1\mkern-7mu1}}

\def\tsum{\mathop{\textstyle\sum}}
\def\bigset#1/{{\def\mid{\mathrel{\big|}}\big\{#1\big\}}}
\def\Bigset#1/{{\def\mid{\mathrel{\Big|}}\Big\{#1\Big\}}}

\catcode`\:=12
\def\pcolon{:=}
\def\colon{:}
\catcode`\:=13
\def:{\ifmmode\colon\else\string:\fi}

\evensidemargin-1in
\oddsidemargin-1in
\dimen0=210truemm
\advance\dimen0 by-\textwidth
\advance\evensidemargin by 0.5\dimen0
\advance\oddsidemargin by 0.5\dimen0

\let\sstar\circ


\def\iii[#1]{[\mkern-2mu[#1]\mkern-2mu]}

\def\sumnl{\sum\nolimits}
\def\dimch{\dim_{\rm ch}}

\let\rk\rank
\def\Chv{\mathop{\rm Chv}\nolimits}

\def\Ann{\mathop{\rm Ann}\nolimits}


\let\cSS\SS
\def\tthref#1#2{{\tt#1}}
\def\varhref{\ifx\href\undefined
\let\nexthref\tthref\else
\let\nexthref\href\fi\nexthref}
\ifarxiv
\ifx\undefined\href
\def\href#1#2{#2}
\fi\else
\usepackage{hyperref}
\hypersetup{
    colorlinks=true,
    linkcolor=blue,
    filecolor=magenta,      
    urlcolor=cyan,
}
\fi
%
%
%

\begin{document}
\mysettings
\pagestyle{headings}

\markboth{\sc Equivariant Poincaré Duality and Gysin Morphisms\hfil\quad }{\quad \hfil\sc Alberto Arabia}
\makeatletter
\def\sectionmark#1{}
\def\subsectionmark#1{}
\def\markboth#1#2{} \let\@mkboth\markboth
\def\markright#1#2{}
\def\@maketitle{%
  \newpage
  \begin{center}%
  \let \footnote \thanks
    {\LARGE \@title\par}%
    \vskip 1.5em%
    {\large
      \lineskip .5em%
      \begin{tabular}[t]{c}%
        \@author
      \end{tabular}\par}%
    \vskip 1em%
    {\large \@date}%
  \end{center}%
  \par
  \vskip 1.em}
\makeatother

\author{Alberto Arabia\footnote{Université Paris Diderot-Paris 7, IMJ-PRG, CNRS, Bâtiment Sophie Germain, bureau 608, Case 7012, 75205. Paris Cedex 13, France. Contact: {\tt alberto.arabia@imj-prg.fr}.}}
\title{On Equivariant Poincaré Duality,\\ Gysin Morphisms and Euler Classes}
\date{July 2017}\maketitle

\maketitle
{\noindent {\bf Abstract. }Given\label{introA} a connected compact Lie group $\Gg$, we set up the formalism of the $\Gg$-equivariant Poincaré duality for oriented $\Gg $-manifolds, following the work of J.-L.~Brylinski leading to the spectral sequence 
$$\Extg\sb{H_\Gg}(\HGc (\Mg),H_\Gg)\Rightarrow \HG(\Mg)[d\sb{\Mg}]\,.$$ 
The equivariant Gysin functors 
$$
\mathalign{
&\hfill(\_)_!:&\Gg\tiret\!\Manp\hfill&\funct&\Cal D\sp{+}(\DGM(\HG)),&
\quad \Mg&\fonct&\OmegaG(\Mg)[d_{\Mg}]\,,\hfill&\quad f&\fonct& f_{!}\\
\hbox{resp. }&(\_)_*:&\Gg\tiret\!\Man\hfill&\funct&\Cal D\sp{+}(\DGM(\HG)),&
\quad \Mg&\fonct&\OmegaGc(\Mg)[d_{\Mg}]\,,&\quad f&\fonct& f_{*}\\
}
$$
are then the covariant functors from the category of oriented $\Gg$-manifolds and proper (resp. unrestricted) maps, to the derived category of the category of differential graded modules over $\HG$, defined as the composition of the Cartan complex of equivariant differential forms functor $\OmegaGc(\_)$ (resp. $\OmegaG(\_)$) with
the duality functor
$\IR\Homgb_{\HG}((\_),\HG)$ and the equivariant Poincaré adjunction
$\IDG (\Mg):\OmegaG (\Mg)[d_{\Mg}]\to
\IR\Homgb_{\HG }\big(\OmegaGc (\Mg),\HG \big)$
(resp. $\IDG' (\Mg):\OmegaGc (\Mg)[d_{\Mg}]\to
\IR\Homgb_{\HG }\big(\OmegaG (\Mg),\HG \big)$).
Equivariant Euler classes are next introduced for any closed embedding $i:\Ng\dans\Mg$ as $\EuG(\Ng,\Mg):=i^{*}i_{!}(1)$ where 
$i^{*}i_{!}:\HG(\Ng)\to\HG(\Ng)$ is the push-pull operator. 
We end recalling localization and fixed point theorems.
}
\medskip

{\noindent {\bf About this work. }These notes were originally intended as an appendix to a book on the foundations of equivariant cohomology of $\Gg$-manifolds.
The idea of introducing Gysin morphisms through 
an equivariant Poincaré duality formalism \expression{à la Grothendieck-Verdier} has many theoretical advantages and is somewhat uncommon in the equivariant setting, warranting publication of these notes.

}

{\setcounter{tocdepth}{3}\parskip0pt plus1pt
\small
\tableofcontents

}

\begin{ackns*}To Matthias Franz for his helpful criticism on the equivalence between equivariant duality and reflexivity, and to Frances Cowell for her wise comments which allowed to significantly improve this work.\end{ackns*}

\newpage
\section{Nonequivariant Background}\label{neq-background}
\subsection{Category of Cochain Complexes}
\subsubsection{Fields in Use.}Unless otherwise specified, $\KK$ denotes
either the field of real numbers $\RR$, or the field of complex numbers $\CC$.

\subsubsection{Vector Spaces Pairings.}Whenever\label{vector-spaces-pairings} $\KK$ is 
understood the expression ``\expression{vector space}'' means vector space over 
$\KK$. If $V$ is a vector space, we denote by $V\dual\pcolon \Hom\sb{\KK}(V,\KK)$ its \expression{dual}\index{dual!vector space}.

Given\label{perfect-pairing} a  bilinear map\index{bilinear map} $\beta:V\times W\to\KK$, also called 
\expression{a pairing}\index{pairing}, 
consider the two linear maps $\gamma\sb{\beta}:V\to W\dual$ and $\rho\sb{\beta}:W\to V\dual$, defined by $\gamma\sb{\beta}(v)(w)=\rho\sb{\beta}(w)(v):=\beta(v,w)$ and 
respectively  called
the \expression{left and right adjoint maps associated with $\beta$}\index{adjoint!maps}. One says that \expression{$\beta$ is a nondegenerate pairing}\index{nondegenerate pairing} whenever the adjoint maps are  injective, and one says that \expression{$\beta$ is a perfect pairing}\index{perfect pairing} whenever they are bijective. For example, the canonical pairing $V\dual\times V\to\KK$, $(\lambda,v)\mapsto \lambda(v)$, is always nondegenerate and it is perfect if and only if $V$ is finite dimensional.

\subsubsection{Graded Vector Spaces.}A \expression{graded space}\label{graded-vector-spaces}\index{graded!space, homomorphism} is a family $\Vg\pcolon \set V^m/\sb{m\in\ZZ}$ of vector spaces. A
\expression{graded homomorphism\index{graded!homomorphism} $\alphag:\Vg\to\Wg$ of degree $d=\deg(\alphag)$} is a family  of linear maps $\set \alpha_m:V^m\to W^{m+d}/\sb{m\in\ZZ}$,
composition of such is defined degree by degree, i.e.
$\betag\circ\alphag=\set \beta_{m+d}\circ\alpha_m/\sb{m\in\ZZ}$. One has  $\deg(\alphag\circ\betag)=\deg\alphag+\deg\betag$. 

We denote by $\Homgr\sp{d}\sb{\KK}(\Vg,\Wg)$
the space of graded homomorphisms of degree $d$ and 
by $\Homgrg\sb{\KK}(\Vg,\Wg)$ the graded space of all graded homomorphisms, i.e.
the family
$$\relax{\Homgrg\sb{\KK}(\Vg,\Wg)=\bigset \Homgr\sb{\KK}\sp{d}(\Vg,\Wg) /\sb{d\in\ZZ}}\,.$$
When $d=0$, we may write 
$\Homgr\sb{\KK}(\Vg,\Wg)$ for 
$\Homgr\sb{\KK}\sp{0}(\Vg,\Wg)$.

\sss The \expression{category $\GV(\KK)$ of 
graded vector spaces}\index{category!of graded vector spaces} is the category
whose objects are graded spaces and whose
\expression{morphisms}\index{morphism!of graded vector spaces} are the graded homomorphisms of degree $0$. 
We denote equivalently $\Morg\sb{\GV(\KK)}(\Vg,\Wg):=\Homgr\sb{\KK}(\Vg,\Wg)$ the set of morphisms from $\Vg$ to $\Wg$.

\subsubsection{Differential Graded Vector Space.}A\label{differential-complex} 
\expression{differential graded vector space}\index{differential!graded vector space}
$(\Vg,\dg)$, \expression{a complex} in short, 
is a graded vector space $\Vg$ together 
a \expression{differential}\index{differential} or \expression{coboundary}\index{coboundary}
$\dg:\Endgr\sp{1}(\Vg)$ such that $\dg^2=0$. 
A \expression{morphism of complexes}\index{morphism!of complexes}
$\alphag:(\Vg,\dg)\to(\Vg',\dg')$ is a morphism $\alphag\in\Homgr\sb{\KK}(\Vg,\Vg')$
commuting with  differentials, i.e. 
$\alphag\circ\dg=\dg'\circ\alphag$. The complexes 
and their morphisms constitute the \expression{category  $\DGM(\KK)$
of differential graded vector spaces}\index{category!of differential graded vector spaces}.

\sss A\label{quasi-} morphism of complexes
$\alphag:(\Vg,\dg)\to(\Vg',\dg')$ induces a morphism
between the graded spaces of cohomologies 
$H(\alphag):H(\Vg,\dg)\to H(\Vg',\dg')$.
The morphism $\alphag$ is a
\expression{quasi-isomorphism, quasi-injection, quasi-surjection}\index{quasi!-isomorphism}\index{quasi!-injection}\index{quasi!-surjection},
whenever $H(\alphag)$ is respectively, an isomorphism, injection, surjection.

\sss Let\label{decalage} $m\in\ZZ$. If $L$ is a vector space,
we denote by $L[m]$ the graded space 
defined by $L[m]\sp{-m}=L$ and $L[m]\sp{n}=0$ if $n+m\not=0$. If $\alpha:V\to W$
is a linear map, we denote by $\alpha[m]:V[m]\to W[m]$ the 
morphism of graded spaces equal to $\alpha$ in degree $-m$ 
and $0$ otherwise. The correspondence
$L\fonct(L[m],\0)$ and $\alpha\fonct\alpha[m]$ is a functor
$$[m]:\Vec(\KK)\to\DGM(\KK)\,.$$

More generally, If $\Vg$ is a graded space we denote by
$\Vg[m]$ the graded space $(\Vg[m])\sp{i}=\Vg\sp{m+i}$, 
and if $\alphag:\Vg\to \Wg$ is a graded homomorphism we denote by
$\alphag[m]:\Vg[m]\to \Wg[m]$ the graded homomorphism 
$\alphag[m]_i=\alpha\sb{m+i}$. 
Next, if $(\Vg,\dg)$ 
is a complex, 
$(\Vg,\dg)[m]$ is the complex
$\big(\Vg[m],(-1)\sp{m}\dg[m]\big)$. 
The correspondence $(\Vg,\dg)\fonct(\Vg,\dg)[m]$, $\alphag\funct\alphag[m]$
is the \expression{$m$-th shift functor}\index{shift functor}
$$[m]:\DGM(\KK)\fonct\DGM(\KK)\,.$$

\sss
Given\label{Hom-Tensor} two complexes $(\Vg,\dg)$ and $(\Vg',\dg')$, 
we recall the definition of the complexes
$$\big(\Homgb\sb{\KK}(\Vg,\Vg'),\Dg\big)
\quad\text{and}\quad \big((\Vg\otimesg\sb{\KK} \Vg')\bull,\Deltag\big)\,.$$
As graded vector spaces they are
$$\let\quad\relax
m\in\ZZ\mapsto \begin{cases}
\Homg\sb{\KK}\sp{m}(\Vg,\Vg')&{}=\Homgr\sb{\KK}\sp{m}(\Vg,\Vg')\\\noalign{\kern2pt}
\hfill(\Vg\otimesg\sb{\KK} \Vg')\sp{m}&{}=\prod\sb{b+a=m}V\sp{a}\otimes\sb{\KK} V'\sp{b}
\end{cases}
$$
and their differentials are
$$\def\rag#1{\hbox to 2.4cm{\hss$#1$}{}}
\def\rrag#1={\hbox to 2.cm{\hss$#1$}{}=}
\begin{cases}
\rrag D_m(f)=\dg'\circ f -(-1)\sp{m} f\circ \dg\\[2pt]
\rrag\Delta_m(v\otimes v')=\dg(v)\otimes v'+(-1)\sp{|v|}v\otimes \dg'(v')
\end{cases}
$$
where $v\otimes v'\in V\sp{|v|}\otimes V'\sp{|v'|}$.
(\footnote{It is worth noting that these formulae\ are inspired by the super Lie bracket equalities
$$\llie[d,f]=df-(-1)\sp{|d||f|}fd\qtext{and}\llie[d,ab]=\llie[d,a]+(-1)\sp{|a||d|}a\llie[d,b]$$
where $\llie[d,{\llie[d,\_]}]=0$ is an immediate consequence of $|d|=1$ and $d^2=0$.
})

\begin{exer}Verify that the following complexes 
coincide as graded vector spaces but not as complexes
even though they are naturally 
isomorphic. 
$$\mathalign{
\hfill\Homgb\sb{\KK}(\Vg[n],\Wg[m])&\simeq&\Homgb\sb{\KK}(\Vg,\Wg)[m-n]\hfill\\\noalign{\kern2pt}
\hfill\Vg[n]\otimesg\Wg[m]&\simeq&(\Vg\otimesg \Wg)[m+n]\hfill
}$$
\end{exer}

\sss Given a morphism of complexes
$\varphig:(\Vg,\dg)\to(\Wg,\dg)$ 
the map
$$\mathalign{\Homg\sb{\KK}\sp{m}(\Wg,\Vg')&\to&\Homg\sb{\KK}\sp{m}(\Vg,\Vg')\,,\quad
\alphag&\mapsto&\alphag\circ\varphig\,,}$$
is well defined for all $m\in\ZZ$ and commutes with differentials so that
one has a morphism of complexes
$$\Homgb\sb{\KK}(\alphag,\Vg'):\big(\Homgb\sb{\KK}(\Wg,\Vg'),\Dg\big)\to\big(\Homgb\sb{\KK}(\Vg,\Vg'),\Dg\big)\,.$$
The correspondence $(\Vg,\dg)\fonct\Homgb\sb{\KK}(\Vg,\Vg')$, $\alphag\fonct\Homgb\sb{\KK}(\alphag,\Vg')$
is then a \slant{contravariant} functor
$$\Homgb\sb{\KK}(\_,\Vg'):\DGM (\KK)\fonct\DGM (\KK)\,.$$

\subsubsection{The Dual Complex.}The\label{dual-complex} functor $\Homgb\sb{\KK}(\_,\KK[0])$ is the
\expression{duality functor}\index{duality functor}, simply 
denoted by $(\_)\dual\pcolon \Homgb\sb{\KK}(\_,\KK[0])$
$$(\_)\dual:\DGM (\KK)\fonct\DGM (\KK)\,.
$$
The complex $(\Vg,\dg)\dual$ is called \expression{the 
dual complex associated with $(\Vg,\dg)$}\index{dual!of a complex}.
One has
$$(\Vg\dual)\sp{m}=\Hom\sb{\KK}(V\sp{-m},\KK)\,,\quad D\sb{m}=(-1)\sp{m+1}\trans d\sb{-(m+1)}$$

\begin{rema}One must take care that the natural embedding\index{bidual embedding}\index{embedding!bidual}
of vector spaces $V\dans V\ddual$ gives only an inclusion of complexes 
 $(\Vg,-\dg)\dans(\Vg,\dg)\ddual$
where the sign of the differential has changed !
The canonical isomorphism 
$$\epsilong:(\Vg,\dg)\to(\Vg,-\dg)\,,\quad\epsilon_m=(-1)\sp{m}\id\sb{V\sp{m}}\eqno(\epsilong)$$
is then necessary to get a canonical embedding $(\Vg,\dg)\hook(\Vg,\dg)\ddual$. 
\end{rema}

The next statement will be used without mention, it
is left as an exercise.

\noendpoint\begin{prop}\label{dual-exact}
\begin{enumerate}
\item A morphism of complexes $\alphag:(\Vg,\dg)\to(\Vg',\dg')$ is a quasi-isomorphism
if and only if $\alphag\dual$ is so.
\item
There exists a canonical isomorphism between the cohomology of the dual and
the dual of the cohomology, i.e.
$$\hg\big((\Vg,\dg)\dual\big)\hf{}{\simeq}{0.7cm}\big(\hg(\Vg,\dg)\big)\dual\,.$$
where $\hg$\index{h@$\hg$, cohomology as graded space} denotes the graded vector space of the cohomologies of a complex.
\end{enumerate}
\end{prop}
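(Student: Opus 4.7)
My plan is to prove (ii) first and deduce (i) from it, since the dual functor on $\KK$-vector spaces reflects isomorphisms.

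\emph{Step 1: a canonical pairing.} For each $m\in\ZZ$ I would introduce the evaluation pairing
$$\ker D_m\times\ker d_{-m}\longrightarrow\KK,\qquad (f,v)\longmapsto f(v),$$
where $f\in(\Vg\dual)^m=\Hom_\KK(V^{-m},\KK)$. From the formula $D_m=(-1)^{m+1}\trans d_{-(m+1)}$, a cocycle $f\in\ker D_m$ is precisely a functional on $V^{-m}$ killing $\Im d_{-m-1}$; and a coboundary $f=D_{m-1}(g)=\pm g\circ d_{-m}$ automatically annihilates $\ker d_{-m}$. Hence the pairing descends to
$$h^m\bigl((\Vg,\dg)\dual\bigr)\times h^{-m}(\Vg,\dg)\longrightarrow\KK,$$
and produces a canonical, natural-in-$(\Vg,\dg)$ homomorphism
$$\Phi^m\colon h^m\bigl((\Vg,\dg)\dual\bigr)\longrightarrow \bigl(h^{-m}(\Vg,\dg)\bigr)\dual=\bigl(\hg(\Vg,\dg)\dual\bigr)^m.$$

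\emph{Step 2: bijectivity of $\Phi^m$.} This relies only on the fact that every subspace of a $\KK$-vector space has a linear complement. For injectivity, if $[f]\in\ker\Phi^m$ then $f$ vanishes on $\ker d_{-m}$, so it factors through $V^{-m}/\ker d_{-m}\cong\Im d_{-m}\subseteq V^{-m+1}$; extending this factored map by zero on a complement of $\Im d_{-m}$ in $V^{-m+1}$ yields $g$ with $\pm g\circ d_{-m}=f$, i.e.\ $f\in\Im D_{m-1}$. For surjectivity, a functional $\bar\varphi\in(h^{-m}(\Vg,\dg))\dual$ is lifted in two steps---first extend it to $\ker d_{-m}$ across a complement of $\Im d_{-m-1}$, then extend the result to all of $V^{-m}$ across a complement of $\ker d_{-m}$---obtaining $f\in\Hom_\KK(V^{-m},\KK)$ which kills $\Im d_{-m-1}$, hence lies in $\ker D_m$, and whose class pairs back to $\bar\varphi$.

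\emph{Step 3: deducing (i).} The naturality established in Step~1 identifies $\hg(\alphag\dual)$ with $\hg(\alphag)\dual$ via the isomorphisms $\Phi$, for every morphism of complexes $\alphag$. Because a linear map $u\colon W\to W'$ between $\KK$-vector spaces is an isomorphism iff $u\dual$ is---once again via complements: $u\dual$ is injective iff $u$ is surjective, and $u\dual$ is surjective iff $u$ is injective---it follows that $\alphag$ is a quasi-isomorphism iff $\hg(\alphag)$ is an isomorphism in each degree iff $\hg(\alphag\dual)$ is, i.e.\ iff $\alphag\dual$ is a quasi-isomorphism.

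\emph{Main obstacle.} The mathematical content is essentially standard linear algebra over a field; the only delicate point is keeping track of the two sources of signs (the degree-reversing shift $m\leftrightarrow-m$ and the alternating $(-1)^{m+1}$ in $D_m$, together with the comparison isomorphism $\epsilong$ from the preceding remark) consistently, so that $\Phi$ is truly natural with respect to the duals of morphisms and the map in (ii) is intrinsic rather than ambiguous up to signs.
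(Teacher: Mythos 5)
Your proof is correct: the evaluation pairing descends to cohomology exactly as you say, the complement arguments give bijectivity of $\Phi^m$ degree by degree (this is just the exactness of $\Hom_{\KK}(\_,\KK)$ over a field), and naturality plus the fact that $(\_)\dual$ reflects isomorphisms yields (i) from (ii) in both directions. The paper offers no proof to compare against --- the statement is explicitly left as an exercise --- and your argument is the standard one it intends; the sign bookkeeping you flag only affects which representative of the coboundary you exhibit, not the validity of any step.
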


\subsection{Some Categories of Manifolds}
\subsubsection{Manifolds.}The names \slant{manifold}\index{manifold} and \slant{map}\index{map!of manifolds} (when applied to manifolds)
will be shortcuts for \slant{real differentiable manifold} and \slant{smooth map}. Manifolds are equidimensionnal, \idest
all their connected components have the same dimension, unless otherwise specified. 
The notation ``$d\sb{\Mg}$'', unless otherwise indicated, 
will always denote the dimension on $\Mg$.

\sss $\Man$ (resp. $\Mano$) denotes the \expression{category of (equidimensional) manifolds  (resp. oriented)
and smooth maps}\index{category!of (oriented) manifolds (with proper maps)}.
Over $\Man$ one has the \expression{(real) de Rham complex}\index{de Rham!complex} contravariant functor
$$\Omega(\_):\Man\funct\DGM(\RR)$$
and the \slant{de Rham cohomology}\index{de Rham!cohomology} 
contravariant functor
$$H(\_):\Man\funct\Mod\sp{\NN}(\RR)\,.$$

\sss $\Manp $ (resp. $\Manop$)  denotes the subcategory of $\Man$ (resp. $\Mano$) with the same objects but with only {\bf proper} maps. 
Over\label{compact-functoriality} $\Manp $ one has, in addition to the previous functors, 
the \slant{(real) de Rham complex with {\bfseries compact} supports}\index{de Rham!complex!with compact supports} contravariant functor:
$$\Omegac (\_):\Manp\fonct\DGM(\RR)$$
and the  \slant{de Rham cohomology with compact support}\index{de Rham!cohomology!with compact support} contravariant functor
$$H\cmp(\_):\Manp\funct\DGM(\RR)\,.$$

\nobreak\noindent The inclusion $\Omegac (\_)\dans\Omega(\_)$ 
induces a morphism of functors $H\cmp(\_)\to H(\_)$.

\subsubsection{$\Gg$-Manifolds.}Let\label{G-manifolds} $\Gg$ denote a Lie group. A manifold 
endowed with a smooth action of $\Gg$ is called \expression{a $\Gg$-manifold}\index{G-manifold@$\Gg$-manifold, $\Gg$-map}. A map $f:\Mg\to\Ng$ between 
$\Gg$-manifolds is called \expression{a $\Gg$-equivariant}\index{equivariant!map}\index{map!equivariant} if it 
commutes with the action of $\Gg$.
The class of $\Gg$-manifolds and $\Gg$-equivariant maps constitutes the category $\Gg\tiret\Man$\index{category!of G-manifolds@of $\Gg$-manifolds}.
The categories $\Gg\tiret\Mano$, $\Gg\tiret\Manp$, $\Gg\tiret\Manop$ are the analogues
of those already introduced in this section.

\subsection{Poincaré Pairing} 
The\label{left-adjoint}\label{poincare-pairing} reference for this section is
\cite{BT} ({\bf I} \myS5 p. 44). Let $\Mg$ be an oriented manifold. The composition of
the bilinear map $\Omega\sp{d\sb{\Mg}-i}(\Mg)\otimes\Omega\sp{i}\cmp (\Mg)
\to\Omegac  \sp{d\sb{\Mg}}(\Mg)$,
$\alpha\otimes\beta\mapsto\alpha\wedge\beta$,
with integration $\int\sb{\Mg}:\Omegac \sp{d\sb{\Mg}}(\Mg)\to\RR$, 
gives rise to a nondegenerate pairing\index{nondegenerate pairing}
$$\mathalign{\IP(\Mg):&
\Omega\sp{d\sb{\Mg}-i}(\Mg)&\otimes&\Omega\sp{i}\cmp (\Mg)&\too&\RR\hfill\\
&\hfill\alpha&\otimes&\beta\hfill&\longmapsto&\int\sb{\Mg}\alpha\wedge\beta}
\eqno(\IP)$$
inducing the \expression{Poincaré pairing in cohomology}\index{Poincaré pairing!in cohomology}
$$\def\Omega{H}\mathalign{\P(\Mg):&
\Omega\sp{d\sb{\Mg}-i}(\Mg)&\otimes&\Omega\sp{i}\cmp (\Mg)&\too&\RR\hfill\\
&\hfill\aalpha&\otimes&\bbeta\hfill&\longmapsto&\int\sb{\Mg}\aalpha\cup\bbeta}
\eqno(\P)$$
The left adjoint map associated with $\IP$ is  
$$\mathalign{
\ID(\Mg):&\Omega\sp{d\sb{\Mg}-i}(\Mg)&\too&\Omegac\sp{i}(\Mg){}\dual\hfill\\
&\alpha&\mapstoo&\ID(\alpha)\pcolon \Big(\beta\mapsto\int\sb{\Mg}\alpha\wedge\beta\Big)
}\eqno(\ID)
\postskip0pt$$
\vskip-1.5em
\noindent and one has
$$\mathalign{
\hfill\ID(\Mg)\big((-1)\sp{d\sb{\Mg}}\dg\alpha\big)(\beta)&=
\int\sb{\Mg}(-1)\sp{d\sb{\Mg}}\dg\alpha\wedge\beta\hfill\\
&=
\int\sb{\Mg}(-1)\sp{d\sb{\Mg+}|\alpha|+1}\alpha\wedge \dg\beta
=(-1)\sp{|\beta|}\ID(\Mg)(\alpha)(\dg\beta)\,,\hfill}$$
Hence, following the conventions introduced in \bref{decalage} and \bref{Hom-Tensor},
$\ID(\Mg)$ is a morphism of complexes
from $
\Omega(\Mg)[d\sb{\Mg}]$ to $\Omegac (\Mg)\dual
$. 

\begin{exer}\label{nondegenerate-pairing}Show that $(\IP)$ is a nondegenerate\index{nondegenerate pairing} pairing.
\end{exer}

\begin{theo}[ Poincaré duality theorem]Let $\Mg$\label{PD} be an oriented manifold.
\begin{enumerate}
\mynobreak\nobreak\item The morphism of complexes, called \expression{the Poincaré morphism}\index{Poincaré!morphism},
$$\ID(\Mg):\Omega(\Mg)[d\sb{\Mg}]\hoook\Omegac  (\Mg)\dual\eqno(*)$$
is a quasi-isomorphism, i.e.
the morphism of graded spaces it induces in cohomology
$$\D(\Mg):H(\Mg)[d\sb{\Mg}]\hf{}{\simeq}{0.6cm} H\cmp (\Mg)\dual\,,\eqno(\asts2)$$
is an isomorphism.

\item The Poincaré pairing in cohomology\index{Poincaré pairing!in cohomology}
$$\P(\Mg):H(\Mg)\otimes H\sb{c}(\Mg)\hf{}{}{0.6cm}\RR\eqno(\asts3)$$
is always \expression{nondegenerate}\index{nondegenerate pairing}. It is \expression{perfect}\index{perfect pairing} (see \bref{perfect-pairing}) if and only if 
$H(\Mg)$ is finite dimensional.
\end{enumerate}\end{theo}
\begin{proof}The (a) part (cf. \cite{BT} p. 44--, for details)
states the bijectivity
of the left adjoint map associated with $\P$.
Then, for each fixed $i$, one obtains by duality the bijectivity
of $\D\sb{i}\dual:(H\cmp\sp{d\sb{\Mg}-i}(\Mg))\ddual\to H\sp{i}(\Mg)\dual$
and the composition of this map with
the canonical embedding\index{bidual embedding}\index{embedding!bidual} 
$\epsilon\sb{i}:H\cmp\sp{d\sb{\Mg}-i}\hook(H\cmp\sp{d\sb{\Mg}-i})\ddual$
is the right adjoint map $\rho\sb{\P}:H\cmp(\Mg)[d\sb{\Mg}]\to H(\Mg)\dual$ (see \bref{perfect-pairing}). The ``finite dimensional'' condition then ensures the bijectivity of $\epsilon\sb{i}$, hence of $\rho\sb{\P}$.\QED
\end{proof}

\begin{exer}Let\label{adjoint-pair} $\Mg$ and $\Ng$ be oriented manifolds.
We denote by 
$$\ID(\_):\Omega(\_)[d\sb{\_}]\to\Omega(\_)\cmp\dual\,,\quad \ID(\alpha)(\beta)=\int\alpha\wedge\beta$$
the left adjoint map of the Poincaré pairing, and by
$\ID'(\_):\Omegac (\_)[d\sb{\_}]\to\Omega(\_)\dual$, $\ID'(\beta)(\alpha)=\int\alpha\wedge\beta$
the right adjoint map.
A pair $(L,R)$ of morphisms of complexes
$L:\Omega(\Ng)\to\Omega(\Mg)$ and $R:\Omegac (\Mg)[d\sb{\Mg}]\to\Omegac (\Ng)[d\sb{\Ng}]$ is a 
\expression{(Poincaré) adjoint pair}\index{adjoint!pair}
whenever
$$\int\sb{\Mg}L(\alpha)\wedge\beta=\int\sb{\Ng}\alpha\wedge R(\beta)$$
for all $\alpha\in\Omega(\Ng)$ and $\beta\in\Omegac (\Mg)$. Show that
\begin{enumerate}
\item If $(L,R_1)$ and $(L,R_2)$ are adjoint pairs, then $R_1=R_2$. One says that $R=R_1$ is the \expression{(Poincaré) right adjoint of $L$}\index{right adjoint}.
\item If $(L_1,R)$ and $(L_2,R)$ are adjoint pairs, then $L_1=L_2$. One says that $L=L_1$ is the \expression{(Poincaré) left adjoint of $R$}\index{left adjoint}.
\item If $(L,R)$ is an adjoint pair, then
$$\ID\circ L=R\dual\circ\ID\,,\qquad
\ID'\circ R=L\dual\circ\ID'\,,
$$
i.e. the following diagrams are commutative
$$
\mathalign{
\Omega(\Mg)&\hfhook{\ID(\Mg)}{(\simeq)}{1.2cm}&\Omegac (\Mg)\dual\\
\vflu{L}{}{0.7cm}&&\vflu{}{R\dual}{0.7cm}\\
\Omega(\Ng)&\hfhook{\ID(\Ng)}{(\simeq)}{1.2cm}&\Omegac (\Ng)\dual\\
}
\qquad\qquad
\mathalign{
\Omegac (\Mg)&\hfhook{\ID'(\Mg)}{}{1.2cm}&\Omega(\Mg)\dual\\
\vfld{R}{}{0.7cm}&&\vfld{}{L\dual}{0.7cm}\\
\Omegac (\Ng)&\hfhook{\ID'(\Ng)}{}{1.2cm}&\Omega(\Ng)\dual\\
}
$$
\item Do the exercise in the cohomological framework, i.e. replace Poincaré pairing $(\IP)$ by 
$(\P)$, $\ID$ by $\D:H[d]\to H\cmp\dual$, $\ID'$ by $\D':H\cmp[d]\to H\dual$, and define
the notion of \expression{(Poincaré) adjoint pair in cohomology}. 

Show that if $(L,R)$ is an adjoint pair of morphisms of complexes, then 
$(H(L),H\cmp(R))$ is an adjoint pair in cohomology so that one has
$$\D\circ H(L)=H\cmp(R)\dual\circ\D\,,\qquad
\D'\circ H\cmp(R)=H(L)\dual\circ\D'\,.
$$
In particular, \expression{$H(L)$ identifies with the dual of $H\cmp(R)$ via Poincaré duality}.
\end{enumerate}
\end{exer}

\begin{rema}We\label{gysin-as-right-adjoint} shall see that, given $f:\Mg\to\Ng$, 
the pullback morphism $f\sp{*}:\Omega(\Ng)\to\Omega(\Mg)$ may or may not admit a right Poincaré adjoint 
at the complexes level, but that it will always do so at the cohomology level, 
this right adjoint is the Gysin morphism\index{Gysin!morphism|slnb} $f\gy:\Hc(\Mg)\to\Hc(\Ng)$, so that 
$(H(f\sp{*}),f\gy)$ is a Poincaré adjoint pair in cohomology. 
On the other hand, when $f$ is a proper map, the pullback
$f\sp{*}:\Omegac (\Ng)\to\Omegac (\Mg)$ is also well defined and one 
may look for a left Poincaré adjoint to $f\sp{*}$, i.e. some morphism
$L:\Omega(\Mg)[d\sb{\Mg}]\to\Omega(\Ng)[d\sb{\Ng}]$
$$\int\sb{\Ng} L(\alpha)\wedge\beta=\int\sb{\Mg} \alpha\wedge f\sp{*}(\beta)\,.$$
Again, this will sometimes be possible at the complex level and will always be possible at the cohomology level leading to the notion of 
\expression{the Gysin morphism for proper maps $f\gyp:\Hr(\Mg)\to\Hr(\Ng)$}\index{Gysin!morphism!for proper maps|slnb}, so that 
$(f\gyp,H\cmp(f\sp{*}))$  is a Poincaré adjoint pair in cohomology. 
\end{rema}

\subsection{Manifolds and maps of Finite de Rham Type}

\subsubsection{Definitions.}A\label{defs-finite-type} manifold $\Mg$ is said to be \expression{of finite (de Rham) type}\index{finite!de Rham@(de Rham) type} whenever its de Rham cohomology 
ring $H(\Mg)$ is finite dimensional. 
A map between manifolds $f:\Mg\to\Ng$ si said to be \expression{of finite (de Rham) type} if $\Ng$ 
is the union of a countable ascending chain
$\U\pcolon\set U_0\dans U_1\dans\cdots/$ of open subspaces of finite type
such that each subspace $f^{-1}(U_m)\dans\Mg$ is of finite type.

\noendpoint\begin{remas}\label{remas-finite-type}
\varlistseps{\itemsep2pt\topsep2pt\parskip0pt}\begin{enumerate}
\item By\label{remas-finite-type-a} Poincaré duality (\bref{PD}), $\Mg$ is of finite type if and only if
its de Rham cohomology with compact support $H\cmp(\Mg)$ is finite dimensional.
\item A\label{remas-finite-type-b} compact manifold is of finite type (\cite{BT} {\bf 5.3} pp. 42-43). An oriented manifold is of finite type 
if and only if its Poincaré pairing in cohomology\index{Poincaré pairing!in cohomology}
is perfect\index{perfect pariring}
(\bref{PD}-(b)),
which will be used in our discussion of the Gysin morphism.
\item Since\label{remas-finite-type-c} any manifold is the union of a countable ascending chain
$\set\uparrow\! U_m/$ of open submanifolds  of finite type (\cf\bref{ppp}), any locally trivial fibration $f:\Mg\to\Ng$ is of finite type (exercise).
\end{enumerate}\end{remas}

\subsubsection{Ascending Chain Property.}Although\label{acc} general manifolds need not be of finite type, they are always the inductive limit of such. More precisely, any manifold $\Mg$ is the union of an ascending
chain\index{ascending chain property} $\set U_0\dans U_1\dans\cdots/$ of open subsets of finite type of $\Mg$.
This weaker finiteness property, sufficient for our needs,
is generally proved by a riemannian geometry argument (\footnote{In these notes, a\label{def-good cover} \expression{good cover of $\Mg$}\index{good cover|iul} (also known as \emph{Leray cover}\index{Leray cover}) is a finite open covering $\U=\set U_i\mid i=1,\ldots,r/$ of $\Mg$ such that all intersections $U_{i_1}\cap\ldots\cap U_{i_k}$ are either vacuous or acyclic (\cite{BT}, p.~5). The existence of good covers is established in \emph{loc.cit.} \myS5,  p. 42.}).
When a manifold is endowed with the action of a Lie group\index{Lie!group} 
$\Gg$, we will require in addition 
that each $U_n$ be $\Gg$-stable.

\begin{prop}Let $\Gg$ be a compact Lie Group.
A\label{ppp} $\Gg$-manifold $\Mg$ is the union of a countable ascending chain
$\U\pcolon\set U_0\dans U_1\dans\cdots/$ of $\Gg$-stable open subsets  of $\Mg$ of finite type.
\end{prop}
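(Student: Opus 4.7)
The plan is to construct the chain $\U$ from equivariant tubular neighborhoods of orbits, supplied by the slice theorem for compact group actions, and then to verify finite de Rham type of each piece by a Mayer--Vietoris induction.

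First I would equip $\Mg$ with a $\Gg$-invariant complete Riemannian metric, obtained by averaging an arbitrary complete Riemannian metric over $\Gg$ against its normalized Haar measure; the compactness of $\Gg$ is essential both for the averaging integral to converge and for the averaged metric to remain complete.

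Next, I would invoke the slice theorem to produce, for each $p\in\Mg$, a $\Gg$-invariant open tubular neighborhood $T_p$ of the orbit $\Gg\cdot p$, $\Gg$-equivariantly diffeomorphic to $\Gg\times_{\Gg_p}V_p$, where $V_p$ is a $\Gg_p$-invariant open ball in the normal space to the orbit at $p$. Each such tube $\Gg$-equivariantly deformation-retracts onto the compact orbit $\Gg/\Gg_p$ and is therefore of finite de Rham type by Remark~\ref{remas-finite-type-b}. Using second-countability of $\Mg$ I would extract a countable family of such tubes $T_1, T_2, \ldots$ covering $\Mg$ and form the partial unions
$$U_n := T_1\cup T_2\cup\cdots\cup T_n,$$
an ascending chain of $\Gg$-stable open subsets of $\Mg$ satisfying $\bigcup_n U_n = \Mg$.

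Finally, I would establish finite de Rham type of each $U_n$ by induction on $n$, via the Mayer--Vietoris long exact sequence
$$\cdots\to H^k(U_n)\to H^k(U_{n-1})\oplus H^k(T_n)\to H^k(U_{n-1}\cap T_n)\to H^{k+1}(U_n)\to\cdots$$
The main obstacle is to verify that the double intersection $U_{n-1}\cap T_n$ is itself of finite type. It is an open $\Gg$-stable subset of the tube $T_n\cong\Gg\times_{\Gg_p}V_p$, and, after arranging the original tubes to sit relatively compactly inside slightly larger ambient tubes, this intersection is relatively compact in $T_n$. A secondary induction then reduces its finite type to the same construction carried out inside the vector-bundle tube $T_n$, where the base $\Gg/\Gg_p$ is already compact and the fibre $V_p$ admits a finite good cover by geodesically convex balls of the restricted invariant metric (cf. the footnote in~\bref{acc}). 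Closing this nested induction produces the required chain $\U$.
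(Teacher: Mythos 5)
Your construction of the chain itself is fine (invariant tubes exist by the slice theorem, each tube retracts onto a compact orbit and is therefore of finite type by \bref{remas-finite-type}-(\bref{remas-finite-type-b}), and second countability gives a countable subcover), but the Mayer--Vietoris induction has a genuine gap at exactly the point you flag: you never actually prove that $U_{n-1}\cap T_n$ is of finite type, and nothing you invoke implies it. The slice theorem gives no control over how two tubes around different orbits intersect; already for two tubular neighborhoods of two disjoint curves in $\RR^2$ the intersection can have infinitely many connected components accumulating inside a compact region, so it can fail to be of finite type even though it is relatively compact. Relative compactness is of no help here (a bounded open subset of $\RR^2$ with infinitely many holes is relatively compact and not of finite type), and the ``secondary induction inside the tube $T_n$'' is not an argument: what you would need is that an arbitrary relatively compact $\Gg$-stable open subset of $T_n$ is of finite type, which is a statement of the same nature as (in fact stronger than) the proposition you are proving, and is false in that generality. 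A finite good cover of $T_n$ by convex sets says nothing about the de Rham type of an arbitrary open $\Gg$-stable subset of $T_n$. So as written the induction does not close; to make a tube-based argument work you would have to choose the tubes with enough extra care that all multiple intersections are controlled, which the slice theorem alone does not provide.

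The paper sidesteps this difficulty entirely by exhausting $\Mg$ with sublevel sets of a single function rather than with unions of tubes: one averages a proper positive function over $\Gg$ to get a proper $\Gg$-invariant $\Phi:\Mg\to\RR$ (\bref{proper-invariant}), picks by Sard's theorem an unbounded increasing sequence of regular values $t_n$, and sets $U_n:=\Phi^{-1}(]{-}\infty,t_n[)$. These are automatically nested and $\Gg$-stable, so no intersections ever need to be examined, and each $\cl U_n$ is a compact manifold with boundary, whose interior is of finite type by \bref{interiorFT} (doubling plus the compact-support long exact sequence and Poincar\'e duality). If you want to salvage your approach, the cleanest repair is to replace the tube decomposition by such a proper invariant exhaustion; otherwise you must add an argument producing tubes whose finite intersections are all of finite type, which is the real content missing from your proof.
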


\noindent The next sections recall some facts needed in the proof of this proposition.

\subsubsection{Existence of Proper Functions.}Recall that a map between manifolds $f:\Mg\to\Ng$ is said to be \expression{proper}\index{proper map} whenever
$f\sp{-1}(\Fg)$ is compact for any compact subset $\Fg$ of $\Ng$. 
The aim of this section is to show that on a $\Gg$-manifold there are 
always proper $\Gg$-invariant functions.

Since the existence of proper functions over compact manifolds is clear, let 
$\Mg$ be a noncompact manifold. Fix a countable, locally finite
covering $\U\pcolon \set U_n/\sb{n\in\NN}$ of $\Mg$, where each $U_n$ is a {\em relatively compact\/} open subset of $\Mg$, and note that the noncompactness of $\Mg$ implies that the family is necessarily infinite. Next, fix a smooth partition of unity $\set \phi_n/\sb{n\in\NN}$ subordinate to $\U$. This means in particular that for each $n\in\NN$, the equality $\phi_n(x)=0$ holds whenever $x\not\in U_n$. 
Then one has for every $N\in\NN$
$$1=\sumnl_{n>N}\phi_n(x)\,,\quad\forall x\not\in U_0\cup\cdots\cup U_N\,.\eqno(\diamond)$$
Now, for every $x\in\Mg$, the infinite sum
$\Phi(x)\pcolon \sumnl\sb{n\in\NN}n\Cdot\phi_n(x)$
is actually finite and smooth with respect to $x\in\Mg$,
as it is a locally finite sum of smooth functions.

\begin{lemm}The\label{proper} positive function $\Phi:\Mg
\to \RR$ is  unbounded and proper.
\end{lemm}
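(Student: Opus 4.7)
The plan is to exploit the key identity $(\diamond)$ from the excerpt, namely that $\sum_{n>N}\phi_n(x)=1$ as soon as $x$ escapes $U_0\cup\cdots\cup U_N$. This gives an immediate lower bound for $\Phi$ outside such finite unions, which handles both unboundedness and properness in essentially the same stroke.

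\emph{Unboundedness.} Fix $N\in\NN$. For any $x\notin U_0\cup\cdots\cup U_N$, the terms with $n\le N$ in the sum defining $\Phi(x)$ vanish, so
$$\Phi(x)\;=\;\sumnl_{n>N} n\,\phi_n(x)\;\ge\;(N+1)\sumnl_{n>N}\phi_n(x)\;=\;N+1$$
by $(\diamond)$. Since $\Mg$ is noncompact and $\set U_n/$ is locally finite with each $U_n$ relatively compact, the union $U_0\cup\cdots\cup U_N$ has compact closure, hence cannot cover $\Mg$; therefore some such $x$ exists, and $\sup\Phi\ge N+1$ for every $N$.

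\emph{Properness.} It suffices to show that $\Phi^{-1}([0,R])$ is compact for every $R>0$, since $\Phi\ge 0$ and any compact $F\subset\RR$ lies in some $[-R,R]$. Choose $N\in\NN$ with $N+1>R$. By the bound just established, any $x$ with $\Phi(x)\le R$ must lie in $U_0\cup\cdots\cup U_N$. Thus
$$\Phi^{-1}([0,R])\;\subset\;U_0\cup\cdots\cup U_N\;\subset\;\overline{U_0}\cup\cdots\cup\overline{U_N}\,.$$
The right-hand side is a finite union of compact sets, hence compact, and $\Phi^{-1}([0,R])$ is a closed subset of it (by continuity of $\Phi$), so it is compact.

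There is no real obstacle beyond unpacking the definitions; the only points to verify carefully are (i) that the sum defining $\Phi(x)$ reduces to a finite sum over $\{n:x\in U_n\}$ thanks to local finiteness and the support condition $\phi_n|_{\Mg\setminus U_n}=0$ (already noted in the excerpt), and (ii) that $\Mg\setminus(U_0\cup\cdots\cup U_N)$ is nonempty for all $N$, which follows from noncompactness of $\Mg$ combined with relative compactness of the $U_n$.
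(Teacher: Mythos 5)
Your proof is correct and follows essentially the same route as the paper: the lower bound $\Phi(x)>N$ (you get $N+1$) outside $U_0\cup\cdots\cup U_N$ via $(\diamond)$, yielding unboundedness, and then properness because $\Phi^{-1}$ of a bounded set sits inside the compact set $\overline{U_0}\cup\cdots\cup\overline{U_N}$ and is closed by continuity. The only (harmless) addition is your explicit check that $\Mg\setminus(U_0\cup\cdots\cup U_N)\neq\emptyset$, which the paper leaves implicit.
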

\begin{proof}  By property $(\diamond)$ one has
$$\Phi(x)\geqslant 
\sumnl\sb{n>N}n\Cdot\phi_n(x)
> N\Big(\sumnl\sb{n>N}\phi_n(x)\Big)=N\,,\quad\forall
x\not\in U_0\cup\cdots\cup U_N \,,\eqno(\diamond\diamond)$$
which clearly implies that $\Phi$ is an unbounded function on $\Mg$.
Now, to see that $\Phi$ is proper, remark that if $\Fg\dans\RR$ is compact, then
$\Fg\dans[-N,N]$ for some $N\in\NN$ and $\Phi\sp{-1}(\Fg)\dans U_0\cup\cdots\cup U_N$ by $(\diamond\diamond)$. But the closure $\overline{U_0\cup\cdots\cup U_N}$ is a compact subset of $\Mg$ because each $\cl U_i$ is assumed compact. As a closed subset of a compact set, $\Phi\sp{-1}(\Fg)$
 is compact.\QED
\end{proof}

\smallskip As a corollary of the preceding lemma one has:

\begin{prop}Manifolds\label{proper-invariant} 
$\Mg$ endowed with a smooth action of a compact Lie group $\Gg$ admit proper $\Gg$-invariant positive functions $\Phi:\Mg\to\RR$. 
\end{prop}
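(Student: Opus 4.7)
The natural approach is to take the proper positive function $\Phi$ produced by Lemma~\bref{proper} and to average it over the compact group $\Gg$. This is the standard ``unitarian trick'': any invariant of interest that survives averaging (here, positivity and properness) can be produced in $\Gg$-invariant form once it is known to exist at all.

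Concretely, let $dg$ denote the normalized bi-invariant Haar measure on $\Gg$ and set
$$\Phi^{\Gg}(x)\pcolon \int_{\Gg}\Phi(g\cdot x)\,dg\,.$$
Smoothness and positivity of $\Phi^{\Gg}$ follow immediately from the smoothness of the action $\Gg\times\Mg\to\Mg$, the compactness of $\Gg$, and the pointwise positivity of $\Phi$; invariance $\Phi^{\Gg}(h\cdot x)=\Phi^{\Gg}(x)$ for every $h\in\Gg$ is the standard change of variables in the Haar integral.

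The only genuine point to verify is that $\Phi^{\Gg}$ is still proper. Given $N\in\NN$, the preimage $(\Phi^{\Gg})^{-1}\big([0,N]\big)$ is closed in $\Mg$, so it suffices to exhibit it inside a compact set. For $x\in(\Phi^{\Gg})^{-1}\big([0,N]\big)$, the mean-value inequality for the integral produces some $g_0\in\Gg$ with $\Phi(g_0\cdot x)\leqslant N$, so $g_0\cdot x\in\Phi^{-1}\big([0,N]\big)$ and hence
$$(\Phi^{\Gg})^{-1}\big([0,N]\big)\;\subseteq\;\Gg\cdot\Phi^{-1}\big([0,N]\big)\,.$$
The right-hand side is the image of the compact product $\Gg\times\Phi^{-1}\big([0,N]\big)$ (compact because $\Gg$ is compact and $\Phi$ is proper) under the continuous action map, hence compact. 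A closed subset of a compact set being compact, $(\Phi^{\Gg})^{-1}\big([0,N]\big)$ is compact, and since every compact subset of $\RR$ is contained in some such interval, $\Phi^{\Gg}$ is proper.

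The step I expect to require the most care is the properness verification: averaging typically destroys growth conditions, and the trick that makes it work here is the mean-value observation together with the compactness of $\Gg$ that turns $\Gg$-saturations of compact sets into compact sets. Everything else is routine once Lemma~\bref{proper} and Haar averaging are in place.
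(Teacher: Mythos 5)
Your proposal is correct and follows essentially the same route as the paper: average the proper function of Lemma~\bref{proper} against the normalized invariant measure on $\Gg$, and control properness through the compact $\Gg$-saturation $\Gg\cdot\Phi^{-1}\big([0,N]\big)$. Your mean-value step is just the contrapositive of the paper's pointwise estimate that the averaged function exceeds $N$ at any point outside $\Gg\cdot\Phi^{-1}\big([-N,N]\big)$.
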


\begin{proof} If $\Mg$ is compact, any positive \expression{constant} map $\Phi$ will do. 
If $\Mg$ is not compact, let $\phi:\Mg\to\RR$ denote any
unbounded proper positive function (see \bref{proper}), and set:
$$\Phi(x)\pcolon \int\sb{\Gg}\phi(g\Cdot x)\;dg\,,$$
where $dg$ is a $\Gg$-invariant form of top degree on $\Gg$, such that $1=\int\sb{\Gg}dg$.
The correspondence $x\mapsto\Phi(x)$ is clearly a 
well-defined nonnegative unbounded $\Gg$-invariant function of $\Mg$ into $\RR$. 
Now, for each $N\in\NN$, the set 
$$\Mg_N\pcolon \Gg\Cdot\phi\sp{-1}([-N,N])$$
is compact and $\Gg$-stable, and if $y\not\in\Mg_N$, one has
$\phi(g\Cdot y)>N$ for all $g\in\Gg$,  so that
$$\Phi(y)=\int\sb{\Gg}\phi(g\Cdot y)\;dg> N\,,\eqno(\diamonds3)$$
and properness of $\Phi$ follows by the same argument as in lemma \bref{proper}: If $\Fg$ is a compact subset of $\RR$, then $\Fg\dans [-N,N]$ for some $N\in\NN$, and  $\Phi\sp{-1}(\Fg)\dans\Mg\sb{N}$
by $(\diamonds3)$. Then $\Phi\sp{-1}(\Fg)$ is compact since it is 
 closed in the compact set $\Mg\sb{N}$.\QED
\end{proof}

\subsection{Manifolds With Boundary}The following is well known.
\begin{prop}\label{interiorFT}
The interior of a compact manifold with boundary is of finite type.
\end{prop}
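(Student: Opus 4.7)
The plan is to reduce the interior $\Mg$ to a compact submanifold via the collar neighborhood theorem, and then to apply the standard Mayer--Vietoris argument on a finite good cover, mirroring the reasoning already used for closed compact manifolds in \bref{remas-finite-type-b}.

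First I would invoke the collar neighborhood theorem to produce a smooth embedding $\phi:\partial\overline{\Mg}\times[0,1)\hook\overline{\Mg}$ satisfying $\phi(x,0)=x$ for every $x\in\partial\overline{\Mg}$. The subset $K\pcolon \overline{\Mg}\setminus\phi\big(\partial\overline{\Mg}\times[0,1/2)\big)$ is then a compact submanifold with boundary, contained in the open manifold $\Mg$. Using the collar coordinates together with a smooth cut-off function of the parameter $t\in[0,1)$, one builds a smooth deformation retraction of $\Mg$ onto $K$: points outside the collar are kept fixed, while points $\phi(x,t)$ with $t<1/2$ are slid along the $t$-axis towards $\phi(x,1/2)$. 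Homotopy invariance of de Rham cohomology then yields $H(\Mg)\simeq H(K)$.

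Next, since $K$ is compact (although with boundary), it admits a finite good cover $\{U_1,\ldots,U_r\}$ in the sense of the footnote to \bref{acc}: pick any Riemannian metric on $K$ and cover $K$ by finitely many geodesically convex open subsets, whose nonempty intersections are diffeomorphic to $\RR^{d_{\Mg}}$ and therefore acyclic. A standard induction on $r$ via the Mayer--Vietoris long exact sequence---exactly as in \cite{BT}, Proposition~5.3.1---then shows that $H(K)$ is finite dimensional, whence so is $H(\Mg)$.

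The only mildly delicate step is verifying that the deformation retraction of $\Mg$ onto $K$ is genuinely smooth, which comes down to selecting an appropriate smooth cut-off along the collar parameter; everything else is a direct appeal to de Rham homotopy invariance and the finite-good-cover argument already recorded in \cite{BT}.
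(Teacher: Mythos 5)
Your proof is correct, but it follows a genuinely different route from the paper's. The paper forms the closed double $\Bg\sqcup\sb{\partial\Bg}\Bg$ of the compact manifold with boundary, deduces $\dim \Hc(\Mg)<\infty$ from the long exact sequence of compactly supported cohomology attached to the closed subset $\partial\Bg$ (whose open complement is two copies of $\Mg$), and then obtains $\dim H(\Mg)<\infty$ via Poincar\'e duality $H(\Mg)\cong\Hc(\Mg)\dual$. You instead use a collar to produce a smooth homotopy equivalence between the interior $\Mg$ and a compact copy $K$ of the manifold with boundary, and conclude with the finite-good-cover/Mayer--Vietoris argument of \cite{BT}. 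Your route is more elementary---only homotopy invariance and Mayer--Vietoris---and has two side benefits: it works verbatim when $\Mg$ is non-orientable (the paper's last step invokes \bref{PD}, stated for oriented manifolds, so strictly it requires a detour through the orientation cover otherwise), and it does not lean on the compact-support long exact sequence, which the paper only sets up later as exercise \bref{exo-gysin-exact-sequence}. The paper's route, in exchange, delivers the finiteness of $\Hc(\Mg)$ at the same time, which is the form most often used afterwards. Two details you should tighten: intersections of your convex sets that meet $\partial K$ are diffeomorphic to a half-space, not to $\RR^{d_{\Mg}}$ (harmless, since the good-cover argument only needs acyclicity), and geodesic convexity on a manifold \emph{with boundary} deserves a word---for instance take a metric that is a product along the collar, or, simpler still, cover the compact set $K$ by finitely many geodesically convex open subsets of the boundaryless manifold $\Mg$, let $U$ be their union, and note that $H(\Mg)\to H(U)$ is injective because $K\hook U\hook\Mg$ composes to a homotopy equivalence, so $\dim H(\Mg)\le\dim H(U)<\infty$. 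The smoothing of the retraction you already flagged; it is standard (or one may invoke that a continuous homotopy equivalence suffices for de~Rham cohomology via comparison with singular cohomology).
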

\begin{proof} Let $\Bg$ be a compact manifold with boundary and let $\Mg$ be its interior.
Gluing $\Bg$ with itself along its boundary $\partial\Bg$, one gets the ``double''
$\Bg\sqcup\sb{\partial\Bg}\Bg$, which is a compact manifold without boundary.
Then, from the long exact sequence of de Rham cohomology with compact support 
(see \bref{exo-gysin-exact-sequence}-(\bref{GLES(a)}))
$$\cdots\too
H\cmp^ i(\Mg)\times H\cmp^ i(\Mg)
\too
H^ i(\Bg\sqcup\sb{\partial\Bg}\Bg)
\too
H^0i(\partial\Bg)\too\cdots\,,
$$
where $H^*({\Bg\sqcup\sb{\partial\Bg}\Bg})$
and
$H^*({\partial\Bg})$ are finite-dimensional, 
the finiteness of $H\cmp^ *(\Mg)$ follows immediately.
The finiteness of $H\sp{*}(\Mg)$ results from
Poincaré duality $H\sp{*}(\Mg)\cong H\cmp\sp{*}(\Mg)\dual$.\QED
\end{proof}

\subsection{Proof of Proposition \bref{ppp}}
Recall\label{proofppp} that the connected components of a manifold $\Mg$ are always open
and closed submanifolds of $\Mg$. In particular, if $\Mg=\coprod\sb{\agoth\in\Agoth}\Cg_\agoth$
denotes the decomposition of $\Mg$ in connected components, 
the indexing set $\Agoth$ is finite or countable, and 
the restriction of a proper function $\Phi:\Mg\to\RR$ 
to each $\Cg_i$ remains proper.

If all the connected components of $\Mg$ are compact, we may index them 
by natural numbers $\Cg_0,\Cg_1,\ldots$ and define
$U_n\pcolon \Cg_0\cup\Cg_1\cup\cdots\cup\Cg_n$.
Each $U_n$ is then open in $\Mg$ and is also a compact manifold, hence it is of finite type.
The ascending chain $\set U_0\dans U_1\dans\cdots/$ satisfies the conditions of the proposition.

If $\Mg$ contains a noncompact connected component $\Cg$,
fix any proper positive $\Gg$-invariant function  
$\Phi:\Mg\to\RR$, which is possible due to \bref{proper-invariant},
 and note that $\Phi(\Cg)$ is necessarily unbounded,
since otherwise $\Cg\dans \Phi\sp{-1}([0,T])$ for some $T\in\RR$, and $\Cg$ 
would be compact as $\Phi$ is proper over $\Cg$. Moreover, there exists $N\in\NN$ such that $\Phi(\Mg)\cont\Phi(\Cg)\cont({}]N,+\infty[)$, 
since $\Phi(\Cg)$ is unbounded \slant{and} connected.
Now, by Sard's theorem, the interior of the set of \emph{critical} values of $\Phi:\Mg\to\RR$ is empty
so that there exists an unbounded increasing sequence of positive real numbers $\set N<
t_0<\dots <t_n<\cdots /\sb{n\in\NN}$
which are \emph{regular} values of $\Phi$. 
Each subset
$\Mg\sb{n}\pcolon \Phi\sp{-1}(t_n)$ is then a 
submanifold of codimension $1$ in $\Mg$ and, moreover, it is
compact and $\Gg$-stable since $\Phi$
is proper and $\Gg$-invariant.
Similarly, the sets $U_n\pcolon \Phi\sp{-1}(]{-}\infty,t_n[)$ and $W_n\pcolon \Phi\sp{-1}(]t_n,+\infty[)$, clearly nonempty, are $\Gg$-stable open subsets of $\Mg$.
One then easily checks that $\cl U_n=U_n\sqcup \Mg_n $ and $\cl W_n=\Mg_n \sqcup W_n$
are in fact $\Gg$-manifolds with boundary $\Mg_n$ embedded in $\Mg$.
Furthermore, $\cl U_n$ is compact as one has
$\cl U_n\pcolon \Phi\sp{-1}({}]{-}\infty,t_n])=\Phi\sp{-1}([0,t_n])$
since $\Phi$ is positive. 
$$\includegraphics{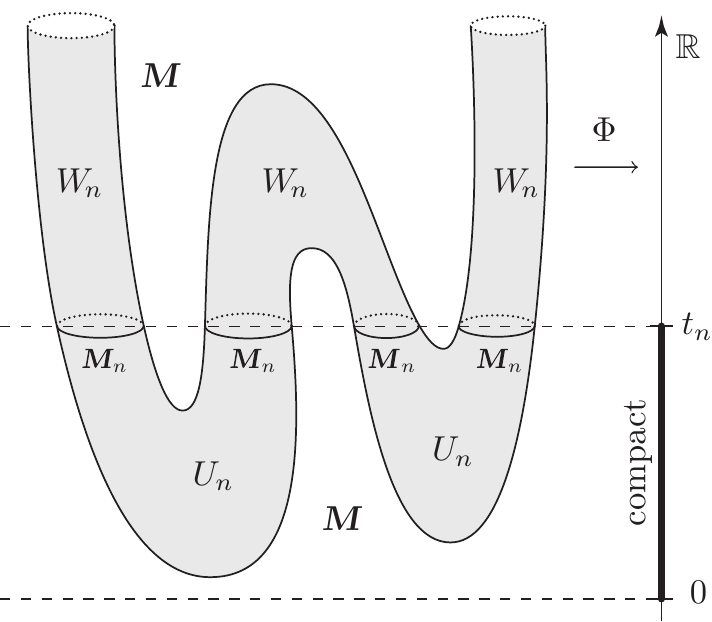}$$
We can then apply proposition \bref{interiorFT} and
state that $U_n$ is a $\Gg$-stable open subset of finite type of $\Mg$.
The ascending chain $\set U_0\dans U_1\dans \cdots/$ satisfies the conditions of the proposition.
\QED

\subsection{The Gysin Functor}
In this section we dualize $\ID(\Mg):\Omega(\Mg)[d\sb{\Mg}]\to\Omegac (\Mg)\dual$, getting an injection
$\D'(\Mg):H\cmp(\Mg)[d\sb{\Mg}]\hook H(\Mg)\dual$ whose image will be shown to be
functorial on the category $\Mano $ of oriented manifolds.

\subsubsection{The Right Adjoint Map.}In\label{right-adjoint} \bref{left-adjoint} we introduced the left adjoint map
associated with Poincaré pairing, i.e. the quasi-isomorphism
$$\ID(\Mg):\Omega(\Mg)[d\sb{\Mg}]\hf{}{(\simeq)}{0.7cm}\Omegac (\Mg)\dual\,.$$
By duality, this map yields 
$\ID(\Mg)\dual:\Omegac (\Mg)\ddual\to\Omega(\Mg)[d\sb{\Mg}]\dual$
which is also a quasi-isomorphism and, composed with the embedding\index{bidual embedding}\index{embedding!bidual}
$\Omegac (\Mg)\dans\Omegac (\Mg)\ddual
$,  gives rise to the injection and quasi-injection\index{quasi!-injection} 
(\bref{nondegenerate-pairing}, \bref{quasi-})
$$\mathalign{\big(\Omegac (\Mg)[d\sb{\Mg}],\dg\big)\hfhook{\dans}{}{1cm}
\big(\Omegac (\Mg)\ddual[d\sb{\Mg}],-\dg\big)\hf{\ID\dual}{(\simeq)}{1cm}
\big(\Omega(\Mg)\dual,-\dg\big)\\
\sqrupc{\ID'(\Mg)}{8cm}
}$$
(See \bref{dual-complex} for the sign of differentials.)
One has  (cf. \bref{adjoint-pair})
$$\ID'(\Mg)(\beta)=\Big(\alpha\mapsto \int\sb{\Mg}\alpha\wedge\beta\Big)\,,$$
which clearly it is the
\expression{right adjoint map associated with the Poincaré pairing $\IP$}.\index{right adjoint!map}

The following proposition paraphrases the statement \bref{PD}-(b).

\begin{prop}Let\label{RPA} $\Mg$ be an oriented manifold.
\begin{enumerate}
\item The\label{RPA-a} morphism of
complexes
$$\ID'(\Mg):\big(\Omegac (\Mg)[d\sb{\Mg}],\dg)\hoook\big(\Omega(\Mg)\dual,-\dg\big)$$
is always an injection and a  quasi-injection. We will denote by
$$\D'(\Mg):H\cmp(\Mg)[d\sb{\Mg}]\hoook H(\Mg)\dual$$
the induced injection in cohomology. 

\item The\label{RPA-b} morphism $\D'(\Mg)$ is an isomorphism if and only if $\Mg$ is of finite type.

\end{enumerate}
\end{prop}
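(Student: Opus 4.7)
The plan is to deduce the statement directly from the Poincaré duality theorem \bref{PD}, from the general fact \bref{dual-exact} that cohomology commutes with duality, and from the tautology that $V \to V^{\vee\vee}$ is always injective and is an isomorphism exactly when $V$ is finite-dimensional.

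First I would unpack the factorization already written in \bref{right-adjoint}: $\ID'(\Mg)$ is the composition of the canonical bidual embedding $\Omegac(\Mg)[d_{\Mg}] \hookrightarrow \Omegac(\Mg)^{\vee\vee}[d_{\Mg}]$ (an honest injection at the chain level since the canonical pairing $V^{\vee}\times V\to\KK$ is nondegenerate, \bref{vector-spaces-pairings}) with $\ID(\Mg)^{\vee}$, which is a quasi-isomorphism by Poincaré duality \bref{PD}(a) together with \bref{dual-exact}(a). So $\ID'(\Mg)$ is already injective as a map of graded spaces, and for part (a) the remaining task is to see that it is a quasi-injection.

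For this I pass to cohomology. By \bref{dual-exact}(b), one has a canonical isomorphism
$$H\bigl(\Omegac(\Mg)^{\vee\vee}\bigr) \;\simeq\; H(\Omegac(\Mg))^{\vee\vee} \;=\; H\cmp(\Mg)^{\vee\vee}.$$
Under this identification the induced map $H(\ID'(\Mg))$ factors as
$$H\cmp(\Mg)[d_{\Mg}] \xrightarrow{\;\epsilon\;} H\cmp(\Mg)^{\vee\vee}[d_{\Mg}] \xrightarrow{\;\D(\Mg)^{\vee}\;} H(\Mg)^{\vee},$$
where $\epsilon$ is the canonical bidual embedding of the vector space $H\cmp(\Mg)$, and where $\D(\Mg)^{\vee}$ is bijective because $\D(\Mg)$ is bijective by \bref{PD}(a) (applying the duality functor to an isomorphism of graded spaces). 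Since $\epsilon$ is always injective, so is $\D'(\Mg) = H(\ID'(\Mg))$, proving (a).

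For (b), the same factorization shows that $\D'(\Mg)$ is surjective if and only if $\epsilon$ is, which happens exactly when $H\cmp(\Mg)$ is finite-dimensional. By remark \bref{remas-finite-type}(a), this is equivalent to $\Mg$ being of finite de Rham type, giving (b). The only mildly delicate point is keeping track of the sign conventions for differentials on shifts and biduals introduced in \bref{decalage} and in the remark of \bref{dual-complex}; everything else is bookkeeping once the factorization through the canonical bidual embedding is made explicit.
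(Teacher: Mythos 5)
Your proposal is correct and follows essentially the same route as the paper: the construction in \bref{right-adjoint} already defines $\ID'(\Mg)$ as the bidual embedding composed with $\ID(\Mg)\dual$, and the paper's proof of \bref{PD}-(b), which the proposition explicitly paraphrases, is exactly your factorization $\D'(\Mg)=\D(\Mg)\dual\circ\epsilon$ with $\epsilon$ injective always and bijective precisely in the finite-dimensional case. Nothing essential differs.
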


\subsubsection{The Gysin Morphism.}The\label{DefGysin} last statement shows that in the oriented finite type case, compact support cohomology canonically coincides with the dual of arbitrary support cohomology so that if $\Mg$ and $\Ng$ are such, the diagram
$$\mathalign{
H\cmp(\Mg)[d\sb{\Mg}]&\hfhook{\D'(\Mg)}{\simeq}{1.5cm}&H(\Mg)\dual\\
\vflddash{f\gy}{}{3}&\oplus&\vfld{}{H(f\sp{*})\dual}{0.7cm}\\
H\cmp(\Ng)[d\sb{\Ng}]&\hfhook{\D'(\Ng)}{\simeq}{1.5cm}&H(\Ng)\dual\\
}\eqno(\D')$$
can be closed in one and only one way with the morphism of graded spaces $f\gy: H\cmp(\Mg)[d\sb{\Mg}]\to H\cmp(\Ng)[d\sb{\Ng}]$. It is then clear that 
the correspondence defined over the category of oriented finite type manifolds,
that assigns $\Mg\funct \Mg\gy\pcolon H\cmp(\Mg)[d\sb{\Mg}]$ and $f\funct f\gy$, is a
covariant functor.

When the manifold $\Ng$ in $(\D')$ is oriented but not of finite type,
$\D'(\Ng)$ is still an injection but it is no longer surjective so that it is not 
obvious that the diagram can be closed.
Statement (\bref{adjonctionGysinProperty}) in the following theorem 
gives a positive answer to this question
showing that the image of $\D'(\_)$ 
is ``stable under pullbacks\index{pullback}''. Hence, it will
always be possible to induce 
$f\gy:\Mg\gy\to\Ng\gy$, 
\expression{the Gysin morphism associated with $f$}\index
{Gysin!morphism}.
After that, the correspondence $\Mg\funct \Mg\gy\pcolon H\cmp(\Mg)[d\sb{\Mg}]$ and $f\funct f\gy$
will appear to be a covariant functor defined over the {\bfit whole} category $\Mano$, \expression{the Gysin functor}\index{Gysin!functor}.

\noendpoint\begin{theo}[ and definitions]\label{GysinDefTheo}
\begin{enumerate}\mynobreak\nobreak
\item Let\label{GysinDefTheo-a} $\Mg$ be oriented and endow 
its open subsets with induced orientations. 
For any inclusion of open subsets $i :V\dans W$, 
denote by $i_*:\Omegac (V)\to\Omegac (W)$ the map that assigns
to $\beta\in\Omegac (V)$ its extension by zero to $W$, 
called \expression{the pushforward of $\beta$}\index{pushforward}.
Then, 
the following diagrams
$$
\mathalign{
\Omegac (V)[d\sb{\Mg}]&\hfhook{\ID'(V)}{}{1.5cm}&\Omega(V)\dual\\
\vfld{i \sb{*}}{}{0.5cm}&&\vfld{}{(i \sp{*})\dual}{0.5cm}\\
\Omegac (W)[d\sb{\Mg}]&\hfhook{\ID'(W)}{}{1.5cm}&\Omega(W)\dual\\
}
\qquad
\mathalign{
H\cmp(V)[d\sb{\Mg}]&\hfhook{\D'(V)}{}{1.5cm}&H(V)\dual\\
\vfld{H\cmp(i \sb{*})}{}{0.5cm}&&\vfld{}{H(i \sp{*})\dual}{0.5cm}\\
H\cmp(W)[d\sb{\Mg}]&\hfhook{\D'(W)}{}{1.5cm}&H(W)\dual\\
}$$
are commutative, i.e. $(i\sp{*},i_*)$ is a Poincaré adjoint pair (\bref{adjoint-pair}).
\item For\label{GysinDefTheo-b}\label{adjonctionGysinProperty} any map $f:\Mg\to\Ng$
between oriented manifolds, one has
the diagram
$$\mathalign{
H\cmp(\Mg)[d\sb{\Mg}]&\hfhook{\D'(\Mg)}{}{1.5cm}&H(\Mg)\dual\\
\vflddash{f\gy}{}{2}&&\vfld{}{H(f\sp{*})\dual}{0.5cm}\\
H\cmp(\Ng)[d\sb{\Ng}]&\hfhook{\D'(\Ng)}{}{1.5cm}&H(\Ng)\dual\\
}\eqno{(\D')}$$
where $\relax{H(f\sp{*})\dual\big(\Im(\D'(\Mg))\big)\dans\Im(\D'(\Ng))}$, so that
there exists one and only one morphism of graded spaces 
$$f\gy: H\cmp(\Mg)[d\sb{\Mg}]\too H\cmp(\Ng)[d\sb{\Ng}]\eqno(\diamond)$$
called \expression{the Gysin morphism  associated with $f$}\index{Gysin!morphism}\index{morphism!Gysin},
making $(\D')$ commutative,  i.e.
$(H(f\sp{*}),f\gy)$ is a Poincaré adjoint pair in cohomology,
which means that,
for any $\aalpha\in H(\Ng)$ and $\bbeta\in H\cmp(\Mg)$, the equation
in  $X$,
$$\int\sb{\Mg}f\sp{*}\aalpha\cup\bbeta=\int\sb{\Ng}\aalpha\cup X\,,
\eqno(\diamonds2)$$
admits one and only one solution in $H\cmp(\Ng)$, namely $X=f\gy\bbeta$.

Furthermore, $f\gy$ in $(\diamond)$ is a morphism of $H(\Ng)$-modules, i.e.
the equality, called
the \expression{projection formula}\index{projection formula},
$$
\relax{f\gy\big( f\sp{*}\aalpha\cup\bbeta\big)=\aalpha\cup f\gy(\bbeta)}
\eqno(\diamonds3)$$
holds for all $\aalpha\in H(\Ng)$ and $\bbeta\in H\cmp(\Mg)$.
\item The\label{GysinDefTheo-c} correspondence 
$$\preskip0pt\def\al#1\fonct{\hbox to 1.5em{\hss$#1$}{}\fonct}
(\_)\gy:\Mano\fonct\GV(\RR)\qquad
\text{\rm with}\qquad
\begin{cases}\noalign{\kern-4pt}
\al\Mg\fonct \Mg\gy\pcolon H\cmp(\Mg)[d\sb{\Mg}]\\
\al f\fonct f\gy
\end{cases}$$
is a covariant functor. It will be called the \expression{Gysin functor}\index{Gysin!functor|bfnb}.

\item If\label{GysinDefTheo-d} $\Mg$ and $\Ng$ are oriented  of finite type, then  $f^{*}:\Hr(\Ng)\to\Hr(\Mg)$ is an isomorphism if and only if the Gysin morphism $f_{*}:
\Hc(\Mg)[d_{\Mg}]
\to
\Hc(\Ng)[d_{\Ng}]
$ is also an isomorphism.
\end{enumerate}
\end{theo}
\begin{proof}
(\bref{GysinDefTheo-a}) The commutativity results from the equality 
$$\int\sb{V} \alpha\rest V\wedge \beta=\int \sb{W}\alpha\wedge i_*\beta$$
for $\alpha\in\Omega(W)$ and $\beta\in\Omegac (V)$, which is evident since
the support of $\alpha\wedge i_*\beta$ is contained in $V$.

(\bref{GysinDefTheo-b}) One must verify that, given $\bbeta\in H\cmp(\Mg)$, the linear form
$$\aalpha\in H(\Ng)\mapsto \int\sb{\Mg}f\sp{*}\aalpha\cup \bbeta\postskip0pt$$
is of the form
$$\preskip0pt\aalpha\in H(\Ng)\mapsto \int\sb{\Ng}\aalpha\cup [\beta']$$
for some $[\beta']\in H\cmp(\Ng)$. Now, thanks to proposition \bref{ppp},
there exists an open subset $W\in\Ng$ of finite type such that $f\sp{-1}W$ contains the support of $\beta$, denoted $\cl\beta\pcolon \beta\rest{f\sp{-1}W}$. One then has the following commutative diagram:
$$\preskip1ex
\def\II{\putTextAt{2.2cm}{4pt}{({\bf I})}}
\xymatrix @!0 @R=12mm @C=1.5cm{
[\cl\beta]\in H\cmp(f\sp{-1}W)[d\sb{\Mg}]\ar[rd]\sb(0.35){\Hc(i_*)}\sp{\II}\ar[rrr]\sp(0.55){\D'(f\sp{-1}W)}&&&H(f\sp{-1}W)\dual\ar[rd]\sp(0.5){(i^*)\dual}
\ar[dd]|\hole\sb (0.7){(f\sp{*})\dual}\\
&\bbeta\in H\cmp(\Mg)[d\sb{\Mg}]\ar[rrr]\sp{\D'(\Mg)}&&&H(\Mg)\dual\ar[dd]\sp(0.7){(f\sp{*})\dual}
\putTextAt{-1.6cm}{-0.4cm}{({\bf II})}\\
[\beta']\in H\cmp(W)[d\sb{\Ng}]\ar[rd]\sb(0.33){\Hc(i_*)}\sp{\II}\ar[rrr]\sp(0.55){\D'(W)}\sb{\simeq}&&&H(W)\dual\ar[rd]\sp(0.45){(i^*)\dual}\\
&H\cmp(\Ng)[d\sb{\Mg}]\ar[rrr]\sp{\D'(\Ng)}&&&H(\Ng)\dual\\
}$$
where subdiagrams ({\bf I}) are commutative after (c) and the commutativity of 
({\bf II}) is just functoriality of pullbacks\index{pullback}.

Following the arrows, we see that 
$$\mathalign{
(f\sp{*})\dual\circ \D'(\Mg)(\bbeta)&=
(i\sp{*})\dual\circ(f\sp{*})\dual\circ\D'(f\sp{-1}W)([\cl\beta])\hfill\\\noalign{\kern2pt}
&=(i\sp{*})\dual\circ \D'(W)([\beta'])\hfill\\\noalign{\kern2pt}
&=\D'(\Ng)\circ H\cmp(i_*)([\beta'])\hfill\\
}$$
where 
$[\beta']\in H\cmp(W)[d\sb{\Ng}]$ verifies
$$\D'(W)([\beta'])=(f\sp{*})\dual\circ\D'(f\sp{-1}W)([\cl\beta])$$
which is possible since $\D'(W)$ is {\bf surjective} as
$W$ is of finite type !

The statement about the equation $(\diamonds2)$ is clear and
implies formally the projection formula since
$$\mathalign{
\int\sb{\Ng}[\omega]\cup f\gy\big(f\sp{*}\aalpha\cup\bbeta\big)&=&
\int\sb{\Mg}f\sp{*}[\omega]\cup f\sp{*}\aalpha\cup\bbeta\hfill\\
&=&\int\sb{\Mg}f\sp{*}([\omega]\cup\aalpha)\cup\bbeta=\int_\Ng[\omega]\cup\aalpha\cup f\gy\bbeta\,.
}
$$

Finally, (\bref{GysinDefTheo-c}) is trivial since $\D'$ is bijective over its image, and (\bref{GysinDefTheo-d}) is clear.
\end{proof}

\begin{rema}It is important to note that the main ingredients in the proof are 
(i) the Poincaré pairings, (ii) Poincaré duality, (iii) the ascending
chain property (\bref{acc}). 
In later sections of these notes we will 
show that all these ingredients exist also in the equivariant setting
so that the last theorem and its proof will extend \emph{verbatim} to $\Gg$-manifolds and $\Gg$-equivariant cohomology.
\end{rema}

\begin{exer}Let\label{pullback} $f:\Mg\to\Ng$ be a map between oriented manifolds.
Show that the dual of the Gysin morphism $f\gy:H\cmp(\Mg)[d\sb{\Mg}]\to H\cmp(\Ng)[d\sb{\Ng}]$ 
coincides, via Poincaré duality, with the \expression{pullback morphism}\index{pullback}
$f\sp{*}:H(\Ng)\to H(\Mg)$.
\end{exer}
\subsubsection{The Image of $\D'(\Mg)$.}The next proposition will be  used when extending the Gysin functor to the equivariant context.
It gives a description of the image of $\D'(\Mg)$
in terms of ascending chains of open finite type subsets of $\Mg$, which was the main reason why we proved that such coverings always exist (see \bref{ppp}).

\begin{prop}
Let\label{D'-image} $\U$ be a filtrant open covering\index{filtrant covering} {\rm(\footnote{We recall that  
$\U=\set U_\agoth/\sb{\agoth\in\Agoth}$ is said \slant{filtrant} whenever for all $U_1,U_2\in\U$ there exists $U_3\in\U$ such that $(U\sb{1}\cup U\sb{2})\dans U\sb{3}$.})} of a manifold $\Mg$.
\begin{enumerate}\itemsep0pt\parskip0pt
\item 
Let\label{D'-image-a}\label{compact-direct-image} $i :V\dans W$ denote an inclusion of open subsets of $\Mg$.

The map $i _*:\Omegac (V)\dans\Omegac (W)$, that assigns to 
$\beta\in\Omegac (V)$ the differential form $i _*(\beta)\in\Omegac (W)$
equal to $\beta$ over $V$ and $0$ otherwise, is a well-defined morphism
of complexes inducing in cohomology the morphism of graded spaces $H\cmp(i _*):H\cmp(V)\to H\cmp(W)$. One has also
the morphism of complexes $i \sp{*}:\Omega(W)\to\Omega(V)$ 
that restricts a differential form of $W$ to $V$, and the corresponding morphism of graded spaces $H(i \sp{*}):H(W)\to H(V)$.

These constructions, applied to the elements of $\U$,
give rise to the inductive systems $\set \Omegac (U)/\sb{U\in\U}$
and $\set H\cmp(U)/\sb{U\in\U}$, and to the projective systems $\set \Omega(U)/\sb{U\in\U}$
and $\set H(U)/\sb{U\in\U}$, 
whence the canonical maps
$$\nu:\limind\sb{U\in\U}\Omegac (U)\to\Omegac (\Mg)
\quad\text{and}\quad
\def\Omega{H}
H(\nu):\limind\sb{U\in\U}\Omegac (U)\to\Omegac (\Mg)\,,$$
$$\mu:\Omega(\Mg)\to\limproj\sb{U\in\U}\Omega(U)
\quad\text{and}\quad
\def\Omega{H}
H(\mu):\Omega(\Mg)\to\limproj\sb{U\in\U}\Omega(U)\,.$$
All these maps are bijective.

\item Suppose\label{D'-image-b}\label{D'U} $\Mg$ is oriented, then the map
$$\displayboxit{\mathalign{
\ID'(\U):&\big(\Omegac (\Mg)[d\sb{\Mg}],\dg\big)&\too&\limind\nolimits\sb{U\in\U}\big(\Omega(U)\dual,-\dg\big)\\
&\beta&\mapsto&\Big(\alpha\mapstoo\int\sb{\Mg}\alpha\wedge\beta\Big)
}}$$
is a well-defined morphism of complexes inducing in cohomology the map
$$\relax{
\D'(\U):H\cmp(\Mg)[d\sb{\Mg}]\to\limind\nolimits\sb{U\in\U}H(U)\dual}$$
\item Suppose\label{D'-image-c}\label{D'U-iso} further that each $U\in\U$ is of finite type. Then
$\ID'(\U)$ is a quasi-isomorphism, and one has
$$\relax{\Im(\D'(\Mg))=\limind\nolimits\sb{U\in\U}H(U)\dual\dans H(\Mg)\dual}\eqno(\diamond)$$
Moreover, the adjoint $\D'(\U)\dual$ canonically identifies with $\D(\Mg)$; more precisely, the following diagram is commutative:
$$\mathalign{
\limproj\limits\sb{U\in\U}H(U)[d\sb{\Mg}]=&(\limind\limits\sb{U\in\U}H(U)\dual)\dual[d\sb{\Mg}]&
\hf{\D'(\U)\dual}{}{1.5cm}&H\cmp(\Mg)\dual\\
\noalign{\kern-10pt}
\vflu{}{\simeq}{0.7cm}&&&\vegal{0.7cm}\\\noalign{\kern3pt}
H(\Mg)[d\sb{\Mg}]\mathemule{}{\ \hf{\D(\Mg)}{}{5cm}}&&&H\cmp(\Mg)\dual\\
}$$
\end{enumerate}
\end{prop}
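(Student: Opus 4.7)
The plan is to treat parts (a), (b), (c) in order, leveraging throughout two general facts: filtered colimits of complexes of vector spaces commute with cohomology, and Poincar\'e duality \bref{RPA} is bijective on finite-type manifolds.

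For part (a), I first verify that $i_*$ is well defined: any $\beta\in\Omegac(V)$ has compact support $K$ closed in $W$, so the extension by zero is smooth, and $d$ commutes with it since it is a local operator. Bijectivity of $\nu$ is a compactness-plus-filtrantness argument: the compact support of any $\beta\in\Omegac(\Mg)$ is covered by finitely many elements of $\U$ which, by iterated filtrantness, lie inside a single $U\in\U$; injectivity is clear because each $\Omegac(U)\to\Omegac(\Mg)$ is injective. Bijectivity of $\mu$ is the sheaf property of the de Rham complex applied to $\U$. Bijectivity of $\Hc(\nu)$ then follows from exactness of filtered colimits of complexes. The subtle point is $H(\mu)$: although $\mu$ is a chain isomorphism, identifying $H(\limproj\Omega(U))$ with $\limproj H(U)$ is not automatic and likely needs either a Mittag-Leffler argument on an ascending chain or the specific form of $\U$ provided by \bref{ppp}, where the restriction maps are surjective on cochains --- this is the main obstacle I anticipate.

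For part (b), I use the surjectivity of $\nu$ to represent each $\beta\in\Omegac(\Mg)$ as $i_*\beta_U$ for some $\beta_U\in\Omegac(U)$ with $U\in\U$, and set $\ID'(\U)(\beta)$ equal to the class of $\ID'(U)(\beta_U)$ in $\limind\nolimits_{U\in\U}\Omega(U)\dual$. Well-definedness (independence of the choice of $U$) and compatibility along inclusions $U\dans W$ are exactly the commutative square of \bref{GysinDefTheo-a}; commutation with the differential is inherited from each $\ID'(U)$.

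For part (c), the finite-type hypothesis and \bref{RPA}-(b) make each $\D'(U)$ an isomorphism, so passing to the filtered colimit, together with the identification $\limind\Hc(U)\cong\Hc(\Mg)$ from (a), yields that $\D'(\U)$ is an isomorphism and hence $\ID'(\U)$ a quasi-isomorphism. For the image identification $\Im(\D'(\Mg))=\limind\nolimits_{U\in\U}H(U)\dual$ I would do a diagram chase based on \bref{GysinDefTheo-a}: for $\subseteq$, any $\bbeta\in\Hc(\Mg)$ equals $\Hc(i_*)\bbeta_U$ for some $\bbeta_U\in\Hc(U)$ with $U\in\U$ finite-type, so $\D'(\Mg)(\bbeta)=(i^*)\dual(\D'(U)(\bbeta_U))$ lies in the image of $(i^*)\dual\colon H(U)\dual\to H(\Mg)\dual$; the reverse inclusion follows from surjectivity of $\D'(U)$ in the finite-type case. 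For the final commutative diagram, finite-dimensionality of each $H(U)$ gives $(\limind H(U)\dual)\dual=\limproj H(U)$, and the claim reduces to $\D$ and $\D'$ being respectively the left and right adjoints of the same Poincar\'e pairing, so that by \bref{adjoint-pair}-(d) the two candidate maps into $\Hc(\Mg)\dual$ coincide.
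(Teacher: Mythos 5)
Your treatment of parts (b) and (c) matches the paper's proof in substance (colimit of the $\ID'(U)$, quasi-isomorphism by filtrancy, image computation and the adjunction identity), but part (a) contains a genuine gap: you never establish that $H(\mu):H(\Mg)\to\limproj\nolimits_{U\in\U}H(U)$ is bijective — you explicitly leave it as "the main obstacle". The fallback routes you sketch do not cover the stated generality. Here $\U$ is an \emph{arbitrary} filtrant covering, not a countable ascending chain, so a Mittag-Leffler argument has no foothold; moreover the restriction maps $\Omega(W)\to\Omega(U)$ are \emph{not} surjective (a form on $U$ need not extend to $W$), so the premise of your second suggestion fails as stated. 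Even for a countable chain, controlling the cochain level only yields a Milnor-type sequence, and killing the $\lim^1$-term would require Mittag-Leffler on cohomology, for which part (a) provides no finiteness hypothesis.

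The paper closes this step by the same duality trick that drives the rest of the proposition: when $\Mg$ is orientable, $H(\mu)$ is identified with the Poincar\'e dual of $H\cmp(\nu)$ — dualizing converts the filtrant colimit $\limind\nolimits_{U\in\U}H\cmp(U)\cong H\cmp(\Mg)$ into the projective limit on the nose, with no derived-limit correction — hence it is bijective by \bref{PD}; when $\Mg$ is not orientable, one lifts $\U$ to the orientation double cover $p:\tilde\Mg\to\Mg$, applies the orientable case there, and passes to $\ZZ/2\ZZ$-invariants using $H(U)=H(\tilde U)^{\ZZ/2\ZZ}$. Note that the missing statement is not cosmetic: the left vertical isomorphism in the final diagram of (c) is precisely $H(\mu)$, so your proof of that diagram silently relies on the very fact you left open (under the finite-type hypothesis of (c) one could salvage it, but the clean argument is the duality one above). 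Everything else in your proposal — well-definedness of $i_*$, bijectivity of $\nu$, $\mu$ and $H\cmp(\nu)$, the representative-wise definition of $\ID'(\U)$ with well-definedness via \bref{GysinDefTheo-a}, and the two inclusions giving $(\diamond)$ — agrees with the paper's proof.
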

\begin{proof}\parskip4pt\mou
(\bref{D'-image-a}) The map
$\nu:\limind_{U\in\U}\Omegac  ^{*}(U)\to\Omegac  ^{*}(\Mg)$ is injective
since it's the limit of a filtrant inductive system of injective maps.
The image of $\nu$ is the union of 
$\Omegac  ^{*}(U)$ for the same reason.
Now, if $\omega\in\Omegac  ^{*}(\Mg)$, its support, being compact, is contained
in some $U\in\U$ so that
$\omega$ is the pushforward\index{pushforward} of $\omega\rest{U}\in\Omegac ^{*}(U)$. 
This justifies the
equality $\Omegac  ^{*}(\Mg)=\bigcup_{U\in\U}\Omegac  ^{*}(U)$ 
and proves that $\nu$ is surjective. Standard arguments on the homology of filtrant inductive systems of complexes prove that $H(\nu)$ is bijective.

The map $\mu:\Omega(\Mg)\to\limproj{\vrule depth3pt width0pt}\sb{U\in\U}\Omega(U)$ is injective, since a differential form is null if and only if it is locally null.
To see it is also surjective, let
$\set\alpha\sb{U}\in\Omega(U)/\sb{U\in\U}$
be a given projective system of differential forms, and note 
that for any $x\in\Mg$, the element $\tilde\alpha(x)\pcolon \alpha\sb{U}(x)$ is well defined since if $x\in U_1\in\U$ and $x\in U_2\in\U$, one chooses $U_3\in\U$ s.t. $U_1\cup U_2\dans U_3$, in which case $\let\omega\alpha\omega\sb{U_1}(x)=\omega\sb{U_3}(x)=\omega\sb{U_2}(x)$. Likewise, one verifies the differentiability of 
$\tilde\alpha$. It is clear that $\tilde\alpha\rest{U}=\alpha_U$, which ends the proof that $\mu$ is surjective. 

It remains only to justify why $\Hr(\mu)$ is bijective. This is immediate when $\Mg$ is orientable, since $H(\mu)$ is then just the Poincaré dual of $H\cmp(\nu)$ which has already been shown to be bijective.
Otherwise, when $\Mg$ is not orientable, we lift $\U$ 
to the orientation manifold $\tilde\Mg$ associated with $\Mg$
through the canonical $\ZZ/2\ZZ$-covering $p:\tilde \Mg\onto \Mg$, 
setting therefore $\thickmuskip=1.5mu
\tilde\U:=\set \tilde U:=p^{-1}(U)\mid U\in\U/$. As
$\smash{\tilde\Mg}$ is orientable, the map $\Hr(\tilde\Mg)\to\limproj{\vrule depth3pt width0pt}\sb{U\in\U}\Hr(\tilde U)$ is now bijective, and because this map is also compatible with the reversing-orientation action of $\ZZ/2\ZZ$,
it induces a bijection between invariants subspaces $\relax{\Hr(\tilde\Mg)^{\ZZ/2\ZZ}\hf{}{\simeq}{0.7cm}\limproj{\vrule depth3pt width0pt}\sb{U\in\U}\Hr(\tilde U)^{\ZZ/2\ZZ}}$, and one concludes since $H(U)=H(\tilde U)\sp{\ZZ/2\ZZ}$.

(\bref{D'-image-b}) Endow each $U\in\U$ with the induced orientation.
Taking the inductive limit of the maps $\ID'(U):H\cmp(U)[d\sb{\Mg}]\to H(U)\dual$
and applying (a) one sees immediately that $\ID(\U)=\limind\sb{U\in\U}\ID'(U)$.

(\bref{D'-image-c})  By \bref{RPA} the maps $\ID'(U):H\cmp(U)[d\sb{\Mg}]\to H(U)\dual$
are quasi-isomorphisms for each $U\in\U$, hence
$\ID(\U)=\limind\sb{U\in\U}\ID'(U)$ is also a quasi-isomorphism since $\U$ is filtrant.
The rest of the statement is then clear by duality.
\end{proof}


\subsection{The Gysin Functor for Proper Maps}
In this section, the Gysin morphism
for compact supports $f\gy:H\cmp(\Mg)[d\sb{\Mg}]\to
H\cmp(\Ng)[d\sb{\Ng}]$ will be extended
 to arbitrary supports
$f\gyp:H(\Mg)[d\sb{\Mg}]\to
H(\Ng)[d\sb{\Ng}]$ when $f:\Mg\to\Ng$ is a {\bf proper} map.
As we will see this case is much simpler than the general one as it
results immediately from Poincaré duality.

When $f:\Mg\to\Ng$ is proper, the pullback\index{pullback} $f\sp{*}:\Omega(\Ng)\to\Omega(\Mg)$
respects compact supports and induces
a morphism of complexes
$f\sp{*}:\Omegac (\Ng)\to\Omegac (\Mg)$,
giving rise to the \emph{covariant} functor from $\Manp$ to $\Vec(\KK)$
$$\Mg\fonct H\cmp(\Mg)\dual\,,\qquad
f\fonct H\cmp(f\sp{*})\dual\,.$$

When $\Mg$ is oriented, $\ID'(\Mg)$ may be extended from $\Omegac (\Mg)$ to $\Omega(\Mg)$
by setting (see \bref{right-adjoint})
$$\displayskips9/10
\ID'(\Mg)(\alpha)= \smash{\Big(\beta\mapsto\int\sb{\Mg}\beta\wedge\alpha\Big)}\,,\quad
\forall\alpha\in\Omega(\Mg),\quad\forall\beta\in\Omegac (\Mg)\,,
$$
so that the diagram
$$
\mathalign{
\Omega(\Mg)&\hf{\ID'(\Mg)}{(\simeq)}{1.2cm}&\Omegac (\Mg)\dual\\
\vflu{\dans}{}{0.6cm}&&\vfluonto{}{}{0.6cm}\\
\Omegac (\Mg)&\hf{\ID'(\Mg)}{}{1.2cm}&\Omega(\Mg)\dual\\
}\postdisplaypenalty10000$$
is  commutative, and, moreover, with its first line a \emph{quasi-isomorphism} as it is simply the Poincaré duality map $\ID(\Mg)$ up to $\pm1$.

\begin{defi}If\label{proper-pullback} $f:\Mg\to\Ng$ is a proper map between oriented manifolds, 
the \expression{Gysin morphism associated with $f$}
is the map $f\gyp:H(\Mg)[d_{\Mg}]\to H(\Ng[d_{\Ng}])$ making commutative the diagram
$$\preskip1em
\mathalign{
H(\Mg)[d\sb{\Mg}]&\hf{\D'(\Mg)}{\simeq}{1.5cm}&H\cmp(\Mg)\dual\\
\vflddash{f\gyp}{}{3}&&\vfld{}{H\cmp(f\sp{*})\dual}{0.6cm}\\
H(\Ng)[d\sb{\Ng}]&\hf{\D'(\Ng)}{\simeq}{1.5cm}&H\cmp(\Ng)\dual\\
}$$\
%
\end{defi} 

\noindent The next theorem, analog to \bref{GysinDefTheo} and
almost immediate, is left as an exercise.

\noendpoint\begin{theo}[ and definitions]\label{ProperGysinDefTheo}\let\diamonds\stars
\varlistseps{\topsep2pt\itemsep4pt}\begin{enumerate}
\item 
For\label{adjonctionProperGysinProperty} 
 $\beta\in H\cmp(\Ng)$ and $\alpha\in H(\Mg)$, the equation
in  $X$,
$$\coh
\int\sb{\Mg}f\sp{*}\beta\wedge\alpha=\int\sb{\Ng}\beta\wedge X\,,
\eqno(\diamonds2)$$
admits one and only one solution in $H(\Ng)$, namely $X=f\gyp\aalpha$.

Furthermore, $f\gyp$ is a morphism of $\Hc(\Ng)$-modules, \idest the equality, called
the \expression{projection formula for proper maps}\index{projection formula!for proper maps},
$$
\relax{f\gyp\big( f\sp{*}\bbeta\cup\aalpha\big)=\bbeta\cup f\gyp\aalpha}
\eqno(\diamonds3)$$
holds for all $\bbeta\in H\cmp(\Ng)$ and $\aalpha\in H(\Mg)$.

\item The following correspondence is a covariant functor:
$$
\def\al#1\fonct{\hbox to 1.5em{\hss$#1$}{}\fonct}
f\gyp:\Manop\fonct\GV(\RR)\qquad\text{with}\qquad
\begin{cases}\noalign{\kern-2pt}
\al\Mg\fonct \Mg\gyp\pcolon  H(\Mg)[d\sb{\Mg}]\\\noalign{\kern-2pt}
\al f\fonct f\gyp\\\noalign{\kern-2pt}
\end{cases}$$
 We will refer to it as the \expression{Gysin functor for proper maps}\index{Gysin!functor!for proper maps|bfnb}.

\item The\label{ProperGysinDefTheo-d} pullback  $f^{*}:\Hc(\Ng)\to\Hc(\Mg)$ is an isomorphism if and only if the Gysin morphism $f_{!}:
\Hr(\Mg)[d_{\Mg}]
\to
\Hr(\Ng)[d_{\Ng}]
$ is also an isomorphism.

\item The natural map $\phi(\_):H\cmp(\_)[d\sb{\_}]\to H(\_)[d\sb{\_}]$
(\bref{compact-functoriality}) is a homomorphism
of Gysin functors $(\_)\gy\to(\_)\gyp$ over the category $\Manop$, i.e.
the diagrams
$$\mathalign{
H\cmp(\Mg)[d_{\Mg}]&\hf{\phi(\Mg)}{}{1cm}&H(\Mg)[d_{\Mg}]\\
\vfld{f\gy}{}{0.6cm}&&\vfld{}{f\gyp}{0.6cm}\\
H\cmp(\Ng)[d_{\Ng}]&\hf{\phi(\Ng)}{}{1cm}&H(\Ng)[d_{\Ng}]\\
}
\postdisplaypenalty10000$$
are natural and commutative.
\end{enumerate}
\end{theo}


\subsection{Principal Examples of Gysin Morphisms}\label{Main-Exemples}
\subsubsection{Universal Property of the Gysin Morphism.}This\label{universal} property 
is the statement (\bref{adjonctionGysinProperty}) in theorem \bref{GysinDefTheo}, 
which says that if $f:\Mg\to\Ng$ is a map between oriented manifolds, then 
for each $\bbeta\in H\cmp(\Mg)$, the element $f\gy(\bbeta)\in H\cmp(\Ng)$ is determined by the equality, for all $\aalpha\in H(\Ng)$,
$$\displayboxit{\int\sb{\Mg}f\sp{*}\aalpha\cup \bbeta=\int\sb{\Ng}\aalpha\cup f\gy\bbeta}\postdisplaypenalty10000
\eqno(\diamonds2)$$
 The pair $(f\gy,f\sp{*})$ is a Poincaré \expression{adjoint pair}\index{adjoint!pair} in cohomology (\bref{adjoint-pair}).

\subsubsection{Constant Map.}Let\label{constant-map} $\Mg$ be oriented and denote by $c\sb{\Mg}:\Mg\to\pt$
the constant map\index{constant map}. One applies $(\diamonds2)$ taking $\alpha=1$:
$$c\sb{\Mg}{}\gy(\bbeta)=\int\sb{\pt}1\cup c\sb{\Mg}{}\gy\bbeta=\int\sb{\Mg}\beta\,.$$
so that the Gysin morphism $c\sb{\Mg}{}\gy:H\cmp(\Mg)[d\sb{\Mg}]\to H\cmp(\pt)=\RR$ 
is the integration map\index{integration}, Poincaré dual of the graded algebra 
homomorphism $c\sb{\Mg}\sp{*}:\RR\to H(\Mg)$. 

\begin{exercise}Show that $c\sb{\Mg}\sp{*}:\Omega(\pt)\to\Omega(\Mg)$ admits a right Poincaré adjoint at the complex level, i.e. $c\sb{\Mg}{}\gy:\Omegac (\Mg)[d\sb{\Mg}]\to\Omega(\pt)$.
\end{exercise}

\subsubsection{Open Embedding.}Let\label{open-embedding} $\Mg$ be oriented. Given an open embedding\index{open embedding}\index{embedding!open}
$i:U\dans\Mg$, endow $U$ with the induced orientation. For
any $\beta\in \Omegac (U)$ one has the tautological equality:
$$\int\sb{U}\alpha\rest U\wedge \beta=\int\sb{\Mg}\alpha\wedge i_*\beta\eqno(*)$$
where $i_*\beta\in\Omegac(\Mg)$ denotes the  extension by zero\index{extension by zero} of $\beta$. The Gysin morphism
$i\gy:\Hc(U)[d_U]\to\Hc(\Mg)[d_{\Mg}]$ is therefore the pushforward\index{pushforward} 
$i\gy=H\cmp(i_*)[d_\Mg]$
(see \bref{D'-image}-(\bref{compact-direct-image})). Note also that 
the equality $(*)$ shows that the pair $(i\sp{*},i_*)$ is a Poincaré adjoint pair (\bref{adjoint-pair}).

\subsubsection{Locally Trivial Fibration.}Let\label{Gysin-fibration} $\pi:\Eg\to\Bg$ be a locally trivial 
fibration\index{fibration}\index{locally trivial fibration} 
with base space $\Bg$ (connected for simplicity) 
and total space $\Eg$ both assumed oriented, 
with fiber $\Fg$ of dimension $d\sb{\Fg}$
endowed with the induced orientation. 
Under these assumptions one has the \expression{operation of integration along $\Fg$}\index{integration along fibers}
(see \cite{BT} {\bf I}\myS6 pp. 61-63) which is a morphism of  complexes
$$\int\sb{\Fg}:\Omegac (\Eg)[d\sb{\Fg}]\to\Omegac (\Bg)
$$
satisfying 
$$\int\sb{\Eg}\pi\sp{*}\alpha\wedge\beta=\int\sb{\Bg}\Big(\alpha\wedge\int\sb{\Fg}\beta\Big)\,,\eqno(*)$$
so that after the adjunction property
$(\diamonds2)$, one has $\pi\gy=\int\sb{\Fg}[d\sb{\Bg}]$ and the 
Gysin morphism is the shift of integration along fibers. Note again that $(*)$ 
shows that the pair $(\pi\sp{*},\int\sb{\Fg}[d\sb{\Bg}])$  is a Poincaré adjoint pair.


\begin{prop}Let\label{cartesian-fibration} $(\pi,\Vg,\Bg)$ and $(\pi,\Vg',\Bg')$ be two
oriented locally trivial fibrations. Let $g:\Bg'\to\Bg$ be a {\bfit proper} map and assume the following diagram cartesian\index{cartesian diagram}:
$$\preskip-1ex\mathalign{\Vg'&\hf{g}{}{0.7cm}&\Vg\\
\vfld{\pi}{}{0.4cm}&\square&\vfld{}{\pi}{0.4cm}\\
\Bg'&\hf{g}{}{0.7cm}&\Bg
}$$
 i.e. $\Vg'=\bigset (b',v)\in\Bg'\times\Vg\mid g(b')=\pi( v)/$. Then
$$\left\{\mathalign{\noalign{\kern-2pt}
g\sp{*}\circ\pi\gy&=&\pi\gy\circ g\sp{*}&\colon& \Hc (\Vg)\to \Hc(\Bg')
\\\noalign{\kern3pt}
\pi\sp{*}\circ g\gyp&=&g\gyp\circ \pi\sp{*}&\colon& H(\Bg')\to H (\Vg)\hfill
}\right.$$
\end{prop}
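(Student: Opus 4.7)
The plan is to establish the two equalities in turn. For the first, $g^{*}\circ \pi_{*}=\pi_{*}\circ g^{*}:\Hc(\Vg)\to\Hc(\Bg')$, I would lift everything to the level of differential forms. By \bref{Gysin-fibration}, $\pi_{*}$ is induced (up to the shift $[d_{\Bg}]$, resp.\ $[d_{\Bg'}]$) by integration along the fiber, $\int_{\Fg}:\Omegac(\Vg)\to\Omegac(\Bg)$ and $\int_{\Fg}:\Omegac(\Vg')\to\Omegac(\Bg')$. The cartesian hypothesis means that for every $b'\in\Bg'$ the top map $g:\Vg'\to\Vg$ restricts to a diffeomorphism from the fiber $\pi^{-1}(b')\subset\Vg'$ onto the fiber $\pi^{-1}(g(b'))\subset\Vg$, compatibly with the induced orientations coming from $\Bg'$, $\Bg$ and the orientation of $\Fg$. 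Since $g$ is proper, $g^{*}$ also preserves compact supports, and a direct check in local trivializations gives the base-change identity $g^{*}\circ\int_{\Fg}=\int_{\Fg}\circ\, g^{*}$ as morphisms of complexes $\Omegac(\Vg)\to\Omegac(\Bg')$. Passing to cohomology yields the first equality.

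For the second equality, $\pi^{*}\circ g_{!}=g_{!}\circ\pi^{*}:H(\Bg')\to H(\Vg)$, I would argue formally via the universal property $(\diamonds2)$ of the proper Gysin morphism, noting first that $g:\Vg'\to\Vg$ is itself proper (properness is stable under base change), so $g_{!}:H(\Vg')\to H(\Vg)$ is well defined. Given $\alpha\in H(\Bg')$, the element $g_{!}(\pi^{*}\alpha)\in H(\Vg)$ is characterized by the equation
$$\int_{\Vg} g_{!}(\pi^{*}\alpha)\cup\gamma=\int_{\Vg'}\pi^{*}\alpha\cup g^{*}\gamma\,,\qquad\forall\gamma\in\Hc(\Vg)\,.$$
I would then check that $X:=\pi^{*}(g_{!}\alpha)$ also satisfies this equation by chaining the adjunctions: applying $(\diamonds2)$ for $\pi_{*}$ on $\Vg$, then $(\diamonds2)$ for $g_{!}$ on $\Bg$, then the already proved first equality, and finally $(\diamonds2)$ for $\pi_{*}$ on $\Vg'$, namely
$$\int_{\Vg}\pi^{*}(g_{!}\alpha)\cup\gamma=\int_{\Bg} g_{!}\alpha\cup\pi_{*}\gamma=\int_{\Bg'}\alpha\cup g^{*}\pi_{*}\gamma=\int_{\Bg'}\alpha\cup\pi_{*}g^{*}\gamma=\int_{\Vg'}\pi^{*}\alpha\cup g^{*}\gamma\,.$$
Uniqueness in $(\diamonds2)$ then gives $\pi^{*}(g_{!}\alpha)=g_{!}(\pi^{*}\alpha)$.

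The main obstacle is the first step, the verification that integration along the fiber commutes with $g^{*}$ at the form level in the cartesian situation. Conceptually this is transparent because the cartesian condition identifies fibers, but writing it out cleanly requires choosing a compatible system of local trivializations of $\pi$ over $\Bg$ and of $\pi$ over $\Bg'$ (the latter being pullbacks of the former), together with the resulting product orientations, and then reducing to Fubini on each trivializing open set. Once this base change formula is established, the second identity is purely formal, obtained by an adjunction dance, so no extra analytic input is required.
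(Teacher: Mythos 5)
Your proposal is correct and follows essentially the same route as the paper: the paper's own (hint) proof rests on the form-level base-change identity $g^{*}\big(\int_{\Fg}\omega\big)=\int_{\Fg}g^{*}(\omega)$ for $\omega\in\Omegac(\Vg)$, checked locally over $\Bg'$, and then observes that the two displayed equalities are equivalent by adjointness — which is exactly your two steps, with the adjunction chain written out explicitly. The only cosmetic remarks are that the uniqueness you invoke at the end is the one from the proper-Gysin adjunction $(\stars2)$ of \bref{ProperGysinDefTheo} rather than $(\diamonds2)$, and that the properness of the top map $\Vg'\to\Vg$ (base change) is what makes $g^{*}$ preserve compact supports already in the first step; neither affects the argument.
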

\begin{hint}By adjointness, the first equality is equivalent to the second.
The first equality follows from the equality for differential forms
$g\sp{*}\big(\int\sb{\Fg}\omega\big)=\int\sb{\Fg}g\sp{*}(\omega)$
for all $\omega\in\Omegac(\Vg)$, that may be verified locally in $\Bg'$ (\slant{loc.cit.}).
\end{hint}
\subsubsection{Zero Section of a Vector Bundle.}Let\label{zero-section} $(\pi,\Vg,\Bg)$ 
be a vector bundle and assume $\Bg$ and $\Vg$ oriented. 
The \expression{zero section map}\index{zero section} $\sigma:\Bg\to\Vg$ is a closed embedding\index{closed embedding}\index{embedding!closed}, hence proper,
so that we have the Gysin morphism for proper maps $\sigma\gyp:H(\Bg)\to H(\Vg)$.
By the adjunction property $(\stars2)$ (see \bref{ProperGysinDefTheo}-(\bref{adjonctionProperGysinProperty})), one has
for all $\beta\in H\cmp(\Vg)$ and $\alpha\in H(\Bg)$
$$\coh\mathalign{
\int_\Vg\beta\wedge \sigma\gyp(\alpha)&=
\int\sb{\Bg}\sigma\sp{*}\beta\wedge \alpha=
\int\sb{\Bg}\sigma\sp{*}\beta\wedge \sigma\sp{*}(\pi\sp{*}\alpha)\hfill\\\noalign{\kern2pt}
&=\int\sb{\Bg}\sigma\sp{*}(\beta\wedge \pi\sp{*}\alpha)\wedge1\hfill
=\int_\Vg\beta\wedge \pi\sp{*}\alpha\wedge\sigma\gyp(1)\hfill
}\eqno(\diamond)$$
where $\Phi\pcolon \sigma\gyp(1)$ is 
\expression{the Thom class of the pair $(\Bg,\Vg)$}\index{Thom!class of a vector bundle}. 
The Gysin morphism associated with the zero section of a fiber bundle
$$\sigma\gyp:H(\Bg)[d\sb{\Bg}]\to H(\Vg)[d\sb{\Vg}]\eqno(\exclams1)$$
is then the multiplication by the  Thom class
$$\coh\sigma\gyp(\alpha)=\pi\sp{*}\alpha\wedge\Phi\,.\eqno(\exclams2)$$
Finally, note that $\sigma\gyp$ is generally not an isomorphism,
since it identifies, via Poincaré duality, with the dual of
the proper pullback $\sigma\sp{*}:H\cmp(\Vg)\to H\cmp(\Bg)$ (see \bref{proper-pullback}) 
which is generally not an isomorphism (\footnote{For example, if $\Bg$ is compact, $H\cmp(\Bg)=H(\Bg)=H(\Vg)$
and $\sigma\sp{*}$ would give a graded isomorphism $H\cmp(\Vg)\simeq H(\Vg)$, and by Poincaré duality
$H^0(\Vg)\simeq H\sp{d\sb{\Vg}}(\Vg)$,
which impossible if $\Vg$ is a vector bundle of positive dimension over $\Bg$.}).

It can be seen (\cite{BT} \myS I.6 p. 64) that if $\alpha\in H\cmp(\Bg)$, then 
$\pi\sp{*}\aalpha\cup\Phi$ naturally belongs to $H\cmp(\Vg)$ so that 
the Gysin morphism 
$$\sigma\gy:H\cmp(\Bg)[d\sb{\Bg}]\to H\cmp(\Vg)[d\sb{\Vg}]\eqno(\stars1)$$
is given by the same 
equality $(\exclams2)$, 
$$\sigma\gy(\bbeta)=\pi\sp{*}\bbeta\wedge\Phi\,.\eqno(\stars2)$$

On the other hand, the Poincaré lemma for vector bundles asserts that the pullback
$\pi\sp{*}: H(\Bg)\to H(\Vg)$ is an isomorphism and this implies, via Poincaré duality (see \bref{pullback}),
that $\pi\gy:H\cmp(\Vg)[d\sb{\Vg}]\to H\cmp(\Bg)[d\sb{\Bg}]$ is also an isomorphism.
Now, by functoriality, one has
$\pi\gy\circ\sigma\gy=\id$, so that $\sigma\gy$
is also an isomorphism. This isomorphism is \expression{the Thom isomorphism}\index{Thom!isomorphism}.

\begin{prop}Let $(\pi,\Vg,\Bg)$ and $(\pi,\Vg',\Bg')$ be two
oriented vector bundles and assume the cartesian diagram\index{cartesian diagram}
in \bref{cartesian-fibration} with $g\colon\Bg'\to\Bg$ {\bf proper}. Denote by
$\sigma\colon\Bg\to\Vg$ and $\sigma\colon\Bg'\to\Vg'$
the zero section maps. The diagram
$$\mathalign{\Bg'&\hf{g}{}{0.7cm}&\Bg\\
\vfld{\sigma}{}{0.4cm}&\square&\vfld{}{\sigma}{0.4cm}\\
\Vg'&\hf{g}{}{0.7cm}&\Vg
}\postskip1em$$
is cartesian and the equalities
$\preskip1ex\postskip0ex\let\pi\sigma\left\{\mathalign{\noalign{\kern-2pt}
g\sp{*}\circ\pi\gy&=&\pi\gy\circ g\sp{*}&\colon& \Hc (\Bg)\to \Hc(\Vg')
\\\noalign{\kern3pt}
\pi\sp{*}\circ g\gyp&=&g\gyp\circ \pi\sp{*}&\colon& H(\Vg')\to H (\Bg)\hfill
}\right.$
hold.
\end{prop}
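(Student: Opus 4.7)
The plan is to prove the three assertions in sequence: (i) cartesianness of the zero-section square and properness of $g\colon\Vg'\to\Vg$; (ii) the second identity $\sigma\sp{*}\circ g\gyp=g\gyp\circ\sigma\sp{*}$ via a Poincar�-lemma reduction to the projection base-change \bref{cartesian-fibration}; and (iii) the first identity $g\sp{*}\circ\sigma\gy=\sigma\gy\circ g\sp{*}$ by pairing off the various Poincar� adjunctions.

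For (i), the cartesianness of the projections gives $\Vg'=\Bg'\times\sb{\Bg}\Vg$, so a pair $(b,v')\in\Bg\times\Vg'$ with $\sigma(b)=g(v')$ corresponds to $(b,(b',v))$ with $b=g(b')$, $v=\pi(v')$, and $v=\sigma(g(b'))$; the bijection $(b,(b',\sigma(g(b'))))\leftrightarrow b'$ with $\Bg'$ identifies the two structure maps of the fiber product with $g\colon\Bg'\to\Bg$ and $\sigma\colon\Bg'\to\Vg'$, proving the zero-section square is cartesian. Under the same identification, $g^{-1}(K)\cong K\times\sb{\Bg}g^{-1}(\pi(K))$ for any compact $K\subset\Vg$, a closed subspace of a product of compacts; hence $g\colon\Vg'\to\Vg$ is proper.

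For (ii), the Poincar� lemma for vector bundles makes $\pi\sp{*}\colon H(\Bg)\to H(\Vg)$ an isomorphism, and the relation $\pi\circ\sigma=\id\sb{\Bg}$ makes $\sigma\sp{*}$ its two-sided inverse; analogously over $\Bg'$. It therefore suffices to check the desired equality after composing with the injection $\pi\sp{*}$. On one side, $\pi\sp{*}\sigma\sp{*}g\gyp\omega=g\gyp\omega$ because $\pi\sp{*}\sigma\sp{*}=\id\sb{H(\Vg)}$. On the other side, the projection base-change $\pi\sp{*}g\gyp=g\gyp\pi\sp{*}$ from \bref{cartesian-fibration} yields $\pi\sp{*}g\gyp\sigma\sp{*}\omega=g\gyp\pi\sp{*}\sigma\sp{*}\omega=g\gyp\omega$. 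Bijectivity of $\pi\sp{*}$ then gives $\sigma\sp{*}g\gyp=g\gyp\sigma\sp{*}$.

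For (iii), fix $\alpha\in\Hc(\Bg)$ and an arbitrary $\omega\in H(\Vg')$. Combining the proper-pullback adjunction $(g\sp{*},g\gyp)$ on $\Vg'\to\Vg$, graded commutativity of the cup product, and the $(\sigma\sp{*},\sigma\gy)$ adjunction on $\Bg\to\Vg$, one obtains
$$\int\sb{\Vg'}\omega\cup g\sp{*}(\sigma\gy\alpha)=\int\sb{\Bg}\sigma\sp{*}(g\gyp\omega)\cup\alpha\,.$$
Similarly, applying first the $(\sigma\sp{*},\sigma\gy)$ adjunction on $\Bg'\to\Vg'$ and then $(g\sp{*},g\gyp)$ on $\Bg'\to\Bg$,
$$\int\sb{\Vg'}\omega\cup\sigma\gy(g\sp{*}\alpha)=\int\sb{\Bg}g\gyp(\sigma\sp{*}\omega)\cup\alpha\,,$$
all Koszul signs cancelling because both Gysin shifts equal the common fiber dimension $d\sb{\Vg}-d\sb{\Bg}=d\sb{\Vg'}-d\sb{\Bg'}$. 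By (ii) the two right-hand sides coincide for every $\omega$ and $\alpha$, and the injectivity of $\ID'(\Vg')$ stated in \bref{RPA}(a) forces $g\sp{*}\sigma\gy\alpha=\sigma\gy g\sp{*}\alpha$ in $\Hc(\Vg')$. The main technical obstacle is the bookkeeping in this last step: chaining three adjunctions in the right order while keeping the sign contributions in check. Once that is done, the whole argument rests on the observation that the Poincar� lemma converts the zero-section base change into the projection base change already treated in \bref{cartesian-fibration}.
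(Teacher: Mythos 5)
Your parts (i) and (ii) are sound: (ii) is essentially the argument the paper intends, namely conjugating the second base-change identity of \bref{cartesian-fibration} by the mutually inverse isomorphisms $\pi^{*}$ and $\sigma^{*}$, and (i) usefully makes explicit the properness of the induced map $\Vg'\to\Vg$, which the paper leaves tacit.

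The gap is in (iii). The paper's proof of the first identity is the mirror image of your (ii), carried out on the compact-support side: since $\pi\circ\sigma=\id_{\Bg}$, functoriality of the Gysin functor gives $\pi\gy\circ\sigma\gy=\id$, and as both maps are isomorphisms (Thom isomorphism, \bref{zero-section}) one also has $\sigma\gy\circ\pi\gy=\id$; then $g^{*}\sigma\gy=\sigma\gy\pi\gy g^{*}\sigma\gy=\sigma\gy g^{*}\pi\gy\sigma\gy=\sigma\gy g^{*}$, using the first identity of \bref{cartesian-fibration}. No pairings, reorderings or signs enter. Your route instead chains three adjunctions together with cup-commutativity and then invokes the injectivity of $\D'(\Vg')$ from \bref{RPA}-(a); that last step is legitimate, but the sign bookkeeping you declare to cancel does not. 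Tracking degrees, with $\alpha\in\Hc^{a}(\Bg)$, $\omega\in H^{w}(\Vg')$, $r$ the common fiber dimension and $s=d_{\Bg}-d_{\Bg'}$, your first chain yields $(-1)^{(a+r)s}\int_{\Bg}\sigma^{*}(g\gyp\omega)\cup\alpha$ and your second yields $(-1)^{as}\int_{\Bg}g\gyp(\sigma^{*}\omega)\cup\alpha$; granting (ii), the two right-hand sides you want to compare differ by $(-1)^{rs}$, which is not $+1$ when $r$ and $s$ are both odd (the equality of the two Gysin shifts $d_{\Vg}-d_{\Bg}=d_{\Vg'}-d_{\Bg'}$ is irrelevant here, since the residual parity involves $s$, not $r$ alone). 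So, as written, your argument only establishes $g^{*}\sigma\gy=\pm\,\sigma\gy g^{*}$, and transporting the compact-support identity through the pairing to the proper one is precisely where Koszul and orientation conventions bite. Replace (iii) by the two-line Thom-inverse conjugation above and the proof is complete.
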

\begin{hint}It is a corollary of \bref{cartesian-fibration} since $\sigma\gy$ is the inverse of $\pi\gy$.
\end{hint}

\subsection{Constructions of Gysin Morphisms}
In this last preliminary section we\label{recipe} summarize the steps in the construction of the Gysin morphisms.

\subsubsection{The Proper Case.}Let\label{proper-case} $f:\Mg\to\Ng$ be a {\bf proper} map of 
oriented manifolds. To $\alpha\in \Omega(\Mg)$ we assign the linear form
on $\Omegac (\Ng)$ defined by $\ID'(f)(\alpha):\beta\mapsto\int\sb{\Mg}f\sp{*}\beta\wedge \alpha$. In this way we 
obtain diagram
$$\xymatrix@R=0.8cm{\putMathAt{3.2cm}{-0.2cm}{\bigoplus}
\Omega(\Mg)[d\sb{\Mg}]\ar[rrd]\sb{\ID'(f)}\ar@{.>}[rr]\sp{f\gyp}&&\Omega(\Ng)\ar[d]^{\ID'(\Ng)\hbox to2.2cm
{\scriptsize\ (quasi-iso)\hss}}[d\sb{\Ng}]\\
&&\Omegac (\Ng)\dual\\
}$$
which may be closed in cohomology, since 
$\ID'(\Ng)$ is a quasi-isomorphism. Note that the closing 
arrow $f\gyp$, the Gysin morphism for proper maps, in general exists \emph{only} at the cohomology level.

\subsubsection{The General Case.}Let\label{general-case} $f:\Mg\to\Ng$ be a map of 
oriented manifolds. To $\beta\in \Omegac (\Mg)$ we assign the linear form
on $\Omega(\Ng)$ defined by $\ID'(f)(\beta):\alpha\mapsto\int\sb{\Mg}f\sp{*}\alpha\wedge \beta$. In this way we obtain the diagram
$$\xymatrix@R=0.8cm{\putMathAt{3.2cm}{-0.2cm}{\bigoplus}
\Omegac (\Mg)[d\sb{\Mg}]\ar[rrd]\sb{\ID'(f)}\ar@{.>}[rr]\sp{f\gy}&&\Omegac (\Ng)\ar[d]^{\ID'(\Ng)\hbox to2cm
{\scriptsize\ $\left(\vcenter{\hsize 2.5cm\parindent 0pt
\centering quasi-iso if $\Ng$ is of \\finite type}\right)$\hss}}[d\sb{\Ng}]\\
&&\Omega(\Ng)\dual\\
}$$
which may be closed in cohomology (as in the proper case), 
when $\Ng$ {\bf is of  finite type}, 
 since then $\ID'(\Ng)$ is a quasi-isomorphism
(\bref{RPA}-(\bref{RPA-b})). 

\medskip
When $\Ng$ is not of finite type, one fixes any filtrant covering $\U$ of $\Ng$ made up
of open finite type subsets of $\Ng$ (see \bref{ppp}), and replaces $\ID'(\Ng)$ with $\ID'(\U)$. In this way,  we get (see \bref{D'-image}-(\bref{D'U},\bref{D'U-iso})), the following diagram:
$$\relax
\xymatrix@R=0.8cm{\putMathAt{3.4cm}{-0.2cm}{\bigoplus}
\Omegac (\Mg)[d\sb{\Mg}]\ar[rrd]\sb{\ID'(f,\U)}\ar@{.>}[rr]\sp{f\gy}&&\Omegac (\Ng)[d\sb{\Ng}]\ar[d]^{\ID'(\U)\rlap{\scriptsize\ (quasi-iso)\hss}}\ar@2{.}[r]\sb(0.55){=}
&\Omegac (\Ng)[d\sb{\Ng}]\ar@{.>}[d]^{\ID'(\Ng)}\\
&&\limind\sb{U\in\U}\Omega(U)\dual\ar@{.>}[r]\sb(0.55){\dans}&\Omega(\Ng)\dual\\
}$$
where $\ID'(f,\U)$ is defined as follows. For $\beta\in\Omegac (\Mg)$
denote by $|\beta|$ its support and by $\U\sb{\beta}\dans\U$ the system 
consisting of $U\in\U$ s.t. $|\beta|\dans f\sp{-1}U$. One has a natural map
$\limind\sb{\U\sb{\beta}}\Omega(U)\dual\to\limind\sb{\U}\Omega(U)\dual$ 
(which is in fact
is bijective). Now, for every $U\in\U\sb{\beta}$ the linear map
$\big(\int\sb{\Mg}f\sp{*}(\_)\wedge\beta\big):\Omega(U)\to\RR$,
is well defined and is compatible with restriction, so that it defines
an element of $\limind\sb{\U\sb{\beta}}\Omega(U)\dual$, and then of
$\limind\sb{\U}\Omega(U)\dual$. This element is $\ID'(f,\U)(\beta)$  by definition.

The closing arrow $f\gy$, \expression{the Gysin morphism\index
{Gysin!morphism}\index
{morphism!Gysin} associated with a general map $f$}, is then defined in cohomology as the composition
 $\D'(\U)\sp{-1}\circ H(\ID(f,\U))$.

\begin{rema}In all cases, the Gysin morphism appears as the composition of a morphism of complexes 
with the ``inverse'' of a quasi-isomorphism,
which obviously is possible in cohomology but also
in the \expression{derived category of complexes}\index
{derived!category}
since this is its main property, i.e. a morphism in derived category
is an isomorphism if and only if it induces an isomorphism in cohomology. Gysin morphisms are well defined 
morphisms of the derived category of complexes of vector spaces.
\end{rema}

\subsection{Exercises}\label{nonequivariant-exercices}
\subsubsection{Gysin Long Exact Sequence.}Let\label{exo-gysin-exact-sequence}\label{nonequivariant-exercices-1} $i:\Fg\dans\Mg$ be a closed embedding\index{closed embedding}\index{embedding!closed} of oriented manifolds. Assume $\Fg$ compact, for simplicity.
Put $\Ug\pcolon \Mg\minus\Fg$ and $j:\Ug\dans\Mg$ the canonical injection. 

\begin{enumerate}\def\UF{\F}
\item \label{GLES(a)}
\begin{enumerate}
\item Let $\UF $ denote the set of open neighborhood of $\Fg$. Restriction morphisms $R\sp{\Wg}\sb{\Vg}:\Omega(\Wg)\to\Omega(\Vg)$ for all $\Wg\cont\Vg\cont\Fg$, give rise to a filtrant inductive system $\set R\sp{\Wg}\sb{\Vg}\mid \Wg\cont\Vg\text{\ in\ }\UF /$ and a canonical 
morphism of complexes $R\sp{\Mg}\sb{\UF }:\Omega(\Mg)\to\limind\sb{\Vg\in\UF }\Omega(\Vg)$. 
Show that the short sequence
$$\0\to\Omegac (\Ug)\hf{j_*}{}{0.7cm}\Omegac (\Mg)
\hf{R\sp{\Mg}\sb{\UF }}{}{0.7cm}
\limind\sb{\UF }\Omega(\Vg)\to\0
$$
where $j_*$ is the pushforward morphism\index{pushforward}, is exact.

\item Restrictions $R\sp{\Vg}\sb{\Fg}:\Omega(V)\to\Omega(\Fg)$ for $\Vg\cont\Fg $,
define a morphism of the inductive system $\set R\sp{\Wg}\sb{\Vg}\mid \Wg\cont\Vg\text{\ in\ }\UF /$ into $\Omega(\Fg)$. Denote by $R\sp{\UF }\sb{\Fg}\pcolon \limind\sb{\UF }R\sp{\Vg}\sb{\Fg}$.
Show that
$$R\sp{\UF }\sb{\Fg}:\limind\sb{\UF }\Omega(\Vg)\to\Omega(\Fg)
$$
is a quasi-isomorphism.

\item 
Derive\label{GLES(a-iii)} the existence of \expression{the long exact sequence of
compact support cohomology}
$$\cdots\to 
H\cmp\sp{k}(\Ug)\hf{i_*}{}{0.7cm}
H\cmp\sp{k}(\Mg)\hf{i\sp{*}}{}{0.7cm}
H\sp{k}(\Fg)\hf{c_k}{}{0.7cm}
H\cmp\sp{k+1}(\Ug)\to\cdots
\eqno(\diamond)$$

\item Endow $\Mg$ with a Riemannian metric $d:\Mg\times\Mg\to\RR$. For each $\epsilon\in\RR$, denote 
$$\begin{cases}
\BB\sb{\epsilon}(\Fg)\pcolon \set m\in\Mg\mid d(m,\Fg)<\epsilon/\\
\cSS\sb{\epsilon}(\Fg)\pcolon \set m\in\Mg\mid d(m,\Fg)=\epsilon/\\
\end{cases}
$$
If $\epsilon$ is small enough, $\BB\sb{2\epsilon}(\Fg)$ is a fiber bundle
with fiber $\RR^{d\sb{\Mg}-d\sb{\Fg}}$ over $\Fg$ 
via the geodesic projection $\pi:\BB\sb{2\epsilon}\to\Fg$.
By restriction, $\pi:\cSS\sb{\epsilon}\to\Fg$ is a fiber bundle 
with compact fiber $\cSS\sp{d\sb{\Mg}-d\sb{\Fg}-1}$.
Denote by $\ell:\cSS\sb{\epsilon}\hook\BB\sb{2\epsilon}\moins\Fg$
the canonical injection. We have the following maps
$$\vcenter{\hbox{\includegraphics{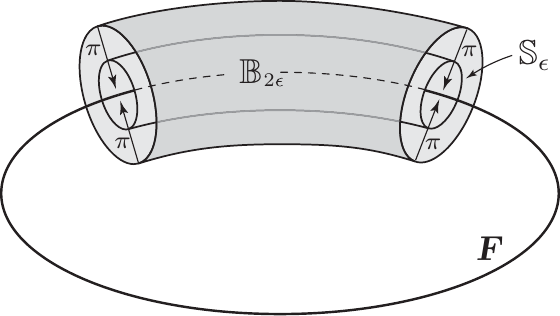}}}
\quad\mathalign{
\cSS\sb{\epsilon}&\hf{\ell}{}{0.7cm}&\BB\sb{2\epsilon}\moins\Fg&\hf{j\sb{\epsilon}}{\dans}{0.7cm}&\Ug\\
\vfldonto{\pi}{}{0.5cm}&&&&\vfld{j}{\dans}{0.5cm}\\
\Fg&\mathemule{}{\kern-3.mm\hfhook{i}{\dans}{2.5cm}}&&&\Mg
}
$$
Show that the connecting morphism
$c:H(\Fg)\to H\cmp(\Ug)[1]$ is given by the composition of the following morphisms
$$
\mathalign{
H(\Fg)&\hf{\pi\sp{*}}{}{0.7cm}&H\cmp(\cSS\sb{\epsilon})
&\hf{\ell\gy[-d\sb{\cSS\sb{\epsilon}}]}{}{1.6cm}&H\cmp(\BB\sb{2\epsilon}\moins\Fg)[1]
&\hf{j\sb{\epsilon,*}}{}{0.8cm}&H\cmp(\Ug)[1]\\
\mathemule{}{\sqrupc{c}{8cm}}
}$$
where $\ell\gy:H\cmp(\cSS\sb{\epsilon})[d\sb{\cSS\sb{\epsilon}}]\to H\cmp(\BB_{2\epsilon}\moins\Fg)[d\sb{\BB\sb{2\epsilon}}]$ denotes the Gysin
morphism associated with $\ell$.

\end{enumerate}

\item
\begin{enumerate}
\item Dualizing and shifting the long exact sequence of compact support $(\diamond)$,
justify the exactness of the \expression{Gysin long exact sequence}\index{Gysin!exact long sequence}
$$
\hf{\delta[-1]}{}{0.7cm}
H(\Fg)[d_{\Fg}-d\sb{\Mg}]
\hf{i\gy[-d\sb{\Mg}]}{}{1.5cm}
H(\Mg)
\hf{j^*}{}{0.7cm}
H(\Ug)
\hf{\delta}{}{0.7cm}
\eqno(\diamonds2)$$
where $i:\Fg\to\Ng$ and $j:U\to\Ng$ are the canonical injections and $\delta$ 
is adjoint to the shift of the connecting morphism $c$ in $(\diamond)$.
\item Show that the connecting morphism 
$\delta:H(\Ug)\to H(\Fg)[-(d\sb{\Mg}-d\sb{\Fg}-1)] $ is simply the restriction to
$\cSS\sb{\epsilon}$ followed by integration along fibers of $\pi$
$$\delta(\alpha)=\int\sb{\cSS\sp{d\sb{\Mg}-d\sb{\Fg}-1}}\alpha\rest{\cSS\sb{\epsilon}}\,.
$$
\end{enumerate}
\end{enumerate}

\subsubsection{Lefschetz Fixed Point Theorem.}Let\label{nonequivariant-Lefschetz} $\Mg$ be a oriented manifold. Denote by $\delta:\Mg\to\Mg\times\Mg$
the diagonal embedding\index{diagonal embedding}\index{embedding!diagonal} $x\mapsto(x,x)$ and let $\Delta\pcolon \Im(\delta)$.
Given $f:\Mg\to\Mg$, denote $\Gr(f):\Mg\to\Mg\times\Mg$ \expression{the graph map}\index{graph map} $x\mapsto(f(x),x)$. 
The \expression{Lefschetz class of $f$} is by definition
$$L(f):=\Gr(f)^{*}(\delta\gyp(1))\in\Hr^{d_{\Mg}}(\Mg)\,,$$
and its \expression{Lefschetz number}\index{Lefschetz!class, number of a map}
is $\Lambda\sb{f}\pcolon \int_{\Mg}L(f)\,.$
\begin{enumerate}
\item Explain\label{nonequivariant-Lefschetz-a} de following the equalities
$$\mathrigid1mu
\Lambda\sb{f}\pcolon \int_{\Mg}\hskip-1mm
\Gr(f)^{*}(\delta\gyp(1))=
\int_{\Mg\times\Mg}\hskip-6mm
\delta\gyp(1)\cup \Gr(f)\gyp(1)
=(-1)^{d_{\Mg}}\hskip-1mm
\int\sb{\Mg}
\hskip-1.5mm
\delta^{*}(\Gr(f)\gyp(1))\,.\eqno(\diamond)
$$

\item Assuming\label{nonequivariant-Lefschetz-b} that $f$ has no fixed points, show that the Gysin morphism 
$$\Gr(f)\gyp:H(\Mg)[d\sb{\Mg}]\to H(\Mg\times\Mg)[2d\sb{\Mg}]$$ 
factorizes through $H\cmp(\Mg\times\Mg\minus\Delta)$ so that $\Lambda\sb{f}=0$.

\item From now on suppose that $\Mg$ is orientable. Let $\B\pcolon \set e_i/\sb{i\in\Ig}$ be a graded basis of $H(\Mg)$ and let $\B'\pcolon \set e'\sb{i}/\sb{i\in\Ig}$ denote the Poincaré dual basis of $\B$, i.e. such that
$e\sb{i}\cup e_j'=\delta\sb{i,j}[\zeta]$, where $[\zeta]$ denotes the fundamental class of $\Mg$.
Using the projection formula for $\delta:\Mg\to\Mg\times\Mg$ show that
$$\delta\gy(1)=\sum\nolimits\sb{i\in\Ig}(-1)\sp{\deg (e_i)}\;e\sb{i}\otimes e\sb{i}'\,,$$
Prove the equality:
$\displaystyle\int\sb{\Mg}\delta\gyp(1)\rest\Delta=\sum\sb{k\in\NN} (-1)\sp{k}\dim \big(H\sp{k}(\Mg)\big) $.

\item Combining\label{nonequivariant-Lefschetz-c}  $(\diamond)$ with the last result, show the 
\expression{Lefschetz fixed point formula}\index{Lefschetz!fixed point formula}
$$
\Lambda\sb{f}=
\sumnl_{k\in\NN}(-1)^{k}\mathop{\rm Tr}
\big(f^{*}:H^{k}(\Mg)\to H^{k}(\Mg)\big)$$
In particular, if this alternating sum doesn't vanish, $f$ has fixed points !\end{enumerate}

\subsectionni{Conclusion.}We have reached the end of the preliminaries
on Poincaré duality and Gysin morphism in the nonequivariant setting. 
As shown, the key ingredient is Poincaré duality so that, in order to extend the constructions to $\Gg$-manifolds, we propose ourselves to follow the same approach. It will therefore be necessary first to introduce Poincaré pairings and Poincaré duality in the $\Gg$-equivariant framework. We devote section \bref{S-EPD} entirely to this subject.
In section~{\bref{S-EGM}}, the $\Gg$-equivariant Gysin morphisms associated with equivariant maps will then be defined following the same  procedures described in {\bref{recipe}}.

\section{Equivariant Cohomology Background}\label{eq-background}

\begingroup
\def\Mg{M}
\def\Xg{X}
\def\Fg{F}
\def\BV{Berline\kern1pt-\!Vergne}

\noindent{\bf1950 Cartan's ENS Seminar. }In May/June 1950, Henri Cartan gave lectures n$^{\circ}$~19/20 of the {\slshape Séminaire Cartan} at the {\slshape Ecole Normale Supérieure de Paris} (\footnote{Lecture 19 on May 15, and lecture 20 in two sessions: May 23 and June 19. The contents of these lectures where also presented at the \expression{Colloque de topologie (espaces fibrés)}, held at Brussels on June 5 to 8.}). The talks, which concerned a principal fiber bundle $p:\E\to\B$ of base space a manifold $\B$, and fiber space a compact connected Lie group $\Gg$ (of Lie algebra $\ggoth:=\Lie(\Gg)$), focused on setting an algebraic framework for the study of the relationship between the cohomologies of $\E$, $\B$ and $\Gg$, and incorporating characteristic classes through an algebraized approach of the Chern-Weil homomorphism 
$\mathop{\rm ch}:S(\ggoth)^{G}\to H(\B)$. 

In the first lecture, Cartan introduces the algebraic analog to the universal fiber bundle $\IE_\Gg$ of $\Gg$, \expression{the Weil algebra $W(\ggoth)$},  as an object of the category of $\ggoth$-differential graded algebras representing the functor ``\expression{algebraic connections}'', in the same way the classifying space $\IB_\Gg$ co-represents the \expression{$\Gg$-principal fiber bundles} functor.
$$\let\Big\big
\def\tt{\vrule depth1.5pt width0pt}\xymatrix@R=0.6cm@C=1.2cm{
\Mor_{\gadg}(W(\ggoth)),\Omega(\E))\ar[d]_(0.45){\rm restriction}^(0.47){\rm to\ basic\ subcomplexes}\ar@{=}[r]^(0.41){\tt\rm Weil}&
\Bigset\text{algebraic connexions on $\Omega(\E)$} /\\
\Bigset\mathop{\rm ch}:S(\ggoth)^{G}\to H(\B)/\ar@{<-}[r]^(0.41){\rm Chern}&
\Bigset\text{infinitesimal connections on $\Omega(\E)$} /\ar@{^(->}[u]
\ar@{->>}[d]\\
\Hot(\B,\IB G)\ar[u]_(0.45){}\ar@{=}[r]^(0.36){\tt\rm Steenrod\;}&
\Bigset\text{$\Gg$-principal fiber bundle $p:\E\to \B$} /
}$$

It is in his second lecture that Cartan studies different ways to relate the cohomologies of $\E$, $\B$ and $\Gg$.
Of these, the most interesting to us  is the construction of a differential graded algebra whose cohomology is that of $\B$, 
taking as its main ingredients the de Rham complex of the total space $(\Omega(\E),\dg)$
and something else related to the Lie group $\Gg$. For that, Cartan recalls that, through the pullback 
$p^{*}:\Omega(\B)\to\Omega(\E)$, the complex $\Omega(\B)$ is identified with the subcomplex~$\Omega(\E)^{\basic}$ of \expression{basic} elements of $\Omega(\E)$ viewed as a $\ggoth$-differential graded algebra. But this wouldn't really help, as $\Omega(\E)$ is lost. 
Instead, Cartan pursues his previous idea and introduces
the complex $W(\ggoth)\otimes\Omega(\E)$ as a candidate for the `de Rham complex' of the \expression{topological} space $\IE_{\Gg}\times\E$, and replaces the previous pullback with the map $\Omega(\B)\to W(\ggoth)\otimes\Omega(\E)$, $\omega\mapsto1\otimes p^{*}(\omega)$, the image of which lies in the subcomplex of basic elements of $W(\ggoth)\otimes\Omega(\E)$, denoted by
$$\Omega_{\ggoth}(\E):=(W(\ggoth)\otimes\Omega(\E))^{\basic}\,.$$
Cartan then states that the resulting map
$\Omega(\B)\to \Omega_{\ggoth}(\E)$, which is a homomorphism of differential graded algebras,
induces an isomorphism in cohomology, and, moreover, that
one has
$$\displayboxit{\Omega_{\ggoth}(\E)=((\Sg\otimes\Omega(\E))\sp{\ggoth},\dg_{\ggoth})}\eqno(\ast)$$
with
$$\relax{\dg_{\ggoth} (P\otimes\omega)=P\otimes \dg\omega+
\sum\nolimits_{i}Pe^{i}\otimes \cg(e_i)\,\omega}\,,\eqno(\ast\ast)\label{Cartan-diff}$$
where $\dg$ is the differential in $\Omega(\E)$,  $\set e_i/$ is a basis of $\ggoth$ of dual basis $\set e^i/$, and  $\cg(e_i)$ is the contraction operator associated with the vector field generated by the infinitesimal action of $e_i$ on $\E$ (\footnote{All these statements, that are not difficult to prove, were given without any justification by Cartan. Later Michel André in his Ph.D. thesis (\cite{andre}, 1962) directed by Claude Chevalley gave complete proofs for Cartan's lectures statements.}).
\comment
The idea behind the procedure is clear, Cartan replaces the principal bundle $ p :\E\to\B$ by the homotopically equivalent one
  $\tilde  p :\IE_\Gg\times\E\to  \IE_\Gg\times_\Gg\E$, where
$\IE_\Gg\times_\Gg\E$ denotes the space of orbits of $\IE\times\E$ under the diagonal action of $\Gg$, \idest $g\cdot(w,x):=(wg^-1,gx)$.
$$\preskip1ex
\def\tt{\relax\vrule height10pt width0pt}
\xymatrixc{@R=1cm}{
\IE_\Gg\times\E\ar[d]_{\tilde p }^{(\Gg)}\ar[r]^(.55){p_2}_(.55){\sim}&\E\ar[d]_{ p }^{(\Gg)}\\
\tt\IE_\Gg\times_\Gg\E\ar[r]^(.6){\cl p_2}_(.6){\sim}&\tt\B\\
}
\quad\fonct
\let\aar\ar\def\ar{\aar@{<-^{)}}}
\xymatrixc{@C=1.2cm@R=.82cm}{
W(\ggoth)\otimes\Omega(\E)\ar[d]_{\tilde p ^*}\ar[r]^(.65){p_2^*}_(.65){\sim}&\ \Omega(\E)\ar[d]_{ p ^*}\\
\tt(W(\ggoth)\otimes\Omega(\E))^\basic\ar[r]^(.65){\cl p_2^*}_(.65){\sim}&\tt\ \Omega(\E)^\basic\mrlap{=\Omega(\B)}\\
}\qquad$$

The real innovation in Cartan is fact that the cohomology of $W(\ggoth)\otimes\Omega(\E)$ was canonically isomorphic to the real singular cohomology of $\E$
\endcomment
The differential graded algebra $\Omega_{\ggoth}(\E)$, nowadays commonly known as \expression{the Cartan's complex}, 
met Cartan's requirements perfectly.

At this point, it is worth emphasizing that the construction 
of $\Omega_\ggoth(\Mg)$
made sense whether or not the action of $\Gg$ on $\Mg$ is free.
Although clear, this was out of focus at the time of Cartan's lectures, where research was mostly concentrated on manifolds and principal fiber bundles rather than on general $\Gg$-manifolds, and still less on general topological $\Gg$-spaces. 

\smallskip\noindent{\bf1960 Borel's IAS Seminar. }Some years later, in 1958-59, Armand Borel, who 
had been 
an active participant in the Cartan Seminar and in the Leray's courses at the \expression{Collège de France} since his arrival in Paris in 1949, 
held his \expression{Seminar on transformation groups} at the Institute for Advanced Study in Princeton  (\cite{borel-sem}). 
There, Borel 
drew attention to the advantages of
considering for \expression{any locally compact} $\Gg$-space $\Xg$, the orbit space of 
$\IE_\Gg\times\Xg$ under the diagonal action of~$\Gg$:
$$\Xg_{\Gg}:=\IE_{\Gg}\times_{\Gg}\Xg\,,$$
as the homotopically best-suited substitute for the orbit space $\Xg/\Gg$
\emph{in whatever way the group $\Gg$ (continuously) acts on $\Xg$\,}! It was best-suited, mainly because the space $\Xg_{\Gg}$ bundles together not only the space $\Xg$ and the group $\Gg$, but also orbits and classifying spaces. 

Indeed, $\Xg_{\Gg}$ appears as the total space of the following two 
maps
$$
\def\mbox#1#2{\raise#1\hbox to1cm{$#2$\hss}}
\xymatrix@R=5mm@C=1.5cm{
&\mbox{-6pt}{\IB_{\Gg}}\ar@{<<-}[ld]!<40pt, 0pt>^(0.55){p_1}_(0.6){[\Xg]}\\
\Xg_\Gg:=\IE_{\Gg}\times_{\Gg}\Xg\\
&
\mbox{6pt}{\Xg/\Gg\,,}\ar@{<<-}[lu]!<40pt, 0pt>_(0.55){p_2}^(0.6){[\IB_{\Gg_x}]}
}$$
\begin{itemize}\itemsep2pt\parskip0pt
\item $p_1:\Xg_\Gg\onto\IB_{\Gg}$, $\overline{(w,x)}\mapsto \cl w$, a locally trivial fibration of fiber space $\Xg$, and
\item
 $p_2:\Xg_\Gg\onto\Xg/\Gg$, $\overline{(w,x)}\mapsto \cl x$, where the fibers are the classifying spaces $\IB_{\Gg_x}$ of the different stability groups $\Gg_x$ for $x\in\Xg$. 
 \end{itemize}
\smallskip\noindent
 As Borel says in its introduction: \emph{It allows us to tie
together the cohomology groups of $\Xg$, $\Xg/\Gg$, and the fixed point set $\Fg$, with those of the classifying spaces of the stability groups and of $\Gg$.} 

The space $\Xg_{\Gg}$, which Borel called \expression{twisted product}, is known today as \expression{the homotopy quotient}, \expression{the homotopy orbit space} or more frequently  \expression{the Borel construction}. 

Beyond its immediate aim of the homological study of the set of fixed points $\Fg:=\Xg^{\Gg}$, the seminar laid most of the foundations of what would later be known as \expression{the equivariant cohomology of  locally compact $\Gg$-spaces}. Orbit types, slice theorems, spectral sequences, fixed points theorems, were already present, if not yet in their final form, at least at a level that would appeal to other mathematicians for further development.

\smallskip\noindent{\bold\bf1968 Atiyah-Segal: Equivariant $K$-theory. }The restriction map 
$$H(\Xg_{\Gg})\to H(\Fg_{\Gg})\eqno(\dagger)$$
appears in almost every section of applications of the Borel Seminar. Conditions are often given to ensure it is an isomorphism, but they are quite restricting. In addition, in the case of the circle group $\Gg:=\Tg^{1}$ action, although it is clear that Borel was aware of the fact that $(\dagger)$ is an isomorphism modulo $H(\IB_{\Gg})$-torsion, he never states it in these terms.
It is only ten years later, in 1968, when Atiyah and Segal introduce the \expression{equivariant K-theory for locally compact $\Gg$-spaces} that the following enhancement of $(\ast)$ appears for the first time. 

\smallskip\noindent{\slshape Localization theorem (\cite{AS,segal}, 1968). The localized restriction map
$$K_\Gg(\Xg)_{\Pgoth}\to K_{\Gg}(\Gg.\Xg^{\Sgorg})_{\Pgoth}\eqno(\ddagger)$$
where $\Pgoth$ is a prime ideal of $K_{\Gg}(\sbullet)$, $\Sgorg$ is minimal among the subgroups of $\Gg$ such that $\Pgoth$ is the inverse image of a prime ideal of $K_{\Sgorg}(\sbullet)$, and $\Xg^{\Sgorg}$ is the set of $\Sgorg$-fixed points, is an isomorphism.}

\smallskip\noindent{\bold\bf1971 Quillen: Equivariant cohomology. }As $K$-theory has a cohomological behavior, the equivariant \expression{cohomology} theory soon came to light. Daniel Quillen did this in  \cite{quillen} (1971), when he explicitly merged the ideas of Atiyah-Segal and Borel. 
The \expression{equivariant cohomology of a $\Gg$-space $\Xg$ (with coefficients in a ring $A$)}, denoted by
$\HG(\Xg)$, is thus defined
as the ordinary cohomology of Borel's construction $\Xg_{\Gg}$, \idest
$$\preskip1ex\HG(\Xg):=H(\Xg_\Gg\,;A)\,.$$
Quillen proves the localization theorem for the case where $\Gg$ is an elementary $p$-group, and for the case where $\Gg$ is a torus $\Tg$. The second is stated as:

\smallskip\noindent{\slshape Theorem. Assume either $\Xg$ is compact or  paracompact with $\dim_{{\rm ch}}(\Xg) < +\infty$ and that the set of identity components of the isotropy groups of points of $\Xg$ is finite. Then the inclusion of $\Xg^{\Tg}$ in $\Xg$ induces an isomorphism
$$\def\loc{\big[\big(\HT^{1}(\sbullet) - 0\big){}^{-1}\big]}
\HT(X)\loc\to\HT(XT)\loc\,.
\eqno(\ddagger\ddagger)$$}

\smallskip\noindent{\bold\bf1975. }In this year, Wu-yi Hsiang's book ``\expression{Cohomology theory of topological transformation groups}'' (\cite {hs}) appeared, in which the third chapter promptly introduces the reader to the foundations of equivariant cohomology for locally compact $\Gg$-spaces. It includes a version of the localization theorem more in the vein of Atiyah-Segal that Hsiang calls the \expression{Borel-Atiyah-Segal localization theorem}. It is stated as follows:

\smallskip\noindent{\slshape Theorem. Assume either $\Xg$ is compact or  paracompact with $\dim_{{\rm ch}}(\Xg) < +\infty$ and that the set of identity components of the isotropy groups of points of $\Xg$ is finite.  For a multiplicative system $S\dans\HG(\sbullet)$ $({}= H(\IB_{\Gg}))$, put
$$\Xg^{S}= \bigset x\in\Xg\mid \hbox{no element of $S$ maps to zero in $H(\IB_{\Gg})\to H(\IB_{\Gg_x})$}/\,.$$
Then, the localized restriction map
$$S^{-1}\HG(\Xg)\to S^{-1}\HG(\Xg^{S})\,,
\postskip0pt$$
is an isomorphism.}

\smallskip
For example, if $\Gg$ is a torus $\Tg$, and $S=\HT(\sbullet)\minus\set0/$, one has $\Xg^{S}=\Xg^{\Fg}$
so that we again see that
the kernel and cokernel of the restriction map
$$\HT(\Xg)\to\HT(\Xg^{\Tg})$$
are torsion $\HT(\sbullet)$-modules as stated in Quillen's $(\ddagger\ddagger)$.

\medskip

\noindent{\bf Comment. }At this point it is worth noting that
Hsiang's introduction to equivariant cohomology suffices for our purpose, which is to introduce equivariant Poincaré duality, equivariant Gysin morphisms and localisation theorems. In this regard, we could have chosen to work with singular or sheaf cohomology with coefficients in arbitrary field and characteristic, and, in many cases, even in the ring of integers (\footnote{\relaxpenalties{-10}To be complete, to work with coefficients in an arbitrary abelian group~$A$, would need two more ingredients: the equivariant cohomology with \expression{compact supports} $\HGc(\Xg;A)$ and, if~$\Xg$ is an oriented manifold, the \expression{equivariant integration map}
$$\int_{\Xg}:\HGc(\Xg\,;A)[d_{\Xg}]\to\HG(\sbullet\,;A)\,.$$

Both are standard concepts associated with the locally trivial fibration
$p_1:\Xg_{\Gg}\onto\IB_{\Gg}\,.$
The first is the well-known  \expression{cohomology with compact vertical support} :
$$\HGc(\Xg\,;A):=H(\IR c_{*}\IR p_{!}(\fs A{}_{\Xg_{\Gg}}))
\,,$$
\vadjust{\break}where $c:\IB_{\Gg}\to\set\sbullet/$ is the constant map, 
and the second is the pushforward map, also known as 
\expression{Gysin map associated with $p_1$}, and also
\expression{integration along fibers (in this case $\Xg$)}  :
$$\Big(\int_{\Xg}\Big)=p_{1!}:\HGc(\Xg\,;A)[d_{\Xg}]\to\HG(\sbullet\,;A)\,.$$
This map, by the way, is compatible with the Leray-Serre spectral sequence for~$p_1$. For example, at the $\IE_{2}$ page it's just the map:
$$\mathalign{
\IE_2=&H(\IB_{\Gg}\,;A)\otimes \Hc(\Xg\,;A)&\too&H(\IB_{\Gg}\,;A)\\
&\alpha\otimes\beta&\mapstoo&\Big(\int_{\Xg}\beta\Big)\alpha\ .}$$
}).
But we chose to work with cohomology with real coefficients and to use the Cartan's complex because those were the choices already set for the book in which this text was intended as an appendix.

That said, it is worth recalling other significant moments in the development of equivariant cohomology theory and its applications.

\smallskip\noindent{\bold\bf1980. Atiyah-Bott, \BV:  The equivariant differential forms. }The reader may have noticed that the Cartan's complex played no role
 in the previous paragraphs. 
 This is because, at the time, Borel, Quillen, Hsiang, \dots where mostly interested in applying equivariant cohomology (often with coefficients in fields of positive characteristic) to find conditions for the existence of fixed points 
in locally compact $\Gg$-spaces and to infer cohomological properties of the fixed point sets from those of the ambient space~$\Xg$ (for example being  a cohomological manifold when $\Xg$ is such). 

In the early 1980s the whole theory underwent an unexpected development  when N.~Berline and M.~Vergne succeeded in proving the Duistermaat-Heckman formula on the pushforward of the Liouville measure on a symplectic manifold under the moment map (\cite{dh}, 1982) by a new fixed point theorem for $\Gg$-manifolds inspired by an old paper old paper of Bott (\cite{bott}, 1966).

Let $\Gg$ be a compact Lie group and $\Mg$ a $\Gg$-manifold. For $X\in\ggoth:=\Lie(\Gg)$, let $X^{*}$ be the vector field over $\Mg$ generated by the infinitesimal action of $X$, and denote by $\cg(X)$ and $\mathcal L(X)$ respectively the contraction and the Lie derivative operators associated with $X^{*}$, acting on the differential algebra of complex de Rham  differential forms $(\CC\otimes\Omega(\Mg),\dg)$. 
The vector $X$ is called \expression{nondegenerate} if, for $m\in\Mg$ fixed by the one-parameter group $\exp(tX)$, the linear operator $L_m(X)$ on the tangent space $T_{m}(\Mg)$ induced by the Lie derivative $\mathcal L(X)$, is invertible. (Notice that this condition implies that $T_{m}(\Mg)$ is even-dimensional.) 

In their joint work, Berline and Vergne introduce the following linear operator on $\CC\otimes\Omega(\Mg)$:
$$\dg_{X}:=\dg-2\pi\, i\, \cg(X^{*})\,.\label{BV-diff}\eqno(\diamond)\label{BV-diff}$$
It verifies $\dg_{X}^{2}=-2\pi i\,\mathcal L(X)$, so that if one denotes by $\Omega(\Mg)^{X}$ the sub-algebra of de~Rham differential forms on $\Mg$ invariant under $\exp(tX)$, the pair  
$(\Omega(\Mg)^{X},d_{X})$ is a ($\ZZ/2\ZZ$-graded) differential algebra. Let us denote by $\HX (\Mg)$ its cohomology. 
The following fixed point theorem is then proved.

\smallskip\noindent{\slshape
Theorem (\cite{bv1,bv2,bv3}). Let $\Gg$ be a compact Lie group and $\Mg$ an oriented compact $\Gg$-manifold (of even dimension). Then, if $X\in\ggoth$ is nondegenerate and $\mu\in\HX(\Mg)$, one has:
$$\int_{\Mg}\mu=\sum_{m\in\Mg^{X}}{\mu(m)\over \Pf(L_m(X))}$$
where $\mu(m)$ is the restriction of $\mu$ to the singleton $\set m/$,
$M^{X}$ is the fixed point set (necessarily finite) of $\exp(tX)$ and $\Pf(L_m(X))$ is the Pfaffian of $L_{m}(X)$.
}

\medskip
At about the same time the Atiyah-Bott paper (\cite{AB}, 1984) appeared. Motivated by the same work of Duistermaat-Heckman, as well as a recent work of Witten (\cite{witten}), it introduced a de Rham model for the equivariant cohomology of manifolds and states the corresponding localization theorems.
In \emph{loc.cit.} (th. 4.13) Atiyah-Bott, taking finite dimensional approximations of $\IE_{\Gg}$, shows that the cohomology of the Cartan's complex $(\Omega_{\ggoth}(\Mg),\dg_{\ggoth})$ is the ordinary cohomology of the topological space $\Mg_{\Gg}$. In this way the original, and somehow neglected, Cartan's complex $(\Omega_{\ggoth}(\Mg),\dg_{\ggoth})$ turned out to have been an excellent model for the equivariant cohomology of manifolds. The elements of $\Omega_{\ggoth}(\Mg)$ have since become known as the \expression{$\Gg$-equivariant (de~Rham) differential forms}. 

\smallskip\displayskips9/10
When one compares the \BV\  operator $d_{X}$ $(\diamond)$ to Cartan's operator~$d_{\ggoth}$ (p.~\pageref{Cartan-diff}), one immediately understands that the map
$$\mathalign{\ev_{X}:&\Omega_{\ggoth}(\Mg)&\too&\Omega(\Mg)^{X}\hfill\\\noalign{\kern4pt}
&P\otimes\mu&\mapstoo&P(-2\pi i X)\mu}
$$
commutes with differentials inducing the map between cohomologies
$$
\xymatrix{
\CC\otimes H_{\Gg}(\Mg)\ar[r]^(0.55){\ev_{X}}&\HX (\Mg)\,.}$$
If $\Tg\dans\Gg$ is the torus topologically generated by $X\in\ggoth$, we have $\Mg^{X}=\Mg^{\Tg}$ and the commutative diagram of restrictions to fixed point sets:
$$
\let\Gg\Tg\xymatrixc{@R6mm}{
\CC\otimes H_{\Gg}(\Mg)\ar@{}[rd]|{\textstyle\oplus}\ar[d]_{\stackdown{\sim\\\cdot}}\ar[r]^(0.55){\ev_{X}}&\HX (\Mg)\ar[d]^{\simeq}\\
\CC\otimes H_{\Gg}(\Mg^{\Tg})\ar@{->>}[r]_(0.55){\ev_{X}}&\HX (\Mg^{\Xg})\\
}\postdisplaypenalty10000\eqno(\ast)$$
where the left vertical arrow is an isomorphism modulo $\HT$-torsion after Quillen. 
Now, the proof of the \BV\  fixed-point theorem proves also that the right vertical arrow in $(\ast)$ is a \emph{true}  isomorphism  (\footnote{This results from the fact that, thanks to the Poincaré lemma for \BV\  cohomology stating that the pullback map $\HX(\Mg)\to\HX(\RR\times\Mg)$ is an isomorphism, it is easy to check that one has a long exact sequence of \BV\ cohomologies:
$$\to\HcX(\Mg\minus\Mg^{X})\to\HX(\Mg)\to\HX(\Mg^{X})\to$$
where $\HcX(\Mg\minus\Mg^{X})=0$, after the original proof of the \BV\  fixed point theorem.}). 
As a consequence, the map $\ev_{X}:\CC\otimes\HT(\Mg)\to\HX(\Mg)$ is surjective and the \BV\  fixed point theorem 
could also be justified through the Atiyah-Bott's de~Rham version of the localization theorem.
Indeed, the equivariant integration map $\int_\Mg$ gives rise to the commutative diagram
$$\let\Gg\Tg\def\ii{\hbox{$\int_{\Mg}$}}
\xymatrix@R=6mm{
\CC\otimes\HG(\Mg)\ar@{}[rd]|{\textstyle\oplus}\ar[d]_{\ii}\ar[r]^(0.55){\ev_{X}}&\HX(\Mg)\ar[d]^{\ii}\\
\HG(\sbullet)\ar[r]_(0.55){\ev_{X}}&\CC
}$$ 
with,  in the second line, $\ev_{X}(P)=P(-2\pi i X)$. Then, by the localization theorem for $\HT(\Mg)$, we see that for all $\mu\in \HX(\Mg)$ and every $\tilde\mu\in\HT(\Mg)$ such that $\ev_{X}(\tilde\mu)=\mu$, one has:
$$
\int_{\Mg}\mu=\Big(\int_{\Mg}\tilde\mu\Big)(-2\pi i X)=
\sum_{m\in\Mg^{\Tg}}
{\mu(m)\over\mathop{\rm Eu}_{\Tg}(m,\Mg)(-2\pi i X)}\,,
$$
where $\mathop{\rm Eu}_{\Tg}(m,\Mg)$ is the equivariant Euler class of $m\in\Mg$, as introduced by Atiyah-Bott in (2.19)-\emph{loc.cit.} 

\medskip 
The \BV\  and Atiyah-Bott works stimulated renewed interest in equivariant cohomology, in particular because of its applications to Lie group representation theory.
What happened next goes well beyond the scope of this work. 
For interested readers, an excellent account of equivariant cohomology theory for manifolds can be found in chapters 6 and 7 of the book \cite{BGV} (1992), and for singular spaces in the \cite{gkm} (1998) article, which also reviews the equivariant intersection cohomology following R.~Joshua (\cite{joshua}, 1987) as well as the Poincaré duality in equivariant intersection cohomology following J-.L.~Brylinski (\cite{bry}, 1992). The latter of these happens to have been the original reason for these notes.

\endgroup


\subsection{Category of Cochain $\ggoth$-Complexes}
\subsubsection{Fields in Use.}Unless otherwise stated, 
Lie groups and Lie algebras, vector spaces, complexes of vector spaces, linear maps, tensor
products and related stuff,
will be defined over the field of real numbers $\RR$.

\subsubsection{$\ggoth$-modules.}Let\label{g-modules}
$\ggoth$ be a real \expression{Lie algebra}\index{Lie!algebra}.
A \expression{representation of $\ggoth$},
also called  \expression{a $\ggoth$-module\index{g-module@$\ggoth$-module}},
will be a real vector space $V$
together with a Lie algebra homomorphism $\rho\sb{V}:\ggoth\to\End\sb{\RR}(V)$.
For simplicity, 
the notation ``$Y\Cdot v$'' will frequently replace
``$\rho\sb{V}(v)$''
 when the representation is understood.

The \expression{trivial representation of $\ggoth$ on a vector space $V$}\index{trivial representation}\index{g-trivial representation@$\ggoth$-tivial representation},
is the one where $\rho\sb{V}=0$.

Given $\ggoth$-modules $V$ and $W$, a \expression{$\ggoth$-module morphism  from $V$ to $W$}\index{g-module morphism@$\ggoth$-module morphism} is a linear map
$\lambda:V\to W$ s.t. $\lambda\circ \rho\sb{V}(Y)=
\rho\sb{W}(Y)\circ\lambda$ for all $Y\in\ggoth$. We denote by $\Hom\sb{\ggoth}(V,W)$ the 
subspace of $\Hom\sb{\RR}(V,W)$ of such maps.

\smallskip

\noindent A $\ggoth$-module $V$ is said to be:
\varlistseps{\partopsep2pt\topsep2pt}
\begin{itemize}
\item \expression{simple or irreducible}\index{simple!module}, if it is nonzero and has no nontrivial submodules;
\item \expression{semisimple}\index{semisimple module}, if it is a direct sum of irreducible $\ggoth$-modules;
\item \expression{reducible}\index{reducible module}
if it is a direct sum of two nonzero $\ggoth$-modules;
\item \expression{completely reducible}\index{completely reducible module} if it is a direct sum of irreducible modules;
\end{itemize}

The $\ggoth$-modules and their morphisms constitute a category,
 the \expression{category of $\ggoth$-modules}\index{category!of g-modules@of $\ggoth$-modules} 
denoted by $\Mod(\ggoth)$.

\begin{exer}
Let\label{completely-reducible} $V$ be a $\ggoth$-module. Show the equivalence of:
{\begin{enumerate}\itemsep0pt\parskip0pt
\item $V$ is completely reducible.
\item $V$ is a sum of irreducible modules.
\item If $W$ is a submodule of $V$ then $V=V'\oplus W$  for some submodule $V'$.
\end{enumerate}}
\end{exer}

\begin{exer}Given\label{g-invariants} a $\ggoth$-module $V$, denote by $V\sp{\ggoth}$
the subspace of \expression{$\ggoth$-invariant\index{g-invariant@$\ggoth$-invariant}
vectors of $V$}, i.e. of $v\in V$, such that $Y\cdot v=0$ for all $Y\in\ggoth$.
\begin{enumerate}
\item Show that for all $\varphi\in\Hom\sb{\ggoth}(V,W)$, $\varphi(V\sp{\ggoth})\dans W\sp{\ggoth}$.
Derive the fact that the correspondence $V\fonct V\sp{\ggoth}$, $\varphi\fonct \varphi\rest{V\sp{\ggoth}}$
is fonctorial from $\Mod(\ggoth)$ intito $\Vec(\RR)$.
\item
Endow $\RR$ with the trivial action of
$\ggoth$. Show that the map
$$\Hom\sb{\ggoth}(\RR,V)\to V\sp{\ggoth}\,,\quad \varphi\mapsto \varphi(1)\,,$$
is a natural isomorphism of functors $\Hom\sb{\ggoth}(\RR,\_)\to(\_)\sp{\ggoth}$. In particular, $(\_)\sp{\ggoth}$ is left exact but not necessarily exact.
\end{enumerate}\end{exer}

\subsubsection{Differential Graded $\ggoth$-Complexes.}A\label{categorie-g-complexes} 
\expression{differential graded $\ggoth$-complex}\index{g-complex@$\ggoth$-complex}\index
{differential!graded g-complex@graded $\ggoth$-complex}, 
a \expression{$\ggoth$-complex} in short, is a quadruple $(\Cg,\dg,\thetag,\cg)$
where:
{\varlistseps{\itemsep2pt\topsep4pt}\begin{itemize}
\item $(\Cg,\dg)$ is a complex in $\DGM(\RR)$  (cf. \bref{differential-complex}); 
\item $\thetag:\ggoth\to\Endg\sb{\GV(\RR)} (\Cg)$ is a Lie algebra  morphism, the \expression{$\ggoth$-derivation}\index{derivation}
(\footnote{Recall that given two $\ZZ$-graded vector spaces $\Cg$ and $\Dg$, we denote by
$\Morg\sb{\GV(\RR)}(\Cg,\Dg)$ the group of graded homomorphisms of degree zero from $\Cg$ into $\Dg$.
The terminology \expression{derivation}\index{derivation} comes from the fact that in the main case where $(\Cg,\dg)$ 
is the de Rham complex of a $\Gg$-manifold, the group $\Gg$ 
acts on $(\Cg,\dg)$ by differential graded algebra automorphisms, 
so that the infinitesimal action of its Lie algebra $\ggoth\pcolon \Lie(\Gg)$ will 
be by differential graded algebra derivations.});

\item $\cg:\ggoth\to\Morg\sb{\GV(\RR)}(\Cg,\Cg[-1])$ is a linear map, the \expression{$\ggoth$-contraction}\index{contraction};
\end{itemize}}
\noindent such that, for all
$X,Y\in\ggoth$
$$\begin{cases}
\hbox to1.5em{\hfill i)\ }\cg(X)\circ \cg(Y)+\cg(Y)\circ \cg(X)=0\\
\hbox to1.5em{\hfill ii)\ }\dg\circ \cg(X)+\cg(X)\circ \dg=\thetag(X)\\
\hbox to1.5em{\hfill iii)\ }\thetag(Y)\circ \cg(X)-\cg(X)\circ \thetag(Y)=\cg([Y,X])
\end{cases}
\eqno(\diamond)$$

\begin{rema}From\label{trivial-action} $(\diamond)$-(ii), one immediately obtains
$\dg\circ\thetag(\_)=\thetag(\_)\circ \dg$ which implies 
that $\thetag$ naturally induces an action of $\ggoth$ on the cohomology of $(\Cg,\dg)$. However, that same condition shows that $\cg(X)$ is a homotopy for $\thetag(X)$, so that this induced action is in fact trivial.
\end{rema}

\subsubsection{Morphisms of $\ggoth$-Complexes.}A\label{g-complex-morphism} \expression{morphism of graded $\ggoth$-complexes}\index{morphism!of g-complexes@of $\ggoth$-complexes}, or \expression{morphism of $\ggoth$-complexes}\index{g-complex@$\ggoth$-complex!morphism}\index{morphism!of g-complexes@of $\ggoth$-complexes}  in short,
$\alphag:(\Cg,\dg,\thetag, \cg)\to(\Dg,\dg,\thetag, \cg)$, is a 
morphism of complexes $\alphag:(\Cg,\dg)\to(\Dg,\dg)$ commuting with
derivations and contractions, i.e. such that
$\alphag\circ\thetag=\thetag\circ\alphag$ and $\alphag\circ\cg=\cg\circ\alphag$.

\subsubsection{Category of $\ggoth$-Complexes.}The\label{def-category-g-complex} 
$\ggoth$-complexes $(\Cg,\dg,\thetag, \cg)$ and their morphisms constitute the 
\expression{category of $\ggoth$-complexes}\index{category!of differential graded $\ggoth$-complexes} denoted by
$\DGM(\ggoth,\RR)$.

In the sequel, a $\ggoth$-complex $(\Cg,\dg,\thetag, \cg)$ 
may be denoted by $(\Cg,\dg)$ and even simply $\Cg$, whenever the remaining data
are understood.

\subsubsection{Split $\ggoth$-Complexes.}
Given\label{split} an inclusion of $\ggoth$-modules $N\dans M$,
we will use the notation ``\expression{$N|M$}'' to express  that
the natural map
$$
\Hom_{\ggoth}(V,M)\too\Hom_{\ggoth}(V,M/N)
\postdisplaypenalty10000\eqno(\ddagger)$$
is  {\bf surjective} for all {\bf finite} dimensional $\ggoth$-module  $V$.

\noNumber
\begin{exer}
Show that the condition
$N|M$ is equivalent to the fact that
for every $\ggoth$-submodule $M'\dans M$
such that $N\dans M'$ is of finite codimension, there exists a $\ggoth$-submodule
$H\dans M'$ such that $M'=H\oplus N$.
\end{exer}

\noNumber
\begin{defi}For a $\ggoth$-complex $(\Cg,\dg)$, let
$B^{i}\pcolon \im(d_{i-1})$ and $Z^{i}\pcolon \ker(d_i)$ respectively be \expression{the $\ggoth$-submodules of
$i$-coboundaries and $i$-cocycles of $(\Cg,\dg)$}. The $\ggoth$-complex $(\Cg,\dg)$ will be called \expression{$\ggoth$-split}\index{g-split complex@$\ggoth$-split complex} whenever one has
$$B^{i}|Z^{i}|C^{i}\,,\quad\hbox{ for all $i\in\ZZ$.}$$
\end{defi}

\begin{lemm}Keep\label{exos-scinde} the above notations and prove the following,
\begin{enumerate}\itemsep0pt\parskip0pt\mynobreak\nobreak
\item If\label{exos-1} $N|M$, the natural map 
$\displaystyle{M^{\ggoth}\over N\sp{\ggoth}}\too\Big({M\over N}\Big)^{\ggoth}$
is an isomorphism. \hfill $(\diamond)$

\item The\label{exos-0} condition $B^i|Z^i$ is equivalent to the fact that $(Z^i)\sp{\ggoth}\to(Z^i/B^i)$ is surjective, 
and it is also equivalent to 
the existence of a $\ggoth$-submodule $H^i$ of $Z\sp{i}$
such that  $Z\sp{i}=B\sp{i}\oplus H\sp{i}$, in which case
$H^i$ is a trivial $\ggoth$-module isomorphic to $Z\sp{i}/B\sp{i}$.

\item A\label{exos-2} $\ggoth$-complex $(\Cg,\dg)$ such that each $C^i$
is completely reducible, is $\ggoth$-split.

\end{enumerate}
\end{lemm}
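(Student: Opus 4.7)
The unifying observation for all three parts is the natural isomorphism $\Hom_{\ggoth}(\RR,W)\cong W^{\ggoth}$ from exercise \bref{g-invariants}, combined with remark \bref{trivial-action} to the effect that the cohomology $Z^i/B^i$ carries the trivial $\ggoth$-action. Together these collapse the all-$V$ lifting property defining $N|M$ into a statement about $\ggoth$-invariants, whenever the target quotient $M/N$ has trivial action.

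For (i), injectivity of $M^{\ggoth}/N^{\ggoth}\to (M/N)^{\ggoth}$ is immediate, since a $\ggoth$-invariant of $M$ lying in $N$ is already invariant in $N$. Surjectivity follows by specializing the defining diagram $(\ddagger)$ to $V=\RR$ with trivial action and translating through $\Hom_{\ggoth}(\RR,{-})\cong({-})^{\ggoth}$.

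For (ii), I handle the two asserted equivalences in turn. For $B^i|Z^i \iff (Z^i)^{\ggoth}\twoheadrightarrow Z^i/B^i$: the forward direction is immediate from (i), using $(Z^i/B^i)^{\ggoth}=Z^i/B^i$ by remark \bref{trivial-action}. The converse is the main technical step of the lemma. Given any finite-dimensional $\ggoth$-module $V$ and any morphism $\varphi:V\to Z^i/B^i$, the triviality of the target forces $\varphi$ to annihilate $\ggoth\cdot V$, so $\varphi$ factors through the trivial module $V/\ggoth V$; writing the induced linear map as a finite sum $\sum_j \lambda_j\otimes e_j$ and lifting each $e_j$ to some $\tilde e_j\in(Z^i)^{\ggoth}$ produces the desired $\ggoth$-equivariant lift $v\mapsto\sum_j\lambda_j(\bar v)\tilde e_j$ to $Z^i$. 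For the equivalence with the existence of a trivial complement: choose any linear section $s:Z^i/B^i\to(Z^i)^{\ggoth}$ of the surjection just established and set $H^i:=s(Z^i/B^i)$; as a submodule of $(Z^i)^{\ggoth}$ it is trivial, and $Z^i=B^i\oplus H^i$ is immediate since $s$ is a section of the projection. The reverse implication is obvious.

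For (iii), I use that submodules and quotients of completely reducible modules are completely reducible, so if each $C^i$ is completely reducible then so are $Z^i$ and $B^i$. Consequently $Z^i$ admits a $\ggoth$-complement in $C^i$, which by a standard modular-law argument implies $Z^i|C^i$ (every intermediate $\ggoth$-submodule of finite codimension inherits a complement by intersection with the fixed one). Likewise $B^i$ admits a complement $H^i$ in $Z^i$, and the $\ggoth$-isomorphism $H^i\cong Z^i/B^i$ forces $H^i$ to carry trivial $\ggoth$-action by remark \bref{trivial-action} once more; part (ii) then delivers $B^i|Z^i$, completing the verification that the complex is $\ggoth$-split.
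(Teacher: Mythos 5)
Your proof is correct and follows essentially the same route as the paper: part (i) is the specialization of $(\ddagger)$ to $V=\RR$ via $\Hom_{\ggoth}(\RR,\_)\cong(\_)^{\ggoth}$, part (ii) rests on the same observation that maps into the trivial module $Z^i/B^i$ lift through $(Z^i)^{\ggoth}$, and part (iii) uses complete reducibility to produce $\ggoth$-complements exactly as the paper's appeal to exercise \ref{completely-reducible}. The only cosmetic difference is that in (ii) you close the equivalence by proving directly that surjectivity of $(Z^i)^{\ggoth}\to Z^i/B^i$ implies $B^i|Z^i$ (factoring a test map through $V/\ggoth V$ and lifting a finite spanning set), whereas the paper first builds the complement $H^i=\Im(\sigma)$ and leaves the implication from the splitting $Z^i=B^i\oplus H^i$ back to $B^i|Z^i$ implicit; both variants are sound.
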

\noendpoint\begin{proof}
\begin{enumerate}\itemsep2pt\parskip0pt
\item After \bref{g-invariants}, the functor $(\_)\sp{\ggoth}$ is isomorphic to $\Hom\sb{\ggoth}(\RR;\_)$
and the sequence $\0\to N^{\ggoth}\to M^{\ggoth}\to(M/N)^{\ggoth}$ is left exact. 
The split condition ensures it is also right exact.

\item Recall that  $\H\sp{i}\pcolon Z\sp{i}/B\sp{i}$ is a trivial $\ggoth$-module (see \bref{trivial-action}).
Following (a), the split condition immediately gives the surjection
$(Z^i)\sp{\ggoth}\onto (\H\sp{i})\sp{\ggoth}=\H\sp{i}$.
Conversely, 
one clearly has $\Hom\sb{\ggoth}(\H^i,\_)=\Hom\sb{\RR}(\H^i,(\_)\sp{\ggoth})$ and, thereafter, the 
commutative diagram
$$\mathalign{
\Hom\sb{\ggoth}(\H^i,Z\sp{i})&\hf{}{}{0.7cm}&\Hom\sb{\ggoth}(\H^i,\H^i)\\
\vegal{0.4cm}&&\vegal{0.4cm}\\
\Hom\sb{\RR}(\H^i,(Z\sp{i})\sp{\ggoth})&\hfonto{}{}{0.7cm}&\Hom\sb{\RR}(\H^i,\H^i)\\
}$$
where the surjectivity of the second line implies the surjectivity of the first one.
In particular, there exists $\sigma\in\Hom\sb{\ggoth}(\H^i, Z^i)$ such that
$\pi\circ\sigma=\id$ where $\pi: Z\sp{i}\onto \H\sp{i}$ denotes the canonical projection.
Setting $H^i\pcolon \Im(\sigma)$ completes de proof.

\item Clear from exercise \bref{completely-reducible}.\QED
\end{enumerate}
\end{proof}

\goodbreak\begin{prop}Let\label{quasi-isomorphismes-scindes} 
$(\Cg,\dg)$ be a $\ggoth$-split $\ggoth$-complex.
\begin{enumerate}\itemsep0pt
\item The inclusion\label{scindes-1}
$\Cg^{\ggoth}\dans\Cg$ is a quasi-isomorphism
\item
If\label{scindes-2} $V$ is a finite dimensional {\bfseries semi-simple} $\ggoth$-module,
the inclusions
$$\mathalign{
V^{\ggoth}\otimesg \Cg&\cont&V^{\ggoth}\otimesg \Cg^{\ggoth}&\dans&(V\otimesg \Cg)^{\ggoth}\\
\Homgb\sb{\RR} (V^{\ggoth},\Cg)&\cont&\Homgb\sb{\RR} (V^{\ggoth},\Cg^{\ggoth})&\dans&\Homgb_{\ggoth}(V,\Cg)
}$$
are quasi-isomorphisms.
\end{enumerate}
\end{prop}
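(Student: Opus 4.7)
The plan is to prove both parts by a direct analysis of cocycles and coboundaries, together with systematic use of the split conditions $B^i | Z^i | C^i$ via Lemma~\bref{exos-scinde}(\bref{exos-1}).

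For (\bref{scindes-1}), I would identify $H^i(\Cg^{\ggoth})$ as $(Z^i)^{\ggoth}/(B^i)^{\ggoth}$. The inclusion $d((C^{i-1})^{\ggoth}) \subseteq (B^i)^{\ggoth}$ is tautological, and the reverse inclusion is the crucial use of the split $Z^{i-1} | C^{i-1}$: since $B^i \simeq C^{i-1}/Z^{i-1}$ through $d$, Lemma~\bref{exos-scinde}(\bref{exos-1}) gives the surjection $(C^{i-1})^{\ggoth} \twoheadrightarrow (C^{i-1}/Z^{i-1})^{\ggoth} = (B^i)^{\ggoth}$, so every invariant coboundary is the differential of an invariant element. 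Applying Lemma~\bref{exos-scinde}(\bref{exos-1}) once more to $B^i | Z^i$ yields $(Z^i)^{\ggoth}/(B^i)^{\ggoth} \simeq (Z^i/B^i)^{\ggoth} = (H^i(\Cg))^{\ggoth}$. By Remark~\bref{trivial-action} the $\ggoth$-action on $H^i(\Cg)$ is trivial, so this equals $H^i(\Cg)$, and the inclusion $\Cg^{\ggoth} \hookrightarrow \Cg$ realizes the isomorphism on the nose.

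For (\bref{scindes-2}) I would first establish the stability of the split condition under tensoring with a finite dimensional module: if $N | M$ and $V$ is finite dimensional, then $V \otimes N | V \otimes M$. This follows from the adjunction $\Hom_{\ggoth}(U, V \otimes M) \simeq \Hom_{\ggoth}(U \otimes V^{\vee}, M)$, since $U \otimes V^{\vee}$ remains finite dimensional. Thus $V \otimes \Cg$ is still $\ggoth$-split (with cocycles $V \otimes Z^i$ and coboundaries $V \otimes B^i$ by exactness of $V \otimes (\_)$), and the same cocycle/coboundary computation used in (\bref{scindes-1}) gives
$$H^i\big((V \otimes \Cg)^{\ggoth}\big) = \frac{(V \otimes Z^i)^{\ggoth}}{(V \otimes B^i)^{\ggoth}} \simeq (V \otimes H^i(\Cg))^{\ggoth}.$$
Since $\ggoth$ acts trivially on $H^i(\Cg)$, choosing any basis of $H^i(\Cg)$ decomposes $V \otimes H^i(\Cg)$ as a direct sum of copies of $V$, so $(V \otimes H^i(\Cg))^{\ggoth} = V^{\ggoth} \otimes H^i(\Cg)$, which coincides with $H^i(V^{\ggoth} \otimes \Cg^{\ggoth})$ by (\bref{scindes-1}). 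This handles the second inclusion $V^{\ggoth} \otimes \Cg^{\ggoth} \hookrightarrow (V \otimes \Cg)^{\ggoth}$. The first inclusion $V^{\ggoth} \otimes \Cg^{\ggoth} \hookrightarrow V^{\ggoth} \otimes \Cg$ is just (\bref{scindes-1}) tensored with the finite dimensional trivial module $V^{\ggoth}$.

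For the $\Hom$ chain I would use the natural $\ggoth$-complex isomorphism $\Homgb_{\RR}(V, \Cg) \simeq V^{\vee} \otimes \Cg$, valid because $V$ is finite dimensional. This identifies the three terms with $(V^{\vee})^{\ggoth} \otimes \Cg$, $(V^{\vee})^{\ggoth} \otimes \Cg^{\ggoth}$, and $(V^{\vee} \otimes \Cg)^{\ggoth}$ respectively, reducing the claim to the tensor case already established, now for $V^{\vee}$. Semisimplicity of $V$ is used here in two ways: to ensure $V^{\vee}$ is also semisimple and finite dimensional, and to identify $(V^{\vee})^{\ggoth}$ with $(V^{\ggoth})^{\vee}$ (writing $V = V^{\ggoth} \oplus W$ with $W^{\ggoth} = 0$, and observing that $\ggoth \cdot W = W$ because each irreducible summand of $W$ is nontrivial). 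The main obstacle is bookkeeping: verifying carefully that the two split conditions pass through $V \otimes (\_)$ and through the $\Hom$/tensor identification so that Lemma~\bref{exos-scinde}(\bref{exos-1}) can be invoked uniformly at each step.
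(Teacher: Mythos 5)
Your argument is correct, but for part (\bref{scindes-2}) it takes a genuinely different route from the paper's. The paper never transfers the split condition through $V\otimes(\_)$: it writes $V=V^{\ggoth}\oplus W$ with $W$ a sum of simple modules different from $\RR$ (this is where semisimplicity enters there) and shows that $\Homgb_{\ggoth}(W,\Cg)$ is acyclic by a direct cocycle argument --- a cocycle is a $\ggoth$-map $\lambda:W\to Z^{i}$, which must land in $B^{i}$ because the action on cohomology is trivial while $W$ has no trivial quotient, and the split condition, used with the simple module $W$ itself as the finite-dimensional test object in the definition of $N|M$, lets one lift $\lambda$ through $d:C^{i-1}\to B^{i}$ and exhibit it as a coboundary; the tensor chain is then deduced from the Hom chain via $\Homgb_{\ggoth}(W,\Cg)=(W^{\vee}\otimes\Cg)^{\ggoth}$, i.e.\ the reduction goes in the opposite direction to yours. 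You instead push $B^{i}|Z^{i}|C^{i}$ through $V\otimes(\_)$ by the Hom--tensor adjunction and repeat the invariants computation of (\bref{scindes-1}), landing on $(V\otimes H^{i}(\Cg))^{\ggoth}=V^{\ggoth}\otimes H^{i}(\Cg)$ by triviality of the action on $H(\Cg)$ (\bref{trivial-action}), and you then reduce the Hom chain to the tensor chain via $V^{\vee}$. Both are sound; yours buys the observation that the tensor-product chain is a quasi-isomorphism for \emph{any} finite-dimensional $V$, semisimplicity being needed only to give meaning to, and match, the inclusion $\Homgb_{\RR}(V^{\ggoth},\Cg^{\ggoth})\dans\Homgb_{\ggoth}(V,\Cg)$ through the canonical splitting $V=V^{\ggoth}\oplus W$ with $\ggoth\cdot W=W$ and to identify $(V^{\vee})^{\ggoth}\cong(V^{\ggoth})^{\vee}$; the paper's route buys brevity, since the acyclicity of the nontrivial part exploits the defining surjectivity of the split condition directly, with no need to check its stability under tensoring. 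One small caution: $V\otimes\Cg$ with the diagonal action is not a $\ggoth$-complex in the sense of \bref{categorie-g-complexes} (there is no compatible contraction, and the induced action on its cohomology is not trivial), so you should claim only the module-theoretic conditions $V\otimes B^{i}\,|\,V\otimes Z^{i}\,|\,V\otimes C^{i}$ and invoke Lemma \bref{exos-scinde}-(\bref{exos-1}), which is in fact all your computation uses; with that phrasing the proof stands, and your part (\bref{scindes-1}) is exactly the argument the paper leaves implicit.
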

\noendpoint\begin{proof}
\begin{enumerate}
\item Immediate from (\bref{exos-scinde}-(\bref{exos-1})).

\item Let us first show that if $W$ is a simple $\ggoth$-module different from $\RR$, 
the complexes $(W\otimesg \Cg)\sp{\ggoth}$ and $\Homgb_\ggoth(W,\Cg)$
are acyclic.

It suffices to treat only the $\Homgb$ case, since one has
$$\Homgb\sb{\ggoth}(W,\Cg)=\Homgb\sb{\RR}(W,\Cg)\sp{\ggoth}=(W\dual\otimesg\Cg)\sp{\ggoth}\,.$$

An $i$-cocycle of
$\Homgb_{\ggoth}(W,\Cg)$ is a $\ggoth$-module morphism
$\lambda: W\to  C^{i}$ such that $\dg\circ\lambda=0$, i.e. such that
$\im(\lambda)\dans Z^{i}$. But the composition of $\lambda$ with the surjection
$Z^{i}\onto Z^{i}/B^{i}$ is null since $\ggoth$ acts trivially on cohomology, 
so that in fact $\im(\lambda)\dans B^{i}$. Now, thanks to the fact that $Z^i|C^i$, 
we can lift $\lambda:W\to B^i$ to $\mu:W\to C^{i-1}$ and we have thus 
proved that $\lambda=\dg\circ\mu$, i.e. that $\lambda$ is a coboundary.

If $V$ is a semisimple $\ggoth$-module, one decomposes
$V$ as $V^\ggoth\oplus W$, where $W$ is a direct sum of simple $\ggoth$-modules different from $\RR$.
Then
$$\Homgb_{\ggoth}(V,\Cg)=\Homgb_{\ggoth}(V^\ggoth,\Cg)\oplus\Homgb_{\ggoth}(W,\Cg)$$
is quasi-isomorphic to $\Homgb_{\ggoth}(V^\ggoth,\Cg)$ 
after the previous paragraph. But
$$\Homgb_{\ggoth}(V^\ggoth,\Cg)=\Homgb_{\ggoth}(V^\ggoth,\Cg^{\ggoth})
=\Homgb\sb{\RR} (V^\ggoth,\Cg^{\ggoth})\,,$$
so that 
$\Homgb\sb{\RR} (V^\ggoth,\Cg^{\ggoth})\dans\Homgb_{\ggoth}(V,\Cg)$ is clearly a quasi-isomorphism.

Finally, that the inclusion
$\Homgb\sb{\RR}(V\sp{\ggoth},\Cg\sp{\ggoth})\dans \Homgb\sb{\RR}(V\sp{\ggoth},\Cg)$
is a quasi-isomorphism results from (a) since $V\sp{\ggoth}\simeq\RR\sp{r}$ 
and the inclusion being considered becomes simply 
$\prod\sb{1\leqslant  i\leqslant  r}\Cg\sp{\ggoth}\dans \prod\sb{1\leqslant  i\leqslant  r}\Cg$.\QED
\end{enumerate}
\end{proof}


\subsection{Equivariant Cohomology of $\ggoth$-Complexes}
\subsubsection{The symmetric Algebra of $\ggoth\dual$.}Let\label{cartan} 
$\Sg$ be the ring of polynomial maps from $\ggoth$ to $\RR$,
graded by twice the polynomial degree and denote by
 $\Sgd{d}$ the subspace of elements of degree $d$, in particular
$\varSg{2}=\ggoth\dual$ and
$\varSg {m}=\nobreak0$ for every odd integer $m$. 
Let $\thetag:\ggoth\to\Der\sb{\RR}(\Sg)$ denote the Lie algebra homomorphism induce by coadjoint representation of $\ggoth$ on $\ggoth\dual$. 

Fix for later use  a vector space basis $\set e_i/$ of $\ggoth$,
of dual basis $\set e^i/$.

\subsubsection{Cartan Complexes.}Given\label{cartan-complex} a $\ggoth$-complex
$(\Cg,\dg,\thetag,\cg)$, we are interested in 
the polynomial maps 
$\omega:\ggoth\ni Y\mapsto\omega(Y)\in \Cg$, i.e. the elements 
$\omega\in\Sg\otimesg\Cg$. 
The Lie algebra $\ggoth$ acts on each
$\varSg a\otimes C^{b}$ by the formula
$$\thetag(Y)(P\otimes\mu)\pcolon \thetag(Y)(P)\otimes\mu+P\otimes\thetag(Y)(\mu)\,,
\qquad\forall Y\in\ggoth\,.$$
A polynomial map $Y\mapsto\omega(Y)$ is then $\ggoth$-invariant if and only if
it satisfies the equality
$$\theta(X)(\omega(Y))+\omega([X,Y])=0\,,
\postskip0pt$$
for all $X,Y\in\ggoth$. Put
$$\preskip0pt
\displayboxit{\Cg\sb{\ggoth}\pcolon (\Sg\otimes\Cg)\sp{\ggoth}=\bigoplus\nolimits\sb{k\in\ZZ}\Cg\sb{\ggoth}\sp{k}}\eqno(\Cg\sb{\ggoth})$$
where 
$\Cgg ^{k}\pcolon \sum_{a+b=k}\big(\varSg a\otimes C^{b}\big)^{\ggoth}$.
The $\Sg$-linear map
$\dgg :\Cgg \to \Cgg $, 
$$\displayboxit{\dgg (1\otimes\omega)=1\otimes \dg\omega+
\sum\nolimits_{i}e^{i}\otimes \cg(e_i)\,\omega}\eqno(\dgg )$$
is a morphism of graded spaces of degree $+1$. It verifies
$
\dgg ^2=\sum\nolimits_{i}e^{i}\otimes \thetag(e_i)\,,
$
so that, over $\Cgg $, one has
$$\dgg ^2=\sum_{i}e^{i}\thetag(e_i)\otimes\id\,.$$

But $\Xi\pcolon \sum_{i}e^{i}\thetag(e_i)$ is the null operator on $\Sg$. Indeed, 
since it acts as a derivation on $\Sg$, it suffices to show that it vanishes on any $\lambda\in\ggoth\dual$, i.e. that $\Xi(\lambda)(e\sb{j})=0$ for all $j$, which comes from the straightforward computation
$$\mathalign{
\hfill\Xi(\lambda)(e_j)&=&\Big(\sum_{i}e^{i}\theta(e_i)(\lambda)\Big)(e_j)
=\sum_i e^{i}(e_j)\lambda([e_i,e_j])=\lambda([e_j,e_j])=0\,.
}$$
Hence, $\dgg\sp{2}=0$ in $\Cgg$. This 
\smash{$\dgg\in\Endgr\sb{\Sg\sp{\ggoth}}\sp{1}(\Cgg)$} is the \expression{Cartan differential}\index{Cartan!differential}.

\begin{defi}The\label{def-cartan-complex} pair $\big(\Cgg ,\dgg \big)$ is a complex. 
It is \expression{the Cartan (equivariant) complex associated with the $\ggoth$-complex $(\Cg,\dg,\thetag,\cg)$}\index{Cartan! complex}\index{equivariant!Cartan complex}, and
the cohomology of $\big(\Cgg ,\dgg \big)$ is its \expression{$\ggoth$-equivariant cohomology}\index
{equivariant!cohomology of a $\ggoth$-complex},
denoted in the sequel by
$$\displayboxit{\Hgg (\Cg)\pcolon \hg\big(\Cgg ,\dgg\big)}$$
\end{defi}

\def\varname{\slshape Important Remark}\begin{var}The\label{Cartan-Sg-graded} graded space $\Cg\sb{\ggoth}$ is 
an $\Sg\sp{\ggoth}$-graded module (\bref{def-graded-module}), the differential $\dgg $ is $\Sg\sp{\ggoth}$-linear, and
the cohomology $H\sb{\ggoth}(\Cg)$ is an $\Sg\sp{\ggoth}$-graded module.
\end{var}

\sss Any morphism of $\ggoth$-complexes $\alphag:(\Cg,\dg,\thetag,\cg)\to(\Dg,\dg,\thetag,\cg)$
induces a canonical $\Sg$-linear morphism of complexes $\alpha\sb{\ggoth}:\Cg\sb{\ggoth}\to\Dg\sb{\ggoth}$
by the formula $\alphag_{\ggoth}=\id\otimes\alphag$. 

\goodbreak\begin{theo}With\label{abstrait} the above notations one has,
\begin{enumerate}\itemsep2pt\parskip0pt
\item The correspondence
$(\Cg,\dg,\thetag,\cg)\fonct (\Cg\sb{\ggoth},\dg)$ and $\alphag\funct\alphag\sb{\ggoth}$
is  a covariant functor from $\DGM(\ggoth,\RR)$ into $\DGM(\RR)$.

\item For every\label{abstrait-spectral} $\ggoth$-complex
$(\Cg,\dg,\thetag,\cg)$, there exists a spectral sequence
converging to $\Hgg (\Cg)$ with
$$\big(\IE\sb{0}^{p,q}=\big(\varSg p\otimes
C^{q}\big)^{\ggoth}\,,\  d_0=1\otimes \dg\big)
\Rightarrow 
\Hgg ^{p+q}(\Cg)\,.$$

\item Let\label{abstrait-retraction} $\Gg$ be a compact Lie group, $\ggoth\pcolon \Lie(\Gg)$ 
and $\Cg$ end $\Dg$ two {\bfseries $\ggoth$-split} $\ggoth$-complexes (\bref{split}).
\begin{enumerate}
\item The\label{abstrait-retraction-ss}
$(\IE_2,d_2)$ spectral sequence term in {\rm(\bref{abstrait-spectral})}
is given by
$$\textstyle\Big(\IE\sb{2}^{p,q}=\varSg p^\ggoth\otimes
H^{q}(\Cg)\,,\  d_2=\sum_{i}e^{i}\otimes\cg(e_i)\Big)
\Rightarrow 
\Hgg ^{p+q}(\Cg)\,.$$

\item If\label{abstrait-reductif-paire} $H^{m}(\Cg)=0$ for all odd 
(or for all even) $m$, 
then 
$$\Hgg (\Cg)=\Sg\sp{\ggoth}\otimes \hg(\Cg)\,.$$

\item If\label{abstrait-reductif-iii}
$\alphag:\Cg\to\Dg$ is a quasi-isomorphism of $\ggoth$-complexes, 
$\alphag_{\ggoth}:\Cgg \to\Dg_{\ggoth}$ is a quasi-isomorphism.
\end{enumerate}

\item Let\label{abstrait-abelien}
 $\Gg$ be a {\bf commutative} compact Lie group and $\ggoth\pcolon \Lie(\Gg)$,
\begin{enumerate}
\mynobreak\nobreak\item For every $\ggoth$-complex $(\Cg,\dg,\thetag,\cg)$, the 
subcomplex
$(\Cg^{\ggoth},\dg)$ is stable under $\thetag$ and $\cg$, i.e.
$(\Cg\sp{\ggoth},\dg,\thetag,\cg)$ is a well defined $\ggoth$-complex.
\item If\label{abstrait-abelien-ii}
$j:\Cg^{\ggoth}\hook\Cg$ denotes the inclusion map, 
$j_{\ggoth}$ is a quasi-isomorphism.
\item The\label{abstrait-abelien-ss}
$(\IE_2,d_2)$ spectral sequence term in {\rm(\bref{abstrait-spectral})}
is given by
$$\textstyle
\Big(\IE\sb{2}^{p,q}=\varSg p\otimes
H^{q}(\Cg^{\ggoth})\,,\  d_2=\sum_{i}e^{i}\otimes \cg(e_i)\Big)
\Rightarrow 
\Hgg ^{p+q}(\Cg)$$
\item If\label{abstrait-abelien-paire} $H^{m}((\Cg)^{\ggoth})=0$ for all odd (or for all even)
$m$, then 
$$\Hgg (\Cg)=\Sg\otimes \hg(\Cg^{\ggoth})\,.$$
\item If\label{abstrait-abelien-iii}
$\alphag:\Cg^{\ggoth}\to\Dg^{\ggoth}$ is a quasi-isomorphism,
$\alphag_{\ggoth}$  is a quasi-isomorphism.
\end{enumerate}\end{enumerate}
\end{theo}

\noendpoint\begin{proof}\begin{enumerate}\itemsep1pt\parskip1pt
\item Clear.
\item For $m\in\ZZ$, let
$K_{m}=\big(\varSg{\geqslant  m}\otimes\Cg\big)^{\ggoth}$. Each $K_m$ is clearly
a sub-complex of $(\Cg\sb{\ggoth},\dgg )$ and 
$\big(\Cg\sb{\ggoth}=K_0\cont K\sb{1}\cont\cdots\big)$
is a \expression{regular}\index{regular filtration}\index{filtration!regular} decreasing filtration of $(\Cg\sb{\ggoth},\dgg )$
(see \cite{god} \myS4 pp. 76-) giving rise to the stated spectral sequence.

\item 
\begin{enumerate}
\item The assumption that $\Gg$ is compact ensures that
each (finite dimensional) $\ggoth$-module $\varSg p$ is semisimple.
Proposition \bref{quasi-isomorphismes-scindes}-(\bref{scindes-2})
may be used, and  
$\big(\varSg p\otimesg\Cg)\sp{\ggoth},1\otimes\dg\big)$
is quasi-isomorphic to
$(\varSg p\sp{\ggoth}\otimesg\Cg,1\otimes\dg\big)$. Consequently
$(\IE\sb{0},d_0)$ in (\bref{abstrait-spectral}) is quasi-isomorphic to
$\big(\Sg^{\ggoth}\otimes\Cg,1\otimes \dg\big)$
and $\IE\sb{1}^{p,q}=\varSg p\sp{\ggoth}\otimes H^q(\Cg)$. But the differential
$d_1:\IE\sb{1}^{p,q}\to \IE\sb{1}^{p+1,q}$ is null
since the $\Sg$ vanishes in odd degrees, therefore
$\IE\sb{1}=\IE_2$, which completes the proof of the claim.

\item Since the differential $d_r$ is of total degree $1$ 
and that $\IE\sb{r}^{p,q}=0$ if $p$ or $q$ is odd for all $r\geqslant 2$, 
one has $d_r=0$ for $r\geqslant 2$, and $\IE_2=\IE\sb{\infty}$.

\item Follows immediately from (\bref{abstrait-retraction}-i).
\end{enumerate}

\item 
\begin{enumerate}
\item We must check that
$\thetag(Y)\cg(X)\Cg^{\ggoth}=0$ for all $X,Y\in\ggoth$,
but, on $\Cg^{\ggoth}$ one has
$\thetag(Y)\cg(X)=\thetag(Y)\cg(X)+\cg(X)\thetag(Y)=\cg([Y,X])=\cg(0)$ 
since $\ggoth$ is abelian and from 
property (iii) of $\ggoth$-complexes (see \bref{categorie-g-complexes}-($\diamond$)).

%
\def\labelenumii{{\rm\theenumii,iii,iv,v)}}
\def\makelabel#1{{#1}}

\item Left to the reader.
\QED
\end{enumerate}\end{enumerate}
\end{proof}

\subsubsection{Split $\Gg$-Complexes.}It's\label{G-scindes} worth noting
that the proof of \bref{abstrait}-(\bref{abstrait-retraction}) 
makes use of the split condition (\bref{split}) \emph{only} for the 
finite dimensional sub-$\ggoth$-modules $V\in\Sg$,
whose $\ggoth$-module structure is obtained by differentiating
its natural $\Gg$-module structure.

The split condition $\bref{split}$ can easily be adapted to the context of
$\Gg$-modules. For any inclusion of $\Gg$-modules $N\dans M$ one writes
``$N|M$'' whenever the natural
map
$$\Hom_{\Gg}(V,M)\too\Hom_{\Gg}(V,M/N)\eqno(\ddagger)$$
is {\bf surjective} for all {\bf finite dimensional} $\Gg$-module $V$.

\begin{defi}A complex of $\Gg$-modules $(\Cg,\dg)$ is said to be 
\expression{$\Gg$-split}\index{G-split complex@$\Gg$-split complex} whenever
$B^{i}|Z^{i}|C^{i}$, for all $i\in\ZZ$.
\end{defi}

The proof of the following proposition is the same as \bref{quasi-isomorphismes-scindes}.

\begin{prop}Let\label{quasi-isomorphismes-G-scindes} $(\Cg,\dg)$ be a $\Gg$-split complex
of $\Gg$-modules 
such that the natural action of $\Gg$ in cohomology is trivial. Then,
\begin{enumerate}
\mynobreak\nobreak\item The\label{G-scindes-1} inclusion
$\Cg^{\Gg}\dans\Cg$ is  quasi-isomorphism.
\item If\label{G-scindes-2} $V$ is a {\bfseries semisimple} finite dimensional 
$\Gg$-module, the inclusions
$$\mathalign{
V^{\Gg}\otimesg\Cg&\cont& V^{\Gg}\otimesg\Cg^{\Gg}&\dans&(V\otimesg\Cg)^{\Gg}\\\noalign{\kern2pt}
\Homgb\sb{\RR}(V^{\Gg},\Cg)&\cont&\Homgb\sb{\RR}(V^{\Gg},\Cg^{\Gg})&\dans&\Homgb\sb{\Gg}(V,\Cg)
}$$
are quasi-isomorphisms.
\end{enumerate}
\end{prop}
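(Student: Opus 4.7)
The plan is to mimic the proof of \bref{quasi-isomorphismes-scindes} line by line, substituting $\Gg$-invariants for $\ggoth$-invariants throughout. Three ingredients of that earlier argument must first be transposed to the present setting: \textbf{(i)} the natural isomorphism of functors $(\_)^{\Gg}\simeq\Hom\sb{\Gg}(\RR,\_)$, from which one reads off the left-exactness of $(\_)^{\Gg}$; \textbf{(ii)} the $\Gg$-analog of \bref{exos-scinde}-(\bref{exos-1}), namely that $N|M$ implies $M^{\Gg}/N^{\Gg}\simeq (M/N)^{\Gg}$, obtained by specializing the split condition $(\ddagger)$ to $V=\RR$; and \textbf{(iii)} the fact that every equivariant cocycle $\lambda\colon W\to Z^{i}$ with $W$ a simple nontrivial $\Gg$-module is already a coboundary. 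Only the third point makes essential use of the extra hypothesis of trivial $\Gg$-action in cohomology; in the Lie-algebra setting that property held automatically because $\cg(X)$ furnishes a chain homotopy between $\thetag(X)$ and $0$ (remark \bref{trivial-action}), whereas no such contraction is available for an abstract complex of $\Gg$-modules.

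For part (\bref{G-scindes-1}), a cocycle of $(\Cg^{\Gg},\dg)$ in degree $i$ is an element of $(Z^{i})^{\Gg}$, and the coboundary subspace equals $(B^{i})^{\Gg}$: applying \textbf{(ii)} to the split inclusion $Z^{i-1}|C^{i-1}$ and combining with the isomorphism $\dg\colon C^{i-1}/Z^{i-1}\to B^{i}$ identifies the image of $\dg\colon (C^{i-1})^{\Gg}\to C^{i}$ with $(B^{i})^{\Gg}$. A second application of \textbf{(ii)}, this time to $B^{i}|Z^{i}$, then yields
$$H^{i}(\Cg^{\Gg})=(Z^{i})^{\Gg}/(B^{i})^{\Gg}\simeq(Z^{i}/B^{i})^{\Gg}=H^{i}(\Cg)^{\Gg}=H^{i}(\Cg)\,,$$
the last equality being precisely the hypothesis on the cohomological action.

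For part (\bref{G-scindes-2}), I would first prove that $\Homgb\sb{\Gg}(W,\Cg)$ is acyclic for every simple $\Gg$-module $W\not\simeq\RR$. A cocycle $\lambda\colon W\to C^{i}$ takes values in $Z^{i}$, and its composition with the projection $Z^{i}\onto H^{i}(\Cg)$ is a $\Gg$-morphism from a simple nontrivial module into a trivial one, hence zero by Schur's lemma. Thus $\lambda(W)\dans B^{i}$, and the split condition $Z^{i-1}|C^{i-1}$, applied to the finite dimensional module $W$, lifts $\lambda$ to $\mu\colon W\to C^{i-1}$ with $\lambda=\dg\circ\mu$. Writing a semisimple $V$ as $V^{\Gg}\oplus W$ with $W$ having no trivial summand, one deduces that $\Homgb\sb{\Gg}(V,\Cg)$ is quasi-isomorphic to $\Homgb\sb{\Gg}(V^{\Gg},\Cg)=\Homgb\sb{\RR}(V^{\Gg},\Cg)$, which by part (\bref{G-scindes-1}) is in turn quasi-isomorphic to $\Homgb\sb{\RR}(V^{\Gg},\Cg^{\Gg})$; the tensor-product version follows by applying the same argument with $V\dual$ in place of $V$. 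The main (though modest) obstacle in the whole adaptation is the bookkeeping of where exactly the new hypothesis on the cohomological action must be inserted, since in \bref{quasi-isomorphismes-scindes} it was never stated but was tacitly used via the contraction $\cg$.
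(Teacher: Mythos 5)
Your proposal is correct and takes essentially the same route as the paper, whose own proof simply declares the $\Gg$-split case to be identical to the $\ggoth$-split case \bref{quasi-isomorphismes-scindes}, with the trivial-action-on-cohomology hypothesis now imposed explicitly (it was automatic there via the contraction homotopy), exactly as you note. The only blemish is the written equality $\Homgb\sb{\Gg}(V^{\Gg},\Cg)=\Homgb\sb{\RR}(V^{\Gg},\Cg)$, which should read $\Homgb\sb{\Gg}(V^{\Gg},\Cg)=\Homgb\sb{\RR}(V^{\Gg},\Cg^{\Gg})$ (a $\Gg$-map out of a trivial module lands in the invariants); with that correction your chain of quasi-isomorphisms is precisely the paper's.
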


\section{Equivariant Cohomology of $\Gg$-Manifolds}
\subsection{Equivariant Differential Forms}
\subsubsection{Fields in Use.}Unless otherwise stated, manifolds, Lie groups and Lie algebras, vector spaces, complexes of vector spaces, linear maps, tensor
products and related stuff,
will be defined over the field of real numbers $\RR$.

\subsubsection{$\Gg$-Derivations and Contractions.}Let\label{G-derivation} $\Gg$ be a {\bf connected} Lie  group. 
Denote by $\ggoth\pcolon \Lie(\Gg)=T_{e}\Gg$ 
the Lie algebra of $\Gg$ endowed with the adjoint action.
As in \bref{cartan}, let $\Sg$ be the ring of polynomial maps from $\ggoth$ to $\RR$,
graded by twice the polynomial degree.

Let $\Mg$ be a $\Gg$-manifold. Each $Y\in\ggoth$ defines a vector field on $\Mg$
by setting
$$\vect Y(m)\pcolon {d\over dt}\Big(t\mapsto \exp(tY)\cdot m\Big)_{t=0}$$

Let  $\vect Y\cdot \omega$ denote the \expression{contraction}\index{contraction} 
of the differential form $\omega\in\Omega(\Mg)$ by the vector field $\vect Y$. The map
$\cg(Y):\Omega(\Mg)\to\Omega(\Mg)$, 
$\omega\mapsto \vect Y\cdot \omega$, is then an \expression{antiderivation}\index{antiderivation} of degree $-1$ and the map $\cg:\ggoth\to\Morg\sb{\GV(\KK)}(\Omega(\Mg),\Omega(\Mg)[-1])$
verifies the condition (i) for $\ggoth$-complexes (see \bref{categorie-g-complexes}-($\diamond$)).

The Lie derivative with respect to the vector field $\vect Y$, gives a Lie algebra representation $\thetag:\ggoth\to\Endg\sb{\GV(\KK)}(\Omega(\Mg))$ by \expression{algebra derivations}\index{derivation}.

Both of the operators $\thetag(Y)$ and $\cg(Y)$, resp. \expression{the $\Gg$-derivations and the $\Gg$-contractions}\index{G-derivations@$\Gg$-derivation}\index
{G-contraction@$\Gg$-contraction}
 stabilizes the subcomplex
of compact support differential forms, and $(\Omega(\Mg),\dg,\thetag,\cg)$ and  $(\Omegac  (\Mg),\dg,\thetag\,\cg)$ 
become $\ggoth$-complexes in the sense of \bref{categorie-g-complexes}.

\begin{defi}Let\label{defi-equivariant-diff-forms} $\Gg$ be a compact connected Lie group.
The\label{coh-equiv} \expression{complex of $\Gg$-equivariant differential forms, resp. with compact support, of $\Mg$}\index{equivariant!differential form}, are the following Cartan complexes (\bref{def-cartan-complex})
$$\mathalign{
\big(\OmegaG (\Mg),\dgG\big) &\pcolon&\big(\Omega(\Mg)\sb{\ggoth},\dgg \big)&=&\big(\big(\Sg\otimes\Omega(\Mg)\big)^{\Gg}\,,\dgg \big)\\\noalign{\kern2pt}
\llap{\it resp.\ }\big(\OmegaGc (\Mg),\dgG\big) &\pcolon&
\big(\Omegac  (\Mg)\sb{\ggoth},\dgg \big)&=&\big(\big(\Sg\otimes\Omegac (\Mg)\big)^{\Gg},\dgg \big)\,.
}$$
Their cohomology, denoted by $\HG (\Mg)$, resp. $H_{\Gg,\rm c}(\Mg)$,
are the \expression{$\Gg$-equivariant cohomology\index{equivariant!cogolomogy}, resp. with compact support, of $\Mg$.}

In the case where $\Mg=\pt$, we have $\HG  (\pt)=\Sg\sp{\Gg}=\Sg\sp{\ggoth}$. 
The notation ``$H_\Gg$''  stands for
``$\HG  (\pt)$''.

The Cartan complexes $\OmegaG (\Mg),\OmegaGc(\Mg)$ and the equivariant cohomology spaces $\HG (\Mg)$ and $H\sb{\Gg,\rm c}(\Mg)$ are  $\HG$-graded modules (\cf\bref{def-graded-module}).
\end{defi}

\goodbreak\begin{prop}Let\label{G-espaces} $\Gg$ be a 
{\relax compact connected} Lie group.
\varlistseps{\parskip0pt\itemsep0pt}\begin{enumerate}\nobreak\mynobreak
\item The\label{G-espaces-a} complexes of $\Gg$-modules 
 $(\Omega(\Mg),\dg)$ and $(\Omegac  (\Mg),\dg)$
are $\Gg$-split (\bref{G-scindes}).
In particular, if $\Cg$ denotes $(\Omega(\Mg),\dg)$ or
$(\Omegac  (\Mg),\dg)$, the inclusions
$$\mathalign{
\Sg^{\Gg}\otimes\Cg&\cont&\Sg^{\Gg}\otimes\Cg^{\Gg}&\dans&(\Sg\otimes\Cg)^{\Gg}
}$$
are quasi-isomorphisms.

\item  The correspondence
$\Mg\fonct(\Omega(\Mg),\dg,\thetag,\cg)$, $f\fonct f^{*}$ is a contravariant functor 
from the category of
$\Gg$-manifolds into the category of $\Gg$-split $\ggoth$-complexes.
\item  The correspondence
$\Mg\fonct(\Omegac (\Mg),\dg,\thetag,\cg)$, $f\fonct f^{*}$ is a contravariant functor 
from the category of
$\Gg$-manifolds and {\bfseries proper} maps to the category of $\Gg$-split $\ggoth$-complexes.

\item There\label{G-espaces-ss} exists a {\bfseries functor} on the category $\Gg\tiret\Man$
of $\Gg$-manifolds and $\Gg$-equivariant maps
 that assigns
to every $\Gg$-manifold $\Mg$ 
a spectral sequence that converges to its equivariant cohomology
$$
\mathalign{
\IE\sb{2}^{p,q}=&\varSg p^{\ggoth}\otimes&H ^{q}(\Mg)\hfill&\Rightarrow& \HG ^{p+q}(\Mg)\,.
}
$$

\item There\label{G-espaces-ss-cmp} exists a {\bfseries functor} on the category $\Gg\tiret\Manp$
of $\Gg$-manifolds and $\Gg$-equivariant {\bfseries proper} maps
that assigns to every $\Gg$-manifold $\Mg$ 
a canonical spectral sequence that converges to its equivariant cohomology with compact support
$$
\preskip-1ex\mathalign{
\IE\sb{2}^{p,q}=&\varSg p^{\ggoth}\otimes&H\cmp ^{q}(\Mg)&\Rightarrow& H_{\Gg,\rm c}^{p+q}(\Mg)\,.
}
$$
\end{enumerate}
\end{prop}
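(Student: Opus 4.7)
The plan is to reduce every part to the abstract machinery already set up in section~\bref{eq-background}, with part (\bref{G-espaces-a}) being the only place that requires genuine manifold-theoretic input. First I would establish the $\Gg$-split property of $\Omega(\Mg)$ and $\Omegac(\Mg)$. Given any finite-dimensional $\Gg$-module $V$, an inclusion $N \subset M$ with $N \in \{B^i, Z^i\}$ and $M \in \{\Omega^i(\Mg), \Omegac^i(\Mg)\}$, and an equivariant map $\varphi : V \to M/N$, I pick an arbitrary $\RR$-linear section $\psi : V \to M$ (available since surjections of vector spaces split) and then average over the compact group $\Gg$ with respect to normalized Haar measure,
$$\tilde\varphi(v) \pcolon  \int_\Gg g \cdot \psi(g^{-1} \cdot v)\, dg\,.$$
Evaluated pointwise on $\Mg$ this is a smooth integral of $\Lambda^\bullet T^*\Mg$-valued functions, so it defines a differential form; a pointwise check shows $\tilde\varphi$ is equivariant and still lifts $\varphi$. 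In the compact-support case, choose a finite spanning set $v_1,\dots,v_r$ of $V$ and a compact $K \subset \Mg$ containing every $\operatorname{supp}\psi(v_j)$: then $\operatorname{supp}\tilde\varphi(v) \subset \Gg \cdot K$, which is compact precisely because $\Gg$ is compact. This gives $B^i\vert Z^i\vert C^i$ in both cases.

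With the $\Gg$-split property in hand, the two stated inclusions are quasi-isomorphisms via proposition \bref{quasi-isomorphismes-G-scindes}. Since $\Gg$ is connected and $\thetag(Y) = \dg\cg(Y) + \cg(Y)\dg$ is a coboundary, $\Gg$ acts trivially on $\Hr(\Mg)$ and $\Hc(\Mg)$, so part (\bref{G-scindes-1}) gives that $\Cg^\Gg \hookrightarrow \Cg$ is a quasi-isomorphism, whence, after tensoring by $\Sg^\Gg$, so is the left inclusion $\Sg^\Gg \otimes \Cg^\Gg \hookrightarrow \Sg^\Gg \otimes \Cg$. For the right inclusion, each homogeneous piece $\varSg{p}$ is a finite-dimensional semisimple $\Gg$-module (compactness of $\Gg$), so part (\bref{G-scindes-2}) applied with $V = \varSg{p}$ yields the quasi-isomorphism $\varSg{p}^\Gg \otimes \Cg^\Gg \hookrightarrow (\varSg{p} \otimes \Cg)^\Gg$; summing over $p$ finishes the claim.

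Parts (b) and (c) are routine functorialities. For an equivariant $f : \Mg \to \Ng$, the pullback $f^*$ commutes with $\dg$, with contractions $\cg(Y)$ (because equivariance gives $f_*\vec{Y}_\Mg = \vec{Y}_\Ng$, hence $f^*(\vec{Y}\cdot\omega) = \vec{Y}\cdot f^*\omega$), and with Lie derivatives $\thetag(Y)$; when $f$ is proper, $f^*$ preserves compact supports. The $\Gg$-split property having already been established object-wise in (\bref{G-espaces-a}), both functors land in $\Gg$-split $\ggoth$-complexes. Parts (\bref{G-espaces-ss}) and (\bref{G-espaces-ss-cmp}) then follow by applying Theorem \bref{abstrait}-(\bref{abstrait-retraction-ss}) pointwise to these $\ggoth$-split $\ggoth$-complexes, which directly yields the spectral sequence
$$\IE_2^{p,q} = \varSg{p}^\ggoth \otimes H^q(\Cg) \Longrightarrow \Hgg^{p+q}(\Cg)\,,$$
with $\Cg = \Omega(\Mg)$ or $\Omegac(\Mg)$. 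Functoriality of the spectral sequence is inherited from that of the filtration $K_m = (\varSg{\geqslant m}\otimes \Cg)^\Gg$, which is natural in morphisms of $\ggoth$-complexes.

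The main obstacle is the compact-support clause in (\bref{G-espaces-a}): one has to argue that Haar averaging of a smooth path $g \mapsto g\cdot\psi(g^{-1}\cdot v)$ of compactly supported forms still lands in $\Omegac(\Mg)$. Compactness of $\Gg$ is used twice and essentially in this single step, both to guarantee the finiteness of the Haar integral and to control the union of supports $\Gg \cdot K$ and keep it compact; once past this point, the remainder is a mechanical application of the abstract formalism of section~\bref{eq-background}.
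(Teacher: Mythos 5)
Your strategy is the paper's own (Haar averaging to get the split property, then \bref{quasi-isomorphismes-G-scindes} and \bref{abstrait} for the quasi-isomorphisms and the spectral sequences), and parts (b)--(e) as you treat them are fine. The gap is in part (a), exactly at the condition $B^{i}|Z^{i}$ and its compact-support analogue. When $N=Z^{i}\dans M=\Omega^{i}(\Mg)$ your ``pointwise check'' is legitimate: the quotient $M/N$ is identified with $B^{i+1}$ via $\dg$, and $\dg$ commutes with the $\Gg$-integral (differentiation under the integral sign), so the averaged lift still lifts $\varphi$ --- this is precisely the paper's symmetrization argument for $Z^{i}|\Omega^{i}$ and $Z^{i}_{\rm c}|\Omega^{i}_{\rm c}$, including your support estimate via $\Gg\Cdot K$. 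But when $N=B^{i}\dans M=Z^{i}$ the quotient is $H^{i}$, and the claim that $\tilde\varphi$ still lifts $\varphi$ is exactly the assertion that
$$\Big[\int_{\Gg}g_{*}\psi(g^{-1}v)\,dg\Big]=[\psi(v)]\,,$$
i.e.\ that Haar averaging of a cocycle does not change its cohomology class. Passing to classes does not commute with the pointwise-defined integral in any obvious way: for each fixed $g$ the integrand differs from a fixed representative by a coboundary (triviality of the action on cohomology), but an integral over $\Gg$ of exact forms is exact only if one can choose primitives depending smoothly on $g$ and integrate them; in the compactly supported case one must moreover keep these primitives compactly supported with uniformly controlled supports --- your support argument controls the support of $\tilde\varphi(v)$, not the supports of the primitives. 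Note that by \bref{exos-scinde}-(\bref{exos-0}) the condition $B^{i}|Z^{i}$ reduces to the surjectivity of $(Z^{i})^{\Gg}\to H^{i}$, so what you are asserting is precisely the nontrivial statement, not a consequence of formal averaging.

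This is where the paper spends the bulk of its proof: using connectedness of $\Gg$ it constructs explicit homotopies $g^{*}\omega-g_{0}^{*}\omega=\dg\big(\int_{0}^{1}\beta(t,g)\,dt\big)$ over charts of $\Gg$ with $\beta(t,g)$ depending smoothly on $g$, glues them with a partition of unity subordinate to a finite good cover of $\Gg$, and tracks the supports of the $\beta(t,g)$ to conclude that $\Sigma(\omega)-\omega$ is a coboundary of a \emph{compactly supported} form when $\omega$ is compactly supported. Your proposal needs either this construction or an equivalent substitute (some argument producing a $g$-smooth family of primitives with support control); as written, the sentence ``a pointwise check shows $\tilde\varphi$ \dots still lifts $\varphi$'' silently assumes the very point at issue for $B^{i}|Z^{i}$ and $B^{i}_{\rm c}|Z^{i}_{\rm c}$.
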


\noendpoint\begin{proof}\let\ttimes\times\def\times{{\ttimes}}
\varlistseps{\topsep0pt }\begin{enumerate}\parskip4pt\mou\itemsep2pt\mou
\item 
For  $i\in\NN$, the \expression{pushforward\index{pushforward} action} of $\Gg$ on $\Omega\sp{i}(\Mg)$ is 
defined as $g_*(\omega)\pcolon (g\sp{-1})\sp{*}(\omega)$ for all $g\in\Gg$ and $\omega\in\Omega\sp{i}$, so that
$(g_1g\sb{2})\sb{*}=g\sb{1*}\circ g\sb{2*}$.

If $V$ be is a (smooth) finite dimensional representation of $\Gg$ over $\CC$, we make
the group $\Gg$ act on $\Hom(V,\Omega\sp{i}(\Mg))$
by the formula 
$$(g\Cdot\lambda)(v)=g\sb{*}\big(\lambda(g\sp{-1}v)\big)\,,\quad
\mrlap{\forall\lambda\in\Hom(V,\Omega\sp{i}(\Mg))\,,}$$
so that $\lambda$ is a $\Gg$-module morphism if and only if $g\Cdot\lambda=\lambda$.
We claim that there exists a ``symmetrization'' operator
$$\Sigma:\Hom(V,\Omega\sp{i}(\Mg))\to\Hom(V,\Omega\sp{i}(\Mg))\sp{\Gg}$$
such that $\Sigma^2=\Sigma$ and $\Sigma(\lambda)=\lambda$ if and only if $\lambda$ is a $\Gg$-module morphism.

Indeed, let $\lambda$ be a linear map from $V$ to $\Omega^i(\Mg)$. For every
$i$-tuple of vector fields $\set \chi_1,\ldots,\chi_i/$ over $\Mg$
and each $v\in V$, the real function
$$\halfdisplayskips\Mg\ni x\mapsto\Big(\int_{\Gg}g_{*}
\big(\lambda (g^{-1}v)\big)(x)
\big(\chi_1(x),\ldots,\chi_{i}(x)\big)\;dg\Big)\in\RR$$
where $dg$ is a $\Gg$-invariant form of top degree on $\Gg$, 
such that $1=\int\sb{\Gg}dg$,
is a smooth function {\bold because $V$ is finite dimensional}, 
and it depends linearly on $v\in V$, and 
multilinearly and antisymmetrically on the  $\chi_*$.
We therefore have an $i$-differential form 
which we denote by
$$\Sigma(\lambda)(v)\pcolon \int_{\Gg}g_{*}
\big(\lambda (g^{-1} v)\big)\,dg\,,\eqno(\asts1)$$
and whose fundamental properties are
{\varlistseps{\topsep4pt\itemsep0pt\parsep0pt}\begin{itemize}
\mynobreak\nobreak\item $\Sigma(\dg\circ\lambda)=\dg\circ\Sigma(\lambda)$;
\mynobreak\nobreak\item $\Sigma(\lambda):V\to\Omega^{i}(\Mg)$
is a $\Gg$-module morphism;
\mynobreak\nobreak\item $\Sigma(\lambda)=\lambda$ if $\lambda$ is already 
a $\Gg$-module morphism.
\end{itemize}}

 We can now resume the proof that $Z^i(\Mg)|\Omega^i(\Mg)$. 
Given a $\Gg$-module morphism
$\mu\in\Hom\sb{\Gg}(V, B^{i+1}(\Mg))$, there always exists
a linear map $\lambda:V\to\Omega\sp{i}(\Mg)$ lifting $\mu$, i.e.
such that
$\mu=\dg\circ\lambda$, but then one applies the symmetrization 
operator $\Sigma$ and one gets
$\mu=\Sigma(\mu)=\Sigma(\dg\circ\lambda)=\dg\circ\Sigma(\lambda)\,,$
which shows that the $\Gg$-module morphism $\Sigma(\lambda)$  lifts $\mu$.

\penalty-500
For $Z^i\cmp(\Mg)|\Omega^i\cmp(\Mg)$, note that, since $V$ is finite dimensional,
the supports of the elements in $\lambda(V)$
are contained in one and the same compact subset $\Cg\dans\Mg$, but then the supports of 
the $g_*(\lambda(g\sp{-1}v))$ in $(\asts1)$
are contained in $\Gg\Cdot\Cg$ which is obviously compact.
Therefore, given $\lambda:V\to\Omegac (\Mg)$, one gets a linear map $\Sigma(V):V\to\Omegac (\Mg)$ which is a $\Gg$-module morphism, and the preceding arguments apply to the compactly supported case.

\smallskip

To prove that $B^i(\Mg)|Z^i(\Mg)$, il  suffices, from \bref{exos-scinde}-(\bref{exos-0}), to show that every cocycle is cohomologous to a $\Gg$-invariant cocycle.
But before doing so, let us recall a general homotopy argument. Given a smooth map
$\varphi:\RR\times\Mg\to \Ng$, if
$\omega\in\Omega^i(\Ng)$  the pullback
$\varphi\sp{*}\omega$ belongs to $\Omega^i(\RR\times\Mg)$, i.e.
is a section of the exterior algebra bundle of the cotangent bundle $T\sp{*}(\RR\times\Mg)$ of $\RR\times\Mg$. Now, 
the canonical decomposition $T\sp{*}(\RR\times\Mg)$
as the direct sum of cotangent bundles $T\sp{*}(\RR)\oplus T\sp{*}(\Mg)$, gives rise to a canonical decomposition of the $i$-th exterior power of the cotangent bundle
$$\textstyle\bigwedge\nolimits^iT\sp{*}(\RR\times\Mg)=
\bigwedge\nolimits ^{i}(T^*{\Mg})\oplus
\Big(T\sp{*}(\RR)\otimes\bigwedge\nolimits ^{i-1}(T^*{\Mg})\Big)\,.
$$
Consequently, the pullback $\varphi\sp{*}(\omega)$ 
canonically decomposes as
$$\varphi\sp{*}(\omega)(t,x)=\alpha(t,x)+dt\wedge\beta(t,x)\,,$$
where $\alpha$ (resp. $\beta$) is a section of the vector bundle
$\bigwedge\nolimits^iT\sp{*}(\Mg)$
(resp. $\bigwedge\nolimits^{i-1}T\sp{*}(\Mg)$)
over the base space $\RR\times\Mg$.

When $\omega$ is in addition a cocycle, so is $\varphi\sp{*}(\omega)$ and, in view of the previous decomposition, this  amounts to the following two conditions\color{black}
$$
\dg\alpha(t,x)=0\,,\qquad
{\partial\over\partial t}\alpha(t,x)=\dg\beta(t,x)\,,
$$
where $\dg$ is the coboundary in $\Omega(\Mg)$ ($t$ is then assumed constant).
In particular, if $\varphi_t:\Mg\to\Ng$ denotes the map $x\mapsto\varphi(t,x)$, we get
$$\mathalign{
\varphi_t^*(\omega)-\varphi_0^*(\omega)&=&\alpha(t)-\alpha(0)
\hfill\\\noalign{\kern4pt}
&=&
\int\sb{0}\sp{t}
{\partial\over\partial t}\alpha(t)\,dt
=\int\sb{0}\sp{t}\dg \beta(t)\,dt
=\dg\Big(\int\sb{0}\sp{t}\beta(t)\,dt\Big)\,,\hfill}
\eqno(\asts2)$$
and the cocycles $\varphi\sp{*}\sb{t}(\omega)$ are all cohomologous to $\varphi\sp{*}\sb{0}(\omega)$. 

At this point it is worth noting that this process also gives a canonical element $\varpi(x)=\int\sb{0}\sp{1}\beta(t,x)\;dt\in\Omega\sp{i-1}(\Mg)$, depending on $\omega$ and such that $\varphi\sb{1}\sp{*}(\omega)-\varphi\sb{0}\sp{*}(\omega)=\dg\varpi$.

\medskip
Under the hypothesis of our proposition, a first consequence of these notes, is that if $\omega\in Z^i(\Mg)$ then
$g^*\omega$ is cohomologous to $\omega$ for all $g\in\Gg$. Indeed, since $\Gg$ is connected, 
there is a smooth path $\gamma:\RR\to\Gg$ such that
$\gamma(0)=e$ and $\gamma(1)=g$, and then taking $\varphi:\RR\times\Mg\to\Mg$, $(t,x)\mapsto\gamma(t)\Cdot x$,
one concludes that $g^*\omega=\gamma_1\sp{*}(\omega)\sim\gamma_0\sp{*}(\omega)=\omega$.

\medskip
More generally, given any diffeomorphism
$\phi:\RR^{d\sb{\Gg}}\to \Gg$
onto an open subset $U\dans\Gg$, one defines a smooth multiplicative action of $\RR$ over $U$ by setting
$t\star g\pcolon \phi(t\cdot\phi^{-1}(g))$
for all $t\in\RR$ and $g\in U$, and considers, for each $g\in U$,
the map $\varphi\sb{g}:\RR\times\Mg\to\Mg$,
$\varphi\sb{g}(t,x)=(t\star g)x$. After that, if $\omega$ is a cocycle of $\Omega\sp{i}(\Mg)$ we will have
$$\preskip-0em
g\sp{*}\omega-g\sb{0}\sp{*}\omega=\dg
\Big(\int\sb{0}\sp{1}\beta(t,g)\,dt\Big)\,,
\eqno(\asts3)$$
with $g_0\pcolon \phi(0)$ and where
$\beta(t,g)$ denotes a family of elements of $\Omega\sp{i-1}(\Mg)$ depending smoothly on $(t,g)\in\RR\times U$, i.e. for any
$(i-1)$-tuple $(\chi_1,\ldots,\chi\sb{i-1})$
of vector fields over $\Mg$, the following map is smooth:
$$\RR\times U\times \Mg\ni (t,g,x)\mapsto\beta(t,g,x)(\chi_1(x),\ldots,\chi\sb{i-1}(x))\in\RR\,.$$

We now come to a key point. If in addition,
one has a compactly supported function
$\rho:U\to\RR$,
then, for any top degree form $dg$ on $\Gg$, one has
$$\mathalign{\int\sb{\Gg}\rho(g)\,g\sp{*}\omega\,dg&=&
\int\sb{\Gg}\rho(g)\,\big(g\sp{*}\omega-g\sp{*}_0\omega\big)\,dg
+\Big(\int\sb{\Gg}\rho(g)\,dg\Big)g\sp{*}_0\omega\hfill\\\noalign{\kern4pt}
&=&
\dg\Big(\int\sb{\Gg}\int\sb{0}\sp{1}\rho(g)\,
\beta(t,g)\,dg\Big)
+\Big(\int\sb{\Gg}\rho(g)\,dg\Big)g\sp{*}_0\omega\\
}$$
where  $\int\sb{\Gg}\int\sb{0}\sp{1}\rho(g)\,
\beta(t,g)\,dg$ is a {\bf smooth} differential form over $\Mg$.
But, as we already show that $g\sp{*}_0\omega\sim\omega$, since $\Gg$ is connected, we may conclude that
$$\int_\Gg\rho(g)\,g^*\omega\,dg\sim\Big(\int\sb{\Gg}\rho(g)\,dg\Big)\,\omega\,,$$
something that is satisfied by any
compactly supported function $\rho:\Gg\to\RR$ 
whose support is contained in any open subset of $\Mg$
diffeomorphic to $\RR^d_\Gg$.

If we now make use of the fact that $\Gg$ is compact (which we haven't done so far), we can choose the form $dg$ to be $\Gg$-invariant such that $\int\sb{\Gg}dg=1$, and we can fix a
smooth partition of unity $\set\rho_i/$ subordinate to a finite good cover\index{good cover} (\cf $(^{\ref{def-good cover}})$) of $\Gg$. Then
$$\mathalign{
\Sigma(\omega)\pcolon \int\sb{\Gg}g\sp{*}\omega\,dg&=&
\int\sb{\Gg}\tsum_i\rho_i(g)\,g\sp{*}\omega\,dg=\tsum_i\int\sb{\Gg}\rho_i(g)\,g\sp{*}\omega\,dg\hfill\\\noalign{\kern4pt}
&&\sim
\Big(\tsum_i\int\sb{\Gg}\rho_i(g)\,dg\Big)\,\omega=
\Big(\int\sb{\Gg}\tsum_i\rho_i(g)\,dg\Big)\,\omega=
\omega
}$$
where, obviously, $\Sigma(\omega)$ is a $\Gg$-{\bf invariant} cocycle, which completes the poof that
$B^i(\Mg)|Z^i(\Mg)$ as $\Gg$-modules.

\smallskip

If we denote by $|\_|$ the support of a differential form, we see in what precedes that for $t\in[0,1]$ and $g\in\Gg$ one has
$$\def\all#1\dans{\hbox to 1.9cm{\hss$#1{}$}{}\dans}
\begin{cases}\noalign{\kern-2pt}
\all |\beta(t)|\dans \gamma([0,1])\Cdot|\omega|&\text{\ in $(\asts2)$}\\[4pt]
\all|\rho(g)\beta(t,g)|\dans ([0,1]\star |\rho|)|\omega|
&\text{\ in $(\asts3)$}\\[-1pt]
\end{cases}
$$
so that if $|\omega|$ is compact, the previous 
arguments show that $\Sigma(\omega)-\omega$ is in fact the differential of a compactly supported differential form, i.e. 
we have also proved that $B^i\cmp(\Mg)|Z^i\cmp(\Mg)$.

\def\itemsuffixe{,c,d}

\def\labelenumi{{\rm\theenumi,c,d)}}
\itemindent0.5cm
\item Follow by (a) and \bref{abstrait} by
interchanging $\ggoth$ and $\Gg$, 
by \bref{G-scindes} and \bref{quasi-isomorphismes-G-scindes}.\QED
\end{enumerate}
\end{proof}

\begin{exer}[ and remarks]Show\label{homotopiquement-trivial} that the conclusion in \ref{G-espaces}-(\ref{G-espaces-a}) does not change if we weaken the connectedness hypothesis of $\Gg$ to simply require the action of $\Gg$ on $\Cg$ to be homotopically trivial. Show that this arrives in particular when, $\Gg$ being connected, one is interested in $\HH(\Mg)$ where $\Hg$ is a closed subgroup of $\Gg$, connected or not.
In that case, if $\Hg_{\circ}$ denotes the connected component of $1\in\Hg$, one has $\HH(\Mg)=\Hr_{\Hg_\circ}(\Mg)^{W}$ and $\HHc(\Mg)=\Hr_{\Hg_\circ,\rm c}(\Mg)^{W}$, where $W=\Hg/\Hg_{\circ}$.
\end{exer}

\subsection{The Borel Construction}\label{Borel-construction}
\subsubsection{The Classifying Space.}Let\label{classifiant} $\Gg$ be a compact connected Lie group
and $\EE_{\Gg}$ a \expression{universal fiber bundle for $\Gg$}\index{universal fiber bundle}.
Recall that this topological space is the 
limit of an inductive system in the category of
(right) $\Gg$-manifolds $\set \EE_{\Gg}(n)\to\EE_{\Gg}(n+1)/_{n\in\NN}$, where $\EE_{\Gg}(n)$ is compact, connected, oriented, $n$-acyclic and, moreover, the action of $\Gg$ on $\EE_{\Gg}(n)$ is free.
A \expression{classifying space of $\Gg$}\index{classifying space}
is then the quotient manifold $\BB_{\Gg}=\EE_{\Gg}/\Gg$,
limit of the inductive system in the category of manifolds
$\set \BB\sb{\Gg}(n)\to\BB\sb{\Gg}(n+1)/$
where each $\BB_{\Gg}(n)\pcolon \EE_{\Gg}(n)/\Gg$
is compact, simply connected since 
$\Gg$ is connected, and oriented. 

\sss Given a $\Gg$-manifold $\Mg$, the quotient $\Mg/\Gg$
may lack good differentiability properties since the action
of $\Gg$ is not, in general, a free action. A key idea
to deal with this issue, dating to the 1950s, 
is to replace the $\Gg$-manifold $\Mg$ by the product
$\EE\sb{\Gg}\times\Mg$ endowed with the 
\expression{diagonal action}\index{diagonal action}
of $\Gg$, $g\Cdot(e,x)\pcolon (eg\sp{-1},gx)$. 
Now, because $\EE\sb{\Gg}$ is ``contractible'', the 
topological space $\EE\sb{\Gg}\times\Mg$ 
has the same homotopy type as $\Mg$ and moreover has the advantage
that $\Gg$ acts freely on it. The quotient space is denoted, following Armand~Borel
(\footnote{Confer \myS3 of chapter IV in \cite{borel-sem}, especially the remark
\myS3.9, reproduced at the end of these notes, 
where Borel cites previous works of Conner and of Shapiro
using this construction in some special cases.}):
$$\displayboxit{\Mg_{\Gg}\pcolon (\EE_{\Gg}\times\Mg)/\Gg}$$

The natural fibration of fiber $\Mg$:
$$
\mathalign{
\Mg_{\Gg}\pcolon &\EE_{\Gg}\times_{\Gg}\Mg&\hfonto{\pi\sb{\Mg}}{}{0.7cm}&\EE_{\Gg}/\Gg&=:\BB_{\Gg}\\
&[e,x]&\hfto{}{}{0.7cm}&[e]
}$$
establishes an important link between the three spaces 
$\Mg,\Mg\sb{\Gg},\BB\sb{\Gg}$. Finally, if $f:\Mg\to\Ng$ is a $\Gg$-equivariant map, 
the induced map
$f_{\Gg}:\Mg_{\Gg}\to\Ng_{\Gg}$,
$[e,m]\mapsto[e,f(m)]$, is well defined and the diagram
$$
\mathalign{
\Mg_{\Gg}&\hf{f_{\Gg}}{}{1cm}&\Ng_{\Gg}\\\noalign{\kern-2pt}
\vfldonto{\pi_{\Mg}}{}{0.4cm}&&\vfldonto{}{\pi_{\Ng}}{0.4cm}\\
\BB_{\Gg}&\hfegal{}{}{1cm}&\BB_{\Gg}
}
$$
is clearly commutative.

\begin{defi}The functor $\Mg\fonct\Mg\sb{\Gg}$, $f\fonct f\sb{\Gg}$, from the category of $\Gg$-manifolds
to the category of fiber spaces over the classifying space\index{classifying space} 
$\BB\sb{\Gg}$, is called
\expression{the Borel construction}\index{Borel construction}
\end{defi}

\sss Although
the\label{limproj} topological space $\Mg_{\Gg}$ 
is not a manifold, it is the limit of an inductive system of such.
Indeed, for each $n\in\NN$, since the compact group $\Gg$ acts freely on the product manifold $\EE_{\Gg}(n)\times\Mg$, the topological quotient
$\Mg_{\Gg}(n)=\EE_{\Gg}(n)\times_{\Gg}\Mg$ 
has a natural manifold structure, canonically oriented 
whenever $\Mg$ is so. One gets an inductive system
in the category of manifolds $\set \mu_n:\Mg_{\Gg}(n)\to\Mg_{\Gg}(n+1)/\sb{n\in\NN}$ with
$\Mg_{\Gg}=\limind\Mg_{\Gg}(n)$, and even
an inductive system in the category of fibrations with fiber $\Mg$
and {\bf compact} base spaces
$$\let\ppi\pi
\def\pi_#1{{\ppi\sb{\Mg,#1}}}
\mathalign{
\hfdash{}{}{6}&\Mg_{\Gg}(n)&\hf{\mu_n}{}{1cm}&\Mg_{\Gg}(n+1)&\hfdash{}{}{6}&\cdots\qquad&\Mg_{\Gg}\mycr
&\vfldonto{\pi_{n}}{}{0.5cm}&&\vfldonto{\pi_{n+1}}{}{0.4cm}&&&\vfldonto{\ppi\sb{\Mg}}{}{0.5cm}\mycr
\hfdash{}{}{6}&\BB_{\Gg}(n)&\hf{\beta_n}{}{1cm}&\BB_{\Gg}(n+1)&\hfdash{}{}{6}&\cdots\qquad&\BB_{\Gg}\mycr
}$$
giving rise to the projective system of de Rham complexes
Rham 
$$\big\{ \Omega^{*}(\Mg_{\Gg}(n+1))\hf{\mu_n\sp{*}}{}{0.7cm}
\Omega^{*}(\Mg_{\Gg}(n))
\big\}{}_{n\in\NN}$$
and the projective system of the Rham cohomology
$$\big\{ H^d(\Mg_{\Gg}(n+1))\hf{H^d(\mu_n\sp{*})}{}{1.2cm}
H^d(\Mg_{\Gg}(n))
\big\}{}_{n\in\NN}$$
for each $d\in\NN$, 
which has the remarkable property that, for a given $d$,
the system is stationary, i.e. $H^d(\mu^*_n)$
is bijective for sufficiently large  $n$.

The same remarks hold for the compact support case since the maps $\mu_n$ are proper. One  then has
the projective system of de Rham complexes
$$
\big\{ \Omega^{*}\cmp(\Mg_{\Gg}(n+1))\hf{\mu_n\sp{*}}{}{0.7cm}
\Omega^{*}\cmp(\Mg_{\Gg}(n))
\big\}{}_{n\in\NN}$$
and;  for each $d\in\NN$, the stationary projective systems of the Rham cohomology
$$
\big\{ H^d\cmp(\Mg_{\Gg}(n+1))\hf{H^d(\mu_n\sp{*})}{}{1.2cm}
H^d\cmp(\Mg_{\Gg}(n))
\big\}{}_{n\in\NN}\,.$$

\begin{rema}One\label{approximations} can show that in both cases
$H^d(\mu^*_n)$ is bijective for all $n>d+1$.
The projective limit of $\set H^d(\Mg_\Gg(n))/\sb{n\in\NN}$
identifies then canonically with the $d$-th {\bf singular} cohomology $H^d(\Mg_\Gg;\RR)$, and
the projective limit of $\set H^d\cmp(\Mg_\Gg(n))/\sb{n\in\NN}$
 with the $d$-th {\bf singular} cohomology of {\bf vertical compact support} $H^d\sb{\rm c.v.}(\Mg_\Gg;\RR)$.
Using Cartan's results in \cite{cartan-1,cartan-2} one obtains canonical isomorphisms
$$
\HG (\Mg)\simeq H(\Mg_{\Gg};\RR)\text{\quad and\quad}
\HGc (\Mg)\simeq H_{\rm c.v.}(\Mg_{\Gg};\RR)\,.
$$
\end{rema}

\subsubsection{Serre Spectral Sequences.}The\label{serre}\index{Serre spectral sequence}\index{spectral sequence}
fibrations $\pi_{\Mg,n}$ 
in \ref{limproj}
are Serre fibrations
and as such, give rise
to a projective system of spectral sequences 
$$\left\{\mathalign{
\IE_2\sp{p,q}(\Mg_\Gg(n))&\pcolon &H^p(\BB_\Gg(n))\otimes H^q(\Mg)
&\Rightarrow& H^{p+q}(\Mg\sb{\Gg}(n))\\\noalign{\kern4pt}
\IE_{{\rm c},2}\sp{p,q}(\Mg_\Gg(n))&\pcolon &H^p(\BB_\Gg(n))\otimes H^q\cmp(\Mg)
&\Rightarrow& H\cmp^{p+q}(\Mg\sb{\Gg}(n))
}\right.$$
whose limits are the  \expression{(Serre) spectral sequence associated with $\pi_{\Mg}$}\index{Serre spectral sequence}.
$$\left\{\mathalign{
\IE_2\sp{p,q}(\Mg_\Gg)&\pcolon &H^p(\BB_\Gg)\otimes H^q(\Mg)
\\\noalign{\kern4pt}
\IE_{{\rm c},2}\sp{p,q}(\Mg_\Gg)&\pcolon &H^p(\BB_\Gg)\otimes H^q\cmp(\Mg)
}\right.\eqno(\IE(\Mg\sb{\Gg}))$$

\begin{prop}The\label{ssSerre} Serre spectral sequences $(\IE(\Mg\sb{\Gg}))$
associated with the fibration
$\pi_{\Mg}:\Mg_{\Gg}\to\BB_{\Gg}$ 
canonically identifies
with the spectral sequences already met in
\bref{G-espaces}-\rm(\bref{G-espaces-ss}).
\end{prop}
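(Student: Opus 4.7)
The plan is to work through the finite-dimensional approximations and invoke a filtered version of the Cartan--Chevalley--Weil comparison theorem. By \bref{limproj}, the total space $\Mg_\Gg$ is the direct limit (in the category of fiber bundles) of the compact base approximations $\pi_{\Mg,n}\colon \Mg_\Gg(n)\to\BB_\Gg(n)$, and both $H(\BB_\Gg)$ and $H(\Mg_\Gg)$ (as well as their compact-support variants) stabilize in each fixed degree. The Serre spectral sequence $(\IE(\Mg_\Gg))$ is by definition the inverse limit of the Serre spectral sequences associated to the $\pi_{\Mg,n}$, each of which is the honest de~Rham Serre spectral sequence of a locally trivial fibration between smooth manifolds, arising from the filtration of $\Omega(\Mg_\Gg(n))$ (resp.\ $\Omegac(\Mg_\Gg(n))$) by $F^p=\bigset \omega\mid \iota_{X_1}\cdots\iota_{X_{p'}}\omega=0,\ \forall p'<p,\ X_i\text{\ vertical}/$.

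Next I would construct, for each $n$, a filtered morphism of complexes from a truncation of the Cartan model of $\Mg$ into $\Omega(\Mg_\Gg(n))$, matching the Cartan filtration $F^p_{\rm Cartan}=(\varSg{\geqslant p}\otimes\Omega(\Mg))^\Gg$ (the very filtration used in \bref{abstrait}-(\bref{abstrait-spectral}) and inherited by \bref{G-espaces}-(\bref{G-espaces-ss})) with the Serre filtration above. Concretely, fix a connection $1$-form on the principal bundle $\EE_\Gg(n)\to\BB_\Gg(n)$ with curvature $\Omega_n\in\Omega^2(\BB_\Gg(n))\otimes\ggoth$; the associated Chern--Weil/Cartan map sends $P\otimes\omega\in(\Sg\otimes\Omega(\Mg))^\Gg$ to the basic form on $\EE_\Gg(n)\times\Mg$ obtained by substituting $\Omega_n$ into $P$ and horizontally lifting $\omega$. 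This is a morphism of complexes by the classical Weil computation, it is $\Sg^\Gg$-linear, and it sends $F^p_{\rm Cartan}$ into $F^p_{\rm Serre}$ because substitution of $\Omega_n$ (a $2$-form vertically annihilated) raises base degree by $2p$ on $\varSg{p}$.

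The associated graded of this map at $E_1$ is, by construction, $\Sg^\Gg\otimes\Omega(\Mg)\to H(\BB_\Gg(n))\otimes\Omega(\Mg)$ induced by the Chern--Weil map $\Sg^\Gg\to H(\BB_\Gg(n))$. For $n\gg p$, this is an isomorphism in degree $\le p$ by the standard computation of $H(\BB_\Gg)$ for compact connected $\Gg$ (which is exactly $\Sg^\Gg$). Passing to the limit and to $E_2$, both the Serre and the Cartan spectral sequences become $\IE_2^{p,q}=\varSg{p}^\ggoth\otimes H^q(\Mg)$ with matching $d_2$, and the comparison is functorial in $\Gg$-equivariant maps by naturality of the Chern--Weil construction. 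The compact-support case is handled identically using $\Omegac(\Mg)$ in place of $\Omega(\Mg)$ and vertical-compact supports on the Borel side, exploiting that the $\mu_n$ are proper.

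The main obstacle is verifying that the Chern--Weil/Cartan map is a \emph{filtered} quasi-isomorphism in the appropriate range, rather than merely a quasi-isomorphism in cohomology; this requires a careful degree count tying the polynomial degree in $\Sg$ to the horizontal degree over the base, and is where one typically invokes Cartan's original theorem from \cite{cartan-1,cartan-2} already cited in \bref{approximations}. Once granted, the identification of spectral sequences follows formally.
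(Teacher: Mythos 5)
Your proposal is, in substance, the same route the paper intends: the paper's entire proof of this proposition is the single sentence ``Implicit in \cite{cartan-1,cartan-2}'', and what you sketch --- a Chern--Weil/Cartan map from the Cartan model $(\Sg\otimes\Omega(\Mg))^{\Gg}$ into $\Omega(\Mg_{\Gg}(n))$ built from a connection on $\EE_{\Gg}(n)\to\BB_{\Gg}(n)$, compatible with the index filtration of \bref{abstrait}-(\bref{abstrait-spectral}) (the one underlying \bref{G-espaces}-(\bref{G-espaces-ss})) on one side and the Serre filtration on the other, inducing an isomorphism on $E_1$ in a range growing with $n$ and then passing to the limit --- is exactly the content the author delegates to Cartan. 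So you are not taking a different approach; you are supplying the details the paper omits, and the genuinely nontrivial step you flag (that the comparison is a \emph{filtered} quasi-isomorphism in the stable range, not merely a quasi-isomorphism) is precisely what Cartan's theorem provides. Two small corrections, neither of which is a gap in the strategy: (i) your description of the Serre filtration is mis-indexed --- the correct condition is that $\omega$ of total degree $m$ lies in $F^{p}$ when $\iota_{X_1}\cdots\iota_{X_{m-p+1}}\omega=0$ for all vertical $X_i$ (i.e.\ $\omega$ has horizontal degree at least $p$); as written, asking that contraction with fewer than $p$ vertical vectors kill $\omega$ would force $\omega$ to be entirely horizontal as soon as $p\geq 2$, which is much too restrictive; (ii) you should state explicitly that your comparison maps are compatible with the stabilization maps $\mu_n^{*}$ (naturality of the Chern--Weil construction under pullback of connections), since the Serre spectral sequence of $\pi_{\Mg}$ is defined here as the limit over $n$, and in the compact-support case this is also where the properness of the $\mu_n$ enters, as you note.
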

\begin{proof}Implicit in  \cite{cartan-1,cartan-2}.\end{proof}

\begin{exer}Let\label{Gysin-filtrant} $f\colon\Mg\to\Ng$ be a $\Gg$-equivariant map between
oriented $\Gg$-manifolds. For each $n\in\NN$, as in $\bref{limproj}$,
denote by $f\sb{\Gg}(n):\Mg\sb{\Gg}(n)\to\Ng\sb{\Gg}(n)$ the corresponding induced map over $\BB\sb{\Gg}(n)$.
\begin{enumerate}
\item Show\label{Gysin-filtrant-a} that the following diagrams are cartesian\index{cartesian diagram} with $\mu(n)$ and $\nu(n)$ proper.
$$
\mathalign{
\Mg\sb{\Gg}(n)&\hf{\mu(n)}{}{0.9cm}&\Mg\sb{\Gg}(n+1)\\
\vfld{f\sb{\Gg}(n)}{}{0.5cm}&&\vfld{}{f\sb{\Gg}(n+1)}{0.5cm}\\
\Ng\sb{\Gg}(n)&\hf{\nu(n)}{}{0.9cm}&\Ng\sb{\Gg}(n+1)
}$$
\item Prove\label{Gysin-filtrant-b} the following equalities
$$\left\{
\mathalign{
\nu(n)\sp{*}\circ f\sb{\Gg}(n+1)\gy&=&f\sb{\Gg}(n)\gy\circ\mu(n)\sp{*}\\\noalign{\kern3pt}
f\sb{\Gg}(n+1)\sp{*}\circ\nu(n)\gyp&=&\mu(n)\gyp\circ f\sb{\Gg}(n)\sp{*}\\
}\right.$$
\item When\label{Gysin-filtrant-c} $f:\Mg\to\Ng$ is moreover a closed embedding\index{closed embedding}\index{embedding!closed}, one defines the \expression{equivariant cohomology with support in $\Mg$}\index{equivariant!cohomology with support} by
$$H_{\Gg,\Mg}(\Ng):= H_{\Mg_{\Gg}}(\Ng_{\Gg})\,.$$
Show that there exists a convergent spectral sequence $(\IE_{\Mg\dans\Ng,r},d_r)$
$$\IE_{{\Mg\dans\Ng},2}\sp{p,q}\pcolon H^p(\BB_\Gg)\otimes H^q_{\Mg}(\Ng)
\Rightarrow H_{\Gg,\Mg}^{p+q}(\Ng)\,.
$$
\end{enumerate}
\end{exer}


\section{Equivariant Poincaré Duality}\label{S-EPD}
\subsection{Differential Graded Modules over a Graded Algebra}
\subsubsection{Graded Algebras.}A \expression{graded algebra}\index{graded!algebra} is a graded vector space
$\Ag\in\GV(\RR)$ with a family of bilinear maps $\Cdot:A^a\times A^b\to A\sp{a+b}$
such that the triple $(\Ag,0,+,\Cdot)$ is an $\RR$-algebra.

\noendpoint\begin{exas}\label{graded-examples}
\varlistseps{\itemsep2pt\topsep0pt}\begin{enumerate}
\item For a graded vector space $\Ng\in\GV(\RR)$, the 
space of graded endomorphisms $(\Endgrg\sb{\RR}(\Ng),0,+,\id,\circ)$
(\bref{graded-vector-spaces}) is a noncommutative  graded algebra.

\item $\HG=\Sg\sp{\ggoth}$ is a positively and evenly  graded commutative algebra. 

\item $\Omega(\Mg)$ and $\OmegaG(\Mg)$ are positively graded anticommutative algebras. 

\item $\Omegac (\Mg)$ and $\OmegaGc(\Mg)$ are positively graded anticommutative algebras,
with no unit element whenever $\Mg$ is not compact. 
\end{enumerate}
\end{exas}

\subsubsection{Graded Modules.}An\label{def-graded-module} \expression{$\HG$-graded module}\index{HG-graded module@$\HG$-graded module, $\HG$-gm}, \expression{$\HG$-gm} in short, is a graded space $\Vg\in\GV(\RR)$
together with a homomorphism $\HG\to\Endgr^{0}\sb{\RR}(\Vg)$ of graded algebras of degree $0$.
Given two $\HG$-gm's $\Vg$ and $\Wg$,
a \expression{graded homomorphism of $\HG$-gm's of degree $d$ from $\Vg$ to $\Wg$}\index{graded! homomorphism} is a graded homomorphism of graded spaces  $\alphag:\Vg\to\Wg$  of degree $d$ (\bref{graded-vector-spaces}), which is compatible with the action of $\HG$, i.e. $\alphag(P\Cdot\vg)=P\cdot\alphag(\vg)$
for all $P\in\HG$ and $\vg\in\Vg$. We denote by $\Homgr\sb{\HG}\sp{d}(\Vg,\Wg)$
the space of such homomorphisms and by
$$\Homgrg\sb{\HG}(\Vg,\Wg)=\bigset \Homgr\sb{\HG}\sp{d}(\Vg,\Wg)/\sb{d\in\ZZ}$$
the graded space of \expression{graded homomorphisms of $\HG$-gm's}. 

\smallskip\noindent 
When $d=0$, we may write 
$\Homgr\sb{\HG}(\Vg,\Wg)$ instead of 
$\Homgr\sb{\HG}\sp{0}(\Vg,\Wg)$.

\begin{exa}Examples \bref{graded-examples}-(c,d) are examples of $\HG$-graded modules.
\end{exa}

\sss The \expression{category $\GM (\HG)$ of 
$\HG$-graded modules}\index{category!of HG-graded modules@of $\Hg$-graded modules} 
is the category whose objects are the $\HG$-gm and whose
\expression{morphisms}\index{morphism!of HG-graded modules@of $\Hg$-graded modules} 
are the graded homomorphisms of degree~$0$. 
We will equivalently write $\Morg\sb{\GM (\HG)}(\Vg,\Wg)$ and $\Homgr\sb{\HG}(\Vg,\Wg)$ the set of morphisms from $\Vg$ to $\Wg$.

\sss A direct sum $\bigoplus\limits_{\agoth\in\Agoth}\HG[m_\agoth]$,
 with $m_{\agoth}\in\ZZ$, is called \expression{a free $\HG$-graded module}\index{free $\HG $-graded module}.

\noendpoint\begin{prop}\label{injectifs-gradues}
\begin{enumerate}\mynobreak\nobreak\itemsep0pt\parskip0pt
\item An\label{injectifs-gradues-a} object $\Vg\in\GM(\HG)$ is \expression{projective}\index{projective!$\HG $-graded module} \expression{(resp. injective\index{injective!HG-mod@$\HG$-graded module})} if and only if the functor
$\Homgb(\Vg,\_):\GM(\HG)\fonct\GM(\HG)$ \emph{(resp. $\Homgb(\_,\Vg)$)} is exact.
\item\label{injectifs-gradues-b} The category $\GM (\HG)$ is an abelian category with enough injective and projective objects. 
\label{injectifs-gradues-c}The cohomological dimension of $\GM(\HG)$ is finite and equals the rank of $\Gg$.
\end{enumerate}\end{prop}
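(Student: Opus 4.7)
That $\GM(\HG)$ is abelian is formal: kernels, cokernels, images and direct sums are computed in each degree in $\Vec(\RR)$, and $\HG$-linearity passes to the constructions. Part~(\ref{injectifs-gradues-a}) is then the usual reformulation, since $\Homgb_{\HG}(\Vg,\_)$ and $\Homgb_{\HG}(\_,\Vg)$ are always left exact (degree by degree), so projectivity (resp.\ injectivity) of $\Vg$ is equivalent to preservation of epimorphisms (resp.\ of monomorphisms), hence to full exactness of the functor.

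For the existence of enough projectives in (\ref{injectifs-gradues-b}), I would first check that each shift $\HG[m]$ is projective, since $\Homgr^{0}_{\HG}(\HG[m],\Wg)=\Wg^{-m}$ is exact in $\Wg$, and then observe that any $\Wg\in\GM(\HG)$ is a quotient of the free graded module $\bigoplus_{w}\HG[-|w|]$ indexed by homogeneous elements $w\in\Wg$, the map sending the generator indexed by $w$ to $w$. For enough injectives, the key input is that $\HG=\Sg(\ggoth\dual)^{\Gg}$ is, by Chevalley's theorem, isomorphic as a graded algebra to a polynomial algebra $\RR[P_{1},\ldots,P_{r}]$ on $r=\rk(\Gg)$ algebraically independent homogeneous generators $P_{i}$ of even positive degree. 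In particular $\HG$ is graded-Noetherian, so Baer's criterion in the graded setting applies: a graded module $\Ig$ is injective iff every graded homomorphism $\Jg\to\Ig$ from a homogeneous ideal $\Jg\dans \HG$ extends to $\HG\to\Ig$. Standard Matlis/Zorn arguments then produce for every $\Vg\in\GM(\HG)$ an injective hull $\Vg\dans\mathbf{E}(\Vg)$: enlarge degree by degree, exhausting divisibility obstructions provided by the finitely many homogeneous ideals up to a given degree, which terminates thanks to Noetherianity.

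For the cohomological dimension in (\ref{injectifs-gradues-c}), the isomorphism $\HG\simeq\RR[P_{1},\ldots,P_{r}]$ again does the work. The Koszul complex on the regular sequence $(P_{1},\ldots,P_{r})$ is a finite free graded resolution of the trivial $\HG$-module $\RR=\HG/\HG^{+}$ of length exactly $r$, whence $\Extgr^{r}_{\HG}(\RR,\RR)\neq 0$ and $\dim\GM(\HG)\geqslant r$. Conversely, by the graded Hilbert syzygy theorem any finitely generated graded $\HG$-module has a finite free graded resolution of length $\leqslant r$; an arbitrary graded module is the filtered colimit of its finitely generated graded submodules, and since filtered colimits are exact in $\GM(\HG)$ and commute with the Koszul/syzygy resolutions degree by degree, we obtain $\Extgr^{k}_{\HG}(\Vg,\Wg)=0$ for all $k>r$ and arbitrary $\Vg,\Wg$.

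The one delicate point is enough injectives: projectives are handed to us for free by the shifts of $\HG$, but constructing injective hulls in $\GM(\HG)$ requires graded Baer/Matlis theory, which is precisely where graded Noetherianity of $\HG$ — itself a nontrivial consequence of Chevalley's theorem on invariants of compact connected Lie groups — enters in an essential way. Everything else reduces to standard homological algebra over a graded polynomial algebra.
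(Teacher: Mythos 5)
Your treatment of (a), of the abelian structure, and of enough projectives (the shifts $\HG[m]$ plus the free module on homogeneous generators) is essentially the paper's argument, and your Koszul computation giving $\Extg_{\HG}^{r}(\RR,\RR)\not=0$, hence the lower bound $r=\rk(\Gg)$, is fine. The genuine gap is in your construction of enough injectives. First, the termination step fails as stated: it is not true that there are only finitely many homogeneous ideals ``up to a given degree'' --- already for a rank-$2$ torus, $\HG=\RR[x,y]$ with $x,y$ in degree $2$, the pencil of principal ideals $\ideal{ax+by}$ is an infinite family of homogeneous ideals generated in a single fixed degree (and for general $\Gg$ of rank $\geqslant2$ one gets the same with suitable powers of two basic invariants). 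Graded Noetherianity bounds the generators of each ideal and stabilizes ascending chains; it does not make the set of Baer obstructions finite, so ``enlarge degree by degree\dots terminates thanks to Noetherianity'' does not stand. Second, the emphasis is misplaced: no finiteness hypothesis on $\HG$ is needed for enough injectives, and Chevalley's theorem enters only in the dimension count. The paper uses Godement's character-module trick: the functor $\Homgb_{\ZZ}(\_,(\QQ/\ZZ)[0])$ is exact, every $\Vg$ embeds naturally into its double character module, and choosing a free right $\HG$-gm $\Fg$ surjecting onto the character module of $\Vg$ and dualizing embeds $\Vg$ into $\Homgb_{\ZZ}(\Fg,(\QQ/\ZZ)[0])$, which is injective by the $\Homgb_{\HG}$--$\Homgb_{\ZZ}$ adjunction. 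If you prefer your Baer-criterion route, it can be repaired --- by a genuine transfinite (Zorn) extension over the \emph{set} of graded ideals, or by noting that $\GM(\HG)$ is a Grothendieck category with generator $\bigoplus_{m\in\ZZ}\HG[m]$, so Grothendieck's Tohoku theorem applies --- but neither repair uses Noetherianity.

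A second, smaller defect concerns the upper bound on the cohomological dimension: $\Extg_{\HG}^{k}(\_,\Wg)$ does not commute with filtered colimits in its contravariant variable, so writing $\Vg$ as the colimit of its finitely generated graded submodules does not by itself yield $\Extg_{\HG}^{k}(\Vg,\Wg)=0$ for $k>r$. The paper sidesteps this by citing the graded Hilbert Syzygy Theorem, which bounds the projective dimension of an \emph{arbitrary} graded module over a polynomial algebra by $r$; alternatively, use Auslander's theorem that global dimension is attained on cyclic modules $\HG/J$, or argue with flat dimension (Tor does commute with filtered colimits) and identify weak and global dimension using that $\HG$ is Noetherian. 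With these two points repaired, your route and the paper's coincide except for the injectivity step, where the paper's character-module argument is both shorter and hypothesis-free.
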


\begin{proof}\parskip2pt\mou
(\bref{injectifs-gradues-a}) is an immediate consequence of the direct decomposition of functors
$$\Homgb_{\HG}(\_,\_)=\bigoplus\nolimits_{m\in\ZZ}\Homgr_{\HG}(\_,\_[m])=
\bigoplus\nolimits_{m\in\ZZ}\Homgr_{\HG}(\_[-m],\_)\,.$$

(\bref{injectifs-gradues-b}) -- For the injectivity properties, let $\set v_\agoth/_{\agoth\in\Agoth}$ be a family of \emph{homogeneous} generators for $\Vg\in\GM(\HG)$ and consider, for each $\agoth\in\Agoth$, the map $\gamma_\agoth:\HG[-d_\agoth]\to\Vg$, $x\mapsto x v_\agoth$ which is clearly a morphism in $\GM(\HG)$. The  sum
$$\sumnl_{\agoth\in\Agoth}\gamma_{\agoth}:\bigoplus\nolimits_{\agoth\in\Agoth}\HG[-d_{\agoth}]\onto\Vg
\eqno(\diamond)$$
represents $\Vg$ as the quotient in $\GM(\HG)$ of a free, and thus projective, $\HG$-gm. 

\def\wwhat#1{\goodsmash{0.9}{1}{\widehat{\goodsmash{0.84}{1}{\widehat{\goodsmash{0.9}{1}{#1}}}}}}
\def\what#1{\widehat{#1}}

\smallskip
\noindent -- For the injectivity properties we reproduce the proof of theorem 1.2.2 in \cite{god} \Spar1.4 in the context of graded rings.
 
\smallskip
The correspondence 
$$\preskip-1.1\baselineskip\Vg\fonct\what \Vg:=\Homgb_{\ZZ}(\Vg,(\QQ/\ZZ)[0])\eqno(\diamond\diamond)$$
is an additive contravariant functor from the category of 
\expression{left} (resp. \expression{right}) $\HG$-gm to the category of \expression{right} (resp. \expression{left}) $\HG$-gm {\rm (\footnote{If\label{bas7} $\Ng$ is a \expression{right} $\HG$-gm, the structure of \expression{left} $\HG$-module of $\Homgb_\ZZ(\Ng,(\QQ/\ZZ)[0])$ is given by $(x\Cdot\gamma)(y):=\gamma(yx)$ for all $x\in\HG$ and $y\in\Ng$.
If $\Ng$ is a \expression{left} $\HG$-gm, the structure of \expression{right} $\HG$-module of $\Homgb_\ZZ(\Ng,(\QQ/\ZZ)[0])$ is given by $(\gamma\Cdot x)(y):=\gamma(xy)$ for all $x\in\HG$ and $y\in\Ng$.})}, and is exact, by (\bref{injectifs-gradues-a}), since
$$\Homgr_\ZZ(\_,(\QQ/\ZZ)[0])=\Hom_{\ZZ}((\_)^{0},\QQ/\ZZ)$$
and since $\QQ/\ZZ$ is an injective $\ZZ$-module.

\smallskip 
\noindent{\sl Lemma 1. The map $\nu(\Vg):\Vg\to\wwhat{\!\Vg}$, $v\mapsto(\gamma\mapsto\gamma(v))$ is an \expression{injective} morphism. }

\smallskip
\noindent 
{\sl Proof of lemma 1. }Because $\nu(\Vg)$ is clearly a morphism of graded modules, it is injective if and only if it doesn't kill any homogeneous nonzero element.
If $0\not=v\in\Vg^d$, the subgroup $\ZZ\Cdot v\dans \Vg^d$ is isomorphic to some $\ZZ/n\ZZ$ for $n\not=\pm1$, and there exists a nonzero homomorphism $\gamma'':\ZZ\Cdot v\to\QQ/\ZZ$ (exercise), restriction of some $\gamma':\Vg^d\to\QQ/\ZZ$ (thanks to the injectivity of $\QQ/\ZZ$). Extend this $\gamma'$ to the whole of $\Vg$, assigning zero on the homogeneous factors $\Vg^e$ when $e\not= d$. 
This last extension, denoted by $\gamma:\Vg\to\QQ/\ZZ$, is a graded morphism of degree $-d$ and verifies $\nu(\Vg)(v)(\gamma)=\gamma(v)\not=0$ by construction, so that $\nu(\Vg)(v)\not=0$, which completes the  proof of lemma 1.
\middleQED

\medskip

\noindent{\sl Lemma 2. For any free \expression{right} $\HG$-gm $\Fg$, 
the \expression{left} $\HG$-gm $\what\Fg$ is injective.}

\smallskip
\noindent 
{\sl Proof of lemma 2. }We recall (\cf\cite{bourbaki} Chap. II, \Spar4, Prop. 1) that for any left $\HG$-dgm $\Ng$, the maps
$$\def\lbox#1/{\hbox to2cm{\hss$#1$}}
\def\rbox#1/{\hbox to2cm{$#1$\hss}}
\xymatrix@R=0mm{
\Homgb_{\HG}(\Ng,\Homgb_{\ZZ}(\HG,(\QQ/\ZZ)[0]))\ \ar@<2pt>[r]\ar@<-2pt>@{<-}[r]&\ \Homgb_{\ZZ}(\Ng,(\QQ/\ZZ)[0])\\
\lbox\gamma/\ar@{|->}[r]&\rbox \big(v\mapsto\gamma(v)(1)\big)/\\\lbox \big(v\mapsto(x\mapsto\xi(xv))\big)/\ar@{<-|}[r]&\ \rbox\xi/
}
$$
are isomorphisms of graded vector spaces each inverse to the other. It follows that $\Homgb_{\ZZ}(\HG,(\QQ/\ZZ)[0]))$ is an injective left $\HG$-gm if and only if the functor $\Homgb_{\ZZ}(\_,(\QQ/\ZZ)[0])$ is exact, but this is equivalent, by (\bref{injectifs-gradues-a}), to the exactness of the functor
$\Homgr_\ZZ(\_,(\QQ/\ZZ)[0])=\Hom_{\ZZ}((\_)^{0},\QQ/\ZZ)\,,$
which is clear since $\QQ/\ZZ$ is an injective $\ZZ$-module.\middleQED

\medskip
Now, if $\Vg$ is a left $\HG$-gm, fix some epimorphism of
right $\HG$-gm $\pi:\Fg\onto\widehat\Vg$ where $\Fg$ is free as in $(\diamond)$. The morphism $\what\pi:\wwhat\Vg\to\what\Fg$ is injective and 
composed with $\nu(\Vg):\Vg\to\wwhat\Vg$, injective by lemma 1, we get an injective morphism $\Vg\hook\what\Fg$ of left $\HG$-gm, where 
$\what\Fg$ is an injective left $\HG$-gm by lemma 2. This completes the proof of the existence of enough injective objects in $\GM(\HG)$.

The statement about $\dimch(\GM(\HT))$ results from the fact (Chevalley's theorem)\index{Chevalley!Theorem}\index{Theorem!Chevalley} that $\HG$ is a polynomial algebra in $\rk(\Gg)$ variables. One may then refer to Hilbert's Syzygy\index{hilbert@Hilbert's Syzygy Theorem}\index{Syzygy}\index{Theorem!Hilbert} Theorem (\cf \cite{jac} p.~385, and Ex. 2, p.~387).
\end{proof}

\begin{exer}Let\label{exo-negative-degrees} $\Ag$ be a graded $\RR$-algebra which is an integral domain.
\varlistseps{\itemsep2pt\parskip0pt}\begin{enumerate}
\item Show that $S^{-1}\Ag$, where $S$ denotes the multiplicative system of homogeneous nonzero elements of $\Ag$, is an injective object of $\GM(\Ag)$. Also, prove that the canonical inclusion $\Ag\hook S^{-1}\Ag$ is an injective envelope for $\Ag$. 
\item Show that when $\rk(\Gg)>0$, the degrees of a non trivial injective  object of $\GM(\HG)$ cannot be bounded below {\rm(\footnote{A\label{bounded-below} graded space $\Vg$ is  said to be \expression{bounded below\index{bounded below (above)} (resp. above)},
if there exists $N\in\ZZ$ such that $\Vg^{i}=0$ for all $i< N$  (resp. $i> N$). The graded algebra $\HG$ is bounded below by $0$.})}. 
\item Show that if $\Vg\in\GM(\HG)$ is positively graded, it admits projective resolutions in $\GM(\HG)$ all of whose terms are positively graded.
\end{enumerate}\end{exer}

\medskip
The next two sections are straightforward generalizations of sections \bref{differential-complex} and
\bref{Hom-Tensor} from graded vector spaces to $\HG$-graded modules.

\subsubsection{Differential Graded Modules.}An \expression{$\HG$-differential graded module}\index{differential!graded module}, \expression{$\HG$-dgm}\index{HG-dgm@$\HG$-dgm} in short, is a pair $(\Vg,\dg)$ with $\Vg\in\GM(\HG)$ and 
$\dg\in\Endgr\sp{1}\sb{\HG}(\Vg)$, called \expression{differential}\index{differential}, is such that $(\Vg,\dg)$ is a complex, i.e.
$\dg\sp{2}=0$.
A \expression{morphism of $\HG$-dgm}\index{morphism!of differential graded modules}
$\alphag:(\Vg,\dg)\to(\Vg',\dg')$ is a morphism of $\HG$-gm's
 which is also a morphism of complexes,  i.e. $\dg'\circ\alphag=\alphag\circ\dg$.
The $\HG$-dgm's and their morphisms constitute the \expression{category $\DGM(\HG)$ of $\HG$-differential graded modules}\index{category!of differential graded modules over $\HG$}.
The category $\DGM(\HG)$ is an abelian category.

\subsubsection{The $\Homgb(\_,\_)$ and $(\_\otimes\_)\bull$ Bi-functors.}Given two $\HG$-dgm's $(\Vg,\dg)$ and $(\Vg',\dg')$, 
we recall the definition of the $\HG$-dgm's
$$\big(\Homgb\sb{\HG}(\Vg,\Vg'),\Dg_{\bullet}\big)
\quad\text{and}\quad \big((\Vg\otimesg\sb{\HG} \Vg')\bull,\Deltag_{\bullet}\big)\,.$$
As $\HG$-graded modules they are defined by
$$\let\quad\relax
m\mapsto 
\begin{cases}\noalign{\kern-2pt}
\Homg\sp{m}\sb{\HG}(\Vg,\Vg')&{}\pcolon\Homgr\sb{\HG}\sp{m}(\Vg,\Vg')\\
\hfill(\Vg\otimesg\sb{\HG} \Vg')\sp{m}&{}\pcolon\pi(\Vg\otimesg\sb{\RR} \Vg')\sp{m}\\\noalign{\kern-1pt}
\end{cases}
$$
where\vadjust{\penalty-200} $\pi:\Vg\otimesg\sb{\RR}\Vg'\onto \Vg\otimesg\sb{\HG}\Vg'$, $v\otimes v'\mapsto [v\otimes v']$,
is the canonical (graded) surjection (see remark \ref{rema-tensor}). The differentials $\Dg_{\bullet}$ and $\Deltag_{\bullet}$ are:
$$\def\rag#1{\hbox to 2.4cm{\hss$#1$}{}}
\def\rrag#1={\hbox to 2.cm{\hss$#1$}{}=}
\begin{cases}\noalign{\kern-2pt}
\rrag \Dg_m(f)=\dg'\circ f -(-1)\sp{m} f\circ \dg\\
\rrag\Deltag_m([v\otimes v'])=[\dg(v)\otimes v']+(-1)\sp{|v|}[v\otimes \dg'(v')]\\\noalign{\kern-2pt}
\end{cases}
\postdisplaypenalty10000$$
where $v\otimes v'\in V\sp{|v|}\otimes V'\sp{|v'|}$ and $|v|+|v'|=m$. 

The fact that $\Dg$ and $\Deltag$ are $\HG$-linear results from the fact that
$\HG$ is graded only by {\bf even} degrees (!).

\sss These\label{Hom-Tensor-graded}  constructions are natural w.r.t. each side entry which means that
one has in fact defined  two bi-functors
$$\mathalign{
\Homgb\sb{\HG}((\_),(\_))&:&\DGM(\HG)\times\DGM(\HG)\fonct\DGM(\HG)\\\noalign{\kern2pt}
\hfill((\_)\otimes\sb{\HG}(\_))\bull&:&\DGM(\HG)\times\DGM(\HG)\fonct\DGM(\HG)
}$$
which are bi-additive and have the usual variances and exactnesses. For example, the first one is contravariant and left exact on the left entry, and covariant and left exact on the right entry, while the second one is bi-covariant and right exact.

\begin{rema}Some\label{rema-tensor} care must be taken with the tensor product since it
hides some subtleties. A good way to understand it is to 
note that $\Vg\otimes\sb{\HG}\Vg'$ is the quotient of the graded space
$\Vg\otimes\sb{\RR}\Vg'$ by the subspace $\Wg$ spanned by  
the tensors $Pv\otimes v'-v\otimes Pv'$ with 
$P\in\HG$ and $(v,v')\in\Vg\times\Vg'$ both \emph{homogeneous}. One  then shows that $\Wg$ is a graded subcomplex
of $(\Vg\otimes\sb{\RR}\Vg',\Deltag)$, so that the canonical
surjection $\pi:(\Vg\otimes\sb{\RR}\Vg',\Deltag)\onto(\Vg\otimes\sb{\RR}\Vg',\Deltag)/\Wg$
is an epimorphism of graded complexes, therefore inducing over
$\Vg\otimes\sb{\HG}\Vg'$ a structure of $\HG$-dgm. 
Again, a key point is that $\HG$ is graded only by {\bf even} degrees.
\end{rema}

\subsubsection{The Dual Complex.}In section \bref{dual-complex} 
we introduced the duality functor 
$\Homgb\sb{\KK}(\_,\KK):\DGM(\KK)\funct\DGM(\KK)$
and noted that it was an exact functor (\bref{dual-exact}). 
In the framework of $\HG$-dgm's, 
the corresponding functor is the
\expression{$\HG$-duality functor}\index{HG-duality functor@$\HG$-duality functor}
$$\Homgb\sb{\HG}(\_,\HG):\DGM(\HG)\funct\DGM(\HG)$$
which is generally {\bf not} exact, {\bf nor does} it respect
quasi-iso\-mor\-phisms.

\subsubsection{The Forgetful Functor.}If we disregard differentials, $\HG$-dgm's simply appear as $\HG$-gm's, and likewise for morphisms. Forgetting the complex structure gives the \expression{forgetful functor}\index{forgetful functor}\index{functor!forgetfull}
$o:\DGM(\HG)\fonct\GM(\HG)$
which is exact and will often be implicit in some of our considerations.

\subsection{Deriving Functors}
\subsubsection{Deriving Functors Defined on the Category $\GM(\HG)$.}
We\label{Extg-Torg}\label{injective} have already shown (\bref{injectifs-gradues}) that the abelian category $\GM(\HG)$ has enough projective and injective objects (\footnote{See 
Grothendieck \cite{groth}, chapter {\bf I}, Thm. 1.10, p. 135.}). 
We will now recall the definition of the \expression{left and right derived functors}\index{derived!functor (left and right)} associated with an additive functor $\Fg:\Ab'\to\Ab$ between abelian categories where $\Ab'$ has enough projective and injective objects.

\comment
Recall that after Grothendieck (\cite{groth}), the category of graded modules over a graded ring is abelian and has
enough projective and injective objects (\footnote{One can easily show
that an $\HG$-graded module is projective in $\GM (\HG)$, if and only if
it is projective in the category of all $\HG$-modules $\Mod(\HG)$. The existence of injectives
follows from Grothendieck \cite{groth}, chapter {\bf I}, Thm. 1.10, p. 135.}).
\endcomment

\smallskip
The
\expression{left and right derived functors}, respectively  $\IL_{\star}\Fg:\Ab'\fonct\Cal K_{\star}(\Ab)$ and $\IR^{\star}\Fg:\Ab'\fonct\Cal K^{\star}(\Ab)$
(\footnote{$\Cal K^{\star}(\Ab)$ (resp. $\Cal K_{\star}(\Ab)$) is the category of cochain (resp. chain) complexes of $\Ab$ whose morphisms are the morphisms of complexes modulo homotopy.})
applied to an object  $\Vg\in\Ab'$ are defined by the following steps. First, choose an injective and a projective resolution of $\Vg$,
$$
\def\to#1{&\hf{#1}{}{0.7cm}&}
\mathalign{
\0\to{}\Vg\to{\epsilon}\Cal I\sp{0}\to{d_0}\Cal I\sp{1}\to{d_1}
\Cal I\sp{2}\to{d_2}\cdots\\\noalign{\kern6pt}
&&\cdots\to{d_{-2}}\Cal P^{-2}\to{d_{-1}}\Cal P^{-1}\to{d_0}\Cal P^{0}\to{\epsilon}\Vg\to{}\0\,.}$$ 
Next, let
$\Cal I^{\star}\Vg$ stand for the truncated complex $\let\ot\leftarrow
\big(0\to\Cal I^{0}\too^{d_0}\Cal I^{1}\too^{d_{1}}\cdots\big)$, and 
$\Cal P^{\star}\Vg$ for 
$\big(\cdots\too^{d_{-1}}\Cal P^{-1}\too^{d_0}\Cal P^{0}\to\0\big)$, and 
set
$$\begin{cases}\noalign{\kern-4pt}
\IL^{\star}\Fg(\Vg):=\Fg(\Cal P^{\star}\Vg)
\\[2pt]
\IR^{\star}\Fg(\Vg):=\Fg(\Cal I^{\star}\Vg)
\\[-3pt]
\end{cases}\eqno(\ast)$$
One proves that the complexes $(\ast)$ are homotopically independent of the chosen resolutions so that each canonically defines an object of $\Cal K^{\ast}(\Ab)$. 

As the targets of the derived functors $\IR^{\star}\Fg$ and $\IL^{\star}\Fg$ are complexes, one is interested in their cohomologies. Their classical notations are
$$\begin{cases}
(\IR^{i}\Fg)(\Vg):=H^{i}(\IR^{\star}(\_))\\[2pt]
(\IL^{i}\Fg)(\_):=H^{i}(\IL^{\star}\Fg(\_))\,.\end{cases}$$
It is easily seen from the above definitions that the 
\expression{augmentation morphisms}\index{morphism!augmentation}\index{augmentation} of complexes $\epsilon:\Vg[0]\to\Cal I^{\star}$ and $\epsilon:\Cal P_{\star}\to\Vg[0]$, give rise to natural morphisms of complexes
$\Fg(\Vg[0])\to(\IR^{\star}\Fg)(\Vg)$ and $
(\IL^{\star}\Fg)(\Vg)\to\Fg(\Vg[0])\,,$
inducing canonical morphisms
$$\Fg(\Vg)\to(\IR^{0}\Fg)(\Vg)\quad\text{and}\quad
(\IL^0\Fg)(\Vg)\to\Vg\,.$$
These are isomorphisms whenever $\Fg$ is respectively left and right exact.

\medskip

\subsubsection{Simple Complex Associated with a Double Complex.} The category $\C^{\natural}(\Ab)$ of (cochain) complexes of  an abelian category $\Ab$ is again an abelian category so that we can look at the category $\C^{\star,\natural}(\Ab):=\C^{\star}(\C^{\natural}(\Ab))$ of (cochain) complexes of $\C^{\natural}(\Ab)$ also called \expression{double (cochain) complexes of $\Ab$}\index{double (cochain) complex}\index{complex!double}.
A double complex $\Ng^{\star,\natural}:=(\Ng^{\star,\natural},\delta_{\star,\natural},d_{\star,\natural})\in\C^{\star,\natural}(\Ab)$  is generally represented as a two dimensional ladder
all of whose subdiagrams are commutative.
$$\preskip0.ex\postskip0.ex
\def\tt{\vrule depth 4pt width0pt}
\def\line#1{\ar[r]^(0.35){\delta_{i-2,#1}}&\Ng^{i-1,#1}\ar[r]^{\delta_{i-1,#1}}\ar[u]|(0.53){\tt d_{i-1,#1}}&
\Ng^{i,#1}\ar[r]^(0.45){\delta_{i,#1}}\ar[u]|(0.53){\tt d_{i,#1}}&
\Ng^{i+1,#1}\ar[r]^(0.62){\delta_{i+1,#1}}\ar[u]|(0.53){\tt d_{i+1,#1}}&}
\xymatrix@C=1.cm@R=6.5mm{&&&\\
\line {j+1}\\
\line {j}\\
\line {j-1}\\
&\ar[u]|(0.43){\tt d_{j-2}}&\ar[u]|(0.43){\tt d_{j-2}}&\ar[u]|(0.43){\tt d_{j-2}}
}
$$

The \expression{simple (or total) complex}\index{complex!simple}\index{simple!complex} associated with$\Ng^{\star,\natural}$ is the complex $(\Tot^{\circ}(\Ng^{\star,\natural}),D_{\circ})$, where, for all $m\in\ZZ$,
$$\preskip4pt\postskip4pt\let\quad\relax
\begin{cases}
\Tot\sp{m}(\Ng\sp{\star,\natural})&{}\pcolon\bigoplus\sb{m=a+b}\Ng\sp{i,j}\\\noalign{\kern4pt}
\hfill D\sb{m}(\ng\sb{i,j})&{}\pcolon\dg\sb{i,j}(\ng\sb{i,j})+(-1)\sp{j}\delta\sb{i,j}(\ng\sb{i,j})
\end{cases}
\qquad\vcenter{\hbox{$\xymatrix@R=0.5cm@C=1.2cm{
\Ng\sp{i,j+1}\\
\Ng\sp{i,j}\ar[u]^{\dg_{i,j}}\ar[r]^(0.45){(-1)\sp{j}\delta\sb{i,j}}&\Ng\sp{i+1,j}
}$}}$$
In this way, one obtains an additive exact functor
$$\Tot^{\sstar}\pcolon \Cal C\sp{\star,\natural}(\Ab)\fonct \C^{\sstar}(\Ab)\,.$$

\subsubsection{Spectral Sequences Associated with Double Complexes.}
The\label{ss-dc} double complex $\Ng^{\star,\natural}$ is said to be \expression{of the first quadrant}\index{quadrant (first,third)}\index{first quadrant double complex}\index{double (cochain) complex!of the first quadrant} if $\set(i,j)\mid\Ng^{i,j}\not=\0/\dans\NN\times\NN$.
As explained in \cite{god} (\Spar4.8, p.~86), one assigns to this kind of double complex, two \expression{regular}\index{regular filtration}\index{filtration!regular} decreasing filtrations of $(\Tot^{\sstar}(\Ng^{\star,\natural}),D_{\sstar})$. The first is relative to the \expression{line $\natural$-filtration}\index{line $\natural$-filtration}\index{filtration!line} $\Tot^{\sstar}(\Ng^{\star,\natural})_{\ell}:=\Tot^{\sstar}(\Ng^{\star,\natural\geq \ell})$, and the second to the \expression{column $\star$-filtration}\index{filtration!column}\index{column $\star$-filtration}
$\Tot^{\sstar}(\Ng^{\star,\natural})_{c}:=\Tot^{\sstar}(\Ng^{\star\geq c,\natural})$.
Each filtration gives rise to a spectral sequence converging to the cohomology of $(\Tot^{\sstar}\Ng^{\star,\natural},D_{\sstar})$, respectively 
$$\left\{\mathalign{
\pIE_2^{p,q}&:=&H_{\natural}^{p}H_{\star}^{q}(\Ng^{\star,\natural})&\Rightarrow&H_{\sstar}^{p+q}(\Tot^{\sstar}\Ng^{\star,\natural},D_{\sstar})\,,\\
\noalign{\kern2pt}
\ppIE_2^{p,q}&:=&H_{\star}^{p}H_{\natural}^{q}(\Ng^{\star,\natural})&\Rightarrow&H_{\sstar}^{p+q}(\Tot^{\sstar}\Ng^{\star,\natural},D_{\sstar})\,,
}\right.$$
where $H_{\star}$ (resp. $H_{\natural}$) is the cohomology w.r.t. $\delta_{\star}$ (resp. $d_{\natural}$).


\subsubsection{The $\IR^{\star}\Homgb_{\HG}(\_,\_)$ and $(\_)\otimes_{\HG}^{\IL^{\star}}(\_)$ Bi-functors.}
Given\label{sss-RHom-RTor} two $\HG$-graded modules $\Vg,\Wg\in\GM(\HG)$, we may consider the four functors
$$
\Homgb_{\HG}(\Vg,\_)\,,\quad
\Homgb_{\HG}(\_,\Wg)\,,\quad
\Vg\otimes_{\HG}(\_)\,,\quad
(\_)\otimes_{\HG}\Wg\,,
$$
where the first two are left exact and the other two are right exact.

Now, given projective resolutions
 $\Cal P^{\natural}(\Vg)\to\Vg$, $\Cal P^{\natural}(\Wg)\to\Wg$ 
and an injective resolution $\Wg\to\Cal I^{\star}(\Wg)$ in $\GM(\HG)$, we have natural morphisms of double complexes 
$$
\mathalign{
\Homg(\Cal P^{\natural}(\Vg),\Wg[0]^{\star})&\too&\Homg(\Cal P^{\natural}(\Vg),\Cal I^{\star}\Wg)&\longleftarrow&
\Homg(\Vg[0]^{\natural},\Cal I^{\star}\Wg)\\\noalign{\kern4pt}
\Cal P^{\natural}(\Vg)\otimes\Wg[0]^{\star}&\too&
\Cal P^{\natural}(\Vg)\otimes\Cal P^{\star}(\Wg)&\longleftarrow&
\Vg[0]^{\natural}\otimes\Cal P^{\star}(\Wg)
}
$$
(\footnote{By $\Wg[0]^{\bullet}$ we denote the complex satisfying $\Wg[0]^{0}=\Wg$ and $\Wg[0]^{i}=\0$ for $i\not=0$.
}) where, to avoid confusion, we omit the indication
 of the $\bullet$-grading, 
giving rise to canonical morphisms of complexes on $\HG$-gm
$$
\thinmuskip2mu
\mathalign{
\Homg(\Cal P^{\natural}(\Vg),\Wg)&\too&\Tot\Homg(\Cal P^{\natural}(\Vg),\Cal I^{\star}\Wg)&\longleftarrow&
\Homg(\Vg,\Cal I^{\star}\Wg)\\\noalign{\kern4pt}
\Cal P^{\natural}(\Vg)\otimes\Wg&\too&
\Tot\big(\Cal P^{\natural}(\Vg)\otimes\Cal P^{\star}(\Wg)\big)&\longleftarrow&
\Vg\otimes\Cal P^{\star}(\Wg)
}
\eqno(\ddagger)$$
The following proposition is classical (\emph{loc. cit.}).

\begin{prop}The\label{prop-RHom-RTor} morphisms $(\ddagger)$ are quasi-isomorphisms. {\rm(\footnote{In fact they are homotopic equivalences, but we won't need to be so precise.})}
\end{prop}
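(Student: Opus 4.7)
The plan is to apply the two spectral sequences of a first-quadrant double complex (\bref{ss-dc}) to the bi-complexes
$$\Ng^{p,q} \pcolon \Homgb_{\HG}(\Cal P^{-p}(\Vg), \Cal I^{q}\Wg)\quad\text{and}\quad \Mg^{p,q} \pcolon \Cal P^{-p}(\Vg) \otimes_{\HG} \Cal P^{-q}(\Wg)$$
that sit, after re-indexing so that both differentials raise the indices, in the middle of the two rows of $(\ddagger)$. Both bi-complexes are first-quadrant, hence bi-regular, so the associated spectral sequences converge to $\Hr^{\bullet}(\Tot^{\bullet})$. For each of the four comparison arrows in $(\ddagger)$, I would show that one of the two spectral sequences of the appropriate bi-complex collapses at $E_{1}$ to a single row or column whose cohomology is that of the other endpoint of the arrow; this then forces the comparison map to be a quasi-isomorphism.

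On the $\Homg$ side, for the arrow involving $\Homgb(\Vg, \Cal I^{\star}\Wg)$ I would compute the row-cohomology of $\Ng$ first. Fixing $q$, the complex $\Ng^{\bullet, q}$ is obtained by applying $\Homgb_{\HG}(\_, \Cal I^{q}\Wg)$ --- exact because $\Cal I^{q}\Wg$ is injective (\bref{injectifs-gradues}-(\bref{injectifs-gradues-a})) --- to the exact resolution $\cdots \to \Cal P^{-1}(\Vg) \to \Cal P^{0}(\Vg) \to \Vg \to 0$. Hence $\pIE_{1}^{p,q}$ vanishes for $p \ne 0$ and equals $\Homgb(\Vg, \Cal I^{q}\Wg)$ for $p = 0$; the spectral sequence degenerates and identifies $\Hr^{\bullet}(\Tot(\Ng))$ with $\Hr^{\bullet}\Homgb(\Vg, \Cal I^{\star}\Wg)$. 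Symmetrically, the column-cohomology computation --- using that $\Homgb_{\HG}(\Cal P^{-p}(\Vg), \_)$ is exact since $\Cal P^{-p}(\Vg)$ is projective, applied to the injective resolution $0 \to \Wg \to \Cal I^{0}\Wg \to \Cal I^{1}\Wg \to \cdots$ --- collapses $\ppIE$ onto $q = 0$ with value $\Homgb(\Cal P^{-p}(\Vg), \Wg)$, giving the second $\Homg$-side quasi-isomorphism.

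On the tensor side the same mechanism applies to $\Mg$: the projective $\HG$-gm's appearing in the resolutions are $\HG$-flat, being direct summands of direct sums of shifts $\HG[m]$, so both $\Cal P^{-p}(\Vg) \otimes_{\HG}(\_)$ and $(\_) \otimes_{\HG} \Cal P^{-q}(\Wg)$ are exact. The two spectral sequences of $\Tot(\Mg)$ then degenerate at $E_{1}$ onto the complexes $\Cal P^{-\bullet}(\Vg) \otimes_{\HG} \Wg$ and $\Vg \otimes_{\HG} \Cal P^{-\bullet}(\Wg)$ respectively, yielding the two tensor quasi-isomorphisms.

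The argument is entirely classical. The main technical points requiring attention --- the bi-regularity of the filtrations (immediate from the first-quadrant shape), the separation of the external $\bullet$-grading of $\Homgb$ and $\otimes$ from the bi-complex indexing (harmless because $\HG$ is evenly graded and every differential in sight is $\HG$-linear of degree $1$ in the total grading), and the appeals to exactness of the various $\Homgb$ and $\otimes$ functors (direct consequences of \bref{injectifs-gradues}) --- pose no serious obstacle.
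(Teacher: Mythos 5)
Your argument is correct and is essentially the paper's own proof: both rely on the two spectral sequences of \bref{ss-dc} attached to the double complexes $\Homgb_{\HG}(\Cal P^{\natural}(\Vg),\Cal I^{\star}\Wg)$ and $\Cal P^{\natural}(\Vg)\otimes_{\HG}\Cal P^{\star}(\Wg)$, with collapse coming from exactness of $\Homgb_{\HG}(\Cal P,\_)$, $\Homgb_{\HG}(\_,\Cal I)$ and flatness of projectives, the comparison maps being filtration-compatible so that the isomorphism on $E_1$-terms yields the quasi-isomorphism. The only cosmetic difference is that you reindex the tensor-side bicomplex to make it (homologically) first-quadrant, whereas the paper keeps the cochain indexing and invokes \bref{ss-dc} dually for third-quadrant double complexes; the regularity of the filtrations is the same in either formulation.
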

\begin{varproof}{Sketch of the proof}For the first line of $(\ddagger)$, one notes that the morphisms of complexes are compatible with line and column filtrations of double complexes of the first quadrant. In the case of 
$$\displayskips7/10 \Homg(\Cal P^{\natural}(\Vg),\Wg)\to\Tot\Homg(\Cal P^{\natural}(\Vg),\Cal I^{\star}\Wg)\,,$$ 
since for each $i\in\ZZ$ the map $\Homg(\Cal P^i(\Vg),\Wg)\to\Tot\Homg(\Cal P^i(\Vg),\Cal I^{\star}\Wg)$ is a quasi-iso\-mor\-phism,
the induced map on the $\ppIE$ terms of the associated spectral sequences (\bref{ss-dc}) is an isomorphism and we conclude. 
The case
of 
$$\displayskips7/10 \Tot\Homg(\Cal P^{\natural}(\Vg),\Cal I^{\star}\Wg)\longleftarrow
\Homg(\Vg,\Cal I^{\star}\Wg)$$
is almost the same except that now we must consider the line filtration and use the $\pIE$ spectral sequence. 

The second line in $(\ddagger)$ is dealt with in the same way after observing that \bref{ss-dc} also applies (dually) to
double complexes of the \expression{third} quadrant\index{double (cochain) complex!of the third quadrant}\index{quadrant (first,third)}.
\end{varproof}
As a consequence of \bref{prop-RHom-RTor}, in each line of $(\ddagger)$ the complexes represent the \expression{same objet} in the derived category $\Cal D^{\star}(\GM(\HG))$. They are classically denoted by
$\IR^{\star}\Homgb_{\HG}(\Vg,\Wg)$ and $\Vg\otimes_{\HG}^{\IL^{\star}}\Wg$.  The constructions are natural w.r.t. each entry so that we get two bi-functors
$$
\let\DGM\GM
\mathalign{
\IR^{\star}\Homgb\sb{\HG}((\_),(\_))&:&\DGM(\HG)\times\DGM(\HG)\fonct\Cal D^{\star}(\DGM(\HG))\\\noalign{\kern2pt}
\hfill((\_)\otimes\sp{\IL^{\star}}\sb{\HG}(\_))\bull&:&\DGM(\HG)\times\DGM(\HG)\fonct\Cal D^{\star}(\DGM(\HG))
}\eqno(\diamond)$$
which are bi-additive and have the usual variances and exactnesses. They clearly extend the bi-functors in \bref{Hom-Tensor-graded} from $\GM(\HG)$ to $\Cal D^{\star}(\GM(\HG))$.

\subsubsection{The $\Extg\bull$ and $\Torg\bull$ Bi-functors.}Given $\Vg,\Wg\in\GM(\HG)$, one defines for $i\in\ZZ$\index{Extg@$\Extg\sb{\HG}\sp{i,\bullet}(\_,\_)$}\index{Torg@$\Torg\sb{\HG}\sp{\bullet}(\_,\_)$}
$$\preskip0pt\begin{cases}
\Extg\sb{\HG}\sp{i,\bullet}(\Vg,\Wg)\pcolon H_{\star}\sp{i}\big(\IR^{\star}\Homgb\sb{\HG}(\Vg,\Wg)\big)
\\\noalign{\kern3pt}
\Torg\sp{\bullet}\sb{\HG,i}(\Vg,\Wg)\pcolon
H\sb{\star}^{i}\big(\Vg\otimesg^{\IL^{\star}}\sb{\HG}\Wg\big)
\end{cases}$$
Where $H_{\star}\sp{i}$ is the $i$'th cohomology functor on $\Cal D^{\star}(\GL(\HG))$. 

\subsubsection{Defining $\IR^{\star}\Homgb(\_,\HG)$ on $\DGM(\HG)$.}
We proceed\label{duality-functor} as in \bref{sss-RHom-RTor}
except that we will consider  only injective resolutions of $\HG$ in $\GM(\HG)$.
 (\footnote{The good notion of projective resolution for an $\HG$-dgm 
$(\Vg,\dg)$ is the one of \expression{simultaneous} projective resolutions. These are resolutions 
$\cdots\to\Cal P^2\to\Cal P^1\to\Cal P^0\to\Vg\to0$ $(*)$ 
in $\DGM(\HT)$ where $\Cal P^{i}$ is a projective $\HG$-gm's, and such that the \expression{derived sequence}
$\cdots\to\hg\Cal P^2\to\hg\Cal P^1\to\hg\Cal P^0\to\hg\Vg\to0$ $(**)$ 
is a projective resolution in $\GM(\HG)$.
When the graded space $\Vg$ is bounded below 
(\cf$(^{\ref{bounded-below}})$), the complex $(\Vg,\dg)$ always admits \expression{simultaneous projective resolutions} and the derived functor $\IR^{\star}\Homgb((\Vg,\dg),\HG)$ may as well be defined as
$\Homgb(\Cal P^{\natural},\HG)$, as in the case of $\HG$-gm's, but we won't use this point of view in these notes.})

Let $\Vg:=(\Vg,\dg)\in\DGM(\HG)$. For any $\Ng\in\GM(\HG)$, we already endowed the $\HG^{\bullet}$-graded module $\Homgb\sb{\HG}(\Vg,\Ng)$ with a canonical structure of $\HG^{\bullet}$-differential graded module $(\Homgb\sb{\HG}(\Vg,\Ng),\Dg_{\bullet})$ (\cf\bref{Hom-Tensor-graded}) in such a way that we obtain a left exact functor 
$$
\Homgb\sb{\HG}(\Vg,\_):\GM(\HG)\fonct\DGM(\HG)\,.$$
It follows that the target of the functor
$\IR^{\star}\Homgb\sb{\HG}(\_,\HG):=\Homgb\sb{\HG}(\_,\Cal I\sp{\star}\Ng)$ is the category $\Cal C\sp{\star}(\DGM(\HG))$. The functor transforms homotopies to identities, and respects quasi-isomorphisms, it therefore induces a contravariant functor
$$
\IR^{\star}\Homgb\sb{\HG}(\_,\HG)\colon\DGM(\HG)\fonct \Cal D\sp{\star}(\DGM(\HG))\,,\eqno(\diamond)$$
which we will call \expression{the derived duality functor}\index{derived!duality fonctor}\index{duality functor}\index{functor!duality-}. This functor, composed with the $i$'th cohomology functor 
$H_{\star}\sp{i}\colon\Cal D\sp{\star}(\DGM(\HG))\fonct\DGM(\HG)$
gives the \expression{$i$'th extension functor}\index{functor!ith-extension@$i$'th extension}\index{ith-extension functor@$i$'th extension functor}
$$
\Extg\sb{\HG}\sp{i,\bullet}(\_,\HG)\pcolon H_{\star}\sp{i}\big(\IR^{\star}\Homgb\sb{\HG}(\_,\HG)\big)\colon
\DGM(\HG)\fonct\DGM(\HG)\,.$$
The family  (indexed by $\star\in\ZZ$)  of all these functors 
$$
\Extg\sb{\HG}\sp{\star,\bullet}(\_,\HG)\pcolon H\sp{\star}\big(\IR^{\star}\Homgb\sb{\HG}(\_,\HG)\big)\colon
\DGM(\HG)\fonct\DGM^{\star,\bullet}(\HG^{\bullet})\eqno(\diamond)$$
where 
$\DGM^{\star,\bullet}(\HG^{\bullet})$ is the category of $\star$-graded $\HG^{\bullet}$-dgm (the action of $\HG^{\bullet}$ does not affect the $\star$-degree), constitutes a $\partial$-functor in $\Cal K\sp{\star}(\DGM(\HG))$.

\subsubsection{Spectral Sequences Associated with $\IR^{\star}\Homgb\sb{\HG}(\_,\HG)$.}
In the last paragraph we defined the derived duality functor $(\diamond)$ for any $\HG$-dgm $(\Vg,\dg)$ as the complex of $\HG$-dgm's
$$\IR^{\star}\Homgb_{\HG}((\Vg,\dg),\HG):=\Homgb_{\HG}((\Vg,\dg),(\Cal I^{\star}\HG,\delta_{\star}))\,,\eqno(\dagger)$$
that will be represented as a double complex with lines indexed by `$\bullet$' and columns by `'$\star$'. The differentials $\dg$ and $\delta_\star$ commute, $\dg$ increases de $\bullet$-degree and leaves unchanged the $\star$-degree, while $\delta_\star$ does the opposite.

\noendpoint\begin{prop}Let\label{prop-RHom(DGM,HG)} $(\Vg,\dg)\in\DGM(\HG)$. 
\varlistseps{\itemsep0pt}\begin{enumerate}
\item There\label{prop-RHom(DGM,HG)-a} exist convergent spectral sequences
$$\left\{\mathalign{\noalign{\kern-1pt}
\pIE^{p,q}&:=&H^{p}_{\bullet}(\Extg_{\HG}^{q,\bullet}(\Vg,\HG))
&\Rightarrow&
H^{p+q}_{\sstar}\big(\Tot^{\sstar}\IR^{\star}\Homgb\sb{\HG}(\Vg,\HG),D_\sstar\big)\\
\noalign{\kern2pt}
\ppIE^{p,q}&:=&\Extg_{\HG}^{p,q}(\hg\Vg,\HG))
\hfill&\Rightarrow&
H^{p+q}_{\sstar}\big(\Tot^{\sstar}\IR^{\star}\Homgb\sb{\HG}(\Vg,\HG),D_\sstar\big)\\\noalign{\kern-3pt}
}\right.$$

\item If\label{prop-RHom(DGM,HG)-b} $\Vg$ is projective as $\HG$-gm {\rm (\footnote{A projective\index{projective!$\protect\HG$-graded module} $\HG$-gm is always free\index{free $\protect\HG$-graded module}, \cf  in \cite{jac} the corollary of theorem 6.21, p.~386.})}, then the following morphism of complexes induced by the augmentation $\epsilon:\HG\to\Cal I^{\star}$ is a quasi-isomorphism:
$$\Homgb_{\HG}((\Vg,\dg),\HG)
\too^{(\epsilon)}
\Tot^{\sstar}\IR^{\star}\Homgb_{\HG}((\Vg,\dg),\HG)\,.$$
\item If\label{prop-RHom(DGM,HG)-c} $\hg\Vg$ is projective as $\HG$-gm, then the following natural morphisms of $\HG$-gm's are isomorphisms:
$$\Homgb_{\HG}(\hg\Vg,\HG)
\too^{(\epsilon)}
\Tot^{\sstar}\Homgb_{\HG}(\hg\Vg,\Cal I^{\star})\too
\hg\big(\Tot^{\sstar}\Homgb_{\HG}(\Vg,\Cal I^{\star})\big)
$$  
\end{enumerate}\end{prop}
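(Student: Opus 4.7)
The three parts follow from applying the double-complex spectral-sequence machinery (\bref{ss-dc}) to $\Homgb_{\HG}(\Vg,\Cal I^{\star}\HG)$, where $\Cal I^{\star}$ is any injective resolution of $\HG$ in $\GM(\HG)$. By \bref{injectifs-gradues}(\bref{injectifs-gradues-b}), the cohomological dimension of $\GM(\HG)$ is finite and equal to $\mathrm{rk}(\Gg)$, so $\Cal I^{\star}$ may be taken bounded; this makes the double complex bounded in the $\star$-direction, which in turn makes both the line and column filtrations of the total complex regular and guarantees convergence of the two standard spectral sequences of \bref{ss-dc}. For the column spectral sequence $\pIE$, $\star$-cohomology is computed first: by the very definition of $\Extg$ it gives $\Extg^{q,\bullet}_{\HG}(\Vg,\HG)$ as an $\HG$-dgm whose differential is induced from $\dg$; applying $\bullet$-cohomology then yields $\pIE_2^{p,q}=H^p_{\bullet}\Extg^{q,\bullet}_{\HG}(\Vg,\HG)$. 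For the line spectral sequence $\ppIE$, $\bullet$-cohomology is computed first: each $\Cal I^p\HG$ is injective, so by \bref{injectifs-gradues}(\bref{injectifs-gradues-a}) the functor $\Homgb(\_,\Cal I^p\HG)$ is exact, and a direct computation with the tautological sequences $0\to Z^{\bullet}\to\Vg\to B^{\bullet+1}\to 0$ and $0\to B^{\bullet}\to Z^{\bullet}\to\hg\Vg\to 0$ produces the identification $H^q_{\bullet}\Homgb(\Vg,\Cal I^p\HG)=\Homgb^q(\hg\Vg,\Cal I^p\HG)$; taking $\star$-cohomology afterwards delivers $\ppIE_2^{p,q}=\Extg^{p,q}_{\HG}(\hg\Vg,\HG)$.

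Part (\bref{prop-RHom(DGM,HG)-b}) falls out of part (\bref{prop-RHom(DGM,HG)-a}): projectivity of $\Vg$ as an $\HG$-gm makes $\Homgb_{\HG}(\Vg,\_)$ exact (\bref{injectifs-gradues}(\bref{injectifs-gradues-a})), hence $\Extg^{q,\bullet}_{\HG}(\Vg,\HG)$ vanishes for $q>0$ and equals $\Homgb_{\HG}(\Vg,\HG)$ for $q=0$. The column spectral sequence $\pIE$ is then concentrated on the row $q=0$, degenerates at $E_2$, and its edge morphism is induced by the augmentation $\epsilon:\HG\to\Cal I^0$; this identifies $\epsilon$ with the abutment identification, so $\epsilon$ is a quasi-isomorphism. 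For part (\bref{prop-RHom(DGM,HG)-c}), assume $\hg\Vg$ is projective. The short exact sequence $0\to B^{\bullet}\to Z^{\bullet}\to\hg\Vg\to 0$ splits in $\GM(\HG)$, producing a section $s\colon\hg\Vg\to Z^{\bullet}\hookrightarrow\Vg$. Since $s$ takes values in cocycles, it defines a morphism of $\HG$-dgm's $(\hg\Vg,0)\to(\Vg,\dg)$ inducing the identity on cohomology, hence a quasi-isomorphism. Applying (\bref{prop-RHom(DGM,HG)-b}) to the projective $\HG$-dgm $(\hg\Vg,0)$, the augmentation $\epsilon$ provides the first quasi-isomorphism. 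Next, $s$ induces $s^{*}\colon\Tot^{\sstar}\Homgb(\Vg,\Cal I^{\star})\to\Tot^{\sstar}\Homgb(\hg\Vg,\Cal I^{\star})$; by the $\ppIE$ spectral sequence of (\bref{prop-RHom(DGM,HG)-a}) applied to both sides, the $E_2$ pages agree, both equal to $\Extg^{p,q}(\hg\Vg,\HG)$, and $s^{*}$ acts as the identity on them because $s$ induces the identity on $\hg$; hence $s^{*}$ is a quasi-isomorphism. Composing these quasi-isomorphisms gives the stated natural isomorphism of $\HG$-gm's on cohomology.

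The main technical obstacle is the identification $H^q_{\bullet}\Homgb(\Vg,\Cal I^p\HG)=\Homgb^q(\hg\Vg,\Cal I^p\HG)$ used in (\bref{prop-RHom(DGM,HG)-a}) to read off the $\ppIE_2$ page. One must argue directly with cocycles (maps $\Vg\to\Cal I^p\HG$ vanishing on $B^{\bullet}$, i.e.\ factoring through $\Vg/B^{\bullet}$) and coboundaries (those vanishing further on $Z^{\bullet}$, i.e.\ factoring through $\Vg/Z^{\bullet}\cong B^{\bullet+1}$), invoke the injectivity of $\Cal I^p\HG$ to lift any class in $\Homg(\hg\Vg,\Cal I^p\HG)$ to a cocycle on $\Vg/B^{\bullet}$, and verify that the induced $d_1$ differential coincides with the one coming from $\delta_{\star}$, so that the $E_2$-stage is a genuine $\Extg$-group rather than some spurious variant.
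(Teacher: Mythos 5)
Your proof is correct and follows essentially the same route as the paper's: fix an injective resolution of $\HG$ of finite length (possible because $\dimch\GM(\HG)=\rk(\Gg)$, \bref{injectifs-gradues}-(\bref{injectifs-gradues-b})), so that both the line and column filtrations of the double complex $\Homgb_{\HG}(\Vg,\Cal I^{\star})$ are regular, and then read off the two spectral sequences of \bref{ss-dc}; the paper merely records the $\IE_0$-terms and declares (\bref{prop-RHom(DGM,HG)-b}) and (\bref{prop-RHom(DGM,HG)-c}) straightforward, and your elaboration of the $E_2$-identifications (in particular $H_{\bullet}\Homgb_{\HG}((\Vg,\dg),\Cal I^{p})\simeq\Homgb_{\HG}(\hg\Vg,\Cal I^{p})$ via exactness of $\Homgb_{\HG}(\_,\Cal I^{p})$) is exactly what is left implicit there. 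The one point where you deviate is (\bref{prop-RHom(DGM,HG)-c}): rather than choosing a splitting $s\colon\hg\Vg\to\Zg$ and comparing total complexes through $\ppIE$, it is quicker, and it produces the \emph{stated natural} arrows directly, to observe that projectivity of $\hg\Vg$ gives $\Extg_{\HG}^{p,\bullet}(\hg\Vg,\HG)=0$ for $p>0$, so $\ppIE$ is concentrated in the column $p=0$, degenerates, and its edge morphisms are precisely the displayed maps. Your splitting argument does work, but to recover the naturality claim you should add that the induced map is independent of the choice of $s$ (any two splittings differ by a map $\hg\Vg\to\Bg$ which lifts through $\dg$ by projectivity, so they are chain homotopic), after which it coincides with the canonical comparison.
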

\begin{proof}(\bref{prop-RHom(DGM,HG)-a}) By \bref{injectifs-gradues} we can fix an injective resolution 
 $\HG\to(\Cal I_{\star}\HG,\delta_\star)$ of $\HG$-gm of \expression{finite length}\index{finite!length resolution}\index{injective!resolution of finite length}, whereby
 the line $\bullet$-filtration and the column $\star$-filtration are both regular for the total order `$\bullet+\star$'. 
We have 
$$\mathalign{(\pIE_0^{p,\star},d_0)&=&(\Homgr^{p}(\Vg,\Cal I^{*}\HG),\delta_\star)\\\noalign{\kern3pt}
(\ppIE_{0}^{p,\bullet},d_0)&=&\Homgb((\Vg,\dg),\Cal I^{p}\HG)\hfill}$$
and the proposition follows.
(\bref{prop-RHom(DGM,HG)-b},\bref{prop-RHom(DGM,HG)-c}) are 
straightforward consequences of (\bref{prop-RHom(DGM,HG)-a}).
\end{proof}

\begin{prop}Let\label{coro-RHom(DGM,HG)}\label{HG-dual-to-dual} $(\Vg,\dg)$ be an $\HG$-dgm. 
\varlistseps{\itemsep0pt}\begin{enumerate}
\item For\label{HG-dual-to-dual-a}  any $\Ng\in\GM(\HG)$, there exists a natural morphism of $\HG$-modules
$$\xi(\Vg,\Ng):\hg(\Homgb_{\HG}((\Vg,\dg),\Ng))\to
\Homgb_{\HG}(\hg\Vg,\Ng)
$$
\item If\label{HG-dual-to-dual-b} $\Vg$ and $\hg\Vg$
 are projective\index{projective!$\HG $-graded module} (free)  $\HG$-gm, then $\xi(\Vg,\HG)$ is an isomorphism.

\item Let\label{HG-dual-to-dual-c} 
$(\Vg,\dg)$ and $(\Vg',\dg')$ be $\HG$-dgm's where $\Vg$ and $\Vg'$ 
are projective\index{projective!$\HG $-graded module} (free)  $\HG$-gm's.
If $\alphag:(\Vg,\dg)\to(\Vg',\dg')$ is a quasi-isomorphism of $\HG$-dgm's,
the following induced morphism of $\HG$-dgm's
is a quasi-isomorphism:
$$\Homgb_{\Hg}(\alphag,\HG):\Homgb_{\Hg}((\Vg',\dg'),\HG)\to\Homgb_{\Hg}((\Vg,\dg),\HG)\,.$$
\end{enumerate}
\end{prop}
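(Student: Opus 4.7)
For (\bref{HG-dual-to-dual-a}) the plan is a direct construction, independent of any resolution. Viewing $\Ng$ as an $\HG$-dgm with zero differential, a cocycle of degree $m$ in $\Homgb_{\HG}((\Vg,\dg),\Ng)$ is a graded $\HG$-linear map $f\colon\Vg\to\Ng$ of degree $m$ satisfying $f\circ\dg=0$; such an $f$ vanishes on $B\pcolon\Im\dg$ and, restricted to $Z\pcolon\ker\dg$, factors through the quotient $Z/B=\hg\Vg$, yielding $\bar f\in\Homgr^m_{\HG}(\hg\Vg,\Ng)$. A coboundary $f=\pm g\circ\dg$ evidently vanishes on $Z$, so $\bar f=0$; hence $[f]\mapsto\bar f$ descends to a well-defined $\HG$-linear map $\xi(\Vg,\Ng)$, manifestly natural in $\Vg$ and $\Ng$.

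For (\bref{HG-dual-to-dual-b}), the plan is to combine the two halves of Proposition~\bref{prop-RHom(DGM,HG)}. Projectivity of $\Vg$ yields, via (\bref{prop-RHom(DGM,HG)-b}), a natural quasi-isomorphism $\Homgb_{\HG}(\Vg,\HG)\to\Tot^{\sstar}\IR^{\star}\Homgb_{\HG}(\Vg,\HG)$, while projectivity of $\hg\Vg$ yields, via (\bref{prop-RHom(DGM,HG)-c}), a natural isomorphism $\Homgb_{\HG}(\hg\Vg,\HG)\simeq\hg\big(\Tot^{\sstar}\Homgb_{\HG}(\Vg,\Cal I^{\star}\HG)\big)$. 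Composing these gives a natural isomorphism $\hg\Homgb_{\HG}(\Vg,\HG)\simeq\Homgb_{\HG}(\hg\Vg,\HG)$, and it remains to check that this composite coincides with $\xi(\Vg,\HG)$, which one sees by unwinding the constructions. A cleaner equivalent route is through the $\ppIE$ spectral sequence: when $\hg\Vg$ is projective, $\Extg_{\HG}^{p,q}(\hg\Vg,\HG)=0$ for $p>0$, so the sequence degenerates at $E_2$ and its edge map is exactly $\xi$.

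For (\bref{HG-dual-to-dual-c}), Proposition~\bref{prop-RHom(DGM,HG)}(\bref{prop-RHom(DGM,HG)-b}) applied to the projective objects $\Vg$ and $\Vg'$ reduces matters to showing that $\Tot^{\sstar}\IR^{\star}\Homgb_{\HG}(\alphag,\HG)$ is a quasi-isomorphism. I would invoke the $\ppIE$ spectral sequence of (\bref{prop-RHom(DGM,HG)-a})
$$\ppIE_2^{p,q}=\Extg_{\HG}^{p,q}(\hg\Vg,\HG)\Rightarrow H^{p+q}_{\sstar}\big(\Tot^{\sstar}\IR^{\star}\Homgb_{\HG}(\Vg,\HG)\big)\,.$$
The morphism induced by $\alphag$ on $\ppIE_2$ is $\Extg_{\HG}^{p,q}(\hg\alphag,\HG)$, an isomorphism since $\hg\alphag$ is one. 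Because the injective resolution of $\HG$ has finite length by Proposition~\bref{injectifs-gradues}, both spectral sequences converge strongly, so the induced map on abutments is itself an isomorphism, concluding the proof.

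The step I expect to be the main obstacle is the verification in (\bref{HG-dual-to-dual-b}) that the composite isomorphism produced via Proposition~\bref{prop-RHom(DGM,HG)} really coincides with the canonical $\xi$ defined in (\bref{HG-dual-to-dual-a}), rather than merely agreeing with it up to some canonical identification. Once the edge map of the $\ppIE$ spectral sequence is explicitly identified with restriction to cocycles followed by passage to the quotient $\hg\Vg$, the three parts fit together cleanly.
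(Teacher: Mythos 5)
Your argument is correct, but for parts (\bref{HG-dual-to-dual-b}) and (\bref{HG-dual-to-dual-c}) it follows a genuinely different route from the paper. For (\bref{HG-dual-to-dual-a}) your explicit description (a cocycle kills $\Bg$, restricts to $\Zg$ and factors through $\hg\Vg$) is the same map the paper obtains more homologically, by applying $\Homgb_{\HG}(\_,\Ng)$ to the exact sequences $\0\to\Zg\to\Vg\to\Bg[1]\to\0$ and $\0\to\Bg\to\Zg\to\hg\Vg\to\0$ and chasing the resulting diagram. For (\bref{HG-dual-to-dual-b}) the paper does \emph{not} pass through \bref{prop-RHom(DGM,HG)}: it proves bijectivity of $\xi$ directly from that diagram, by a dimension-shifting argument ($\IR^{1}\Homgb_{\HG}(\Zg[1],\Ng)\simeq\IR^{1+m}\Homgb_{\HG}(\Zg[1+m],\Ng)$ for all $m$) killed by the finiteness of $\dimch(\GM(\HG))$; note this yields the statement for an arbitrary coefficient module $\Ng$, and it sidesteps the one step you rightly single out as delicate, namely identifying the composite isomorphism coming from \bref{prop-RHom(DGM,HG)}-(\bref{prop-RHom(DGM,HG)-b},\bref{prop-RHom(DGM,HG)-c}) (equivalently, the edge map of the degenerate $\ppIE$ spectral sequence) with $\xi$. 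That identification does hold and is routine -- the edge map is restriction of the $(0,n)$-component to cocycles followed by factorization through $\hg\Vg$, using the left exactness of $\Homgb_{\HG}(\hg\Vg,\_)$ applied to $\0\to\HG\to\Cal I^{0}\to\Cal I^{1}$ -- but it must be carried out for your proof of (b) to be complete; the paper's remark after its proof makes exactly your observation that, \emph{disregarding} $\xi$, the mere existence of the isomorphism is immediate from \bref{prop-RHom(DGM,HG)}. For (\bref{HG-dual-to-dual-c}) the paper argues via the cone of $\alphag$, whose cohomology is zero and hence projective, reducing to (b); your comparison of $\ppIE$ spectral sequences (isomorphism on $E_2$ via $\Extg_{\HG}(\hg\alphag,\HG)$, strong convergence from the finite-length injective resolution) is an equally valid alternative that, like the paper's, requires no projectivity of $\hg\Vg$ or $\hg\Vg'$.
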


\begin{proof}In order to minimise notations we shall write `$\Homgb$' for `$\Homgb\sb{\HG}$'.
\def\Hm#1{\Homgb (#1,\Ng)}%
\displayskips7/10 

Let $\Zg\dans\Vg$, resp. $\Bg\dans\Vg$, denote the $\HG$-graded submodules of cocycles, resp. coboundaries, of $(\Vg,\dg)$, and let 
$\Ng$ be any $\HG$-graded module.

\smallskip\noindent(\bref{HG-dual-to-dual-a})
Applying the functor $\Hm\_$ to the short exact sequence:
$$\0\to\Zg\too\Vg\too^{\dg}\Bg[1]\to\0\,,\eqno(\dagger)$$
one gets the left exact sequence of $\HG$-complexes
$$\0\to\Hm\Bg[-1]\too\sp{\alpha}\Hm\Vg\too\sp{\beta}\Hm\Zg\,,$$
and the short exact sequence of $\HG$-complexes
$$\0\to\Hm\Bg[-1]\too\sp{\alpha}\Hm\Vg\too\sp{\beta}\Qg\bull(\Zg,\Ng)\to\0\,,\eqno(*)$$
where $\Qg\bull(\Zg,\Ng)$ denotes the image of $\beta$. Note that the left and right complexes in $(*)$ have null differentials so that they coincide with their cohomology.

The cohomology sequence associated with $(*)$ is the long exact sequence
$$
\too^{c_{i-1}}{\def\bullet{i}\Hm\Bg}\too\sp{a_{i}}\hg\sp{i}\Hm\Vg\too\sp{b_i}\Qg\sp{i}(\Zg,\Ng)\too\sp{c_i}{\def\bullet{i}\Hm\Bg}\too\sp{a_{i+1}}\,,$$
where one easily verifies that $c_i$ is  the restriction to $\Qg\sp{i}(\Vg,\Ng)$ of the natural map $\Hm\Zg\to\Hm\Bg$ induced by the inclusion $\Bg\dans\Zg$. In this way we obtain
the exact triangle of $\HG$-graded modules
$$\hg\Hm\Vg\too\sp{b}\Qg\bull(\Zg,\Ng)\too\sp{c}\Hm\Bg\too\sp{a[+1]}\,.\eqno({*}{*})$$
On the other hand, if we apply $\Hm\_$ to
the short exact sequence 
$$\0\to\Bg\ \dans\ \Zg\to\hg\Vg\to\0\,,\eqno(\dagger\dagger)$$
we obtain the left exact sequence 
$$\0\to\Hm{\hg\Vg}\too\sp{b'}\Hm\Zg\too\sp{c'}\Hm\Bg\,,$$
which, joined to $({*}{*})$,  gives rise to the following commutative diagram with exact horizontal lines:
$$\preskip2pt\postskip2pt\vcenter{\hbox{$\xymatrix@C=0.7cm@R=6mm{
&\hg\Hm\Vg\ar[r]^(0.55){b}\ar@{-->}[d]\sp{\xi(\Vg,\Ng)}&\Qg\bull(\Vg,\Ng)\ar[r]^(0.45){c}\ar[d]\sp{\dans\smash{\rlap{\kern1cm$\bigoplus$}}}&\Hm\Bg\ar[d]\sp{=}\\
\0\ar[r]&\Hm{\hg\Vg}\ar[r]^{b'}&\Hm\Zg\ar^(0.48){c'}[r]&\Hm\Bg\\
}$}}\eqno(\Cal D)$$
establishing the existence of $\xi(\Vg,\Ng)$.

\smallskip
\noindent(\bref{HG-dual-to-dual-b}) If $\hg\Vg$ is projective,  the connecting morphism $c'$ is surjective and
$$\IR^{i}\Homgb_{\HG}(\Zg,\Ng)=\IR^{i}\Homgb_{\HG}(\Bg,\Ng)\,,\quad\forall i\geq1\,.\eqno(\diamond)$$
It follows that $\xi(\Vg,\Ng)$ is bijective if and only if
$\Qg^\bullet(\Vg,\Ng)=\Homgb(\Zg,\Ng)$, which is equivalent, as $\Vg$ is projective, to $\IR^{1}\Homgb_{\HG}(\Bg[1],\Ng)=\0\,,$ and to
$$\IR^{1}\Homgb_{\HG}(\Zg[1],\Ng)=\0\,,\eqno(\diamond\diamond)$$
thanks to $(\diamond)$. Let us prove this equality. 

For each $\ell\in\ZZ$, the projectivity of $\Vg[\ell]$ and the exactness of $(\dagger)$, implies that
$$\IR^{i}\Homgb_{\HG}(\Zg[\ell],\Ng)\simeq\IR^{i+1}\Homgb_{\HG}(\Bg[\ell{+}1],\Ng)\,,\quad\forall i\geq1\,,
$$
and, again by $(\diamond)$,
$$\IR^{i}\Homgb_{\HG}(\Zg[\ell],\Ng)\simeq\IR^{i+1}\Homgb_{\HG}(\Zg[\ell{+}1],\Ng)\,,\quad\forall i\geq1\,,
$$
so that we have, for all $m\geq1$
$$\IR^{1}\Homgb_{\HG}(\Zg[1],\Ng)\simeq\IR^{1+m}\Homgb_{\HG}(\Zg[1{+}m],\Ng)\,,$$
and $(\diamond\diamond)$ follows from tha fact that $\dimch(\GM(\HG))<+\infty$  (\bref{injectifs-gradues}-(\bref{injectifs-gradues-b})).

\smallskip
\noindent(\bref{HG-dual-to-dual-c}) Consider the exact triangle 
in $\DGM(\HG)$ 
$$(\Vg,\dg)\too^{\alphag}(\Vg',\dg')\too^{p_1}(\cone\alphag,\Delta)\too^{p_2}_{[+1]}$$
where $(\cone\alphag,\Delta)$ denotes the \emph{cone}\index{cone of a morphism of dgm's} of $\alphag$, \idest the $\HG$-gm 
$\cone\alphag:=\Vg'\oplus\Vg[1]$
with differential 
$\Delta(v',w):=(\dg' v'+\alphag\omega,-\dg\omega)$. 
Note that $\hg(\cone\alphag)=0$ since by the universal property of the cone construction, $\cone\alphag$ is acyclic if and only if $\alphag$ is a quasi-isomorphism. Now, since additive functors respect exactness of triangles and cones, the morphism
$\Homgb_{\HG}(\alphag,\HG)$ is a quasi-isomorphism if and only if the complex $\cone{\Homgb_{\HG}(\alphag,\HG)}=\Homgb_{\HG}(\cone\alphag,\HG)$ is acyclic. This is indeed the case  following (\bref{HG-dual-to-dual-b}) because, $\cone\alphag$ and $\hg(\cone\alphag)$ being both projective $\HG$-gm, we have
$\hg(\Homgb_{\HG}(\cone\alphag,\HG))=\Homgb_{\HG}(\hg(\cone\alphag),\HG)=0$.
\end{proof}

\begin{remas}
If one disregards the morphism $\xi(\Vg,\HG)$ in \bref{HG-dual-to-dual}-(\bref{HG-dual-to-dual-a}), then the fact that $\hg(\Homgb_{\HG}(\Vg,\HG))$ and $\Homgb_{\HG}(\hg\Vg,\HG)$
are isomorphic $\HG$-gm's when $\Vg$ and $\hg\Vg$ are projectives is an immediate result of \bref{prop-RHom(DGM,HG)}-(\bref{prop-RHom(DGM,HG)-b},\bref{prop-RHom(DGM,HG)-c}).

The statement \bref{HG-dual-to-dual}-(\bref{HG-dual-to-dual-c}) is a straightforward consequence of \bref{prop-RHom(DGM,HG)}-(\bref{prop-RHom(DGM,HG)-b}). Indeed, it claims that the restriction
 of the functor $\Homgb_{\HG}(\_,\HG)$ to the full subcategory of $\HG$-dgm's $(\Vg,\dg)$ with $\Vg$ projective (free) as $\HG$-gm is a derived functor, so that, as such, it preserves quasi-isomorphisms. 
\end{remas}

\begin{exer}Prove that $\Vg$ and $\hg\Vg$ are projective\index{projective!$\protect\HG$-graded module} (free)\index{free $\protect\HG$-graded module} $\HG$-gm if and only if $\Zg$ and $\Bg$ are. {\sl Hint. Follow the ideas in the proof of \bref{HG-dual-to-dual}-(\bref{HG-dual-to-dual-b}).}
\end{exer}

\subsection{Equivariant Integration}Let\label{integration-equivariante}
$\Gg$ be a {\bf compact connected} Lie group and $\Mg$ an oriented
$\Gg$-manifold of dimension $d\sb{\Mg}$. 

Extend the $\RR$-linear integration map
$\int_{\Mg}:\Omegac  (\Mg)\to\RR$ by
$\Sg $-linearity to the map
$$\mathalign{
\int_{\Mg}:&\Sg &\otimesg&\Omegac  (\Mg)&\hf{}{}{0.7cm} &\Sg\\
&\hfill P&\otimes&\omega\hfill&\hfto{}{}{0.7cm}&\textstyle P\int\sb{\Mg}\omega
}\eqno(\diamond)$$

As $\Gg$ acts on $\Sg\otimes\Omegac (\Mg)$ by 
$g\Cdot(P\otimes\omega)\pcolon g\Cdot P\otimes g\Cdot\omega$, the above integration 
map is $\Gg$-equivariant since one has
$\int_{\Mg} g\Cdot\omega=\int_{\Mg}\omega$, as a consequence
of the connectedness of $\Gg$ (see proof \bref{G-espaces}-(\bref{G-espaces-a})).
Therefore, the restriction of $(\diamond)$
to the subspace of $\Gg$-equivariant differential forms with compact support
$$\OmegaGc (\Mg)\pcolon \big(\Sg \otimesg\Omegac (\Mg)\big)^{\Gg}=\big(\Sg \otimesg\Omegac (\Mg)\big)^{\ggoth}$$
takes values in $\HG  \pcolon \Sg\sp{\Gg}$ (\bref{coh-equiv}). 
We denote this restriction by
$$\int_{\Mg}:\OmegaGc (\Mg)\to \HG \eqno(\diamond\diamond)$$
and call it \expression{the equivariant integration}\index{equivariant!integration}, which is clearly a morphism of $\HG$-graded modules of degree $-d\sb{\Mg}$.

Now, 
the graded algebra
$\OmegaGc (\Mg)$ has already been endowed with the differential 
$\dgG (P\otimes \omega)=P\otimes \dg\omega+ \sum_{i} Pe^i\otimes \cg(e_i)\,\omega$
(see \bref{coh-equiv}, \bref{cartan-complex}-($\dgg$)), and a homogeneous equivariant form $\zeta\in\OmegaGc \sp{d}(\Mg)$ of total degree $d$
decomposes in a unique way as a sum
$$\zeta=\sum_{0\le i\le d/2}\Big(\sum_{Q\in\B(i)} Q\otimes\omega_Q\Big)$$
where $\B(i)$ denotes a vector space basis
of $\Sgd i$ and $\omega\sb{Q}\in\Omega^{d-2\deg Q}(\Mg)$. As a consequence,
one easily checks that  
if $\zeta$ is an equivariant coboundary, the terms $Q\otimes\omega\sb{Q}$ in 
the above decomposition
such that $\omega\sb{Q}\in\Omegac (\Mg)$ is of top degree $d\sb{\Mg}$ are already coboundaries,
i.e. $\omega\sb{Q}\in B\sp{d\sb{\Mg}}\cmp(\Mg)$, and consequently $\int\sb{\Mg}\zeta=0$. We have thereby proved the following lemma.

\begin{lemm}$\displaystyle\int\sb{\Mg}\dgG (\OmegaGc(\Mg))=0$.\label{annulation-cobords}
\end{lemm}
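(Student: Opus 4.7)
The key observation is that $\int_{\Mg}$ only detects the piece of an equivariant form whose differential-form component has top degree $d_{\Mg}$. More precisely, writing an element $\xi\in\OmegaGc(\Mg)\subset\Sg\otimes\Omegac(\Mg)$ in an arbitrary homogeneous basis $\set Q/$ of $\Sg$ as $\xi=\sum_Q Q\otimes \xi_Q$ with $\xi_Q\in\Omegac(\Mg)$, the $\Sg$-linear extension $(\diamond)$ gives $\int_\Mg\xi=\sum_Q Q\int_\Mg \xi_Q$, and the integrals on the right are zero unless $|\xi_Q|=d_\Mg$. So it suffices to show that whenever $\zeta=\dgG\eta$ with $\eta\in\OmegaGc(\Mg)$, the top-form-degree component of $\zeta$ is of the form $\sum_Q Q\otimes \dg\mu_Q$ for some $\mu_Q\in\Omegac^{d_\Mg-1}(\Mg)$, and then invoke the classical Stokes theorem $\int_\Mg \dg\mu_Q=0$ (valid because $\mu_Q$ has compact support).

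The extraction of the top component is almost automatic from the formula $(\dgg)$ of \bref{cartan-complex}. First I would decompose $\eta=\sum_{Q}Q\otimes\eta_Q$; then
\[
\dgG\eta \;=\; \sum_{Q}Q\otimes \dg\eta_Q \;+\; \sum_{Q,i} Qe^{i}\otimes \cg(e_i)\eta_Q\,.
\]
The second summand contributes to form-degree $d_\Mg$ only through terms where $\cg(e_i)\eta_Q$ has degree $d_\Mg$, i.e.\ where $\eta_Q$ has degree $d_\Mg+1$; but $\Omegac^{d_\Mg+1}(\Mg)=0$, so this summand contributes nothing in top form-degree. The first summand contributes in form-degree $d_\Mg$ precisely through the $\eta_Q$ of form-degree $d_\Mg-1$, giving $\sum_{|\eta_Q|=d_\Mg-1} Q\otimes \dg\eta_Q$ as required.

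Combining:
\[
\int_\Mg\dgG\eta \;=\; \sum_{|\eta_Q|=d_\Mg-1} Q\int_\Mg \dg\eta_Q \;=\; 0\,,
\]
by Stokes applied to each compactly supported $\eta_Q\in\Omegac^{d_\Mg-1}(\Mg)$. I do not foresee any real obstacle: there is no regrouping subtlety because we are free to work in an arbitrary (not necessarily $\Gg$-adapted) basis of $\Sg$, and we never needed to use that $\eta$ is $\Gg$-invariant—the same argument works on the whole of $\Sg\otimes\Omegac(\Mg)$. The $\Gg$-invariance enters only implicitly to guarantee that $\int_\Mg\zeta$ lies in $\HG=\Sg^\Gg$, which is already established by the preceding discussion.
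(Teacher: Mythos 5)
Your proposal is correct and follows essentially the same route as the paper: decompose the form over a basis of $\Sg$, observe that the contraction term of the Cartan differential cannot produce anything in top form-degree $d_{\Mg}$ (it would need a compactly supported form of degree $d_{\Mg}+1$), so the top-degree component of $\dgG\eta$ consists of de Rham coboundaries of compactly supported forms, whose integrals vanish by Stokes. Your closing remark that $\Gg$-invariance is not needed for the vanishing itself (only to land in $\HG$) is accurate and consistent with the paper's argument.
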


Therefore,
the equivariant integration $(\diamond\diamond)$
induces a morphism of $\HG$-graded modules of degree $[-d\sb{\Mg}]$ 
in cohomology:
$$\int_{\Mg}:\HGc (\Mg)\to \HG \eqno(\diamonds3)$$
also called \expression{equivariant integration}\index{equivariant!integration}.

\subsubsection{Equivariant Integration vs. Integration Along Fibers.}As we already pointed out in \bref{ssSerre}, $\Gg$-equivariant cohomology
is canonically isomorphic to the projective limit of
the de Rham cohomologies of the fibered spaces
$\pi_n:\Mg_{\Gg}(n)=\EE_{\Gg}(n)\times_{\Gg}\Mg\onto\BB_{\Gg}(n)$ (\bref{limproj}). 
Moreover, for each fixed $d\in\NN$ the projective system $\set H^{d}(\Mg_{\Gg}(n))/\sb{n}$
is stationary and converges to $H^{d}(\Mg_{\Gg})$. Now, each  $\pi\sb{\Mg,n}:\Mg\sb{\Gg}(n)\to\BB\sb{\Gg}(n)$ is a fibration with oriented base space, whose fiber
is the oriented manifold $\Mg$. The operation of integration along $\Mg$ 
is then well defined 
$\int\sb{\Mg}:H\cmp(\Mg\sb{\Gg}(n))[d\sb{\Mg}]\to H\cmp(\BB\sb{\Gg}(n))$
(see \bref{Gysin-fibration}) and induces a limit map
$$\pi\sb{\Mg,\gysgn}:H\sb{\Gg,\rm c}(\Mg)[d\sb{\Mg}]\to H(\BB\sb{\Gg})=\HG  (\pt)$$

\noNumber\begin{prop}The map $\pi\sb{\Mg,\gysgn}$ coincides with the equivariant integration.
\end{prop}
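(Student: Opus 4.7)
\begin{varproof}{Proof plan}
The strategy is to reduce the identification to each finite level $n$ of the approximation $\pi_{\Mg,n}\colon\Mg\sb{\Gg}(n)\to\BB\sb{\Gg}(n)$. On both sides, the maps in question arise as the projective limit, over $n\in\NN$, of maps defined at the level of $\Mg\sb{\Gg}(n)$: on the one hand, the geometric fiber integrations $\int\sb{\Mg}\colon H\cmp(\Mg\sb{\Gg}(n))[d\sb{\Mg}]\to H\cmp(\BB\sb{\Gg}(n))$ from \bref{Gysin-fibration}; on the other, the equivariant integrations obtained by composing $\int\sb{\Mg}\colon\OmegaGc(\Mg)\to\HG$ with the stabilization morphisms $\HG(\Mg)\to H\sp{\le d}(\Mg\sb{\Gg}(n))$ afforded by the Cartan model and \bref{approximations}. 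Since both families are compatible with the structure morphisms of the inverse system and each individual degree stabilizes for $n$ large, the coincidence in the limit follows from the coincidence at every sufficiently large level $n$.

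At a fixed level $n$, the comparison requires an explicit cochain-level realization of the Cartan model on $\Mg\sb{\Gg}(n)$. I would fix a $\Gg$-equivariant connection one-form $\theta\in\Omega\sp{1}(\EE\sb{\Gg}(n))\otimes\ggoth$ on the principal $\Gg$-bundle $\EE\sb{\Gg}(n)\to\BB\sb{\Gg}(n)$, with curvature $\Omega\sb{\theta}\in\Omega\sp{2}(\EE\sb{\Gg}(n))\otimes\ggoth$, and use the associated Cartan--Weil construction: a monomial $P\otimes\omega\in(\Sg\otimes\Omegac(\Mg))\sp{\Gg}$ is sent to the basic form on $\EE\sb{\Gg}(n)\times\Mg$ obtained as $P(\Omega\sb{\theta})\wedge\omega\sb{\theta}$, where $\omega\sb{\theta}$ is the horizontal-equivariant extension of $\omega$ determined by $\theta$; this descends to a compactly supported (along the fiber $\Mg$) differential form on $\Mg\sb{\Gg}(n)$. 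This construction provides the quasi-isomorphism underlying \bref{ssSerre} and, by a standard Chern--Weil argument, identifies the two spectral sequences of \bref{G-espaces-ss-cmp} and of the Serre fibration. What is decisive for the proposition is that $P(\Omega\sb{\theta})$ is pulled back from $\BB\sb{\Gg}(n)$ and represents the Chern--Weil image of $P\in\Sg^{\ggoth}$ in $H(\BB\sb{\Gg}(n))=H_{\Gg}(\pt)$ up to the same degree $n$.

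The comparison is then the following one-line computation along the fibers. Since $P(\Omega\sb{\theta})$ is horizontal, i.e. $\pi\sb{\Mg,n}\sp{*}$ of a form on $\BB\sb{\Gg}(n)$, the projection formula for fiber integration (\bref{Gysin-fibration}-($*$)) yields
$$\int\sb{\Mg}\bigl(P(\Omega\sb{\theta})\wedge\omega\sb{\theta}\bigr)=P(\Omega\sb{\theta})\wedge\int\sb{\Mg}\omega\sb{\theta}=P(\Omega\sb{\theta})\cdot\int\sb{\Mg}\omega\,,$$
the last equality because the connection-dependent correction terms in $\omega\sb{\theta}-1\otimes\omega$ are concentrated in bidegrees with strictly positive horizontal degree, hence their integrals along $\Mg$ either vanish for degree reasons on a top-degree form of $\Mg$, or else produce horizontal forms that fall into lower strata of the decreasing filtration of $\OmegaGc(\Mg)$ and hence contribute only to forms of lower Cartan polynomial degree. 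Passing to cohomology and invoking Chern--Weil to recognise $[P(\Omega\sb{\theta})]$ as the image of $P$ in $\HG$ gives the stated equality $\pi\sb{\Mg,*}[P\otimes\omega]=P\cdot\int\sb{\Mg}\omega$. The main obstacle is the bookkeeping in the second equality above: making precise that only the $1\otimes\omega$ contribution survives integration along $\Mg$ modulo coboundaries. This is handled by the same lemma \bref{annulation-cobords} that made the equivariant integration well defined, applied fibrewise to the horizontal-vertical decomposition of $\omega\sb{\theta}$ induced by $\theta$.
\end{varproof}
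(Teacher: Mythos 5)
The paper offers no argument here (the proof is ``left to the reader''), so there is nothing to compare against except the intended standard argument, and your plan is exactly that argument: reduce to the finite approximations $\Mg\sb{\Gg}(n)$, realise the Cartan model geometrically via a connection on $\EE\sb{\Gg}(n)\to\BB\sb{\Gg}(n)$, and compute the fiber integration of the Cartan--Weil image $P(\Omega\sb{\theta})\wedge\omega\sb{\theta}$ by the projection formula. Two small points deserve cleaning up. First, the hedge at the end is unnecessary and slightly off: every connection-dependent correction term in $\omega\sb{\theta}$ carries horizontal degree at least one, hence fiber degree strictly less than $d\sb{\Mg}$, so it is killed by fiber integration \emph{exactly}, at the cochain level; there is no need to invoke coboundaries, the filtration of $\OmegaGc(\Mg)$, or lemma \bref{annulation-cobords} at this stage (that lemma is only what makes the equivariant integration itself well defined in cohomology). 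Second, a general element of $\big(\Sg\otimes\Omegac(\Mg)\big)^{\Gg}$ is a sum of tensors $P\otimes\omega$ whose individual factors need not be invariant, so $P(\Omega\sb{\theta})$ taken alone need not descend to $\BB\sb{\Gg}(n)$; the fix is to perform the whole computation upstairs on $\EE\sb{\Gg}(n)\times\Mg$, where integration along $\Mg$ is $\Gg$-equivariant and sends basic forms to basic forms, and only then descend, recognising the result as the Chern--Weil representative of the invariant polynomial $\sum\sb{i}P\sb{i}\int\sb{\Mg}\omega\sb{i}$. Finally, your appeal to \bref{ssSerre} is the right place to anchor the fact that the Cartan--Weil map is the comparison map underlying the identification of \bref{approximations}, which is what makes the level-$n$ computation compute the limit map $\pi\sb{\Mg,\gysgn}$; with these adjustments the proof is complete.
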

\begin{proof}Left to the reader.
\end{proof}

\subsection{Equivariant Poincaré Pairing}
Equivariant integration is what we need  to mimic 
the nonequivariant Poincaré pairing (\bref{poincare-pairing}) in the equivariant framework.

\sss The composition of the $\HG$-bilinear map 
$\OmegaG(\Mg)\otimes\OmegaGc (\Mg)
\to\OmegaGc(\Mg)$,
$\alpha\otimes\beta\mapsto\alpha\wedge\beta$,
with equivariant integration $\int\sb{\Mg}:\OmegaGc(\Mg)\to\HG$, 
gives rise to a nondegenerate pairing
$$\mathalign{\IP\sb{\Gg}(\Mg):&
\OmegaG(\Mg)&\otimes&\OmegaGc(\Mg)&\too&\HG\hfill\\
&\hfill\alpha&\otimes&\beta\hfill&\longmapsto&\int\sb{\Mg}\alpha\wedge\beta}
\eqno(\IP\sb{\Gg})$$
inducing the \expression{Poincaré pairing in equivariant cohomology}\index{Poincaré pairing!in equivariant cohomology}
$$\mathalign{\P\sb{\Gg}(\Mg):&
\HG(\Mg)&\otimes&\HGc(\Mg)&\too&\HG\hfill\\
&\hfill\aalpha&\otimes&\bbeta\hfill&\longmapsto&\int\sb{\Mg}\aalpha\cup\bbeta}
\eqno(\P\sb{\Gg})$$

\sss The\label{equivariant-left-adjoint} left adjoint\index{equivariant!left adjoint map}\index{adjoint!left equivariant map} map associated with $\IP\sb{\Gg}$ (see \bref{left-adjoint}) is now the map $$\mathalign{
\IDG (\Mg):&\OmegaG(\Mg)&\too&\Homgb\sb{\HG}\big(\OmegaGc(\Mg),\HG\big)\hfill\\
&\alpha&\mapstoo&\IDG(\Mg) (\alpha)\pcolon \Big(\beta\mapsto\int\sb{\Mg}\alpha\wedge\beta\Big)
}\eqno(\IDG )
\postskip0pt$$
and one has, following lemma \bref{annulation-cobords}, for $\alpha$ homogeneous
$$\preskip1.5ex\mathalign{
\hfill\IDG \big((-1)\sp{d\sb{\Mg}}\dgG \alpha\big)(\beta)&=
\int\sb{\Mg}(-1)\sp{d\sb{\Mg}}\dgG\alpha\wedge\beta\hfill\\\noalign{\kern2pt}
&=
\int\sb{\Mg}(-1)\sp{d\sb{\Mg+}|\alpha|+1}\alpha\wedge \dgG\beta
=(-1)\sp{|\beta|}\IDG(\alpha)(\dgG\beta)\,,\hfill}$$
Hence, following the conventions in \bref{decalage} and \bref{Hom-Tensor-graded},
$\IDG (\Mg)$ is a morphism of $\HG$-graded complexes
from $
\OmegaG(\Mg)[d\sb{\Mg}]$ to $\Homgb\sb{\HG}(\OmegaGc(\Mg),\HG)
$. 

\sss
The\label{equivariant-right-adjoint} right adjoint\index{equivariant!right adjoint map}\index{adjoint!right equivariant map} map associated with $\IP\sb{\Gg}$ (see \bref{right-adjoint}) is the map 
$$\mathalign{
\IDG' (\Mg):&(\OmegaGc(\Mg),\dgG)&\too&\big(\Homgb\sb{\HG}\big(\OmegaG(\Mg),\HG\big),-\Dg\big)\hfill\\\noalign{\kern2pt}
&\beta&\mapstoo&\IDG'(\Mg) (\beta)\pcolon \Big(\alpha\mapsto\int\sb{\Mg}\alpha\wedge\beta\Big)
}\eqno(\IDG' )
$$
which is also a morphism of $\HG$-graded complexes.

\begin{exer}\label{nondegenerate-equivariant-pairing}Verify that $(\IP\sb{\Gg})$ is a nondegenerate\index{nondegenerate pairing} pairing and that $\IDG'(\Mg)$ is a morphism of $\HG$-graded complexes. 
\end{exer}

\subsection{$\Gg$-Equivariant Poincaré Duality Theorem}
\begin{theo}
Let\label{GPD} 
$\Gg$ be a compact connected Lie group, and
 $\Mg$ an oriented $\Gg$-manifold of dimension $d\sb{\Mg}$. Then,
\begin{enumerate}
\item\label{IDG(M)}\label{GPD-a}The $\HG$-graded morphism of complexes
$$\displayboxit{\IDG (\Mg):\OmegaG (\Mg)[d_{\Mg}]\too
\Homgb_{\HG }\big(\OmegaGc (\Mg),\HG \big)}$$
is a quasi-isomorphism.

\item\label{H=ExtHc}\label{GPD-b}
\comment
There are canonical isomorphisms
$$\HG(\Mg)[d\sb{\Mg}]\cong\hg\big(\Homgb_{\HG }\big(\OmegaGc (\Mg),\HG \big)\big)\cong\Extg\sb{\HG}(\HGc(\Mg),\HG)$$
In particular, if 
\endcomment
The morphism $\IDG (\Mg)$ induces 
\expression{the ``Poincaré morphism in $\Gg$-equivariant cohomology''}\index
{Poincaré!morphism!in $\Gg$-equivariant cohomology} 
(see \bref{HG-dual-to-dual}-(\bref{HG-dual-to-dual-a}))
$$\displayboxit{
\DG(\Mg) :\HG (\Mg)[d_{\Mg}]\too 
\Homgb_{\HG }\big( \HGc (\Mg),\HG \big)}$$
If $\HGc(\Mg)$ is a free $\HG$-module,  $\DG(\Mg)$
is an isomorphism.

\item There\label{GPD-spectral-sequences}\label{GPD-c} are natural spectral sequence converging to $\HG(\Mg)[d\sb{\Mg}]$
$$\left\{\mathalign{
\IE\sb{2}^{p,q}(\Mg)&=&\big(\Extg_{\HG }^{p}\big(\HGc (\Mg),\HG \big)\big)\sp{q}&\Rightarrow &
H^{p+q}_{\Gg}(\Mg)[d_{\Mg}]\\\noalign{\kern3pt}
\IF\sb{2}^{p,q}(\Mg)&=&\HG\sp{p}\otimes\sb{\RR}\Homgb\sb{\RR}(\Hc\sp{q}(\Mg),\RR)&\Rightarrow &
H^{p+q}_{\Gg}(\Mg)[d_{\Mg}]}
\right.$$
where, in the first one, $q$ denotes the graded vector space
degree.

\item\label{IDG'(M)}\label{GPD-d}Moreover, if $\Mg$ is of finite type\index{finite!de Rham type}, the $\HG$-graded morphism of complexes
$$\displayboxit{\IDG' (\Mg):\OmegaGc (\Mg)[d_{\Mg}]\too
\Homgb_{\HG }\big(\OmegaG (\Mg),\HG \big)}$$
is a quasi-isomorphism, and mutatis mutandis for
 {\rm(b)} and {\rm (c)}.

\end{enumerate}
\end{theo}
\noendpoint\begin{proof}
\begin{enumerate}\itemsep0pt
\item We 
recall the filtration of the Cartan complex 
 we already used in
the proof of \bref{abstrait}-(\bref{abstrait-spectral}): 
An equivariant form in $(\OmegaG (\Mg),\dgG )$
is said to be \expression{of index $m$}\index
{index of an equivariant form|slnb} if it belongs to the subspace
$$\OmegaG (\Mg)_{m}\pcolon\big(\varSg{\geqslant  m}\otimes\Omega(\Mg)\big){}^{\Gg}\,.$$
One easily checks that each $\OmegaG (\Mg)_{m}$ is stable under 
the Cartan differential $\dgG$, that
$\OmegaG (\Mg)=\OmegaG (\Mg)_{m}$ for all $m\leqslant  0$
and that one has a decreasing filtration
$$\OmegaG (\Mg)=\OmegaG (\Mg)_{0}\cont\OmegaG (\Mg)_{1}
\cont\OmegaG (\Mg)_{2}\cont\cdots\eqno(*)$$
Furthermore, 
$\Omega^{i}_{\Gg}(\Mg)\cap\OmegaG(\Mg)\sb{m}=0$ whenever $m>i$, so that
$(*)$  is a \expression{regular filtration}\index{regular filtration}\index{filtration!regular} (see \cite{god} \myS4 pp. 76-).

\medskip
In a similar way,
$\lambda\in\Homgb_{\HG}(\OmegaG (\Mg), \HG )$ is said to be 
\expression{of index $m$} whenever
$$\lambda \Big(\big(\varSg a\otimes\Omegac  (\Mg)\big)^{\Gg}\Big)
\dans \HG^{\geqslant a+m}\,,\qquad\forall a\in\NN\,,$$
and we denote $\Homgb_{\HG }(\OmegaGc (\Mg), \HG )_{m}$ the subspace of such
maps. As before, each of these spaces is
is a subcomplex of $(\Homgb\sb{\HG}(\OmegaG(\Mg)),\Dg)$
and the decreasing filtration
$$
\mathalign{
\cdots\cont \Homgb\sb{\HG }(\OmegaGc , \HG )_{m}
\cont \Homgb\sb{\HG }(\OmegaGc , \HG )_{m+1}\cont\cdots}\eqno(**)$$
verifies for
each $\lambda$ homogeneous of degree $i$
$$a+\dim\Mg+i\geqslant \deg\lambda\Big((\varSg a\otimes\Omegac  (\Mg))^{\Gg}\Big)\geqslant  a+i\,,
\qquad\forall a\in\NN\,,$$
so that $(**)$ is also regular\index{regular filtration}\index{filtration!regular}.

An immediate verification shows that $\IDG(\Mg)$
is a morphism of graded filtered modules, i.e.
$$\IDG (\Mg)\big(\OmegaG (\Mg)[d_\Mg]_{m}\big)\dans
\Homgb_{\HG }\big(\OmegaGc (\Mg),\HG \big){}_{m}\,,
\qquad\forall m\in\ZZ\,,$$
giving rise, therefore, to a morphism between
the associated spectral sequences (see \cite{god}, \myS4 Thm. 4.3.1, p. 80)
whose $\IE\sb{0}$ terms are 
$$\mycases{
\big((\Sg\otimesg\Omega(\Mg))^{\Gg},1\otimes \dg\big)[d_\Mg]\\\noalign{\kern3pt}
\Homgb_{\HG }\big(\big(\Sg\otimes\Omegac  (\Mg))^{\Gg},1\otimes \dg\big),\HG \big)
}
$$
and which are respectively quasi-isomorphic to
$$\mycases{
\HG \otimesg\big(\Omega(\Mg),\dg\big)[d_\Mg]\\\noalign{\kern3pt}
\Homgb_{\HG }\big(
H_\Gg\otimes\big(\Omega(\Mg),\dg\big),\HG \big)
}
$$
Indeed, the first one is just
\bref{G-espaces}-(\bref{G-espaces-a}), and the
second one results
from the fact that, since $\Gg$ is compact, there is a 
canonical isomorphism 
$\Sg=\HG \otimes_{\RR} \H$,
where  $\H$ denotes the (graded) subspace of  $\Gg$-harmonic polynomials
of $\Sg$ (see \cite{dix}, thm. {\bf7.3.5} p.~241, \myS{\bf8} pp. 277-), so that
$$\big(\big(\Sg\otimes_{\RR}\Omegac  (\Mg)\big){}^{\Gg},1\otimes d_\Mg\big)=
\HG \otimes_{\RR}\big(\big(\H\otimes\Omegac  (\Mg)\big){}^{\Gg},1\otimes d_{\Mg}\big)\,,$$
and the quasi-isomorphisms of \bref{G-espaces}-(\bref{G-espaces-a})
$$
\mathalign{
\HG \otimes\big(\Omega_c(\Mg),\dg\big)&\cont&
\HG \otimes\big(\Omega_c(\Mg)^{\Gg},\dg\big)&\dans&
\big((\Sg\otimes\Omega_c(\Mg))^{\Gg},1\otimes \dg\big)
}$$
are morphisms of complexes of {\sl\bfseries free\/} $\HG$-graded modules.
Consequently, the induced morphisms on the corresponding $\HG$-dual complexes will still be quasi-isomorphisms 
(\cf\bref{HG-dual-to-dual}-(\bref{HG-dual-to-dual-c})). 

Putting together these observations, 
the induced morphism 
on the $\IE_1$ terms 
of the concerned spectral sequences by $\IDG (\Mg)$, is simply
$$\mathalign{
H_\Gg\otimes H(\Mg)[d_\Mg]&\hf{\1\otimes\D(\Mg)}{}{1,5cm}&
H_\Gg\otimes\Homgb_{\RR}(H_c(\Mg),\RR)\hfill\\
&&\vegal{3mm}\kern0.8cm\\
&&\Hom_{\HG }\big((\HG \otimes H_c(\Mg),\HG )\big)}$$
where one recognizes in $\1\otimes\D(\Mg)$ the classical Poincaré duality \bref{PD}.

\item This is a straightforward application of proposition \bref{HG-dual-to-dual} since, as we noted in the previous paragraphs, $\OmegaG\pcolon\OmegaG(\Mg)$ is a free $\HG$-gm.

\item The first spectral sequence $\IE(\Mg)$ is just the $\ppIE$ spectral sequence of \bref{prop-RHom(DGM,HG)} converging to the right hand side of $(\star)$. 
On the other hand, the spectral sequence, $\IF_2\sp{p,q}(\Mg)$, is the one we used in the proof of (a).
\item Left to the reader.\QED
\end{enumerate}
\end{proof}


\subsubsection{Torsion-freeness, Freeness and Reflexivity.}\label{reflexivity}
Proposition \hbox{\ref{GPD}-(\ref{GPD-b},\ref{GPD-d})} shows that the freeness of equivariant cohomology as $\HG$-gm is a sufficient condition for equivariant Poincaré duality to hold. The question then arises whether some weaker condition could be equivalent to duality. 
Apart from freeness, two other properties have been thoroughly study in Allday-Franz-Puppe~\cite{afp}.
\varlistseps{\itemsep2pt\topsep2pt}
\begin{itemize}\halfdisplayskips
\item Torsion-freeness. An $\HG$-gm $\Vg$ is said to be \expression{torsion-free}\index{torsion-free module} if, for all $v\in\Vg$,
$$\Ann(v):=\set P\in\HG\mid P\cdot v=0/=0\,.$$
The torsion-freeness of equivariant cohomology is clearly a necessary condition for duality as the modules $\Homgb_{\HG}(\_,\HG)$ are torsion-free. It is also a sufficient condition for the injectivity of the Poincaré morphism (Prop. 5.9 \cite{afp}, see ex. \ref{exo-injectivite}), but it is not for duality as the explicit examples of Franz-Puppe~\cite{fp} (2006) show.

\item Reflexivity. An $\HG$-gm $\Vg$ is said to be \expression{reflexive}\index{reflexive module} if the natural map 
$$\Vg\to\Homgb_{\HG}(\Homgb_{\HG}(\Vg,\HG),\HG)$$
is an isomorphism.

For a finite type manifold $\Mg$, while the reflexivity of $\HG(\Mg)$ 
and $\HGc(\Mg)$ are clearly necessary conditions to duality, the converse, which is also true, is more subtle. The equivalence between duality and reflexivity has been established in \cite{afp}  (Prop. 5.10) for $\Gg$ abelian, and in Franz \cite{fra} (Cor.~5.1) for general $\Gg$ . (\footnote{The key point is to prove that reflexivity of $\HG(\Mg)$ implies that $\mathrigid1mu
\Extg_{\HG}^{i}(\HG(\Mg),\HG)=0$, for all $i>0$, in which case the proof \ref{HG-dual-to-dual}-(\ref{HG-dual-to-dual-b}) applies and duality follows as in \ref{GPD}-(\ref{GPD-b}).})
\end{itemize}

The following diagram illustrates the relationship between
the different kinds of nontorsions in equivariant cohomology and  significant properties of the equivariant Poincaré pairing\index{perfect pairing}\index{nondegenerate pairing}.
$$\def\maboite#1{\vcenter{\sffamily\small
\parindent0pt\centering\hsize2.3cm\lineskiplimit0pt\lineskip0pt\baselineskip10pt
#1}}
\def\boite#1{\hbox{\small\tt#1}}
\def\vegal#1{\Updownarrow}
\qquad\mathalign{
&&
\llap{$\set\boite{free}/\ \dans\ {}$}\set\boite{reflexive}/
&\dans&
\set\boite{torsion-free}/\\
&&\vegal{3mm}&&\vegal{3mm}\\\noalign{\kern2pt}
&&
\Bigset\maboite{Perfect\\ Poincaré pairing}/
&\dans&
\Bigset\maboite{Nondegenerate\\  Poincaré pairing}/
}$$

It is worth noting that in \cite{fra2} (2015), Franz gives the first known examples of compact manifolds having reflexive but nonfree equivariant cohomology.

\subsection{$\Tg$-Equivariant Poincaré Duality Theorem}
When $\Gg$ is a {\bf compact 
connected torus} $\Tg=\cSS^{1}\times\cdots\times\cSS^{1}$, we have:
$$\left\{\mathalign{
\hfill H_{\Tg}&=&\St\hfill\mycr
\OmegaT (\Mg)&=&\St\otimes_{\RR}\Omega(\Mg)^{\Tg}\mycr
\OmegaTc (\Mg)&=&\St\otimes_{\RR}\Omegac (\Mg)^{\Tg}}\right.
\postskip0pt$$
so that
$$\mathalign{
\Homg_{H_{\Tg}}(\OmegaTc ,H_{\Tg})&=&
\Homg_{\St}\big(\St\otimes\Omegac  (\Mg)^{\Tg},\St\big)\hfill\mycr
&=&\Homg_{\RR}\big(\Omegac (\Mg)^{\Tg},\St\big)\hfill\\
&=&
\St\otimes_{\RR}\Homg_{\RR}\big(\Omegac (\Mg)^{\Tg},\RR\big)
\hfill
}$$
The left adjoint map $\IDT(\Mg)$  associated with the $\Tg$-equivariant Poincaré pairing $\IP\sb{\Tg}$ (see \bref{equivariant-left-adjoint})
identifies naturally to $\1\otimes\ID(\Mg)$,
$$\displayboxit{\vrule height25pt depth0pt width0pt\mathalign{
\St\otimes\Omega(\Mg)^{\Tg}[d_{\Mg}]&
\hf{\IDT(\Mg)}{\1\otimes \ID(\Mg)}{1.5cm}&
\St\otimes\Homg_{\RR}\big(\Omegac(\Mg)^{\Tg},\RR\big)\mycr
P\otimes\alpha&\hfto{}{}{1.5cm}&P\otimes\Big(\beta\mapsto\int_{\Mg}\alpha\wedge\beta\Big)
}}$$
and the right adjoint map (see \bref{equivariant-right-adjoint}) to
$$\displayboxit{\vrule height25pt depth0pt width0pt\mathalign{
\St\otimes\Omegac(\Mg)^{\Tg}[d_{\Mg}]&
\hf{\IDT'(\Mg)}{\1\otimes \ID'(\Mg)}{1.5cm}&
\St\otimes\Homg_{\RR}\big(\Omega(\Mg)^{\Tg},\RR\big)\mycr
P\otimes\beta&\hfto{}{}{1.5cm}&P\otimes\Big(\alpha\mapsto\int_{\Mg}\alpha\wedge\beta\Big)
}}$$

The following theorem is a particular case of \bref{GPD}.

\begin{theo}Let\label{TPD} $\Tg$ be a compact connected torus, and $\Mg$ an oriented $\Tg$-manifold of dimension $d\sb{\Mg}$.
\begin{enumerate}
\mynobreak\nobreak
\item The\label{TPD-a} $\HT$-graded morphism of complexes
$$\displayboxit{\IDT (\Mg):\OmegaT (\Mg)[d_{\Mg}]\too
\Homg_{\HT }\big(\OmegaTc(\Mg),\HT \big)}$$
is a quasi-isomorphism.
\item 
The\label{TPD-b} morphism $\IDT (\Mg)$ induces 
\expression{the ``Poincaré morphism in $\Tg$-equivariant cohomology''}\index
{Poincaré!morphism!in $\Tg$-equivariant cohomology} 
(see \bref{HG-dual-to-dual}-(\bref{HG-dual-to-dual-a}))
$$\displayboxit{
\DT(\Mg) :\HT (\Mg)[d_{\Mg}]\too 
\Homg_{\HT }\big( H_{\Tg,c}(\Mg),\HT \big)}$$
If $\HTc(\Mg)$ is a free $\HT$-module,  $\DT(\Mg)$
is an isomorphism.

\item There\label{TPD-c} are natural spectral sequences 
converging to $\HT(\Mg)[d\sb{\Mg}]$
$$\let\HG\HT\let\HGc\HTc
\left\{\mathalign{
\IE\sb{2}^{p,q}(\Mg)&=&\big(\Extg_{\HG }^{p}\big(\HGc (\Mg),\HG \big)\big)\sp{q}&\Rightarrow &
\HG^{p+q}(\Mg)[d_{\Mg}]\\\noalign{\kern2pt}
\IF\sb{2}^{p,q}(\Mg)&=&\HG\sp{p}\otimes\sb{\RR}\Homg\sb{\RR}(\Hc\sp{q}(\Mg),\RR)&\Rightarrow &
\HG^{p+q}(\Mg)[d_{\Mg}]}
\right.$$
where, in the first one, $q$ denotes the graded vector space
degree.

\item Moreover,\label{TPD-d} if $\Mg$ is of finite type\index{finite!de Rham type}, the $\HG$-graded morphism of complexes
$$\displayboxit{\IDT' (\Mg):\OmegaTc (\Mg)[d_{\Mg}]\too
\Homg_{\HT }\big(\OmegaT (\Mg),\HT \big)}$$
is a quasi-isomorphism, and mutatis mutandis for
 {\rm(b)} and {\rm (c)}.
\end{enumerate}
\end{theo}
\noendpoint\begin{proof}(\bref{TPD-a})
Since we have the identification $\IDT(\Mg)=\1\otimes\ID(\Mg)$,
we may conclude using \bref{abstrait}-(\bref{abstrait-abelien}).
Statements (\bref{TPD-b},\bref{TPD-c},\bref{TPD-d}) are particular cases of \bref{GPD}.\QED
\end{proof}

\begin{rema} Recall that $\HTc(\Mg)$ is a free $\HT$-module
whenever
$\Mg$ has no odd (or no even) degree cohomology with compact support
(\bref{abstrait}-(\bref{abstrait-abelien})-(iv)).
Obviously, though not very interesting, 
this is also the case when $\Tg$ acts trivially on $\Mg$, since then
$\cg(Y)=\thetag(Y)=0$, $\forall\, Y\in\tgoth$, and $\HTc(\Mg)=\HT\otimes\sb{\RR}\Hc(\Mg)$.
\end{rema}

\section{Equivariant Gysin Morphism}\label{S-EGM}
We now follow the steps in section \bref{recipe} for the construction of the Gysin morphisms in the equivariant framework.

\subsection{$\Gg$-Equivariant Gysin Morphism in the General Case}
\subsubsection{Equivariant Finite de Rham Type Coverings.}We\label{equivariant-covering} have already proved in \bref{proofppp} that if $\Gg$ is a compact Lie Group, a $\Gg$-manifold $\Mg$ is the union of a countable ascending chain
$\U\pcolon\set U_0\dans U_1\dans\cdots/$ of $\Gg$-stable open subsets  of $\Mg$ of finite type.

\smallskip
The following theorem, the equivariant analog of \bref{D'-image}, is a
 corollary of the $\Gg$-equivariant Poincaré duality theorem \bref{GPD}.
The proof is left to the reader.

\begin{theo}
Let\label{cohomologie-compacte-equivariante} $\Gg$ be a compact connected Lie group, and $\Mg$ an oriented
$\Gg$-manifold of dimension $d\sb{\Mg}$. Then,
\varlistseps{\itemsep2pt\topsep2pt}\begin{enumerate}
\mynobreak\nobreak
\item For every filtrant covering $\U$ of $\Mg$ by $\Gg$-stable open subsets, the canonical map
$
\limind_{U\in\U}\OmegaGc(U)\to\OmegaGc(\Mg)$
is bijective, and the map
$$
\IDG'(\U):(\OmegaGc(\Mg),\dgG)[d_{\Mg}]\too
\limind_{U\in\U}
\big(\Homg_{\HG}(\OmegaG(U),\HG),-\Dg\big)$$
analog to \bref{D'-image}-{\rm(\bref{D'U})} 
is a well defined morphism of complexes.

\item Moreover, if the open sets in $\U$ are of finite type,
the map $\IDG'(\U)$
is a quasi-isomorphism.
\end{enumerate}
\end{theo}

\sss  We now closely follow the instructions of section \bref{general-case} for the construction of Gysin morphisms.

Let $f:\Mg\to\Ng$ be a $\Gg$-equivariant map between
oriented $\Gg$-manifolds. 
To $\beta\in \OmegaGc (\Mg)$ we assign the linear form
on $\OmegaG(\Ng)$ defined by $\IDG'(f)(\beta):\alpha\mapsto\int\sb{\Mg}f\sp{*}\alpha\wedge \beta$. In this way we obtain the diagram
$$\xymatrix{\putMathAt{4cm}{-0.2cm}{\bigoplus}
\OmegaGc (\Mg)[d\sb{\Mg}]\ar[rrd]\sb{\IDG'(f)}\ar@{.>}[rr]\sp{f\gy}&&\OmegaGc (\Ng)\ar[d]^{\ID'(\Ng)\hbox to3cm
{\scriptsize\ $\left(\vcenter{\hsize 2.5cm\parindent 0pt
\centering quasi-iso if $\Ng$ is of \\finite type}\right)$\hss}}[d\sb{\Ng}]\\
&&\Homg\sb{\HG}(\OmegaG(\Ng),\HG)\\
}$$
which may be closed in cohomology whenever $\Ng$ is of {\bf finite type}, 
since $\IDG'(\Ng)$ is then a quasi-isomorphism
(\bref{GPD}-(d)). 

When $\Ng$ is not of finite type, one fixes any equivariant  covering $\U$ of $\Ng$ made up
of open finite type subsets  (\bref{equivariant-covering}), and replaces $\IDG'(\Ng)$ by the morphism $\IDG'(\U)$ of theorem \bref{cohomologie-compacte-equivariante}. The diagram
$$
\xymatrix @C=5mm{\putMathAt{3.8cm}{-0.2cm}{\bigoplus}
\OmegaGc (\Mg)[d\sb{\Mg}]\ar[rrd]\sb{\IDG'(f,\U)\hskip10pt }\ar@{.>}[rr]\sp{f\gy}&&\OmegaGc (\Ng)[d\sb{\Ng}]\ar[d]^{\IDG'(\U)\rlap{\scriptsize\ (quasi-iso)\hss}}\\
&&\limind\sb{U\in\U}\Homg\sb{\HG}(\OmegaG(U),\HG)\\
}$$
where $\IDG'(f,\U)$ is defined as in \bref{general-case},
may be closed in cohomology 
since $\IDG'(\U)$ is a quasi-isomorphism. The closing arrow
$$\displayboxit{f\gy:\HGc(\Mg)[d\sb{\Mg}]\to\HGc(\Ng)[d\sb{\Ng}]}$$
 \expression{the equivariant Gysin morphism associated with $f$}\index{equivariant!Gysin morphism}, is therefore defined as
$$f\gy\pcolon\DG'(\U)\sp{-1}\circ \hg(\ID(f,\U))\,.$$

\begin{theo}[ and definitions]With\label{equivariantGysinDefTheo} the above notations,
\begin{enumerate}
\mynobreak\nobreak
\item The\label{equivariantGysinDefTheo-a} equality
$$\int\sb{\Mg}f\sp{*}\aalpha\cup\bbeta=\int\sb{\Ng}\aalpha\cup f\gy\bbeta \eqno(\diamonds2)$$
holds for all $\aalpha\in \HG(\Ng)$ and $\bbeta\in \HGc(\Mg)$.

\item Furthermore\label{equivariantGysinDefTheo-b}, $f\gy$ is a morphism of $\HG(\Ng)$-modules, i.e.
the equality, called
the \expression{equivariant projection formula}\index{projection formula},
$$
\relax{f\gy\big( f\sp{*}\aalpha\cup\bbeta\big)=\aalpha\cup f\gy(\bbeta)}
\eqno(\diamonds3)$$
holds for all $\aalpha\in \HG(\Ng)$ and $\bbeta\in \HGc(\Mg)$.

\item The\label{equivariantGysinDefTheo-c} correspondence 
$$\def\al#1\fonct{\hbox to 1.5em{\hss$#1$}{}\fonct}
(\_)\gy:\Gg\tiret\!\Mano\fonct\GM (\HG)\quad
\text{\rm with}\quad
\begin{cases}\noalign{\kern-2pt}
\al\Mg\fonct \Mg\gy\pcolon \HGc(\Mg)[d\sb{\Mg}]\\\noalign{\kern-2pt}
\al f\fonct f\gy\\\noalign{\kern-2pt}
\end{cases}$$
is a covariant functor. We will refer to it as  \expression{the equivariant Gysin functor}\index{equivariant!Gysin functor|bfnb}.

\item Suppose\label{equivariantGysinDefTheo-d} that $\Mg$ and $\Ng$ are manifolds of finite type. If the pullback morphism $f^{*}:\HG(\Ng)\to\HG(\Mg)$ is an isomorphism,  then the Gysin morphism $f_{*}:
\HGc(\Mg)[d_{\Mg}]
\to
\HGc(\Ng)[d_{\Ng}]
$ is also.
\end{enumerate}
\end{theo}
\begin{proof}(\bref{equivariantGysinDefTheo-a}) Immediate from de definition of the Gysin morphism. 
 
 (\bref{equivariantGysinDefTheo-b}) Unlike the proof of the nonequivariant statement \bref{GysinDefTheo}-(\bref{GysinDefTheo-b}),  this claim is no longer a formal consequence
  of (\bref{equivariantGysinDefTheo-a}) because equivariant cohomology may have torsion elements, something that doesn't affect equivariant integration. Instead, when $\Ng$ is of finite type and since then $\ID'(\Ng)$ is a quasi-isomorphism, we can check that the following equality holds  at the \emph{cochain} level,
$$\IDG'(f)(f^{*}(\alpha)\cup\beta)=
\hbox{``$\ID'(\Ng)(\alpha\cup f_{\gy}(\beta))$''}
=\IDG'(f)(\beta)\circ\mu_{\rm r}(\alpha)\,,
\eqno(\dagger)$$
where the central term is there for purely heuristic reasons 
and where we denote $\mu_{\rm r}(\alpha):\OmegaG(\Ng)\to\OmegaG(\Ng)$ the right multiplication by $\alpha$, \idest $\mu_{\rm r}(\alpha)(\_)=(\_)\cup\alpha$. The identification of the left and right terms in $(\dagger)$ is then a straightforward verification from the definition of $\IDG'(f)$. When $\Ng$ is not of finite type, we follow the same arguments with $\IDG'(f,\U)$ instead of $\IDG'(f)$.
 
(\bref{equivariantGysinDefTheo-c}) is clear.
(\bref{equivariantGysinDefTheo-d}) as $f^{*}:\OmegaG(\Ng)\to\OmegaG(\Mg)$ is a quasi-isomorphism, the induced map
$\Homgb_{\HG}(\OmegaG(\Ng),\HG)\to\Homgb_{\HG}(\OmegaG(\Mg),\HG)$
is also, following \bref{HG-dual-to-dual}-(\bref{HG-dual-to-dual-c}), and one concludes, since  $\IDG'(\Mg)$ and $\IDG'(\Ng)$ are quasi-isomorphisms.
\end{proof}

\begin{exer}Prove\label{equivariantGysinDefTheo-d+} the following enhancement of
 the statement \bref{equivariantGysinDefTheo}-(\bref{equivariantGysinDefTheo-d}).
If $\pi:\Vg\to\Bg$ is a vector bundle over an oriented manifold $\Bg$, the map $\pi$ is of finite type\index{finite!type map} (\bref{defs-finite-type}), and $\pi^{*}:\HG(\Bg)\to\HG(\Vg)$ and $\pi\gy:\HGc(\Vg)[d_{\Vg}]\to\HGc(\Bg)[d_{\Bg}]$ and both isomorphisms (\cf\bref{remas-finite-type}-(\bref{remas-finite-type-c})).
\end{exer}

\subsection{$\Gg$-Equivariant Gysin Morphism for Proper Maps}
Following \bref{proper-case}, let $f:\Mg\to\Ng$ be a {\bf proper} $\Gg$-equivariant map between
oriented $\Gg$-manifolds. To $\alpha\in \OmegaG(\Mg)$ we assign the $\HG$-linear form
on $\OmegaGc (\Ng)$ defined by $\IDG'(f)(\alpha):\beta\mapsto\int\sb{\Mg}f\sp{*}\beta\wedge \alpha$. In this way we obtain the diagram
$$\preskip2pt\postskip2pt
\xymatrix{\putMathAt{3.2cm}{-0.2cm}{\bigoplus}
\OmegaG(\Mg)[d\sb{\Mg}]\ar[rrd]\sb{\IDG'(f)}\ar@{.>}[rr]\sp{f\gyp}&&\OmegaG(\Ng)\ar[d]^{\IDG'(\Ng)\hbox to1.5cm
{\scriptsize\ (quasi-iso)\hss}}[d\sb{\Ng}]\\
&&\OmegaGc (\Ng)\dual\\
}$$
which may be closed in cohomology because 
$\IDG'(\Ng)$ is a quasi-isomorphism, as shown in \bref{GPD}-(a). 
The closing  arrow:
$$\displayboxit{f\gyp:\HG(\Mg)[d\sb{\Mg}]\to\HG(\Ng)[d\sb{\Ng}]}\,,$$
 \expression{the equivariant Gysin morphism associated with a proper map $f$}\index{equivariant!Gysin morphism of a proper map}, is therefore defined as
$$\preskip0ptf\gyp\pcolon\DG'(\U)\sp{-1}\circ \hg(\IDG'(f))\,.$$

\begin{theo}[ and definitions]With the above notations\label{equivariantProperGysinDefTheo},\let\diamonds\stars
\varlistseps{\topsep2pt\itemsep4pt}\begin{enumerate}\mynobreak\nobreak
\item The equality
$$
\int\sb{\Mg}f\sp{*}\bbeta\cup \aalpha=\int\sb{\Ng}\bbeta\cup f\gyp\aalpha
\postdisplaypenalty10000 \eqno(\diamonds2)
$$
holds for all $\aalpha\in \HG(\Mg)$ and $\bbeta\in \HGc(\Ng)$.
\item 
Furthermore, $f\gyp$ is a morphism of $\HGc(\Ng)$-modules, 
the equality, called
the \expression{equivariant projection formula for proper maps}\index{equivariant!projection formula!for proper maps},
$$
\relax{f\gyp\big( f\sp{*}\bbeta\cup\aalpha\big)=\bbeta\cup f\gyp\aalpha}
\eqno(\diamonds3)$$
holds for all $\bbeta\in \HGc(\Ng)$ and $\aalpha\in \HG(\Mg)$.

\item The correspondence 
$$
\def\al#1\fonct{\hbox to 1.5em{\hss$#1$}{}\fonct}
f\gyp:\Gg\tiret\!\Manop\fonct\GM (\HG)\qquad\text{with}\qquad
\begin{cases}\noalign{\kern-3pt}
\al\Mg\fonct \Mg\gyp\pcolon  \HG(\Mg)[d\sb{\Mg}]\\
\al f\fonct f\gyp\\\noalign{\kern-3pt}
\end{cases}$$
is a covariant functor. We will refer to it as \expression{the equivariant Gysin functor for proper maps}\index
{equivariant!Gysin functor for proper maps|bfnb}.

\item If\label{equivariantProperGysinDefTheo-d} the pullback morphism $f^{*}:\HGc(\Ng)\to\HGc(\Mg)$ is an isomorphism,  the Gysin morphism $f_{!}:
\HG(\Mg)[d_{\Mg}]
\to
\HG(\Ng)[d_{\Ng}]
$ is also an isomorphism.

\item The natural map $\phi(\_):\HGc(\_)[d\sb{\_}]\to \HG(\_)[d\sb{\_}]$
{\rm(\bref{compact-functoriality})} is a homomorphism
between the two equivariant Gysin functors $(\_)\gy\to(\_)\gyp$ over the category $\Gg\tiret\!\Manop$, i.e.
the diagrams
$$\mathalign{
\HGc(\Mg)&\hf{\phi(\Mg)}{}{1cm}&\HG(\Mg)\\
\vfld{f\gy}{}{0.6cm}&&\vfld{}{f\gyp}{0.6cm}\\
\HGc(\Ng)&\hf{\phi(\Ng)}{}{1cm}&\HG(\Ng)\\
}
$$
are naturally commutative.
\end{enumerate}
\end{theo}
\begin{proof}Same as the proof of \bref{equivariantGysinDefTheo}, left to the reader.
\end{proof}

\subsection{Comparison Theorems}
The next theorem establishes a close connexion between the nonequivariant and the equivariant Gysin morphisms. It is a basic tool for the generalization
of known properties of classical Gysin morphisms into the equivariant framework.

\begin{theo}Let\label{comparison} $\Gg$ be a compact connected Lie group and $f:\Mg\to\Ng$
a $\Gg$-equivariant map between oriented $\Gg$-manifolds.
There exists a natural morphism of the spectral sequences $\IF$
of theorem \bref{GPD}-{\rm(\bref{GPD-spectral-sequences})}
converging to the Gysin morphism $f\gy:\HGc(\Mg)[d\sb{\Mg}]\to\HGc(\Ng)[d\sb{\Ng}]$, 
$$\mathalign{
\IF\sb{\rm c,2}(\Mg)&=&\HG\otimes \Hc(\Mg)[d\sb{\Mg}]&\Rightarrow& \HGc(\Mg)[d\sb{\Mg}]\\
&&\vfld{1\otimes f\gy}{}{0.6cm}&&\vfld{}{f\gy}{0.6cm}\\
\IF\sb{\rm c,2}(\Ng)&=&\HG\otimes \Hc(\Ng)[d\sb{\Ng}]&\Rightarrow& \HGc(\Mg)[d\sb{\Ng}]
}$$
and in the proper case to 
$f\gyp:\HG(\Mg)[d\sb{\Mg}]\to\HG(\Ng)[d\sb{\Ng}]$, 
$$\mathalign{
\IF\sb{\rm2}(\Mg)&=&\HG\otimes H(\Mg)[d\sb{\Mg}]&\Rightarrow& \HG(\Mg)[d\sb{\Mg}]\\
&&\vfld{1\otimes f\gyp}{}{0.6cm}&&\vfld{}{f\gyp}{0.6cm}\\
\IF\sb{\rm2}(\Ng)&=&\HG\otimes H(\Ng)[d\sb{\Ng}]&\Rightarrow& \HG(\Mg)[d\sb{\Ng}]
}$$
\end{theo}
\begin{proof}Clear from the proof of \bref{GPD} and the definition of Gysin morphisms.
\end{proof}

\subsection{Universal Property of the equivariant Gysin Morphism}
\noendpoint\noNumber\begin{prop}Let\label{univ-gysin} $f\colon \Mg\to\Ng$ be a $\Gg$-equivariant map between oriented $\Gg$-manifolds.
\begin{enumerate}
\mynobreak\nobreak\item A\label{univ-gysin-a} morphism of complexes $\varphi\gy:\OmegaGc(\Mg)[d\sb{\Mg}]\to\OmegaGc(\Ng)[d\sb{\Ng}]$
induces the equivariant Gysin morphism 
$$
f\gy:\HGc(\Mg)[d\sb{\Mg}]\to\HGc(\Ng)[d\sb{\Ng}]\,,\postskip-0.5ex$$
if and only if
$$
\int\sb{\Mg}f\sp{*}\alpha\wedge\beta=\int\sb{\Ng}\alpha\wedge\varphi_*\beta\,,\quad
\forall\alpha\in\OmegaG(\Ng)\,,\ \forall\beta\in\OmegaGc(\Mg)\,.$$
\item If\label{univ-gysin-b} $f$ is a proper, a morphism of complexes $\varphi\gyp:\OmegaG(\Mg)[d\sb{\Mg}]\to\OmegaG(\Ng)[d\sb{\Ng}]$
induces the equivariant Gysin morphism 
$$
f\gyp:\HG(\Mg)[d\sb{\Mg}]\to\HG(\Ng)[d\sb{\Ng}]\,,\postskip-0.5ex
$$
if and only if
$$
\int\sb{\Mg}f^{*}\beta\wedge\alpha=\int\sb{\Ng}\beta\wedge\varphi\gyp\alpha\,,\quad
\forall\alpha\in\OmegaG(\Mg)\,,\ \forall\beta\in\OmegaGc(\Ng)\,.$$
\end{enumerate}\end{prop}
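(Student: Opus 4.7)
The plan is to reduce both parts to the defining construction of the equivariant Gysin morphism via the Poincar\'e duality quasi-isomorphisms established earlier. Recall from the preceding subsections that, for a general $\Gg$-equivariant map $f\colon\Mg\to\Ng$, one sets $f\gy := \DG'(\U)^{-1}\circ H(\IDG'(f,\U))$, where $\U$ is an equivariant filtrant covering of $\Ng$ by $\Gg$-stable finite-type opens (reducing to $\U=\{\Ng\}$ when $\Ng$ is of finite type), relying on the quasi-isomorphism of Theorem~\ref{cohomologie-compacte-equivariante}. For the proper-case map $f$, one similarly constructs $f\gyp$ using the Poincar\'e duality quasi-isomorphism of Theorem~\ref{GPD}-(\ref{IDG(M)}), which holds without any finite-type hypothesis.

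For the ``if'' direction of part (\ref{univ-gysin-a}), the integral equation in the statement is exactly the pointwise cochain-level equality between two cochain maps from $\OmegaGc(\Mg)[d_\Mg]$ into $\Homgb_{\HG}(\OmegaG(\Ng),\HG)$: namely, $\IDG'(\Ng)\circ\varphi\gy$ sending $\beta$ to $\alpha\mapsto\int_\Ng\alpha\wedge\varphi\gy\beta$, and the adjoint $\IDG'(f)$ sending $\beta$ to $\alpha\mapsto\int_\Mg f^*\alpha\wedge\beta$ (with the obvious replacement of $\IDG'(\Ng)$ by $\IDG'(\U)$ into the inductive limit target when $\Ng$ is not of finite type). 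Passing to cohomology and composing with the isomorphism $\DG'(\U)^{-1}$ yields $H(\varphi\gy)=f\gy$, so that $\varphi\gy$ induces the equivariant Gysin morphism. Part (\ref{univ-gysin-b}) proceeds in complete parallel: the integral equation translates into the cochain-level equality of the corresponding adjoint maps with values in $\Homgb_{\HG}(\OmegaGc(\Ng),\HG)$, and applying cohomology together with the inverse of the Poincar\'e duality isomorphism $\DG(\Ng)$ gives $H(\varphi\gyp)=f\gyp$.

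The main subtlety lies in the ``only if'' direction: strictly speaking, a cochain map merely inducing $f\gy$ on cohomology need not satisfy the pointwise cochain-level identity, which is stronger. The reading that makes both implications go through is to interpret the proposition as characterizing the cochain-level realizations of $f\gy$ that fit into the defining commutative triangle of the Gysin construction; with this interpretation, the integral equation is equivalent to the cochain-level factorization of $\varphi\gy$ through the relevant Poincar\'e duality quasi-isomorphism, and both implications then follow immediately from the analysis above.
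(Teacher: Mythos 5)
Your proof is correct and is essentially the paper's own argument: the paper offers no separate proof, saying in remark \ref{rema-adj} that the proposition is a simple consequence of the definition, and your unwinding of the ``if'' direction --- the integral identity states exactly that $\IDG'(\Ng)\circ\varphi\gy=\IDG'(f)$ at the cochain level (resp.\ its $\U$-analogue from \ref{cohomologie-compacte-equivariante}, resp.\ the proper-case analogue with values in $\Homgb_{\HG}(\OmegaGc(\Ng),\HG)$ using \ref{GPD}-(\ref{GPD-a})), so that passing to cohomology and inverting the quasi-isomorphism gives $H(\varphi\gy)=f\gy$ --- is precisely that. Your caveat about the ``only if'' direction is also well placed and consistent with the paper's intent: the cochain-level identity is the defining property of a cochain representative of $f\gy$ (it is what makes the construction triangle commute), not a formal consequence of mere equality of induced maps in cohomology, since a homotopy perturbation of $\varphi\gy$ still induces $f\gy$ yet can violate the identity on non-closed forms.
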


\begin{rema}
The\label{rema-adj} last proposition is a simple consequence of the definition of the Gysin morphism. But one must beware that, unlike the nonequivariant case (\bref{universal}), it is generally not true that the equivariant Gysin morphism is characterized by the equality of {\bf cohomology classes}: 
$$
\int\sb{\Mg}f\sp{*}\aalpha\cup\bbeta=\int\sb{\Ng}\aalpha\cup f\gy\bbeta\,, 
\quad\forall\aalpha\in\HG(\Ng)\,,\ \forall\bbeta\in\HGc(\Mg)\,.
\eqno(\diamonds2)
$$
(or $(\stars2)$ for $f\gyp$ in the proper case).
For example, the uniqueness of $f\gy$ satisfying the relation $(\diamonds2)$, results only from the injectivity
 of the map:
$$\let\Mg\Ng
\mathalign{
\IDG(\Mg):\HGc  (\Mg)&\hf{}{}{0.8cm}&\Hom_{\HG}\big(\HG (\Mg),\HG)\big)\\\noalign{\kern2pt}
\bbeta&\hf{}{}{0.8cm}&\Big(\aalpha\to\int_{\Mg}\aalpha\wedge\bbeta\Big)
}$$
a property that is not always satisfied\color{black}. Indeed, let $\Tg$ be a torus and $\Ng$
a compact oriented $\Tg$-manifold without fixed points. 
We know from the localization theorem, that $\HT(\Ng)$ is a torsion
 $\HT$-module and consequently that $\Homg\sb{\HT}(\Hg(\Ng),\HT)=0$, 
so that $\IDG(\Ng)$ is null, although $\HT(\Ng)\not=0$.
\end{rema}

\noNumber\begin{exer}
Let $\Tg=\cSS^{1}\times\cSS^{1}$ act on $\Ng=\cSS^{1}$ by
$(t,u)(v)=uv$. 
\varlistseps{\topsep2pt\itemsep4pt}\begin{enumerate}
\item $\HT=\RR[X,Y]$, 
$\HT(\Ng)=\RR[Y]$, 
$\Endg_{\HT}(\HT(\Ng))=\RR[Y]$.

\item For any map $f:\Ng\to\Ng$ and any $\lambda\in\Endg\sb{\HG}(\HG(\Ng))$ one has
$$\int\sb{\Ng}f\sp{*}\aalpha\cup\bbeta=\int\sb{\Ng}\aalpha\cup\lambda\bbeta\,,\quad\forall\aalpha,\bbeta\in\HT(\Ng)\,.$$

\item Let $\Ng$ be any oriented $\Gg$-manifold such that
$\HGc(\Ng)$ is an $\HG$-free module. Show that condition $(\diamonds2)$ 
(resp. $(\stars2)$ for proper maps) of theorem \bref{equivariantGysinDefTheo} (resp. \bref{equivariantProperGysinDefTheo})
completely characterizes Gysin morphisms for maps $f\colon\Mg\to\Ng$.
\end{enumerate}
\end{exer}

\subsection{Group Restriction}
Let\label{group-restriction} $\Gg$ be a compact \emph{connected\/} Lie group. For any closed subgroup $\Hg\dans\Gg$, \emph{connected or not}, and for any $\Gg$-manifold $\Mg$, the canonical projection of Borel constructions
$\preskip0pt
\IE\Gg\times_\Hg\Mg
\onto
\IE\Gg\times_\Gg\Mg
$
which is a locally trivial fibration with fiber $\Gg/\Hg$, induces by inverse image the \expression{restriction homomorphism} of equivariant cohomology rings
$$\Res^{\Gg}_{\Hg}:\HG(\Mg)\to\HH(\Mg)\,.$$

At this point, one could react against the possible lack of connectednes of $\Hg$ in so far as this property has been everywhere required in these notes. However, a careful examination 
shows that connectednes is only needed to ensure that the action of $\Gg$ on $\Mg$ is homotopically trivial, a property that is clearly inherited by any subgroup $\Hg$ of a connected group $\Gg$, whether the subgroup is connected or not (\cf\ref{homotopiquement-trivial}). In that case if $\Hg_{\circ}$ denotes the connected component of $1$ in $\Hg$ and $W_{\Hg}=\Hg/\Hg_\circ$, we have 
$$
\HH=S(\hgoth)^{\Hg}\text{\quad and\quad }
\HH(\Mg)=\Hr_{\Hg_{\circ}}(\Mg)^{W_{\Hg}}\,.$$

\begin{theo}For\label{theo-group-restriction} any closed subgroup $\Hg\dans\Gg$ and any equivariant map $f:\Mg\to\Ng$ between oriented $\Gg$-manifolds, the following diagrams of Gysin morphisms are commutative:
$$\def\res{\ar[d]_{\Res^{\Gg}_{\Hg}}}\def\ress{\ar[d]^{\Res^{\Gg}_{\Hg}}}
\xymatrix@R=5mm{
\HG(\Mg)\res\ar[r]|{f\gy}&\HG(\Ng)\ress\\
\HH(\Mg)\ar[r]|{f\gy}&\HH(\Ng)
}
\qquad
\xymatrix@R=5mm{
\HGc(\Mg)\res\ar[r]|{f\gyp}\ar@{}[rd]|{\hbox{\rm\scriptsize ($f$ is proper)}}&\HGc(\Ng)\ress\\
\HHc(\Mg)\ar[r]|{f\gyp}&\HHc(\Ng)
}
$$
\end{theo}
\proof For a general map $f:\Mg\to\Ng$ the diagram of induced maps between Borel constructions 
$$\def\vonto{\ar@{->>}[d]}\def\EEG{\EE\Gg}
\xymatrix@R=5mm{
\EEG\times_{\Hg}\vonto_{\pi}\Mg\ar[r]|{f}\ar@{}[rd]|{\Box}&\EEG\times_{\Hg}\Ng\vonto^{\pi}\\
\EEG\times_{\Gg}\Mg\ar[r]|{f}&\EEG\times_{\Gg}\Mg
}
$$
is \emph{cartesian} and if we endow $\Gg/\Hg$ with an orientation, the integration along the fibers of $\pi$ enters in the \emph{commutative} diagram of complexes:
$$
\def\res{\ar[d]_{\hbox{\scriptsize
$\displaystyle\int_{G/H}$\quad }}}
\def\ress{\ar[d]^{\hbox{\scriptsize
\qquad $\displaystyle\int_{G/H}$}}}
\xymatrix@R=5mm{
\OmegaHc(\Ng)\res\ar[r]|{f^{*}}\ar@{}[rd]|{\bigoplus}&\OmegaHc(\Mg)\ress\\
\OmegaGc(\Ng)\ar[r]|{f^{*}}&\OmegaGc(\Mg)
}$$
We may then conclude thanks to \ref{univ-gysin}-(\ref{univ-gysin-a}) and  that $\int_{\Gg/\Hg}$ is adjoint to $\Res^{\Gg}_{\Hg}$. 

\nobreak The case where $f:\Mg\to\Ng$ is proper follows in the same way.
\endproof

\subsection{Explicit Constructions of Equivariant Gysin Morphisms}

Although we gave a universal definition for the equivariant Gysin morphism in the last section, it is worth recalling alternative constructions for some particular maps where there exist explicit morphisms of Cartan complexes inducing the Gysin morphism, just as in the nonequivariant case (\bref{Main-Exemples}).

\subsubsection{Constant Map.}Let $\Mg$ be an oriented $\Gg$-manifold. The constant map $c\sb{\Mg}\colon\Mg\to\pt$ is $\Gg$-equivariant,
$\HG(\pt)=\HT$ is free and $\IDG(\pt)$ is bijective. Therefore, the cohomological adjunction  \ref{rema-adj}-$(\diamonds2)$ uniquely determines the Gysin morphism and we have, for all $\beta\in\OmegaTc(\Mg)$:
$$
c\sb{\Mg\gysgn}(\beta)=\Big(\int\sb{\pt}1\cup c\sb{\Mg\gysgn}\bbeta\Big)=\int\sb{\Mg}\beta\,.$$

\subsubsection{Equivariant Open Embedding.}Let\label{ouvert-equivariant} $\Mg$ be an oriented $\Gg$-manifold. If $U$
is a $\Gg$-invariant open set in $\Mg$, denote by
$\iota:U\dans\Mg$ the injection and endow $U$ with the induced orientation. 
One has a natural inclusion of Cartan complexes $\iota\sb{\Gg}:\OmegaGc(U)\to\OmegaGc(\Mg)$,
and the elementary equality
$$\int_{\Ug}\iota\sb{\Gg}\sp{*}(\alpha)\wedge \beta=
\int_{\Mg} \alpha\wedge\iota\sb{\Gg,\gysgn}(\beta)\,,\quad
\forall \alpha\in \OmegaG (\Mg)\,,\ \forall\beta\in \OmegaGc (U)\,,
$$
 shows immediately that the following induced map is the equivariant gysin map:
$$H(\iota_{\Gg\gysgn}):\HGc (\Ug)[d_{\Ug}]\too \HGc (\Mg)[d_{\Mg}]\,.$$

\ParagrapheLabel{Equivariant Projection.}{projection-equivariant}
Given two oriented $\Gg$-manifolds $\Mg,\Ng$, denote by
$\pr:\Mg\times\Ng\onto\Ng$, the projection $(x,y)\mapsto y$.

The map
$$\mathalign{
\Omegac  (\Mg)\otimes\Omegac (\Ng)
&\hf{\varphi_*}{}{0.7cm}&\Omegac  (\Ng)\mycr
\nu\otimes\mu&\hfto{\varphi_*}{}{0.7cm}&\big(\textstyle\int_{\Mg}\nu\big)\mu}
$$
is a morphism of $\HG$-gm's commuting with $\Gg$-derivations ($\Gg$ is connected),
and with $\Gg$-contractions since
$$\def\iota{\cg}\mathalign{
\varphi_*\big(\iota(X)(\nu\otimes\mu)\big)&=&
\varphi_*\big(\iota(X)(\nu)\otimes\mu\big)+(-1)^{\deg\nu}
\varphi_*\big(\nu\otimes\iota(X)(\mu)\big)\hfill\mycr
&=&(-1)^{d_{\Mg}}\varphi_*\big(\nu\otimes\iota(X)(\mu)\big)=
(-1)^{d_{\Mg}}\iota(X)\big(\varphi_*(\nu\otimes\mu\big)\big)\,,
}
$$
as $\int_{\Mg}\iota(X)\nu=0$. The morphism
$\varphi_*$ may then be naturally extended to a morphism of Cartan complexes
$\halfdisplayskips
\varphi_{\Gg*}:\big(\Sg\otimes\Omegac(\Mg)\otimes\Omegac  (\Ng)\big)\sp{\Gg}\to
\big(\Sg\otimes\Omegac  (\Ng)\big)\sp{\Gg}$
satisfying
$$
\int_{\Mg\times\Ng}\pr^{*}(\alpha)\wedge\beta=\int_{\Ng} 
\alpha\wedge\varphi\sb{\Gg*}(\beta)
\,,\quad
\forall \beta\in \OmegaGc (\Mg)\times\sb{\HG}\OmegaGc (\Ng)\,,\ \forall\alpha\in \OmegaG (\Ng)\,.
$$

On the other hand, since the natural map $\Omegac(\Mg)\otimes\Omegac(\Ng)\to\Omegac(\Mg\times\Ng)$
is a quasi-isomorphism (\Kunneth\ \cite{BT} p. 50), the induced map
$$(\Sg\otimes\Omegac(\Mg)\otimes\Omegac(\Ng))\sp{\Gg}\to
(\Sg\otimes\Omegac(\Mg\times\Ng))\sp{\Gg}=\OmegaGc(\Mg\times\Ng)$$
is also a quasi-isomorphism and one may conclude that
$$\pr_{\Gg,*}:H^*_{\Gg,c}(\Mg\times\Ng)[d_{\Mg}]\too H^*_{\Gg,c}(\Ng)$$
induced by $\varphi\sb{\Gg,*}$ 
is the equivariant Gysin map associated with $\pr$.

\subsubsection{Equivariant Fiber Bundle.}Let\label{projection-fibre-equivariant}
 $(\pi,\Vg,\Bg)$ be an oriented $\Gg$-{equiv\-ari\-ant} fiber bundle
with fiber $\Fg$. Integration along fibers (see \cite{BT} {\bf I}\myS6 pp. 61-63)
gives a morphism of complexes
$\int_{\Fg}:\Omega_c(\Vg)\to\Omegac  (\Bg)$ 
such that if $\psi:\Vg\to\Vg$
is an isomorphism exchanging fibres,
then 
$\int_{\Fg}\circ \psi^{*}=\psi^{*}\circ\int_{\Fg}$,
consequently $\int_\Fg$ is $\Gg$-equivariant.
On the other hand, $\int_\Fg$ commutes with the contractions
$\cg(X)$. Indeed, since these
are local operators, it suffices 
(modulo unit partitions if necessary) 
to verify the claim
over a trivializing open subset of
$\Vg$, i.e. over $\pi^{-1}(U)$ for $U$
s.t. $\pi^{-1}U\sim\Fg\times U$, 
where we are in the case of a projection already discussed in \bref{projection-equivariant}.

Now, the map
$\int_{\Fg}:\Sg\otimes\Omegac  (\Vg)\to \Sg\otimes\Omegac  (\Bg)$, 
given by $\int_{\Fg} P\otimes \omega\pcolon P\otimes\int_{\Fg}\omega$ 
restricts naturally to
$\int_{\Fg}:\OmegaGc(\Vg)[d_{\Fg}]\to\OmegaGc(\Bg)$
as a morphism of Cartan complexes satisfying
$\int_\Vg \pi_{*}\alpha\wedge\beta= \int_{\Bg}\big(\alpha\wedge\int_\Fg\beta\big)$
since it is so in the nonequivariant case \bref{Gysin-fibration}-($*$).

\subsubsection{Zero Section of an Equivariant Vector Bundle}\ \label{section-nulle-equivariant}

\nobreak\noindent{\bf The Equivariant Thom Class. }Let 
$(\pi,\Vg,\Bg)$ be a $\Gg$-equivariant oriented vector bundle. 
In \bref{equivariantGysinDefTheo-d+}, we pointed out that the Gysin morphism for compact supports $\pi_*: H\cmp  (\Vg)[d_\Fg]\to H\cmp  (\Bg)$ is an 
{\bf isomorphism}, so that, in particular:
$$\Hc ^{i}(\Vg)=0\,,\spacetext{for all $i<d_{\Fg}$.}\eqno(\diamond)$$

\begin{prop}[ and definition]Assume\label{prop-section-nulle-equivariant} $\Gg$ is compact and connected.
\varlistseps{\topsep2pt}\begin{enumerate}
\mynobreak\nobreak\item There\label{prop-section-nulle-equivariant-a}
exist homogeneous $\Gg$-equivariant cocycles
of total degree $d_{\Fg}$ 
$$\Phi_{\Gg}=\Phi^{[d_{\Fg}]}+\Phi^{[d_{\Fg}-2]}+\Phi^{[d_{\Fg}-4]}+\cdots$$
with $\Phi^{[i]}\in \big(\Sg\otimes\Omegac  ^{i}(\Vg)\big)\sp{\Gg}$ 
where
$\Phi^{[d_{\Fg}]}\in\Omegac  ^{d_\Fg}(\Vg)^\Gg$ 
represents the Thom class of $(\Bg,\Vg)$ {\rm (see \bref{zero-section})}.
Two such cocycles are cohomologous. The map
$$\mathalign{
(\Sg\otimes\Omegac(\Bg))\sp{\Gg}
&\hf{\varphi_{\Gg,*}}{}{1cm}&
(\Sg\otimes\Omegac(\Vg))\sp{\Gg}[d_\Fg]\mycr
\nu&\hfto{}{}{1cm}&\pi^*\nu \wedge\tilde\Phi
}$$
is a morphism of Cartan complexes, and the same with `$\mkern2mu\Omega$' instead of `$\mkern3mu\Omegac\mkern-2mu$'.

\item The zero section\index{zero section} 
$\sigma:\Bg\hook\Vg$ of the vector bundle $\pi:\Vg\to\Bg$ is a proper $\Gg$-equivariant map.
The \expression{equivariant Gysin morphisms}
$$
\left\{\mathalign{\noalign{\kern-4pt}
\sigma\gy&\colon&\HGc(\Bg))[d_\Bg]
&\to&
\HGc(\Vg)[d_\Vg]\\\noalign{\kern2pt}
\sigma\gyp&\colon&\HG(\Bg))[d_\Bg]
&\to&
\HG(\Vg)[d_\Vg]\\\noalign{\kern-2pt}
}\right.$$
are both induced by the morphism of complexes $\varphi_{\Gg,*}$ 
of (a).
\end{enumerate}
\end{prop}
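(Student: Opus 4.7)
For part (\bref{prop-section-nulle-equivariant-a}), my plan is to produce $\Phi_\Gg$ by inductive obstruction theory, relying on the key vanishing $(\diamond)$ recalled just before the proposition, namely $\Hc^{i}(\Vg)=0$ for $i<d_\Fg$ (a consequence of the nonequivariant Thom isomorphism). I would first average any nonequivariant Thom form under the compact group $\Gg$ to obtain a $\Gg$-invariant representative $\Phi^{[d_\Fg]}\in\Omegac^{d_\Fg}(\Vg)^\Gg$ of the Thom class. Then, assuming $\Phi^{[d_\Fg-2j]}$ already constructed for $0\le j\le k$, the condition $\dgG\Phi_\Gg=0$ in the next total bidegree reads $d\,\Phi^{[d_\Fg-2k-2]}=-\sum_i e^i\,\cg(e_i)\Phi^{[d_\Fg-2k]}$; the right-hand side is a $\Gg$-invariant closed element of $\varSg{k{+}1}\otimes\Omegac^{d_\Fg-2k-1}(\Vg)$ (closedness following from the Cartan relations and the inductive equations), and its cohomology class vanishes by $(\diamond)$, so an invariant primitive $\Phi^{[d_\Fg-2k-2]}$ exists (averaged over $\Gg$ if necessary). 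Conceptually, the same statement is encoded by the Cartan spectral sequence $\IF_{\rm c,2}^{p,q}=\HG^p\otimes\Hc^q(\Vg)\Rightarrow\HGc^{p+q}(\Vg)$ of Proposition~\bref{G-espaces}-(\bref{G-espaces-ss-cmp}): every differential out of bidegree $(0,d_\Fg)$ lands in zero, so $[\Phi^{[d_\Fg]}]$ is a permanent cocycle, and furthermore $\HGc^{d_\Fg}(\Vg)\simeq\Hc^{d_\Fg}(\Vg)$, which yields uniqueness modulo $\dgG$-coboundary. The resulting $\varphi_{\Gg,*}(\nu):=\pi^*\nu\wedge\Phi_\Gg$ is $\HG$-linear, preserves $\Gg$-invariance and compact (resp. vertical) supports since $\Phi_\Gg$ is vertically compactly supported, and commutes with $\dgG$ by the Leibniz rule combined with $\dgG\Phi_\Gg=0$.

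For part (\bref{prop-section-nulle-equivariant-b}), the zero section is a closed embedding and hence proper. To identify $\varphi_{\Gg,*}$ with $\sigma\gy$ on the compact-support side, I would invoke Exercise~\bref{equivariantGysinDefTheo-d+} to conclude that the equivariant Gysin morphism $\pi\gy\colon\HGc(\Vg)[d_\Vg]\to\HGc(\Bg)[d_\Bg]$ is an isomorphism, whence $\sigma\gy=(\pi\gy)^{-1}$ by functoriality applied to $\pi\sigma=\id_\Bg$. Since $\pi\gy$ is realised at the cochain level by equivariant integration along fibers (\bref{projection-fibre-equivariant}), the cochain projection formula yields
\[
\textstyle\int_\Fg\varphi_{\Gg,*}(\beta)=\int_\Fg(\pi^*\beta\wedge\Phi_\Gg)=\beta\wedge\int_\Fg\Phi_\Gg=\beta,
\]
because only the top form-degree component $\Phi^{[d_\Fg]}$ survives $\int_\Fg$ for degree reasons, and it integrates fiberwise to $1$ as a Thom form. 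Hence $\pi\gy\circ H(\varphi_{\Gg,*})=\id$ forces $H(\varphi_{\Gg,*})=\sigma\gy$. For the arbitrary-support case, the same cochain map viewed as $\OmegaG(\Bg)\to\OmegaG(\Vg)[d_\Fg]$ is identified with $\sigma\gyp$ via the equivariant projection formula \bref{equivariantProperGysinDefTheo}-(b): specialised with $\beta:=\pi^*\alpha$, $\alpha:=1$, and using $\sigma^*\pi^*=\id$, it gives $\sigma\gyp(\alpha)=\pi^*\alpha\cup\sigma\gyp(1)$, while the same identity holds for $H(\varphi_{\Gg,*})$ with $\sigma\gyp(1)$ replaced by $[\Phi_\Gg]$; the coincidence $\sigma\gyp(1)=\phi\,\sigma\gy(1)=\phi[\Phi_\Gg]$ then follows from the naturality of the forget-supports homomorphism $\phi\colon\HGc\to\HG$ with respect to the two Gysin functors (last item of \bref{equivariantProperGysinDefTheo}).

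The main obstacle I foresee lies precisely in this passage from compact to arbitrary supports: the equivariant projection formula quoted in the excerpt is stated only for classes $\beta$ of compact support, and the class $1\in\HG(\Bg)$ lies in the image of $\phi$ only when $\Bg$ is itself compact. The cleanest workaround is to reduce to the finite-type case using Proposition~\bref{ppp}, which furnishes a $\Gg$-stable filtrant open exhaustion $\set U_n/$ of $\Bg$ by relatively compact subsets. Over each $\pi^{-1}(U_n)$ the ambient manifold is of finite type, so both the projection formula and Exercise~\bref{equivariantGysinDefTheo-d+} apply with their full strength; the identification then passes to the union because the Cartan complex commutes with filtrant unions of $\Gg$-stable open subsets by Theorem~\bref{cohomologie-compacte-equivariante}.
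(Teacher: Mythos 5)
Your part (a) is essentially the paper's own argument: an invariant compactly supported representative of the Thom class, then an induction on the polynomial degree in which the next component $\Phi^{[d_\Fg-2k-2]}$ exists because the obstruction is an invariant closed compactly supported form of degree $<d_\Fg$, killed by $\Hc^{i}(\Vg)=0$ for $i<d_\Fg$ (with invariance recovered by averaging, as in \bref{G-espaces}-(\bref{G-espaces-a})). Your identification of the compact-support Gysin map is correct but takes a different route from the paper: instead of checking the form-level adjunction of \bref{univ-gysin}-(\bref{univ-gysin-a}) and reducing to the nonequivariant Thom identity of \bref{zero-section}, you invert $\pi\gy$ (Exercise \bref{equivariantGysinDefTheo-d+}), realize it at the cochain level by equivariant fiber integration (\bref{projection-fibre-equivariant}), and use $\int_\Fg\Phi_{\Gg}=1$ to get $\pi\gy\circ H(\varphi_{\Gg,*})=\id$, hence $H(\varphi_{\Gg,*})=(\pi\gy)^{-1}=\sigma\gy$; this is clean and avoids the duality pairing altogether.

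The genuine gap is in the $\sigma\gyp$ half, and the exhaustion you propose does not close it. The projection formula of \bref{equivariantProperGysinDefTheo} only applies when the class pulled back from the target is of \emph{compact support}, and $\pi^{*}\alpha$ never is (for $\Bg$ noncompact); likewise $1\in\HG(\Bg)$ is not in the image of $\phi(\Bg)$. Replacing $\Bg$ by finite-type invariant opens $U_n$ changes nothing: $1\in\HG(U_n)$ is still not compactly supported (an open $U_n$ is not compact), and finite type is not the relevant hypothesis anyway — what makes cohomology-level adjunction or projection-formula identities \emph{characterize} an equivariant Gysin morphism is freeness (or at least torsion-freeness) of the equivariant cohomology of the target, cf.\ Remark \bref{rema-adj} and the exercise following it; the paper's own counterexample there (a compact $\Tg$-manifold without fixed points) is of finite type, so "finite type $\Rightarrow$ full strength" is false. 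Your limiting step also goes the wrong way: Theorem \bref{cohomologie-compacte-equivariante} is about $\OmegaGc$ and filtrant \emph{inductive} limits, whereas arbitrary-support cohomology of $\Bg$ is approached by restriction (a projective system), and the compatibility of $\sigma\gyp$ with restriction to the opens $\pi^{-1}(U_n)$ is itself an unproved base-change statement. The missing tool is the cochain-level universal property \bref{univ-gysin}-(\bref{univ-gysin-b}), which is exactly how the paper concludes: since $\varphi_{\Gg,*}$ is a morphism of Cartan complexes $\OmegaG(\Bg)[d_\Bg]\to\OmegaG(\Vg)[d_\Vg]$, it induces $\sigma\gyp$ as soon as the identity $\int_{\Bg}\sigma^{*}\beta\wedge\alpha=\int_{\Vg}\beta\wedge\pi^{*}\alpha\wedge\Phi_{\Gg}$ holds for all $\alpha\in\OmegaG(\Bg)$ and all $\beta\in\OmegaGc(\Vg)$ at the level of equivariant differential forms, and this reduces (using $\pi^{*}$ quasi-isomorphism and the nonequivariant Thom-class property of \bref{zero-section}) with no compactness, freeness, or exhaustion hypotheses; the same device also covers your compact-support case in one line.
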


\begin{proof}(a) Let $n=d_\Fg$.
Since $\Gg$ is connected and compact,
there exists $\Phi^{[n]}\in\Omegac  ^{n}(\Vg)\sp{\Gg}$ representing the Thom
class of $\Vg$. We have
$$\dgG(\Phi^{[n]})=\dg(\Phi^{[n]})+\cg(X)\Phi^{[n]}=\cg(X)\Phi^{[n]}\,,$$
where $\cg(X)\Phi\sp{[n]}\in(\Sg\otimes\Omegac\sp{n-1}(\Vg))\sp{\Gg}$ 
and 
$\dg\big(\cg(X)\Phi^{[n]}\big)=L(X)\Phi^{[n]}=0$. 
But then $\cg(X)\Phi^{[n]}$ is a coboundary of compact support following $(\diamond)$ 
and, again thanks to the connectedness of $\Gg$,
there exists $\Phi^{[n-2]}
\in(\Sg\otimes\Omegac\sp{n-2}(\Vg))\sp{\Gg}$
s.t. $\cg(X)\Phi^{[n]}=\dg\Phi^{[n-2]}$.
The iteration of this procedure, possible 
because of the vanishing condition $(\diamond)$,
leads to the $\Gg$-equivariant cocycle $\Phi_{\Gg}$.
The cohomological uniqueness is proved in a similar way.
The fact that $\varphi_{\Gg,*}$ is compatible with differentials
is obvious as $\Phi_{\Gg}$ is a cocycle.

(b) By the universal property of the equivariant Gysin morphisms \bref{univ-gysin}, it suffices to verify the equality
$$\displayskips11/10
\int_{\Bg}\sigma\sp{*}\alpha\wedge\beta
=\int_{\Vg} \alpha\wedge \varphi\sb{\Gg,*}(\beta)\,,
\qquad
\forall\alpha\in \OmegaG (\Vg)\,,\ \forall \beta\in \OmegaGc (\Bg)\,.
$$
Since $\pi^{*}:H (\Bg)\to H (\Vg)$ is an isomorphism, 
the same is true in equivariant cohomology following \bref{G-espaces}-(\bref{G-espaces-ss}),
so that there exists
$\alpha'\in\OmegaG(\Bg)$ s.t. $\alpha\sim\pi^{*}\alpha'$. We are thus lead to verify that
$$\displayskips11/10
\int_{\Bg}\alpha'\wedge\beta=\int_{\Vg} \pi^*(\alpha'\wedge\beta)\wedge \Phi_{\Gg}\,,\qquad
\forall\alpha'\in \OmegaG (\Bg)\,,\ \forall \beta\in \OmegaGc (\Bg)\,,
$$
and this follows from the universal property of the nonequivariant Thom class (\bref{zero-section}) that states that one has:
$\displayskips11/10
\relax{\int_{\Bg}\omega\rest\Bg=\int_{\Vg}\omega\wedge \Phi\,,\quad\forall\omega\in \Hr(\Vg)}\,.
\postskip0pt
$
\end{proof}

\subsection{Exercises}
\varlistseps{\topsep2pt\itemsep2pt}\begin{enumerate}
\displayskips85/100
\def\theenumi{\arabic{enumi}}
\item Restate and solve exercise \bref{nonequivariant-exercices} in the equivariant framework. In particular:
\begin{itemize}
\item If $i:\Bg\hook\Mg$ 
is a \emph{closed
equivariant embedding}\index{closed embedding}\index{embedding!closed} of oriented $\Gg$-manifolds,
denote by $j:\Ug:=\Mg\moins\Bg\hook\Mg$ the complementary open embedding\index{open embedding}\index{embedding!open}, and justify the existence of the following triangles where the left arrows are Gysin morphisms and the right ones are restriction morphisms.
\begin{enumerate}
\item The equivariant compact support cohomology triangle
$$
\HGc(\Ug)[d_{\Ug}]\hf{j_*}{}{0.7cm}
\HGc(\Mg)[d_{\Mg}]\hf{i\sp{*}}{}{0.7cm}
\HGc(\Bg)[d_{\Bg}]\hf{}{[+1]}{0.6cm}\,.
\eqno(\diamond)$$

\item The equivariant Gysin triangle
$$
\HG(\Bg)[d_{\Bg}]
\hf{i!}{}{0.7cm}
\HG(\Mg)[d_{\Mg}]
\hf{j^*}{}{0.7cm}
\HG(\Ug)[d_{\Ug}]
\hf{}{[+1]}{0.6cm}
\eqno(\diamonds2)$$
\end{enumerate}

\item In the equivariant version of the Lefschetz fixed point exercise (\bref{nonequivariant-Lefschetz}) you will define
the \expression{$\Gg$-equivariant Lefschetz class of $f$}\index{Lefschetz! equivariant class, number of an equivariant map} by
$$L_{\Gg}(f):=\Gr(f)^{*}(\delta\gyp(1))\in\HG^{d_{\Mg}}(\Mg)\,,$$
and its \expression{equivariant Lefschetz number}
$\Lambda_{\Gg,f}\pcolon \int_{\Mg}L_{\Gg}(f)\,.$
Prove that 
$$\begin{cases}\noalign{\kern-2pt}
\Res^{\Gg}_{1} L_{\Gg}(f)=L(f)\in\Hr^{d_{\Mg}}(\Mg)\\
\Lambda_{\Gg,f}=\Lambda_{f}\\\noalign{\kern-2pt}
\end{cases}$$
and conclude that 
the \expression{equivariant Lefschetz number} coincides with the nonequivariant one. In particular, if $\HG(\Mg)$ is a torsion module (\ref{torsion-modules}), the Euler caracteristic of $\Mg$ is zero.
\end{itemize}
\item Show that if $f\colon\Bg\to\Mg$ is $\Gg$-equivariant between oriented $\Gg$-manifolds, the projective limit (see \bref{approximations}) of nonequivariant Gysin morphisms
$$\limproj_n\big(f(n)\gy:\Hc(\Bg\sb{\Gg}(n))[d\sb{\Bg}]\to\Hc(\Mg\sb{\Gg}(n))[d\sb{\Mg}]\big)$$
is well defined and coincides with the equivariant Gysin morphism
$$
f\gy:\HGc(\Bg)[d\sb{\Bg}]\to\HGc(\Mg)[d\sb{\Mg}]\,.$$
And \emph{mutatis mutandis} for the proper case.

\item 
\begin{enumerate}
\item Show that in \bref{section-nulle-equivariant}, the restriction of the equivariant Thom class  to the complement of the zero section, is an equivariant coboundary. 
({\sl Hint: remark that $[\Phi_{\Gg}]=\sigma_{\gyp}(1)$ and use} (1)-($\diamond\diamond$)).
\item (**) Show that the multiplication by $[\Phi_{\Gg}]$ defines a map from $\HG(\Bg)$ to the equivariant cohomology of $\Vg$ with supports in $\Bg$:
$$(\_)\wedge[\Phi_{\Gg}]:\HG(\Bg)\to H_{\Gg,\Bg}(\Vg)\,.$$
Show next that this map is an isomorphism. ({\sl Hint: use the spectral sequence of exercise {\rm \bref{Gysin-filtrant}-(\bref{Gysin-filtrant-c})} to reduce to the nonequivariant case}).

\item (**) Extend (ii) to the case of a closed embedding $\Bg\hook\Mg$ of oriented manifolds. ({\sl Hint: show that $\Bg$ may be seen as the zero section of a tubular $\Gg$-stable neighborhood\index{tubular neighborhood} $\Bg_{\epsilon}$ and use (and justify) the fact that the restriction map $H_{\Gg,\Bg}(\Mg)=H_{\Gg,\Bg}(\Bg_{\epsilon})$ is an isomorphism}).
\end{enumerate}
\end{enumerate}

\section{The field of fractions of $\HG$}
\subsection{The Localization Functor}
Denote\label{localization} by $\QG$ the field of fractions of $\HG$. The \expression{localization functor}\index{localization!functor} is the base change functor
$$
\QG\otimes_{\HG}(\_):\GM(\HG)\fonct\Vec(\QG)
$$
(\footnote{Note\label{bas-16} that we loose grading in considering this kind of localization. From this point of view, it would have been more clever to tensor by the ring $\Lg_{\Gg}:=S^{-1}\HG$, where $S$ denotes the multiplicative system of nonzero homogeneous elements of $\HG$. As appendix \ref{appendix1} explains, if $\Ng$ is an $\HG$-gm, the $\HG$-module $\Lg_{\Gg}\otimes_{\HG}(\_)$ is graded, flat and injective, which is what we really need about localization.})
General result of commutative algebra state for any $\HG$-module $\Ng$, the $\HG$-module $\QG\otimes_{\HG}\Ng$ is flat and injective (as in appendix \Spar\ref{appendix1}). The localization functor is exact and when applied to Cartan complexes, we obtain the \expression{localized Cartan complexes}\index{localized!Cartan complex}
$$\big(\QG\otimes_{\HG}\OmegaG (\Mg),\id\otimes\dgG\big)
\text{\quad and\quad}
\big(\QG\otimes_{\HG}\OmegaGc (\Mg),\id\otimes\dgG\big)$$
whose cohomologies, the \expression{localized equivariant cohomologies}\index{localized!equivariant cohomology}, respectively denoted
$\QG(\Mg)$ and $\QGc(\Mg)$, satisfy :
$$
\QG(\Mg)=\QG\otimes_{\HG}\HG(\Mg)
\text{\quad and\quad}
\QGc(\Mg)=\QG\otimes_{\HG}\HGc(\Mg)\,.
$$
 
\subsubsection{Localized Equivariant Poincaré Duality.} 
 The localized equivariant cohomology is very close to the non equivariant cohomology in that the Poincaré duality pairings are perfect\index{perfect pairing}.
The following, analog of \ref{GPD}, simply results from the fact that $\QG$ is a flat and injective $\HG$-module (details are left to the reader).
 
\begin{theo}
Let\label{Q-GPD} 
$\Gg$ be a compact connected Lie group, and
 $\Mg$ an oriented $\Gg$-manifold of dimension $d\sb{\Mg}$. Then,
\begin{enumerate}

\item\label{Q-H=ExtHc}
The morphism of (nongraded) complexes
$$\IDG (\Mg):\QG\otimes_{\HG}\OmegaG (\Mg)[d_{\Mg}]\too
\Homgb_{\QG }\big(\QG\otimes_{\HG}\OmegaGc (\Mg),\QG \big)$$ induces 
an isomorphism
$$\displayboxit{
\DG(\Mg) :\QG (\Mg)[d_{\Mg}]\too 
\Homgb_{\QG }\big( \QGc (\Mg),\QG \big)}$$


\item\label{Q-IDG'(M)}Moreover, if $\Mg$ is of finite type\index{finite!de Rham type}, the morphism of complexes
$$\relax{\IDG' (\Mg):
\QG\otimes_{\HG}\OmegaGc (\Mg)[d_{\Mg}]\too
\Homgb_{\QG }\big(
\QG\otimes_{\HG}\OmegaG (\Mg),\QG \big)}$$
induces 
an isomorphism
$$\displayboxit{
\DG'(\Mg) :\QGc (\Mg)[d_{\Mg}]\too 
\Homgb_{\QG }\big( \QG (\Mg),\QG \big)}$$
\end{enumerate}
\end{theo}

\begin{exer}\label{exo-injectivite}Let $\Mg$ be of finite type. Prove that the torsion-freeness (\ref{reflexivity}) of $\HG(\Mg)$ (resp. $\HGc(\Mg)$) is a necessary and sufficient condition for 
$$
\DG(\Mg) :\HG (\Mg)[d_{\Mg}]\to
\Homgb_{\HG }\big( \HGc (\Mg),\HG \big)$$
(resp. $\DG'(\Mg)$) to be injective. Discuss the case where $\Mg$ is not of finite type.

\smallskip\noindent {\slshape Hint: Let $M$ be  $\HG$-gm. Show that
the canonical map $M\to\QG\otimes_{\HG} M$ is injective if and only if $M$ is torsion-free. Show  that if $M$ is also of finite type the natural map $\Homgb_{\HG }\big( M,\HG \big)\to\Homgb_{\HG }\big( M,\QG \big)$ induces
an isomorphism $\QG\otimes_{\HG}\Homgb_{\HG }\big( M,\HG \big)\simeq
\let\HG\QG
\Homgb_{\HG }\big(\QG\otimes_{\HG} M,\HG \big)$. Apply \ref{Q-GPD}.}
\end{exer}
\subsubsection{Localized Equivariant Gysin Morphisms.} As a consequence of theorem \ref{Q-GPD}, if $f:\Mg\to\Ng$ is a map between oriented $\Gg$-manifolds, the localized Gysin morphisms 
$$
\begin{cases}\noalign{\kern-2pt}
\;f_*:
\QGc(\Mg)\to\QGc(\Ng)\\[4pt]
\;f_{!}:\QG(\Mg)\to\QG(\Ng)\,,\text{\quad if $f$ is proper,}
\end{cases}$$
are \emph{catacterized}, as in the nonequivariant framework, by the adjoint equalities, 
$$\begin{cases}\noalign{\kern-2pt}
\ \displaystyle\int\sb{\Mg}f\sp{*}\bbeta\cup \aalpha=\int\sb{\Ng}\bbeta\cup f\gy\aalpha\\[12pt]
\ \displaystyle\int\sb{\Mg}f\sp{*}\bbeta\cup \aalpha=\int\sb{\Ng}\bbeta\cup f\gyp\aalpha\,,\text{\quad if $f$ is proper.}\\[-1pt]
\end{cases}$$


\section{Equivariant Euler Classes}
\noindent The reference\label{Euler} for this section is Atiyah-Bott's paper \cite{AB}, notably \Spar2 and \Spar3. \vskip-1em\vskip-1em
\subsection{The Nonequivariant Euler Class}Given\label{Nonequivariant-Euler} a pair of oriented manifolds $(\Ng,\Mg)$
with $\Ng\dans\Mg$, we denote by $\Ng_{\epsilon}$ a tubular neighborhood\index{tubular neighborhood} of $\Ng$ in $\Mg$. As the inclusion $\Ng\dans\Ng_{\epsilon}$ has the same nature as the inclusion  of the zero section of a vector bundle $\sigma:\Bg\dans\Vg$ (\bref{zero-section}), we may define the \expression{Thom class $[\Phi(\Ng,\Mg)]$ of the pair $(\Ng,\Mg)$}\index{Thom!class of a pair $(\Ng,\Mg)$} following the same principle, that is, by means of the Gysin morphism associated with the closed embedding $i:\Ng\dans\Mg$. We thus set :
$$
\displayboxit{[\Phi(\Ng,\Mg)]:=i\gyp(1)\in\Hr^{d_{\Mg}-d_{\Ng}}(\Mg)}$$

\begin{defi}The \expression{Euler class\index{Euler!class of the pair $(\Ng,\Mg)$} $\Eu(\Ng,\Mg)$ of the pair $(\Ng,\Mg)$} is the restriction of the Thom class to $\Hr(\Ng)$ (\footnote{\emph{Cf.}~formula (2.19), p.~5, in \emph{loc.cit.}}), \idest:
$$\Eu(\Ng,\Mg):=i^{*}i\gyp (1)=
[\Phi(\Ng,\Mg)]\goodrest{7pt}{4pt}{2.5pt}{\Ng}\in\Hr^{d_\Mg-d_{\Ng}}(\Ng)\,.\eqno(\diamond)$$
\end{defi}
\subsection{$\Gg$-Equivariant Euler Class}The\label{Equivariant-Euler} generalization of the concept of Euler class to the equivariant framework is straightforward thanks to the equivariant Gysin morphism formalism: Given a pair of oriented $\Gg$-manifolds $(\Ng,\Mg)$ with $\Ng\dans\Mg$, we denote by $i_{\Gg}:\Ng\dans\Mg$ the inclusion map and define the \expression{$\Gg$-equivariant Euler class $\EuG(\Ng,\Mg)$ of the pair $(\Ng,\Mg)$} by the same formula ($\diamond$):
$$
\displayboxit{\EuG(\Ng,\Mg):=i_{\Gg}^{*}i_{\Gg!} (1)=
[\Phi_{\Gg}(\Ng,\Mg)]\goodrest{7pt}{4pt}{2.5pt}{\Ng}\in\HG^{d_\Mg-d_{\Ng}}(\Ng)}\,.
$$
where $i_{\Gg!}:\HG(\Ng)[d_{\Ng}]\to\HG(\Mg)[d_{\Mg}]$ is now the equivariant Gysin morphism.

\begin{exer}Given oriented $\Gg$-manifolds $\Lg\dans\Ng\dans\Mg$, prove the folowing formula for nested equivariant Euler classes
$$\halfdisplayskips\displayboxit{\EuG(\Lg,\Mg)=\EuG(\Lg,\Ng)\cup\EuG(\Ng,\Mg)\rest{\Lg}}$$
{\slshape Hint: Use the projection formula for Gysin morphisms.}
\end{exer}

\subsubsection{$\Gg$-Equivariant Euler Class of Discrete Fixed Point Sets}\ 

In the sequel, we denote by\label{equivariant-Euler-point} $\Mg^{\Gg}$ the subspace of $\Gg$-fixed points of $\Mg$.

When
$\Ng$ is a discrete subspace of $\Mg^{\Gg}$, one has
$$\EuG(\Ng,\Mg)\in\HG^{d_{\Mg}}(\Ng)=\prod\nolimits_{b\in\Ng}\Sgd{d_{\Mg}}^{\Gg}\,,$$ 
and $\EuG(\Ng,\Mg)$ is simply the family of invariant polynomials 
$$\EuG(\Ng,\Mg)=\bigset\EuG(b,\Mg)\in\Sgd{d_{\Mg}}^{\Gg}/_{b\in\Ng}\,.$$

\begin{prop}If\label{Thom-Int} $\Ng$ is a finite subset of $\Mg^{\Gg}$, one has
$$
\sumnl_{b\in\Ng}\;\EuG(b,\Mg)=\int_{\Mg}\Phi_{\Gg}(\Ng,\Mg)\cup \Phi_{\Gg}(\Ng,\Mg)\text{\quad and\quad}
|\Ng|=\int_{\Mg}\Phi_{\Gg}(\Ng,\Mg)\,.
$$
\end{prop}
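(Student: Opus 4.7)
The strategy is to express everything in terms of equivariant Gysin morphisms associated with the closed embedding $i_{\Gg}:\Ng\hook\Mg$ and the constant maps $c_{\Mg}:\Mg\to\pt$ and $c_{\Ng}:\Ng\to\pt$, and then exploit functoriality together with the projection formula (\bref{equivariantProperGysinDefTheo}-(\bref{equivariantProperGysinDefTheo}-b)). Since $\Ng$ is finite, the map $c_{\Ng}$ is proper and factors $c_{\Mg}\circ i_{\Gg}=c_{\Ng}$; the integration identification $\int_{\Mg}=c_{\Mg!}$ of \bref{constant-map} together with functoriality of Gysin morphisms (\bref{equivariantProperGysinDefTheo}-(\bref{equivariantProperGysinDefTheo}-c)) then gives, for every $\omega\in\HG(\Mg)$,
$$\int_{\Mg}\omega=c_{\Mg!}(\omega)\qtext{and}\int_{\Mg}i_{\Gg!}\xi=c_{\Ng!}(\xi)\,,\quad\forall \xi\in\HG(\Ng).$$

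The first step is to identify $c_{\Ng!}:\HG(\Ng)\to\HG$ with summation over the points of $\Ng$. As $\Ng$ is a discrete $\Gg$-fixed subset, $\HG(\Ng)=\prod_{b\in\Ng}\HG$, and because $c_{\Ng}$ is the disjoint union of the constant maps $c_{\{b\}}:\{b\}\to\pt$, additivity of Gysin on disjoint unions (immediate from the universal characterisation) reduces to the case of a single point, where $c_{\{b\}!}=\id_{\HG}$ since $d_{\{b\}}=0$ and integration over a point is the identity. Hence $c_{\Ng!}(\xi_{b})_{b\in\Ng}=\sum_{b\in\Ng}\xi_{b}$.

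For the second formula, apply this to $\xi=1\in\HG(\Ng)$, noting that $\Phi_{\Gg}(\Ng,\Mg)=i_{\Gg!}(1)$ by definition (\bref{Equivariant-Euler}):
$$\int_{\Mg}\Phi_{\Gg}(\Ng,\Mg)=c_{\Mg!}\big(i_{\Gg!}(1)\big)=c_{\Ng!}(1)=\sumnl_{b\in\Ng}1=|\Ng|.$$

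For the first formula, I would use the projection formula for $i_{\Gg}$ to rewrite the cup product $\Phi_{\Gg}\cup\Phi_{\Gg}$: taking $\aalpha=i_{\Gg!}(1)$ and $\bbeta=1$ in \bref{equivariantProperGysinDefTheo}-(b),
$$i_{\Gg!}(1)\cup i_{\Gg!}(1)=i_{\Gg!}\big(i_{\Gg}^{*}i_{\Gg!}(1)\cup 1\big)=i_{\Gg!}\big(\EuG(\Ng,\Mg)\big).$$
Integrating over $\Mg$ and using functoriality $c_{\Mg!}\circ i_{\Gg!}=c_{\Ng!}$, together with the first step, yields
$$\int_{\Mg}\Phi_{\Gg}(\Ng,\Mg)\cup\Phi_{\Gg}(\Ng,\Mg)=c_{\Ng!}\big(\EuG(\Ng,\Mg)\big)=\sumnl_{b\in\Ng}\EuG(b,\Mg),$$
as claimed. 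No real obstacle is expected; the only point requiring care is the identification of $c_{\Ng!}$ for a discrete fixed set, which follows from \bref{constant-map} once one observes that on a $\Gg$-fixed point the equivariant cohomology collapses to $\HG$ and the zero-dimensional integration is the identity.
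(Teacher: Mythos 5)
Your strategy (functoriality of the Gysin functor along the factorisation $c_{\Ng}=c_{\Mg}\circ i_{\Gg}$, plus the projection formula) is the right kind of argument, but as written it has a genuine gap: it silently assumes $\Mg$ compact, which the proposition does not. The identities you build everything on, namely $\int_{\Mg}=c_{\Mg!}$ and $\int_{\Mg}\omega=c_{\Mg!}(\omega)$ for all $\omega\in\HG(\Mg)$, require the constant map $c_{\Mg}:\Mg\to\pt$ to be proper: for noncompact $\Mg$ the morphism $c_{\Mg!}$ does not exist ($c_{\Mg}$ is not a morphism of $\Gg\tiret\Manop$, so the functoriality $c_{\Mg!}\circ i_{\Gg!}=c_{\Ng!}$ of \bref{equivariantProperGysinDefTheo} is unavailable), and equivariant integration is only defined on $\HGc(\Mg)$ (\bref{integration-equivariante}). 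Moreover your use of the projection formula of \bref{equivariantProperGysinDefTheo} takes as first argument the class $i_{\Gg!}(1)$, which a priori lies in $\HG(\Mg)$, whereas the formula as stated requires a class in $\HGc(\Mg)$. So the key chain $\int_{\Mg}i_{\Gg!}\xi=c_{\Ng!}(\xi)$ is unjustified in the stated generality, which is exactly the generality needed (the reason the integrals in the statement make sense at all for noncompact $\Mg$ is that the Thom class of the finite set $\Ng$ admits a compactly supported representative). Your step identifying the Gysin morphism of $c_{\Ng}$ with the sum of restrictions is fine; note that the cohomological adjunction does characterize it here because the target is a point with free $\HG$-cohomology, cf. \bref{rema-adj}.

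The repair is precisely the observation with which the paper's own proof opens: since $\Ng$ is finite, $\1_{\Ng}$, and a fortiori the Thom class, are compactly supported, so the argument should be run in the compact-support Gysin functor of \bref{equivariantGysinDefTheo}, which is defined for arbitrary (non proper) maps, in particular for $c_{\Mg}$. Since $\HGc(\Ng)=\HG(\Ng)$, the closed embedding carries both $i\gy$ and $i\gyp$, compatible under $\phi$ (last statement of \bref{equivariantProperGysinDefTheo}), and your computation goes through verbatim after replacing $c_{\Mg!}$ by $c_{\Mg*}=\int_{\Mg}$ on $\HGc(\Mg)$, $c_{\Ng!}$ by $c_{\Ng*}=\sum_{b\in\Ng}(\_)\rest{b}$, and the projection formula by that of \bref{equivariantGysinDefTheo}-(\bref{equivariantGysinDefTheo-b}), which gives $i\gy\big(i_{\Gg}^{*}\Phi_{\Gg}(\Ng,\Mg)\cup\1\big)=\Phi_{\Gg}(\Ng,\Mg)\cup i\gy(\1)$. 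Once repaired, your route is essentially the paper's, only unpacked: the paper compresses both formulas into the single adjunction identity $\sum_{b\in\Ng}\alpha\rest{b}=\int_{\Mg}i\gy(\1_{\Ng})\cup\alpha$ for all $\alpha\in\HG(\Mg)$, applied to $\alpha=1$ and $\alpha=\Phi_{\Gg}(\Ng,\Mg)$; your functoriality step $c_{\Ng*}=c_{\Mg*}\circ i\gy$ is just this adjunction read through the constant maps.
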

\proof
The constant function $\1_{\Ng}$ and, \emph{a fortiori}, the Thom class $\Phi_{\Gg}(\Ng,\Mg)$, are both of compact supports. The 
equalities then immediately follow from
the adjoint property of the Gysin morphism $i\gy:\HGc(\Ng)\to\HGc(\Mg)$ which gives:
$$\sumnl_{b\in\Ng}\alpha\rest b=\int_{\Mg} i\gy(\1_{\Ng})\cup \alpha\,,\quad\forall \alpha\in\HG(\Mg)\,.\eqno\QED$$
\endproof

\subsubsection{$\Gg$- and $\Tg$-Equivariant Euler Classes of a Fixed Point.}Let $\Tg$ be the maximal torus of the compact connected Lie group $\Gg$ and denote by $\Tg':=N_{\Gg}(\Tg)$ the  \expression{normalizer} of $\Tg$ in $\Gg$. We have $\Tg\dans\Tg'\dans\Gg$ and if we choose $\IE\Gg$ as \expression{universal fiber bundle}\index{universal fiber bundle} for any of these groups, we can easily compare the corresponding Borel constructions\index{Borel construction} for a given $\Gg$-manifold $\Mg$. In this way we obtain a natural commutative  diagram of locally trivial fibrations:
$$\def\aronto{\ar@{->>}}\def\ve{\ar@{->>}[d]}
\vcenter{\hbox{\xymatrix@C=5mm@R=8mm{
\Mg_\Tg:=\IE\Gg\times_{\Tg}\Mg\aronto[r]^(0.48){p}\ve&\Mg_{\Tg'}:=\IE\Gg\times_{\Tg'}\Mg\aronto[r]^{q}\ve&\Mg_\Gg:=\IE\Gg\times_{\Gg}\Mg\ve\\
\IB\Tg\aronto[r]^(0.48){p}&\IB\Tg'\aronto[r]^{q}&\IB\Gg}}}
\eqno(\diamond)$$

\smallskip\noindent-- The \expression{Weyl group of $(\Gg,\Tg)$}\index{Weyl group}, \idest the finite group $\Wg:=\Ng_{\Gg}(\Tg)/\Tg$, acts on the \emph{right} of $\Mg_{\Tg}$ and $p$ is the orbit map for this action. In particular, the map 
$p^{*}:\HTp(\Mg)\to\HT(\Mg)^{\Wg}$
is an isomorphism.

\smallskip\noindent-- The fibers of $q$ are isomorphic to $\Gg/\Tg'$ which is \emph{acyclic in rational cohomology}. Indeed, this space
is the orbit space of $\Gg/\Tg$ for the right action of $\Wg$ and we know from an old result of Leray (\footnote{The statement appears as the Lemma 27.1 in the Ph.D. thesis of A.~Borel, defended at La Sorbonne (with Leray as president) in 1952, (\cite{borel-these}, lemme 27.1, p.~193). Borel attributes the result to J.~Leray (\cite{leray}).}) that, under this action, $\Hr(\Gg/\Tg)$ is the regular representation of $\Wg$. In particular $\Hr(\Gg/\Tg')=\Hr(\Gg/\Tg)^{\Wg}=\KK$, which implies that 
$$
q^{*}:\HG(\Mg)\to\HTp(\Mg)
$$
is an isomorphism.

This is a consequence of the general fact that if $q:\Xg\to\Yg$ is a locally trivial fibration with acyclic fiber $\Fg$ between manifolds, then  $q^{*}:\Hr(\Yg)\to\Hr(\Xg)$ is an isomorphism. Indeed, if $\U=\set U/$ is an good cover\index{good cover} of $\Yg$ (\cf $(^{\ref{def-good cover}})$)  such that $q:f^{-1}(U)\to U$ is a trivial fibration for all $U\in\U$, then $f^{-1}(U)=U\times\Fg$ and
the cover $f^{-1}(\U):=\set f^{-1}(U)/$ will be also good for $\Xg$. In that case, $q^{*}$ establishes an \emph{isomorphism} of \v Cech cohomologies $q^{*}:\check  \Hr(\U;\Yg)\to\check \Hr(f^{-1}\U;\Xg)$
which are known to be canonically isomorphic to de Rham cohomologies. By this result, the maps $q^{*}:\Hr(\IE\Gg(m)\times_{\Gg}\Mg)\to\Hr(\IE\Gg(m)\times_{\Tg'}\Mg)$ are bijective for all finite dimensional approximation $\IE\Gg(m)$ of $\IE\Gg$, which suffices to our purposes as equivariant cohomology is the projective limit of the cohomologies of these approximations (\ref{approximations}).
(\footnote{In \cite{AB}, p.~4, the interested reader will find
partial indications to a different justification, that seems to rely  on \cite{gott}.})

\smallskip\noindent-- Summing up, we have the following two canonical isomorphisms 
$$
\displayboxit{\goodsmash{0.8}{1}{\xymatrix@C=6mm{
\HG(\Mg)\ar[r]^{q^{*}}&\HTp(\Mg)\ar[r]^{p^{*}}&\HT(\Mg)^{\Wg}
}}}
\eqno(\ddagger)$$

\smallskip\noindent-- When $\Mg=\set\bullet/$, we obtain a commutative diagram of Chern-Weil homomorphisms\index{Chern-Weil homomorphism}
$$\preskip-1ex\def\aronto{\ar@{->}}\def\ve{\ar[d]|{\vrule height5pt depth3pt width0pt\simeq}}
\vcenter{\hbox{\xymatrix@R=7mm@C=1.2cm{\Sg^{\Gg}\ve\ar[r]|(0.48){\;\Chv\;}&\St^{\Wg}\ve\ar[r]|{\subseteq}&\St\ve\\
\HG\ar[r]_{\simeq}^{q^{*}}&\Hr_{\Tg'}\ar@{>->}[r]^{p^{*}}&\HT
}}}
\eqno(\ddagger\ddagger)$$
where $\Chv:\Sg^{\Gg}\to\St^{\Wg}$ is the map that associates
a symmetric polynomial function on $\ggoth$ with its restriction to the subspace $\tgoth$. The diagram already shows that 
 $\Chv$ is an isomorphism, a claim known as the \expression{Chevalley isomorphism} 
following the celebrated, much more general, 
  \expression{Chevalley's restriction theorem}\index{Chevalley!restriction theorem}.

\medskip
At this point it is worth noting that for each $b\in\Mg^{\Gg}$ the group $\Gg$ acts naturally on the tangent space $T_{b}(\Mg)$ through a \emph{linear representation}. Now, if we endow $\Mg$ with a $\Gg$-invariant riemannian metric, the exponential map $\exp: T_{b}(\Mg)\to\Mg$ is a $\Gg$-equivariant diffeomorphism between $T_{b}(\Mg)$ and an open neighborhood of $b$ in $\Mg$, so that the computation of equivariant Euler classes on fixed points may be greatly simplified by linearizing the data. The following proposition deals with the linear case.

\begin{prop}Let\label{Euler-G} $\Vg$ be a linear representation of a compact connected Lie group $\Gg$ with maximal torus $\Tg$.
\varlistseps{\topsep4pt\itemsep4pt}\begin{enumerate}
\mynobreak\nobreak
\item The\label{Euler-G-a} equivariant Euler class $\EuT(0,\Vg)$ belongs to $\St^{\Wg}$ and the Chevalley isomorphism $\Chv:\Sg^{\Gg}\to\St^{\Wg}$ exchanges 
$\EuG(0,\Vg)$ and $\EuT(0,\Vg)$.
\item If\label{Euler-G-b} $\Vg:=\Vg_{1}\oplus\Vg_{2}$ as $\Gg$-module, then
$\EuG(0,\Vg)=\EuG(0,\Vg_1)\EuG(0,\Vg_2)$.
\item Denote\label{Euler-G-c} by $\CC(\alpha)$ the complex vector space $\CC$ endowed with the representation of $\Tg$ corresponding to the (nonzero) \expression{weight}\index{weight} $\alpha\in\tgoth\dual$, \idest $\exp(tx)(z)=e^{it\alpha(x)}z$, for all $t\in\RR$ and $z\in\CC$. If the decomposition of $\Vg$ in irreducible representations of $\Tg$ is 
$\Vg=\RR^{\mu_{0}}\oplus\bigoplus\nolimits_\alpha\CC(\alpha)^{\mu(\alpha)}\,,
$ 
then 
$$\EuT(0,\Vg)=0^{\mu_0}\prod\nolimits_{\alpha}\alpha^{\mu(\alpha)}\,.
$$
\item $\EuG(0,\Vg)\not=0$\label{Euler-G-d} if and only if $\Vg^{\Tg}=\set0/$.
\end{enumerate}
\end{prop}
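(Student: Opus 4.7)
My plan is to address the four assertions in order, leveraging the equivariant Gysin formalism and the torus-restriction diagrams $(\ddagger)$ and $(\ddagger\ddagger)$.

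For (a), the key ingredient is the compatibility of Gysin morphisms with group restriction established in Theorem \bref{theo-group-restriction}. Writing $i\colon\{0\}\hookrightarrow\Vg$, we have $\EuG(0,\Vg)=i^{*}i_{\Gg!}(1)$, and since pullbacks trivially commute with restriction while $\Res^{\Gg}_{\Tg}\circ i_{\Gg!}=i_{\Tg!}\circ\Res^{\Gg}_{\Tg}$ by \bref{theo-group-restriction}, one obtains $\Res^{\Gg}_{\Tg}(\EuG(0,\Vg))=\EuT(0,\Vg)$. On the other hand, diagram $(\ddagger\ddagger)$ identifies $\Res^{\Gg}_{\Tg}\colon\HG\to\HT$ on the point with the Chevalley restriction $\Chv\colon\Sg^{\Gg}\to\St^{\Wg}\hookrightarrow\St$. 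Consequently $\EuT(0,\Vg)$ lies in $\St^{\Wg}$ and is the $\Chv$-image of $\EuG(0,\Vg)$.

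For (b), I would use the explicit equivariant Thom form provided by Proposition \bref{prop-section-nulle-equivariant}: if $p_j\colon\Vg_1\oplus\Vg_2\to\Vg_j$ denotes the $j$-th ($\Gg$-equivariant) projection, the cup product $p_1^{*}\Phi_{\Gg}(0,\Vg_1)\wedge p_2^{*}\Phi_{\Gg}(0,\Vg_2)$ is a $\Gg$-equivariant cocycle of compact support, of total degree $d_{\Vg_1}+d_{\Vg_2}=d_{\Vg}$, and whose integral along $\Vg$ equals $1$ by Fubini and the defining property of each $\Phi_{\Gg}(0,\Vg_j)$. By the uniqueness clause of \bref{prop-section-nulle-equivariant}-(\bref{prop-section-nulle-equivariant-a}) it therefore represents $\Phi_{\Gg}(0,\Vg)$. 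Restricting to $0$ yields the multiplicativity $\EuG(0,\Vg)=\EuG(0,\Vg_1)\cup\EuG(0,\Vg_2)$.

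For (c), I reduce to the irreducible summands by iterating (b) on the $\Tg$-isotypic decomposition. The trivial real summand contributes $\EuT(0,\RR)\in\HT^{1}=\St^{1}=0$ (since $\St$ is concentrated in even degrees), so $\EuT(0,\RR^{\mu_{0}})=\EuT(0,\RR)^{\mu_{0}}$ matches the convention $0^{\mu_{0}}$. For $\CC(\alpha)$, I identify $\CC(\alpha)_{\Tg}=\IE\Tg\times_{\Tg}\CC(\alpha)$ with the complex line bundle over $\IB\Tg$ classified by the character $\alpha$; its equivariant Euler class, equal to its first Chern class as an oriented rank-$2$ real bundle, is the element $\alpha\in\tgoth\dual=\St^{2}$ under the Chern--Weil identification $\HT=\St$. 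This last identification is the main technical obstacle; the cleanest route builds an explicit Mathai--Quillen-type equivariant Thom form on $\CC(\alpha)$ and restricts it to the zero section, or, alternatively, reduces by naturality to the universal case $\Tg=\cSS^{1}$ where the verification is a direct computation in Cartan's model.

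Finally, (d) falls out formally from (a) and (c): since $\Chv$ is injective one has $\EuG(0,\Vg)\neq 0$ iff $\EuT(0,\Vg)\neq 0$, and by the formula in (c) this occurs precisely when $\mu_{0}=0$, that is, when $\Vg^{\Tg}=\{0\}$, using that $\St$ is an integral domain and each weight $\alpha$ is nonzero by hypothesis.
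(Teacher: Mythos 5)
Your parts (a), (b) and (d) are correct, and in (a) you take a genuinely different route from the paper. You apply the general restriction--Gysin compatibility of \bref{theo-group-restriction} with $\Hg=\Tg$ to the closed embedding $i\colon\{0\}\hookrightarrow\Vg$ and identify $\Res^{\Gg}_{\Tg}$ on a point with $\Chv$ via $(\ddagger\ddagger)$; the paper instead passes through the normalizer $\Tg'$, uses the isomorphism $\HG(\_)\simeq\HTp(\_)$ of $(\ddagger)$, and checks by hand the square comparing the $\Tg'$- and $\Tg$-equivariant Gysin maps, the nonobvious subsquare being verified after dualizing, where the vertical arrows become $p\gyp=\int_{\Wg}$. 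Your version is shorter and is a legitimate use of an already proved result (whose proof is, in substance, the same fiber-integration argument), and it gives $\Res^{\Gg}_{\Tg}\EuG(0,\Vg)=\EuT(0,\Vg)$, which lies in $\St^{\Wg}$ since by $(\ddagger)$ the image of $\Res^{\Gg}_{\Tg}$ consists of $\Wg$-invariants. Your (b) is exactly the argument the paper leaves to the reader: the product of the pulled-back equivariant Thom cocycles is compactly supported and $\dgG$-closed, its top form-degree component is the product of the nonequivariant Thom forms (over a point base: closed, compactly supported, of integral $1$, hence a Thom representative for the product orientation), so the uniqueness clause of \bref{prop-section-nulle-equivariant}-(\bref{prop-section-nulle-equivariant-a}) applies and restriction to $0$ gives multiplicativity. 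Your parity argument for the trivial summands ($\St$ is evenly graded, so $\EuT(0,\RR)=0$) and your deduction of (d) from (a) and (c) are also fine.

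The genuine gap is in (c): you never establish the normalization $\EuT(0,\CC(\alpha))=\alpha$, which is the entire content of the statement once (b) is in hand. You reduce it to ``the first Chern class of the line bundle over $\IB\Tg$ classified by $\alpha$, under the Chern--Weil identification'' and then propose either a Mathai--Quillen-type Thom form or a reduction to the case $\Tg=\cSS^{1}$, but you carry out neither, and both moves conceal precisely the point at issue: the sign and $2\pi$ normalizations relating the Cartan model used in this paper to the Borel-construction Chern class, and, for the reduction to $\cSS^{1}$, a functoriality along the surjection $\Tg\to\cSS^{1}$ that the paper has not established (only restriction to closed subgroups, \bref{group-restriction}, is available). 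The paper closes this point by a direct Cartan-model computation: in polar coordinates it takes the invariant Thom representative $\Phi^{[2]}=\lambda(\rho)\,\rho\,d\rho\wedge d\theta$ with $\int_{0}^{\infty}\lambda(\rho)\,\rho\,d\rho=1/2\pi$, computes $\cg(X)\Phi^{[2]}=-2\pi\,\alpha(X)\,\lambda(\rho)\,\rho\,d\rho$, solves for the unique compactly supported $\Phi^{[0]}$ completing the equivariant cocycle as in the construction of \bref{prop-section-nulle-equivariant}, and evaluates $\Phi^{[0]}(0)(X)=\alpha(X)$, whence $\EuT(0,\CC(\alpha))=\alpha$. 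Some such explicit verification (or an honest proof of your Chern-class identification with the correct normalization) must be supplied before (c), the displayed product formula, and your derivation of (d) are complete.
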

\proof (\ref{Euler-G-a}) After the natural isomorphism of functors $\HG(\_)\simeq\HTp(\_)$ of $(\ddagger)$, it suffices to justify the commutativity of the following diagram:  
$$\def\d{\ar[d]^(0.45){p^{*}}}
\xymatrix@R=7mm{
\HTp(0)\ar@{}[rd]|{\hbox{(I)}}\d\ar[r]^{i\gyp}&\HTp(\Vg)\ar@{}[rd]|{\hbox{(II)}}\d\ar[r]^{i^{*}}&\HTp(0)\d\\
\HT(0)\ar[r]_{i\gyp}&\HT(\Vg)\ar[r]_{i^{*}}&\HT(0)\\
}
$$
The commutativity of the subdiagram (II) is obvious. For (I) we check its dual, the diagram
$$\preskip-4pt\def\d{\ar[d]^(0.45){p\gyp}}
\xymatrix@R=5mm{
\HTc(\Vg)\ar@{}[rd]|{\hbox{(I$\dual$)}}\d\ar[r]^{i^{*}}&\HT(0)\d\\
\HTpc(\Vg)\ar[r]_{i^{*}}&\HTp(0)\\
}
\postskip0.8ex$$
where $p\gyp=\int_{\Wg}$, which is also clearly commutative.

\medskip\noindent (\ref{Euler-G-b},\ref{Euler-G-c},\ref{Euler-G-d}) left to the reader. 
{\slshape Hint for} (\ref{Euler-G-c}). Following (\ref{Euler-G-b}), it suffices to show that $\EuT(0,\CC(\alpha))=\alpha$. Taking polar coordinates $(\rho,\theta)\in\RR_+\times[0,2\pi]$ in $\CC$, the nonequivariant Thom class $\Phi(0,\CC)$ is of the form 
$$\Phi^{[2]}=\lambda(\rho)\;\rho\, d\rho\wedge d\theta\,,$$
where $\lambda:\RR\to\RR$ is a nonnegative differential function with compact support equal to $1$ in a neighborhood of $0$ and such that $\int_0^{\infty}\;\lambda(\rho)\;\rho\,d\rho=1/2\pi$. As it is clear that $\Phi^{[2]}$ is invariant under the action of the unit circle action, it is $\Tg$-invariant. We can thus use this differential $2$-form to construct an equivariant Thom class following the procedure described in the proof of \ref{prop-section-nulle-equivariant}-(\ref{prop-section-nulle-equivariant-a}). We have 
$$(d_{\tgoth}\Phi^{[2]})(X)=c(X)\;\lambda(\rho)\; \rho\, d\rho\wedge d\theta
=-2\pi\,\alpha(X)\; \lambda(\rho)\; \rho\, d\rho\,,$$
and $\Phi^{[0]}(X)$ is necessarily equal to
$$
\Phi^{[0]}(\rho,\theta)(X)=-2\pi\,\alpha(X)\Big(\int_{0}^{\rho} \lambda(\rho)\;\rho d\rho-\int_{0}^{+\infty} \lambda(\rho)\;\rho d\rho\Big)\,,$$
since it must be of compact support. In this way we obtain
$$\EuT(0,\CC(\alpha))=\Phi_{\Tg}(0,\CC(\alpha))\rest{0}=\Phi^{[0]}(0)(X)=\alpha(X)\,.$$
\vskip-2em\endproof
\medskip
\begin{exer}If\label{exo-SO3} $\Gg$ is the \expression{special orthogonal group} $\mathop{\rm SO}(3)$ of the euclidean space $\RR^{3}$, show that $\EuG(0,\RR^3)=0$. Conclude that isolated $\Gg$-fixed points may have a null equivariant Euler class when $\Gg$ is nonabelian, contrary to the abelian case.
\end{exer}

\subsection{Torsions in Equivariant Cohomology Modules}
\subsubsection{Torsions.}The\label{torsion-modules} 
\expression{annihilator\index{annihilator!of an element} of an element $v$}
of an $\HG$-gm $\Vg$,  is the ideal
$$\Ann(v):=\set P\in\HG\mid P\cdot v=0/\,.$$
One says that $v$ is a \expression{torsion element}\index{torsion element, module} if $\Ann(v)\not=0$, 
otherwise $v$ is  a \expression{torsion-free element}.
The $\HG$-gm $\Vg$ is called a \expression{torsion module} if all its elements are torsion elements, it is called a \expression{torsion-free module}\index{torsion-free module} if zero is its the only torsion element, otherwise, it is called a \expression{nontorsion module}\index{nontorsion module}.

\begin{exer}Given\label{exer-torsion} an $\HG$-gm $\Vg$, let $\tau(\Vg)$ be the subset of its torsion elements. Show that
\varlistseps{\topsep2pt\itemsep2pt}\begin{enumerate}\def\theenumi{\arabic{enumi}}
\item $\tau(\Vg)$\label{exer-torsion-1} is a torsion module and the quotient
\relax{$\displaystyle{\varphi(\Vg):=\Vg/\tau(\Vg)}$} is torsion-free.
 The natural map:
$\QG\otimes_{\HG}\Vg\to\QG\otimes_{\HG}{\varphi(\Vg)}$
is an isomorphism.
\item $\QG\otimes_{\HG}\Vg=0$\label{exer-torsion-2}  if and only if $\Vg$ is torsion.
\item $\Homgr_{\HG}(\Vg,\QG)=0$\label{exer-torsion-3} if and only if $\Vg$ is torsion.
\item An\label{exer-torsion-4} inductive limit of torsion modules is a torsion module.
\item A\label{exer-torsion-5} projective limit of torsion modules may be a nontorsion module. 
\end{enumerate}
\end{exer}

\begin{exer}The\label{exer-ann-alg} \expression{annihilator\index{annihilator!of module} of an $\HG$-module} is the ideal
$$\Ann(\Vg):=\set P\in\HG\mid P\cdot \Vg=0/=\bigcap\nolimits_{v\in\Vg}\Ann(v)\,.$$

\varlistseps{\topsep4pt\itemsep4pt}\begin{enumerate}\def\theenumi{\arabic{enumi}}
\item Show\label{exer-ann-alg-1} that if $\Ann(\Vg)\not=0$, then $\Vg$ is torsion, but the converse may fail. (\footnote{\sl Hint: For $P\in\HG$, let $\Wg(P):=\HG/(\HG\cdot P)$ and take $\Vg:=\bigoplus_{P\in\HG}\Wg(P)$.})
\item Show\label{exer-ann-alg-2} that if $\Vg$ is an $\HT$-algebra with unit element, then
$\Ann(\Vg)=\Ann(1)$.

\item Let\label{exer-ann-alg-3} $\set U_1\dans U_2\dans \cdots U_n\cdots /$ be an increasing family of $\Gg$-stable open subsets of a $\Gg$-manifold $\Mg$ such that $\Mg=\bigcup_n U_n$. 
Suppose that $\HG(U_n)$ and $\HGc(U_n)$ are torsion for all $n\in\NN$. Show that $\HGc(\Mg)$ is torsion, whereas $\HG(\Mg)$ may fail to be torsion. 

\item In\label{exer-ann-alg-4} (\ref{exer-ann-alg-3}) show 
that $\set\Ann(\HG(U_n))/_{n}$ is a decreasing sequence of ideals and that
$$\Ann(\HG(\Mg))=\bigcap\nolimits _{n\in\NN}\ \Ann(\HGc(U_n))\,.$$
In particular, if the set $\set \Ann(\HG(U_n))/$ is finite, then $\HG(\Mg)$ is torsion.
\end{enumerate}\end{exer}

\vskip1em

\addhabille3
\hmodeHabillage{\includegraphics{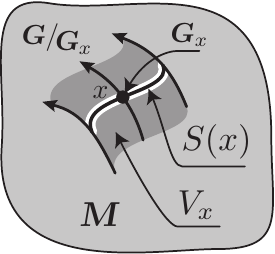}}{1}{-0pt}
\mysubsubsection{The slice theorem.}Given\label{slice} a $\Gg$-manifold $\Mg$, the \expression{slice theorem}\index{slice theorem} (\footnote{See Hsiang \cite{hs} \Spar I.3, p.~11.}) 
claims that, through every point $x\in\Mg$ a submanifold $S(x)$ passes, \expression{a slice through $x$},  such that the map $\Gg/\Gg_x$
$$\Gg\times_{\Gg_{x}}S(x)\to\Mg\,,\quad[(g,x)]\mapsto g\cdot x\,, $$
where $\Gg_{x}$ denotes the \expression{isotropie group  of $x$}\index{isotropie group},
is a diffeomorphism onto a $\Gg$-stable neighborhood $V_x$ of $x$. Then
$$\HG(V_{x})=\Hr(\IE\Gg\times_{\Gg}\Gg\times_{\Gg_x}S(x))=\Hr_{\Gg_x}(S(x))\,,$$
as well as $\HGc(V_x)=\Hr_{\Gg_x,\rm c}(V_x)$ (exercise). As a consequence, the $\HG$-module structures of $\HG(V_x)$ and $\HGc(V_x)$ factorise through the natural ring homomorphism $\rho_x:\HG\to\Hr_{\Gg_x}$. 
\endHabillage

\begin{prop}Let\label{torsion} $\Tg$ be a torus. For every point $x$ in a $\Tg$-manifold $\Mg$, the following equivalences hold.
\begin{enumerate}

\item $\rho_{x}:\HT\to\Hr_{\Tg_{x}}$\label{torsion-a} is injective if and only if $x\in\Mg^{\Tg}$. The $\HT$-modules $\HG(V_x)$ and $\HGc(V_x)$ are torsion if and only if $x\not\in\Mg^{\Tg}$

\item If\label{torsion-b} $x\in\Mg^{\Tg}$, then $\EuT(x,\Mg)\not=0$ if and only if  $x$ is an isolated point of $\Mg^{\Tg}$.
\end{enumerate}
\end{prop}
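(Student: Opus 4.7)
The strategy is to use the slice theorem \ref{slice} to linearize the local geometry around $x$, and then to invoke Proposition~\ref{Euler-G} together with the equivariant Thom/Poincar\'e isomorphism of exercise \ref{equivariantGysinDefTheo-d+} to reduce each claim to a statement about the linear $\Tg$-representation $T_x\Mg$. First I would unwind $\rho_x$: since $\Tg$ is abelian, $\Tg_x$ is central, so it acts trivially by the adjoint representation on $\tgoth_x := \Lie(\Tg_x) \subseteq \tgoth$ and its finite component group acts trivially on $S(\tgoth_x^{*})$ (\cf\ref{homotopiquement-trivial}); hence $\Hr_{\Tg_x} = S(\tgoth_x^{*})$, and $\rho_x$ identifies with the surjective restriction of polynomial functions $S(\tgoth^{*}) \to S(\tgoth_x^{*})$. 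Injectivity of $\rho_x$ is therefore equivalent to $\tgoth_x = \tgoth$, and, since $\Tg$ is connected, to $\Tg_x = \Tg$, i.e., to $x \in \Mg^{\Tg}$.

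For the torsion statement of (a), the slice theorem gives $\HT(V_x) = \Hr_{\Tg_x}(S(x))$ and $\HTc(V_x) = \Hr_{\Tg_x,\rm c}(S(x))$, so the $\HT$-module structures on both factor through $\rho_x$. If $x \notin \Mg^{\Tg}$, then $\ker\rho_x \neq 0$ annihilates both modules, which are therefore torsion by exercise \ref{exer-ann-alg}-(\ref{exer-ann-alg-1}). If $x \in \Mg^{\Tg}$, then $\Tg_x = \Tg$ and $V_x$ can be taken $\Tg$-equivariantly diffeomorphic to the linear representation $T_x\Mg$, with $x$ corresponding to $0$; applying exercise \ref{equivariantGysinDefTheo-d+} to the vector bundle $T_x\Mg \to \{x\}$, one obtains $\HT(V_x) \simeq \HT$ and $\HTc(V_x) \simeq \HT$ (up to a degree shift) as free $\HT$-modules of rank one, which are not torsion. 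This completes (a).

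For (b), the same linearization identifies $\EuT(x,\Mg)$ with $\EuT(0, T_x\Mg)$, once one observes that the equivariant Thom class $\Phi_{\Tg}(x,\Mg) = i_{\Tg!}(1)$ may be represented by a cocycle supported in any preassigned $\Tg$-stable tubular neighborhood of $x$, whose restriction to that neighborhood is a Thom class for the linear model (this is the standard localness of the Thom class, \cf\ref{Equivariant-Euler}). Proposition~\ref{Euler-G}-(\ref{Euler-G-d}), applied to the torus $\Tg$, then asserts that $\EuT(0, T_x\Mg) \neq 0$ iff $(T_x\Mg)^{\Tg} = 0$. The remaining ingredient, the identification $(T_x\Mg)^{\Tg} = T_x(\Mg^{\Tg})$, comes from the slice theorem applied with a $\Tg$-invariant Riemannian metric: the equivariant exponential identifies a $\Tg$-stable neighborhood of $0$ in $T_x\Mg$ with $V_x$ and sends $\Tg$-fixed vectors to $\Tg$-fixed points, so $\Mg^{\Tg}$ is a smooth submanifold with tangent space $(T_x\Mg)^{\Tg}$ at $x$. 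Hence $\EuT(x,\Mg) \neq 0$ iff $\dim_x \Mg^{\Tg} = 0$, i.e., iff $x$ is isolated in $\Mg^{\Tg}$. The only delicate point of the whole argument is this naturality of the Euler class under the equivariant open embedding identifying $(x, V_x)$ with $(0, T_x\Mg)$; everything else is a formal consequence of results already in the text.
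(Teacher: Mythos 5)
Your proposal is correct and follows essentially the same route as the paper: part (a) reduces via the slice theorem to the (non)vanishing of $\ker\rho_x$ (the paper exhibits the kernel by splitting off a complementary subtorus, you by restricting polynomial functions from $\tgoth$ to $\tgoth_x$ — a cosmetic difference), and part (b) is exactly the paper's reduction, by linearizing at the fixed point, to \ref{Euler-G}-(\ref{Euler-G-d}). You merely spell out steps the paper leaves implicit (the non-torsion direction when $x\in\Mg^{\Tg}$, and the localness of the equivariant Thom/Euler class under the identification of $V_x$ with $T_x\Mg$), and these fillings-in are sound.
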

\proof (\ref{torsion-a}) If $\Tg_{x}\not=\Tg$, there exist closed subtorus $\Hg\dans\Tg$ such that $\Tg=\Hg\times\Tg_{x}$ and $\dim(\Hg)>0$, in which case $\ker(\rho_{x})=\Hr_{\Hg}^{+}\otimes\Hr_{\Tg_{x}}\not=0$. (\ref{torsion-b}) is \ref{Euler-G}-(\ref{Euler-G-d}).
\endproof

\begin{rema}The interesting point of this proposition is that it faithfully translates topological properties of a point in a $\Tg$-manifold into algebraic properties of $\HT$-modules, opening the way to the algebraic study of the topology of $\Tg$-spaces.
When $\Gg$ is no longer abelian, both claims may fail. For (\ref{torsion-a}), if $\Tg$ is a maximal torus for $\Gg$ and $\Mg=\Gg/\Tg$, 
the isotropie group of $x=g[\Tg]\in\Mg$ is the maximal torus
 $\Gg_x=g\Tg g^{-1}$, and $\rho_x$ is the inclusion $\HG=(\Hr_{\Tg})^{\Wg}\dans\Hr_{\Gg_x}$. 
Thus, $\rho_x$ is injective although $x$ is not a $\Gg$-fixed point. 
Exercise \ref{exo-SO3} gives a counterexample for (\ref{torsion-b}).
\end{rema}

\def\HTx{\Hr_{\Tg_x}}
\penalty-2500
\subsubsection{Orbit Type of $\Tg$-Manifolds.}The\label{orbit-type} torsions of the $\HT$-modules $\HTc(\Mg)$ and $\HT(\Mg)$ play a central role in the \expression{fixed point theorem}\index{fixed point!theorem}. 

\smallskip\nobreak When $\Mg^{\Tg}=\emptyset$, the slice theorem and \ref{torsion}-(\ref{torsion-a}) show that $\Mg$ may be covered by a family of
$\Tg$-stable open subspaces $V_{x}$ where $\HT(V_x)$ is killed by the elements of the nontrivial kernel $\rho_x:\HT\to\HTx$. But then any finite union of those subspaces will also have torsion equivariant cohomology thanks to Mayer-Vietoris sequences, and, if $\Mg$ is compact, 
we can already say that $\HT(\Mg)$ is torsion. When $\Mg$ is not compact we may not be able to conclude the same (\cf \ref{exer-ann-alg}-(\ref{exer-ann-alg-3}))
 unless we have some kind of finiteness condition on the kernels of $\rho_{x}$. As shown in exercise \ref{exer-ann-alg}-(\ref{exer-ann-alg-4}), such condition may be the finiteness of 
the set those kernels, or, what amounts to the same, the set of the isotropy groups $\Cal O_{\Tg}(\Mg):=\set \Tg_{x}\mid x\in\Xg/$ which is  called the \expression{orbit type\index{orbit type} of the $\Tg$-space $\Mg$} (\footnote{See \cite{hs} chap IV \Spar2, p.~54, for the general definition nottably for non abelian groups.}). 

\medskip\noindent{\bf Definition. }A $\Tg$-manifold $\Mg$ 
is said
\expression{of finite orbit type}\index{finite!orbit type} if $\Cal O_{\Tg}(\Mg)$ is finite.

\begin{exer}Show that a $\Tg$-manifold $\Mg$ is always locally of finite orbit type. In particular, if $\Mg$ is compact, it is of finite orbit type.

\smallskip\noindent{\slshape Hint. Use the slice theorem. If $x\not\in\Mg^{\Tg}$, show that the slice $S(x)$ is a strict submanifold of $\Mg$ stable under $\Gg_x$ and that $\Cal O_{\Gg}(\Gg\cdot S(x))=\Cal O_{\Gg_{x}}(S(x))$, then conclude by induction on $\dim(\Mg)$. Otherwise, if $x\in\Mg^{\Tg}$, linearize the action as in \ref{Euler-G} and conclude showing that there is  a one-to-one correspondence between isotropy groups in the $\Tg$-space $T_{x}\Mg$ and subsets of the set of nonzero weights of the linear representation of $\Tg$ on $T_{x}\Mg$.
 }
\end{exer}
\def\QT{Q_{\Tg}}


\begin{prop}If\label{prop-torsion} $\Mg^{\Tg}=\emptyset$ and $\Mg$ is of finite orbit type, then 
$$\HTc(\Mg)\otimes_{\HT}\QT=\HT(\Mg)\otimes_{\HT}\QT=0\,.$$
\end{prop}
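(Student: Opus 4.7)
The plan is to establish the compact-support case first, and then to deduce the arbitrary-support case from the localized equivariant Poincar\'e duality \ref{Q-H=ExtHc}. The compact-support case will be reduced, via a Mayer-Vietoris induction over a countable cover of $\Mg$ by slice tubes, to the local situation around each point, which is handled directly by the slice theorem.

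I would begin by noting that for any $x\in\Mg$ the slice theorem \ref{slice} provides a $\Tg$-invariant tube $V_{x}=\Tg\times_{\Tg_{x}}S(x)$, and that every $\Tg$-invariant open $W\subseteq V_{x}$ is automatically saturated of the form $W=\Tg\times_{\Tg_{x}}S'$ for some $\Tg_{x}$-invariant open $S'\subseteq S(x)$. Hence $\HT(W)$ and $\HTc(W)$ are the $\Tg_{x}$-equivariant cohomologies of $S'$, and their $\HT$-module structures factor through the restriction $\rho_{x}\colon\HT\to\Hr_{\Tg_{x}}$. Since $\Mg^{\Tg}=\emptyset$ forces $\Tg_{x}$ to be strictly smaller than $\Tg$, the ideal $\ker\rho_{x}$ is nonzero; consequently $\HT(W)$ and $\HTc(W)$ are torsion $\HT$-modules (extending \ref{torsion-a} from $V_{x}$ to its invariant open subsets), and by \ref{exer-torsion-2} they vanish after $\QT\otimes_{\HT}(-)$.

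Using the second countability of $\Mg$, I would then extract a countable subcover $\{V_{n}\}_{n\in\NN}$ from $\{V_{x}\}_{x\in\Mg}$ and set $U_{n}:=V_{1}\cup\cdots\cup V_{n}$. Tensoring the standard Mayer-Vietoris short exact sequence of compactly-supported de Rham complexes by $\Sg$ and taking $\Tg$-invariants---an exact operation since $\Tg$ is compact, via Haar averaging---produces the short exact sequence of Cartan complexes
$$0\to\OmegaTc(U_{n}\cap V_{n+1})\to\OmegaTc(U_{n})\oplus\OmegaTc(V_{n+1})\to\OmegaTc(U_{n+1})\to 0\,,$$
whose associated long exact sequence survives the flat base change $\QT\otimes_{\HT}(-)$. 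Since $U_{n}\cap V_{n+1}$ is a $\Tg$-invariant open of $V_{n+1}$, the previous paragraph yields $\QT\otimes_{\HT}\HTc(U_{n}\cap V_{n+1})=0$, and a routine induction on $n$ gives $\QT\otimes_{\HT}\HTc(U_{n})=0$ for every $n$. Theorem \ref{cohomologie-compacte-equivariante} identifies $\OmegaTc(\Mg)$ with $\limind\sb{n}\OmegaTc(U_{n})$, and since both cohomology and $\QT\otimes_{\HT}(-)$ commute with filtered inductive limits I conclude that $\QT\otimes_{\HT}\HTc(\Mg)=0$. The arbitrary-support statement then follows formally from \ref{Q-H=ExtHc}, which provides the natural isomorphism
$$\QT\otimes_{\HT}\HT(\Mg)[d_{\Mg}]\cong\Homgb_{\QT}\bigl(\QT\otimes_{\HT}\HTc(\Mg),\QT\bigr)\,,$$
whose right-hand side is now zero.

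A notable feature of this argument is that it does not explicitly invoke the finite-orbit-type hypothesis; this hypothesis must therefore be reserved for a sharper, non-localized conclusion, such as the existence of a \emph{common} nonzero annihilator of $\HT(\Mg)$ and $\HTc(\Mg)$ directly in $\HT$, in the spirit of \ref{exer-ann-alg-4}. The main step I would want to scrutinize is the applicability of \ref{Q-H=ExtHc} when $\Mg$ is not of finite type: at the level of complexes, the identification of $\QT\otimes_{\HT}\Homgb_{\HT}(\OmegaTc(\Mg),\HT)$ with $\Homgb_{\QT}(\QT\otimes_{\HT}\OmegaTc(\Mg),\QT)$ is the delicate input that genuinely relies on the flatness and injectivity of $\QT$ over $\HT$.
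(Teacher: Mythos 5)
Your first half is fine and is in substance the paper's own argument: on a slice tube $V_x$ (and on any $\Tg$-invariant open subset of it) the $\HT$-module structure factors through $\rho_x\colon\HT\to\Hr_{\Tg_x}$, whose kernel is nonzero because $\Mg^{\Tg}=\emptyset$, and the resulting local torsion is propagated by Mayer-Vietoris for compact supports and a filtered colimit; the paper organizes the same ingredients with Zorn's lemma instead of a countable exhaustion, but nothing essential differs, and indeed this half does not use the finite orbit type hypothesis.

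The gap is in the second half. Your deduction of $\QT\otimes_{\HT}\HT(\Mg)=0$ from $\QT\otimes_{\HT}\HTc(\Mg)=0$ via the localized duality \ref{Q-GPD} never invokes finite orbit type, and the arbitrary-support statement is false without that hypothesis, so the argument cannot be repaired as written. Concretely, let $\Tg$ be a $2$-torus, let $\Hg_1,\Hg_2,\dots$ be pairwise distinct circle subgroups, and let $\Mg=\coprod_{n\geq1}\Tg/\Hg_n$: this is an oriented, equidimensional $\Tg$-manifold with $\Mg^{\Tg}=\emptyset$ and infinite orbit type. Here $\HTc(\Mg)=\bigoplus_n\Hr_{\Hg_n}$ is torsion, so $\QT\otimes_{\HT}\HTc(\Mg)=0$; but the unit $1\in\HT^{0}(\Mg)$ has annihilator $\bigcap_n\ker\big(\HT\to\Hr_{\Hg_n}\big)=0$, since each kernel is generated by a linear form on $\tgoth$ cutting out the line $\mathrm{Lie}(\Hg_n)$ and no nonzero polynomial vanishes on infinitely many distinct lines; hence $\QT\otimes_{\HT}\HT(\Mg)\neq0$. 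The failure occurs precisely at the step you flagged as delicate: flatness and injectivity of $\QT$ do \emph{not} make $\QT\otimes_{\HT}(\_)$ commute with $\Homgb_{\HT}(\OmegaTc(\Mg),\HT)$ (equivalently, with the infinite products hidden in $\HT$ of such an $\Mg$); the natural map $\QT\otimes_{\HT}\Homgb_{\HT}(N,\HT)\to\Homgb_{\QT}(\QT\otimes_{\HT}N,\QT)$ is an isomorphism only under finiteness hypotheses on $N$, which is why \ref{Q-GPD} cannot be applied blindly to a manifold that is not of finite type (and, as a secondary point, \ref{prop-torsion} does not assume $\Mg$ oriented, while any duality argument does). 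This is exactly why the paper abandons duality for this half: finite orbit type produces a single nonzero ideal $I=\bigcap_{x\in\Mg}\ker\rho_x$, and a Zorn plus Mayer-Vietoris argument on $\HT^{0}$ shows that $I$ kills the unit of $\HT(U\cup V_y)$ at each step, hence kills all of $\HT(\Mg)$ by \ref{exer-ann-alg}-(\ref{exer-ann-alg-2}); this uniform annihilator is what circumvents the fact that a projective limit of torsion modules need not be torsion.
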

\proof 
-- {\slshape Torsion of $\HTc(\Mg)$. }Let $(\U,\dans)$ be the set of $\Gg$-stable open subspaces $U\dans\Mg$, such that $\HTc(U)$ is torsion, partially ordered by set inclusion. The set $\U$ is non empty as it contains every slice neighborhood $V_x$ (\ref{slice}) and it is an inductive poset by exercise \ref{exer-ann-alg}-(\ref{exer-ann-alg-3}), so that Zorn lemma can be applied. Let $U$ be a maximal element in $\U$.
For any $y\in \Mg$, let $V_y$ be a slice neighborhood of $y$. By the exactness of the Mayer-Vietoris sequence for compact supports:
$$\cdots \to\HGc^{0}(U\cap V_y)
\to\HGc^{0}(U)\oplus\HGc^{0}(V_y)\to\HGc^{0}(U\cup V_y)\to
\HGc^{0}(U\cap V_y)[1]\to
\,,$$
we easily conclude that $\HGc^{0}(U\cup V_y)$ is torsion. Then  
$U\cont V_y$, by the maximality of $U$, hence $U=\Mg$. 

\smallskip-- {\slshape Torsion of $\HT(\Mg)$. }We cannot use the same argument as in the compact support case because a projective limit of torsion modules is no longer necessarily torsion. The finiteness assumption on the set of orbit types will now be crucial.

Let $I$ be the intersection of all the ideals $\ker(\rho_x:\HT\to\Hr_{\Tg_x})$ for $x\in\Mg$. The finiteness of the orbit type of $\Mg$ ensures that $I\not=0$. 
Let $(\U,\dans)$ be the set of $\Gg$-stable open subspaces $U\dans\Mg$, such that $I\dans\relax{\Ann(\HT(U))}$, partially ordered by set inclusion. The set $\U$ is non empty as it contains every slice neighborhood $V_x$ (\ref{slice}) and it is an inductive poset by exercise \ref{exer-ann-alg}-(\ref{exer-ann-alg-4}), so that Zorn lemma can be applied. Let $U$ be a maximal element in $\U$.
For any $y\in \Mg$, let $V_y$ be a slice neighborhood of $y$. Thanks to the exactness of the first terms of the Mayer-Vietoris sequence:
$0\to\HG^{0}(U\cup V_y)\to\HG^{0}(U)\oplus\HG^{0}(V_y)\to\HG^{0}(U\cap V_y)\to\,,$
we easily see that $1\in\HG^{0}(U\cup V_y)$ is killed by $I$. Then  $I\dans\Ann(\HG(U\cup V_y))$ by \ref{exer-ann-alg}-(\ref{exer-ann-alg-2}) and $U\cont V_y$, by the maximality of $U$, hence $U=\Mg$. 
\endproof

\subsection{Localization Theorems}\label{localization}
Given a $\Tg$-manifold $\Mg$ and a \emph{nontrivial closed} subgroup 
$\set1/\not=\Hg\dans\Tg$, the fixed point set  
$\Mg^{\Hg}:=\set x\in\Mg\mid h\cdot x=x\ \forall h\in\Hg/$
is a submanifold whose connected components (not necessarily of equal dimensions) are stable under the action of $\Tg$, and furthermore they are orientable if $\Mg$ is (\footnote{We recall that the reason for this is that under the action of $\Hg$, the tangent spaces $T_{x}(\Mg)$ for $x\in\Mg^{\Hg}$ split as the direct sum of $T_{x}(\Mg^{\Hg})$ and a sum of $\Hg$-irreducible two dimension representations $\CC(\alpha)$ (\cf\ref{Euler-G}-(\ref{Euler-G-c})) which are besides canonically oriented by their character. Thus, the orientation of $T_{x}(\Mg^{\Hg})$ determines that of $T_{x}(\Mg)$ and vice versa.
}).

\medskip
\noindent{\bf Terminology. }An homomorphism of $\HT$-modules $\alpha:L\to L'$ will be called an \expression{isomorphism modulo torsion}\index{isomorphism modulo torsion} if its kernel and cokernel  are both torsion $\HT$-modules, \idest if the induced homomorphism of $\QT$-modules
$$
\postskip0em
\alpha\otimes_{\HT}\id:L\otimes_{\HT}\QT\to L'\otimes_{\HT}\QT$$
is an isomorphism.

\begin{prop}Let\label{prop-localization} $\Mg$ be an oriented $\Tg$-manifold of finite orbit type.
For any $\Hg$ nontrivial closed subgroup of $\Tg$, denote by $\iota_{\Hg}:\Mg^{\Hg}\hook\Mg$ the set inclusion\index{closed embedding}\index{embedding!closed}. The following morphisms of $\HT$-gm {\rm (\footnote{As the submanifold $\Mg^{\Hg}$ need not be connected nor equidimensionnal
the shift indication in a notation as $\HT(\Mg^{\Hg})[d_{\Mg^{\Hg}}]$ must be understood component-wise.})} are isomorphisms modulo torsion.
$$
\hbox to3cm{\hss Gysin morphisms\ }
\hbox to7cm{$\displaystyle\pmathalign{\noalign{\kern-1pt}
\iota_{\Hg!}&:&\HT(\Mg^{\Hg})[d_{\Mg^{\Hg}}]&\to&\HT(\Mg)[d_{\Mg}]&\hbox{}\\
\iota_{\Hg*}&:&\HTc(\Mg^{\Hg})[d_{\Mg^{\Hg}}]&\to&\HTc(\Mg)[d_{\Mg}]&\hbox{}
}$\hss}
$$
$$\preskip1ex\postskip0em
\hbox to3cm{\hss Restriction morphisms\ }
\hbox to7cm{$\displaystyle\pmathalign{\noalign{\kern-1pt}
\iota_{\Hg}^{*}&:&\HTc(\Mg)&\to&\HTc(\Mg^{\Hg})&\hbox{}\\
\iota_{\Hg}^{*}&:&\HT(\Mg)&\to&\HT(\Mg^{\Hg})&\hbox{}
}$\hss}
$$\
\end{prop}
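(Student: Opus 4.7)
My strategy is to show that the complement $\Ug := \Mg \setminus \Mg^{\Hg}$ has vanishing localized equivariant cohomology, and then read off the four isomorphisms modulo torsion from the Gysin exact triangles, an equivariant Mayer-Vietoris sequence, and localized Poincar\'e duality (Theorem \ref{Q-GPD}).

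First, because $\Hg \subseteq \Tg$ gives $\Mg^{\Tg} \subseteq \Mg^{\Hg}$, one has $\Ug^{\Tg} = \emptyset$; moreover $\Ug$ is open in $\Mg$ and thus inherits finite orbit type. Proposition \ref{prop-torsion} therefore yields $\HT(\Ug) \otimes_{\HT} \QT = 0$ and $\HTc(\Ug) \otimes_{\HT} \QT = 0$. Applying the exact flat functor $\QT \otimes_{\HT} (\_)$ to the two equivariant Gysin exact triangles attached to $\iota_\Hg$, namely
\[
\HTc(\Ug)[d_\Ug] \longrightarrow \HTc(\Mg)[d_\Mg] \longrightarrow \HTc(\Mg^\Hg)[d_{\Mg^\Hg}] \longrightarrow \cdots
\]
(with middle map $\iota_\Hg^*$) and
\[
\HT(\Mg^\Hg)[d_{\Mg^\Hg}] \longrightarrow \HT(\Mg)[d_\Mg] \longrightarrow \HT(\Ug)[d_\Ug] \longrightarrow \cdots
\]
(with first map $\iota_{\Hg!}$), the $\Ug$-terms vanish, forcing $\iota_\Hg^*: \HTc(\Mg) \to \HTc(\Mg^\Hg)$ and $\iota_{\Hg!}: \HT(\Mg^\Hg) \to \HT(\Mg)$ to be isomorphisms modulo torsion. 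These two form a Poincar\'e adjoint pair, hence are $\QT$-duals of each other under Theorem \ref{Q-GPD}.

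The remaining pair, $\iota_\Hg^*: \HT(\Mg) \to \HT(\Mg^\Hg)$ and $\iota_{\Hg*}: \HTc(\Mg^\Hg) \to \HTc(\Mg)$, is invisible to the Gysin triangles, so I would instead invoke the equivariant Mayer-Vietoris sequence (already used in the proof of Proposition \ref{prop-torsion}). Choose a $\Tg$-invariant tubular neighborhood $\Bg_\epsilon$ of $\Mg^\Hg$; it equivariantly retracts to $\Mg^\Hg$, whence $\HT(\Bg_\epsilon) \simeq \HT(\Mg^\Hg)$. The intersection $\Bg_\epsilon \cap \Ug = \Bg_\epsilon \setminus \Mg^\Hg$ is a $\Tg$-stable open submanifold of finite orbit type with no $\Hg$-fixed point (and hence no $\Tg$-fixed point), so Proposition \ref{prop-torsion} applies there too. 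Applying $\QT \otimes_{\HT} (\_)$ to the Mayer-Vietoris sequence for the cover $\{\Bg_\epsilon, \Ug\}$ of $\Mg$ makes the $\Ug$- and intersection-terms disappear, collapsing it to an isomorphism $\HT(\Mg) \otimes \QT \simeq \HT(\Bg_\epsilon) \otimes \QT \simeq \HT(\Mg^\Hg) \otimes \QT$ whose composite is $\iota_\Hg^* \otimes \id$. Localized Poincar\'e duality then transports this conclusion to $\iota_{\Hg*}$.

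The main obstacle, I expect, is conceptual rather than computational: recognizing that the Gysin triangles naturally handle only one of the two Poincar\'e adjoint pairs among the four morphisms, and that the other pair requires a separate Mayer-Vietoris input. A subsidiary concern is that $\Mg^\Hg$ may be non-equidimensional, so the dimension shifts must be interpreted component-wise; one should also check that the $\Tg$-equivariant tubular neighborhood (whose existence follows from the slice theorem \ref{slice} applied transversally along $\Mg^\Hg$) does induce an isomorphism on equivariant cohomology, a standard consequence of equivariant homotopy invariance.
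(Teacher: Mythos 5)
Your proof is correct, and it is worth comparing with the paper's. For the pair $\iota_{\Hg}^{*}:\HTc(\Mg)\to\HTc(\Mg^{\Hg})$ and $\iota_{\Hg!}$ you and the paper do essentially the same thing: everything follows from the vanishing of the localized equivariant cohomologies of $\Ug=\Mg\moins\Mg^{\Hg}$ (Prop.~\ref{prop-torsion}, which applies since $\Ug$ is a $\Tg$-stable open set with $\Ug^{\Tg}=\emptyset$ inheriting finite orbit type), fed into the two exact triangles attached to the closed embedding; the paper phrases this as ``kernel and cokernel lie in the torsion module $\HTc(\Ug)$'' plus duality, you phrase it by localizing both triangles -- the same computation. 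The genuine divergence is in the second pair. The paper proves that the equivariant cohomology of $\Mg$ with supports in $\Ug$ (i.e.\ relative to $\Mg^{\Hg}$) is a torsion module, by a Mayer--Vietoris induction over slice neighborhoods $V_x\dans\Ug$ together with the identification $H_{\Tg,V_x}(\Mg)\simeq H_{\Tg,\Tg\cdot x}(\Mg)\simeq H_{\Tg_x}$, and then reads $\iota_{\Hg}^{*}:\HT(\Mg)\to\HT(\Mg^{\Hg})$ (and, dually, $\iota_{\Hg*}$) off the long exact sequence of the pair. You instead introduce a $\Tg$-invariant tubular neighborhood $\Bg_{\epsilon}$ of $\Mg^{\Hg}$ and localize the Mayer--Vietoris sequence of the cover $\set\Bg_{\epsilon},\Ug/$, using Prop.~\ref{prop-torsion} on $\Ug$ and on $\Bg_{\epsilon}\moins\Mg^{\Hg}$, then transfer the conclusion to $\iota_{\Hg*}$ by the localized duality of Theorem \ref{Q-GPD}-(1) combined with the adjunction of Theorem \ref{equivariantGysinDefTheo}-(\ref{equivariantGysinDefTheo-a}); this last step is sound because part (1) of \ref{Q-GPD} needs no finite-type hypothesis and transposition over the field $\QT$ detects isomorphisms. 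Both routes work; yours is shorter and avoids cohomology with supports altogether, at the price of invoking the invariant tubular neighborhood theorem (with its equivariant retraction) for the possibly noncompact, non-equidimensional submanifold $\Mg^{\Hg}$ -- a standard fact (invariant metric and normal exponential map with an invariant radius function), consistent with what the paper assumes in its exercises but not established in the text, and somewhat looser than your appeal to the slice theorem suggests; the paper's slice-by-slice argument stays entirely within the tools it has already built.
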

\proof The kernel and cokernel of the restriction $\iota_{\Hg}^{*}:\HTc(\Mg)\to\HTc(\Mg^{\Hg})$ lay within $\HTc(U)$, where $U:=\Mg\moins\Mg^{\Hg}$. Now, as the isotropy groups of the points of $U$ are 
\emph{strict} subgroups of $\Tg$, there are no $\Tg$-fixed points, \idest  $U^{\Tg}=\emptyset$, and we can conclude that $\HTc(U)$ is an $\HT$-torsion module by \ref{prop-torsion}. In particular, 
any submodule of $\HTc(U)$, viz. the kernel and the cokernel of $\iota_{\Hg}^{*}$, is a torsion $\HT$-module.
By duality the same is true for $\iota_{\Hg,!}:\HT(\Mg^{\Hg})\to\HT(\Mg)$.

The other restriction $\iota_{\Hg}^{*}:\HT(\Mg)\to\HT(\Mg^{\Hg})$ is a little more tricky as its kernel and cokernel lay within $\Hr_{\Tg,U}(\Xg)$ which we have not yet proved is an $\HT$-torsion module. For that, recall that since one has short exact sequences of local section functors over open subspaces
$$
\let\Gamma\varGamma
0\to\Gamma_{U_1\cap U_1}(\_)\too
\Gamma_{U_1}(\_)\oplus\Gamma_{U_2}(\_)\too
\Gamma_{U_1\cup U_2}(\_)\to0
$$
where $\varGamma_{U}(\_)$ denotes the kernel of the restriction $\varGamma(\Mg,\_)\to\varGamma(\Mg\moins U,\_)$, one may follow a Mayer-Vietoris procedure to approach $\Hr_{\Tg,U}(\Xg)$ by successively adding slice open sets $V_x\dans U$ (\ref{slice}). In this way, to show that $\Hr_{\Tg,U}(\Mg)$ is a torsion module, it suffices to show that each $\Hr_{\Tg,V_x}(\Mg)$ is so. Now, this $\HT$-module 
occurs in the exact triangle
$$
\Hr_{\Tg,V_x}(\Mg)\too\Hr(\Mg)\to\Hr_{\Tg}(\Mg\moins V_x)\to$$
where $\Mg\moins V_x$ is $\Tg$-equivariantly homotopic to $\Mg\moins \Tg{\cdot}x$ since the slice $S(x)$ is a submanifold of $\Mg$, therefore 
$\Hr_{\Tg,V_x}(\Mg)\simeq \Hr_{\Tg,\Tg{\cdot}x}(\Mg)\simeq \HT(\Tg{\cdot}x)=\Hr_{\Tg_x}\,,$
which proves that $\Hr_{\Tg,V_x}(\Mg)$ is a torsion $\HT$-module.
\endproof

\begin{theo}Let\label{theo-localization} $\Mg$ be a $\Tg$-oriented manifold of finite orbit type such that $\Mg^{\Tg}$ is a discrete subspace of $\Mg$. Then
\begin{enumerate}
\item For\label{theo-localization-a} all $\mu\in\HTc(\Mg)$ the following \emph{``localization formula''} is satisfied:
$$\int_{\Mg}\mu=\sum_{x\in\Mg^{\Tg}}{\mu\rest{x}\over\EuT(x,\Mg)}\,.$$
\item If\label{theo-localization-b} $\Mg$ is compact of positive dimension
$$0=\sum_{x\in\Mg^{\Tg}}{1\over\EuT(x,\Mg)}\,.$$
\end{enumerate}
\end{theo}
\proof (\ref{theo-localization-a}) From \ref{prop-localization}, the morphism $i_{\Tg,*}:\HTc(\Mg^{\Tg})\to\HTc(\Mg)$ is an isomorphism modulo torsion, so that it suffices to prove the localization formula for the equivariant Thom classes $\Phi_{\Tg}(x,\Mg)$ for all $x\in\Mg^{\Tg}$.
But we have already shown that 
 $\int_{\Mg}\Phi_{\Tg}(x,\Mg)=1$ (\ref{Thom-Int}) and that $\Phi_{\Tg}(x,\Mg)\rest x=\EuT(x,\Mg)$ by definition.
(\ref{theo-localization-b}) Apply the localization formula to $1\in\HT^{0}(\Mg)$.
\endproof
\addtocontents{toc}{\protect\vskip1ex}
\addtocontents{toc}{\protect\secnumberstrue}\color{black}

\section{Miscellany}
Excerpt from \cite{borel-sem} (A.~Borel, IV-\myS3 , p.~55, 1960), where, for the first time, a reference to what is nowadays known as \expression{the Borel construction}\index{Borel construction} appears.
$$
\hss\includegraphics[width=\hsize]{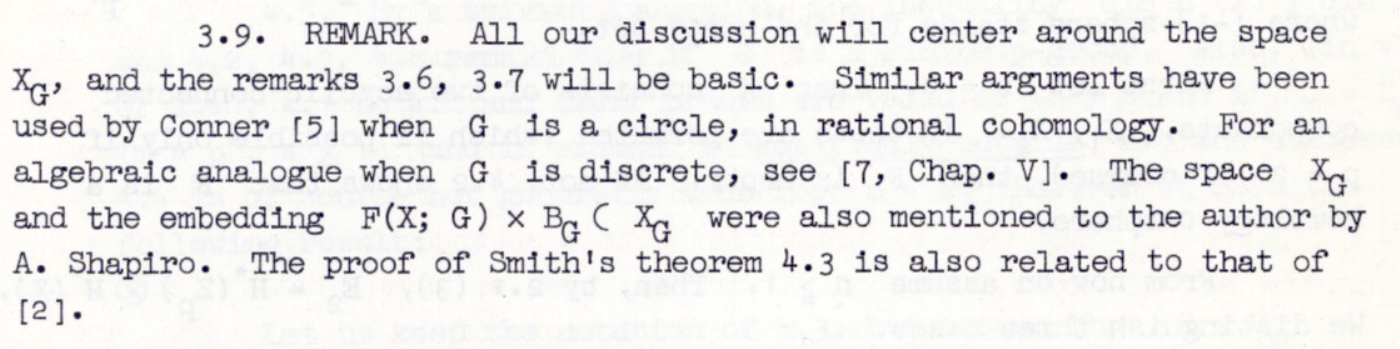}\hss$$
The references `[2]', `[5]' and `[7]' correspond to ours \cite{borel-smith},  \cite{conner} and \cite{groth}.

\section{Appendix}
\noindent We\label{appendix1} explain the following fact mentioned in footnote ($^{\bref{bas-16}}$).

\begin{prop*}Let $\Ag:=\Ag\sp{0}\oplus\Ag\sp{1}\oplus\cdots$ be a graded ring.
Denote by $S$ the multiplicative system generated by the nonzero graded elements of $\Ag$. The ring  $\Lg\pcolon S\sp{-1}\Ag$ is a graded $\Ag$-module such that, for any $\Ag$-graded module $\Ng$, the tensor product $\Lg\otimes\sb{\Ag}\Ng$ is flat and injective in the category of graded $\Ag$-modules. 
\end{prop*}

\begin{proof}
\nobreak\noindent{$\bullet$ \bold$\Lg\otimes\Ng$ is flat. }For any graded ideal $\Ig$ of $\Ag$, one has the long exact sequence:
$$\0\to\Torg\sp{\Ag}\sb{1}(\Lg,{\Ag/\Ig})\too\Lg\otimes\Ig\too\Lg\too\Lg\otimes(\Ag/\Ig)\to\0\eqno(*)$$
where $\Ag/\Ig$ is a torsion graded $\Ag$-module. The annihilators of the elements of 
$\Ag/\Ig$ are graded ideals, generated, as such, by invertible elements of $\Lg$. Therefore
$$\Torg\sp{\Ag}\sb{1}(\Lg,{\Ag/\Ig})=\0\,,\quad\forall i\in\NN\,,$$
and we have from $(*)$ the equality $\Lg\otimes\Ig=\Lg$ from which, we deduce
$$\Lg\otimes\Ig\otimes\Ng=\Lg\otimes\Ng$$
for any $\Ag$-graded module $\Ng$. 
The \expression{ideal criterion of flatness} applies, and the $\Ag$-graded module $\Lg\otimes\Ng$ is flat.

\smallskip\noindent$\bullet$ {\bold$\Lg\otimes\Ng$ is injective. }Let $\alpha:\Mg_1\dans\Mg_2$ be a graded inclusion of graded $\Ag$-modules. We must show that any
morphism $\lambda:\Mg_1\to\Lg\otimes\Ng$ of graded $\Ag$-modules can be extended to $\Mg_2$.
$$\xymatrix{
\Mg_1\ar[r]\sp{\alpha}\sb{\dans\ \ }\ar[d]_\lambda&\Mg_2\ar@{-->}[dl]\sp{\lambda'}\\
\Lg\otimes\Ng}$$

In the contrary, Zorn's lemma will led us to assume that $\Mg_2\supsetneqq\Mg_1$ and that $\lambda$ may not be further extended.
In particular, $\Ag\cdot m\mathrel\cap\Mg_1\not=\0$ for any homogeneous $m\in\Mg_2\setminus \Mg_1$, hence the quotient
$\Mg_2/\Mg_1$ is a torsion module. One then has
$$\Lg\otimes\Mg_1=\Lg\otimes\Mg_2\,,$$
and a contradiction arises as a consequence of the diagram
$$\xymatrix{
\Homgr\sb{\Ag}(\Mg\sb{2},\Lg\otimes\Ng)\ar[d]\sb{\cong}\ar[r]&
\Homgr\sb{\Ag}(\Mg\sb{1},\Lg\otimes\Ng)\ar[d]\sb{\cong}\\
\Homgr\sb{\Lg}(\Lg\otimes\Mg\sb{2},\Lg\otimes\Ng)\ar[r]\sb{(=)}&
\Homgr\sb{\Lg}(\Lg\otimes\Mg\sb{1},\Lg\otimes\Ng)\\
}$$
where the horizontal arrows are induced by the inclusion $\Mg_1\dans\Mg_2$ and the vertical arrows are the well-known canonical natural isomorphisms.\qed
\end{proof}



\vfill\break
\newif\ifinsidebib
\def\_{\vrule height0.4pt depth0pt width0.7ex}

\small\printindex



\begin{thebibliography}{gg}
\let\Bibitem\bibitem\def\bibitem[#1]{\ifinsidebib
\penalty-100\else\insidebibtrue
\fi\Bibitem[#1]}

\parskip0pt\mou\itemsep1.5pt\mou

\small

\bibitem [AFP]{afp}C.~Allday, M.~Franz, V.~Puppe.
\newblock Equivariant cohomology, syzygies and orbit structure.
\newblock
Trans. Amer. Math. Soc. 366, no. 12, 6567--6589  (2014).


\bibitem [A]{andre}M. André. 
\newblock  Cohomologie des algèbres différentielles où opère une algèbre de Lie. Ph.D. Thesis 1962, advisor Claude C. Chevalley.
Tôhoku Math. J. (2) 14 (1962), no. 3, 263--311.
\href{https://projecteuclid.org/download/pdf_1/euclid.tmj/1178244116}{http://projecteuclid.org/download/pdf\_1/euclid.tmj/1178244116}.




\bibitem[AB]{AB}
M.F.~Atiyah, R.~Bott.
\newblock
The moment map and equivariant cohomology. 
\newblock
Topology 23, no. 1, 1--28 (1984).
\bibitem[AS]{AS}
M.F.~Atiyah, G.B.~Segal.
\newblock The index of elliptic operators II. 
\newblock Ann. of Math. (2) 87, 531--545 (1968).
\bibitem[BGV]{BGV}
N.~Berline, E.~Getzler, M.~Vergne.
\newblock  Heat kernels and Dirac operators. Corrected reprint of the 1992 original. 
\newblock  Grundlehren Text Editions. Springer-Verlag, (2004).

\bibitem[BV$_1$]{bv1}
N.~Berline, M.~Vergne.
\newblock  Classes caractéristiques équivariantes. Formule de localisation en cohomologie équivariante. 
\newblock 
C. R. Acad. Sci. Paris Sér. I Math. 295, no. 9, 539--541 (1982). {\catcode`\%=12\catcode`\:=12
\href{http://gallica.bnf.fr/ark:/12148/bpt6k62356694/f77}
{http://gallica.bnf.fr/ark:/12148/bpt6k62356694/f77}.}

\bibitem[BV$_2$]{bv2}
N.~Berline, M.~Vergne.
\newblock  Zéros d'un champ de vecteurs et classes caractéristiques équivariantes.
\newblock 
Duke Math. J. 50, no. 2, 539--549,  (1983).

\bibitem[BV$_3$]{bv3}
N.~Berline, M.~Vergne.
\newblock  
Fourier transforms of orbits of the coadjoint representation. \newblock  Representation theory of reductive groups (Park City, Utah, 1982), 53--67, Progr. Math., 40, Birkhäuser, (1983).

\bibitem [B$_{1}$]{borel-smith}
\newblock A.~Borel.
\newblock Nouvelle démonstration d'un théorème de P. A. Smith. 
\newblock Comment. Math. Helv. 29, 27--39  (1955). 
\bibitem [B$_{2}$]{borel-sem}
\newblock A.~Borel.
\newblock\slant{``Seminar on transformation groups''.}
\newblock With contributions by G. Bredon, E.
E. Floyd, D. Montgomery, R. Palais. Annals of Mathematics Studies, No. 46.
Princeton University Press, Princeton, N.J. (1960).



\bibitem [B$_{3}$]{borel-these}
\newblock A.~Borel.
\newblock Sur La Cohomologie des Espaces Fibres Principaux et des Espaces Homogenes de Groupes de Lie Compacts
\newblock Annals of Mathematics, Second Series, Vol. 57, No. 1, pp.~115-207 (1953).





\bibitem [Bo]{bott}
R.~Bott.
\newblock
Vector fields and characteristic numbers. Michigan Math. J. 14  231--244 (1967).

\bibitem[BT]{BT}
R.~Bott, L.W.~Tu.
\newblock
\slant{``Differential forms in algebraic topology''.}
\newblock Graduate Texts in Mathematics, 82. 
Springer-Verlag, New York-Berlin (1982).

\bibitem[Bk]{bourbaki}
\newblock N.~Bourbaki.
\newblock \slant{``Algebra I. Chapters 1--3''}. 
Elements of Mathematics (Berlin). Springer-Verlag, Berlin, 1998. 
\bibitem [Br]{bry}
\newblock J.-L. Brylinski.
\newblock Equivariant intersection cohomology.
\newblock 
Kazhdan-Lusztig theory and related topics (Chicago, IL, 1989), 5--32, 
Contemp. Math., 139, Amer. Math. Soc., Providence, RI, 1992. 





\bibitem [Ca$_{1}$]{cartan-1}
H.~Cartan.
\newblock
 Notions d'algèbre différentielle; application aux groupes de Lie 
et aux variétés où opère un groupe de Lie.
\newblock
Colloque de topologie (espaces fibrés), Bruxelles 1950, pp. 15--27 (1951)
\bibitem [Ca$_{2}$]{cartan-2}
H.~Cartan.
\newblock
La transgression dans un groupe de Lie et dans un espace fibré principal.
\newblock
Colloque de topologie, 
Bruxelles 1950, pp. 15--27 (1951)

\bibitem [Co]{conner}
\newblock P.E. Conner.
\newblock On the action of the circle group. 
\newblock Michigan Math. J. 4 1957 241--247


\bibitem [Dx]{dix}
J.~Dixmier.
\newblock
\slant{``Enveloping algebras''}.
\newblock
Revised reprint of the 1977 translation. Graduate Studies in Mathematics, 11. 
AMS, 1996.

\bibitem [DH]{dh}
J.J.~Duistermaat, G.J.~Heckman. 
\newblock On the variation in the cohomology of the symplectic form of the reduced phase space. 
\newblock Invent. Math. 69, no. 2, 259--268,  (1982). 
\newblock Addendum to: ``On the variation in the cohomology of the symplectic form of the reduced phase space''. 
\newblock Invent. Math. 72, no. 1, 153--158,  (1983). 

\bibitem [FP]{fp}M.~Franz, V.~Puppe.
\newblock Freeness of equivariant cohomology and mutants of compactified representations.
\newblock pp. 87-98 in: Megumi Harada et al. (eds.), Toric Topology (Osaka, 2006), Contemp. Math. 460, AMS, Providence (2008)

\bibitem [F$_{1}$]{fra2}M.~Franz.
\newblock Big polygon spaces.
\newblock Int. Math. Res. Not. 2015, 13379-13405  (2015).

\bibitem [F$_{2}$]{fra}M.~Franz.
\newblock Syzygies in equivariant cohomology for non-abelian Lie groups.
\newblock pp. 325-360 in: F. Callegaro et al. (eds.), Configuration spaces (Cortona, 2014), Springer INdAM Ser. 14, Springer, Cham (2016)



\bibitem [FF]{fos-fox}
R.~Fossum, H.-B. Foxby.
\newblock The category of graded modules. 
\newblock Math. Scand. 35, 288--300  (1974). 
\bibitem [Go]{god}
R.~Godement.
\newblock \slant{``Topologie algébrique et théorie des faisceaux''}.
\newblock Troisième édition revue et corrigée. 
Actualités Scientifiques et Industrielles, No. 1252. Hermann, Paris, (1973).


\bibitem [GKM]{gkm}
M.~Goresky, R.~Kottwitz, R.~MacPherson.
\newblock 
Equivariant cohomology, Koszul duality, and the localization theorem. 
\newblock 
Invent. Math. 131 (1998), no. 1, 25--83. 


\bibitem [Gt]{gott}
D.H.~Gottlieb. 
\newblock Fibre bundles and the Euler characteristic.
\newblock J. Differential Geometry  10, 39--48,   (1975).

\bibitem [Gr]{groth}
A.~Grothendieck.
\newblock
Sur quelques points d'algèbre homologique.
\newblock 
Tôhoku Math. J. (2) 9 1957 119--221. 

\bibitem [H]{hs}
W.~Hsiang.
\newblock \slant{``Cohomology theory of topological transformation groups''}. 
\newblock Ergebnisse der Mathematik und ihrer Grenzgebiete, Band 85 (1975)


\bibitem [J]{jac}
N.~Jacobson.
\newblock \slant{``Basic algebra. II''}.
\newblock Second edition. W. H. Freeman and Company, New York, 1989

\bibitem [Jo]{joshua}
R.~Joshua.
\newblock Vanishing of odd-dimensional intersection cohomology. 
\newblock Math. Z. 195 (1987), no. 2, 239--253 (1987). 


\bibitem [L]{leray}
J.~Leray.
\newblock Sur l'homologie des groupes de Lie, des espaces homogènes et des espaces fibrés principaux. (French) Colloque de topologie (espace fibrés), Bruxelles, 101--115 (1950).


\bibitem [Q]{quillen}
D.~Quillen.
\newblock   The spectrum of an equivariant cohomology ring. I, II. 
\newblock   Ann. of Math. (2) 94 (1971), 549--572; ibid. (2) 94 (1971), 573--602. 


\bibitem [S]{segal}
G.~Segal.
\newblock  Equivariant K-theory. 
\newblock Inst. Hautes Études Sci. Publ. Math. No. 34, 129--151 (1968).

\bibitem [W]{witten}
E.~Witten
\newblock  Supersymmetry and Morse theory. 
\newblock J. Differential Geom. 17 (1982), no. 4, 661--692 (1983).

\end{thebibliography}
\end{document}